\theoremstyle{plain}
\newtheorem{thm}{Theorem}[subsection]
\newtheorem*{thm*}{Theorem}
\newtheorem*{thm1}{Theorem 1}
\newtheorem*{thm2}{Theorem 2}
\newtheorem{prop}{Proposition}[subsection]
\newtheorem*{prop*}{Proposition}
\newtheorem{lemma}{Lemma}[subsection]
\newtheorem*{lemma*}{Lemma}
\newtheorem*{coro*}{Corollary}
\newtheorem{conj}{Conjecture}[subsection]
\theoremstyle{definition}
\newtheorem*{dfn*}{Definition}
\newtheorem{rem}{Remark}[subsection]
\newtheorem*{rem*}{Remark} 
\newtheorem*{ex*}{Example}
\newtheorem{hyp}{Hypothesis}[subsection]
\newcommand{\A}{\mathbb A}
\newcommand{\C}{\mathbb{C}}
\newcommand{\R}{\mathbb R}
\newcommand{\Z}{\mathbb Z}
\newcommand{\Q}{\mathbb Q}
\newcommand{\Or}{\mathcal O}
\newcommand{\Gal}{\operatorname{Gal}}
\newcommand{\Hom}{\operatorname{Hom}}
\newcommand{\Aut}{\operatorname{Aut}}
\newcommand{\coh}{\operatorname{coh}}
\newcommand{\Res}{\operatorname{Res}}
\newcommand{\GL}{\operatorname{GL}}
\newcommand{\Temp}{\operatorname{Temp}}
\newcommand{\Ad}{\operatorname{Ad}}
\newcommand{\Fr}{\operatorname{Fr}}
\newcommand{\ab}{\operatorname{ab}}
\newcommand{\Gm}[1]{\mathbb{G}_{\operatorname{m},#1}}
\newcommand{\gr}{\operatorname{gr}}
\newcommand{\aut}{\operatorname{aut}}
\newcommand{\id}{\operatorname{id}}
\newcommand{\diag}{\operatorname{diag}}
\newcommand{\St}{\operatorname{St}}
\newcommand{\Sh}{\operatorname{Sh}}
\newcommand{\hol}{\operatorname{hol}}
\newcommand{\ahol}{\operatorname{ahol}}
\newcommand{\can}{\operatorname{can}}
\newcommand{\sub}{\operatorname{sub}}
\newcommand{\dR}{\operatorname{dR}}
\newcommand{\Lie}{\operatorname{Lie}}
\newcommand{\art}{\operatorname{art}}
\newcommand{\W}{\mathcal{W}}
\newcommand{\mot}{\operatorname{mot}}
\newcommand{\Coh}{\operatorname{Coh}}
\numberwithin{equation}{subsection}
\begin{document} 

\title[Period relations for automorphic forms on unitary groups]{Period relations for automorphic forms on unitary groups and critical values of $L$-functions}
\author{Lucio Guerberoff}
\address{Department of Mathematics, University College London, 25 Gordon Street, London WC1H 0AY, UK}
\email{l.guerberoff@ucl.ac.uk}
\subjclass[2010]{11F67 (Primary) 11F70, 11G18, 11R39, 22E55 (Secondary). }

\begin{abstract} In this paper we explore some properties of periods attached to automorphic representations of unitary groups over CM fields and the critical values of their $L$-functions. We prove a formula expressing the critical values in the range of absolute convergence in terms of Petersson norms of holomorphic automorphic forms. On the other hand, we express the Deligne period of a related motive as a product of quadratic periods and compare the two expressions by means of Deligne's conjecture. 
\end{abstract}

\maketitle

\section{Introduction}\label{sec:intro}
The goal of the present paper is to study critical values of the standard $L$-functions of cohomological automorphic representations of unitary groups, and relate them to the motivic expression predicted by Deligne's conjecture (\cite{deligne}). This extends previous results by Harris~(\cite{harriscrelle}) from quadratic imaginary fields to arbitrary CM fields $L$. Let $K$ be the maximal totally real subfield of $L$, and let $G$ be a similitude unitary group attached to an $n$-dimensional hermitian space over $L$. Fix a CM type $\Phi$ for $L/K$, and suppose that $G$ has sigature $(r_{\tau},s_{\tau})$ at each $\tau\in\Phi$. Let $\pi$ be a cuspidal cohomological automorphic representation of $G(\A)$. We can parametrize the weight of $\pi$ by a tuple of integers $((a_{\tau,1},\dots,a_{\tau,n})_{\tau\in\Phi};a_{0})$. See Section~\ref{sec:automorphicmotives} for details. We assume that the corresponding algebraic representation of $G_{\C}$ is defined over $\Q$, and that $\pi^{\vee}\cong\pi\otimes\|\nu\|^{2a_{0}}$, where $\nu$ is the similitude factor. Let $\psi$ be an algebraic Hecke character of $L$ of infinity type $(m_{\tau})_{\tau:L\hookrightarrow\C}$. Our main theorem is the following.

\begin{thm1} Suppose that $\pi$ satisfies Hypothesis \ref{hypomult} and contributes to antiholomorphic cohomology. If $m>n$ is an integer satisfying 
\[ m\leq\min\{a_{\tau,r_{\tau}}+s_{\tau}+m_{\tau}-m_{\overline\tau},a_{\tau,s_{\tau}}+r_{\tau}+m_{\overline\tau}-m_{\tau}\}_{\tau\in\Phi},\]
then
\[ L^{S}\left(m-\frac{n-1}{2},\pi\otimes\psi,\St\right)\sim(2\pi i)^{[K:\Q](mn-n(n-1)/2)-2a_{0}}D_{K}^{\lfloor\frac{n+1}{2}\rfloor/2}P(\psi)Q^{\hol}(\pi).\]
\end{thm1}
In this expression, the members belong to $E(\pi)\otimes E(\psi)\otimes\C$, where $E(\pi)$ and $E(\psi)$ are number fields over which $\pi_{f}$ and $\psi$ are defined, and $\sim$ means up to multiplication by an element of $E(\pi)\otimes E(\psi)\otimes L'$, with $L'$ being the Galois closure of $L$ in $\C$. We refer the reader to Section~\ref{sec:doubling}, in particular to Theorem \ref{maintheorem}, for a precise and detailed explanation of the notation. Let us mention that the factor $Q^{\hol}(\pi)$ is an automorphic quadratic period attached to $\pi$. This is basically defined as a Petersson norm of an arithmetic holomorphic vector in $\pi$. The element $P(\psi)$ is an explicit expression involving CM periods attached to $\psi$. 

The method of proof of Theorem 1 follows the lines of \cite{harriscrelle}, and is based on earlier work by Shimura (\cite{shimurazeta}). It is based on the doubling method, and allows us to write the $L$-function as an integral of a holomorphic automorphic form against a certain Eisenstein series. Roughly speaking, Li proved in \cite{li} that the $L$-function can be written in terms of global and local Piatetski-Shapiro-Rallis zeta integrals and an inner product between automorphic forms. The inequality that $m$ needs to satisfy in the hypotheses of the theorem governs the existence of a differential operator for automorphic vector bundles, constructed in \cite{harrisvb2}. By carefully choosing the sections defining the Eisenstein series and using these differential operators, we can see that the zeta integrals are rational over $L'$, and we can interpret the inner product as the automorphic quadratic period. See Section \ref{sec:doubling} for more details. In the final sections of the paper, we interpret the formula in Theorem 1 motivically to obtain period relations.

\subsection{Background and motivation} The first results concerning the expression of special values of automorphic $L$-functions as Petersson norms were due to Shimura, especially in the case of Hilbert modular forms (see~\cite{shimurazeta}, \cite{shimuraduke}, \cite{shimurarelations}, \cite{shimuraperiods88}). Petersson norms are to be interpreted as quadratic periods, as in Shimura's conjectures concerning the factorization of periods. In~\cite{harriscrelle}, Harris generalized the quadratic periods to the setting of coherent cohomology of Shimura varieties, in this case attached to unitary groups of hermitian spaces over quadratic imaginary fields. In this paper, we treat the case of arbitrary CM fields. The motivic interpretation, given below, relates the expression of Theorem 1 to Deligne's conjecture on critical values. It should be noted that quadratic periods have their own importance independently of any reference to motives. In particular, a proper understanding of them is key to the construction of $p$-adic $L$-functions.

Period relations have been studied by several authors such as Shimura, Yoshida, Oda, Schappacher, Panchishkin, Blasius, Hida and Harris, among others. One of the basic principles in their prediction is Tate's conjecture. For instance, Tate's conjecture is used crucially in Blasius's proof of Deligne's conjecture for Hecke $L$-series (\cite{blasiusannals}). See also \cite{blasiusperiods}. It has long been known by specialists, dating back to Shimura, Deligne and Langlands, that there should be motivic relations, and hence period relations, predicted by relations between automorphic forms on different groups. In this paper, this manifestation takes the form of period relations for unitary groups of different signatures.

For related recent results, we remark that Jie Lin in her recent Paris thesis (\cite{thesislin}) conjectures a similar formula as that of Theorem 1, but without the discriminant factor, which is assumed to belong to the coefficient field. This is used to prove results generalizing those of \cite{grobnerharris}. 

\subsection{Motivic interpretation} We can interpret motivically the formula in Theorem 1 as follows. Suppose that $\Pi$ is a cuspidal, cohomological, self-dual automorphic representation of $\GL_{n}(\A_{K})$. There is a conjectural motive $M$ over $K$, with coefficients in a number field $E\subset\C$, attached to $\Pi$. The $\ell$-adic realizations of $M$ have already been constructed (see \cite{chl}, \cite{shingalois}, \cite{ch}, \cite{sorensen}), and the existence of $M$ will mostly play a heuristic role. We refer to Section~\ref{sec:factorization} for details about motives. For simplicity in what follows, we fix the embedding $E\hookrightarrow\C$, and all the $L$-values and periods will be considered to be complex numbers via this embedding. Under certain assumptions, we can descend $\Pi_{L}$ to an automorphic representation $\pi$ of the unitary group $G$ (\cite{labesse}; see also \cite{mok}, \cite{kmsw}). Write $\psi|_{\A_{K}}=\psi_{0}\|\cdot\|^{-w}$, with $w$ being the weight of $\psi$ and $\psi_{0}$ a finite order character, and let $\chi=\psi^{2}(\psi_{0}\circ N_{L/K})^{-1}$. The relation between the standard $L$-function of $\pi\otimes\psi$ and $M$ is encompassed in the following formula:
\[ L\left(s-\frac{n-1}{2},\pi\otimes\psi,\St\right)=L(M\otimes RM(\chi),s+w).\]
Here $RM(\chi)$ is the restriction of scalars from $L$ to $K$ of the motive $M(\chi)$ attached to $\chi$. In Section~\ref{sec:factorization}, we determine explicitly the set of critical integers of $L(M\otimes RM(\chi),s)$. The motive $M$ is regular of weight $n-1$, in the sense that the Hodge components $M_{\sigma}^{pq}$ have dimension $0$ or $1$ for each $\sigma$. We let the Hodge numbers be $(p_{i}(\sigma),q_{i}(\sigma))$, where $p_{1}(\sigma)>\dots>p_{n}(\sigma)$. We assume that $m_{\tau}\neq m_{\overline\tau}$ for any $\tau$, and take the CM type $\Phi$ in such a way that $m_{\tau}>m_{\overline\tau}$ for $\tau\in\Phi$. We show that if $M\otimes RM(\chi)$ has critical values, then for each $\sigma$ there exists $r_{\sigma}=0,\dots,n$ such that
\[ n-1-2p_{r_{\sigma}}(\sigma)<2m_{\tau}-2m_{\overline\tau}<n-1-2p_{r_{\sigma}+1}(\sigma).\]
Moreover, suppose that the signatures of $G$ are given by $r_{\tau}=r_{\sigma}$ for $\tau\in\Phi$ extending $\sigma$ (starting from $\Pi$ and $\psi$, we can always find $G$ with these signatures). The set of critical integers of the form $m+w$ for $M\otimes RM(\chi)$ is governed by two inequalities (see~(\ref{setofcriticalvalues})), one of which is precisely the inequality on Theorem 1. Thus, we can see the corresponding values of the $L$-function of $\pi\otimes\psi$ as critical values of $M\otimes RM(\chi)$. Deligne's conjecture predicts that these are, up to multiplication by an element in the coefficient field, equal to the Deligne periods $c^{+}(M\otimes RM(\chi))(m+w)$.

The motive $M$ is also equipped with a polarization $M\cong M^{\vee}(1-n)$. The methods of Section~\ref{sec:factorization} allow us to write the $\sigma$-periods $c_{\sigma}^{+}(M\otimes RM(\chi))$ in terms of another set of periods $Q_{j,\sigma}$ of $M$, called quadratic periods. We refer to the main text for a precise definition. We prove the following result, first proved in \cite{harriscrelle} when $K=\Q$.

\begin{thm2} If $m+w$ is critical for $M\otimes RM(\chi)$, then
\[ c^{+}\left(M\otimes RM(\chi)(m+w)\right)\sim(2\pi i)^{[K:\Q]mn+w\sum_{\sigma}r_{\sigma}-s_{\sigma}}\delta(M)Q(\chi)\prod_{\sigma}\prod_{j=1}^{s_{\sigma}}Q_{j,\sigma}.\]
\end{thm2}
We can actually obtain a more precise formula involving only the $c_{\sigma}^{+}(M\otimes RM(\chi))$ for a single $\sigma$ (see Theorem \ref{thmfact}). Here, $Q(\chi)$ is an explicit expression involving CM periods attached to $\chi$. We stress that the conjectural existence of the motive $M$ attached to $\Pi$ plays a heuristic role, but Theorem 2 is proved for any family of realizations $M$ (such as a motive for absolute Hodge cycles) with the properties of being regular and polarized over a totally real field. Comparing this expression with that of Theorem 1, we match the CM periods and get that the prediction of Deligne's conjecture for $M\otimes RM(\chi)$ is translated in the following statement:
\begin{equation}\label{relation} \prod_{\sigma}\prod_{j=1}^{s_{\sigma}}Q_{j,\sigma}\sim(2\pi i)^{-[K:\Q]n(n-1)/2}\delta(M)^{-1}D_{K}^{n/2}Q^{\hol}(\pi).\end{equation}
We can state this relation without making reference to the quadratic periods $Q_{j,\sigma}$ by interpreting them as automorphic periods obtained from automorphic representations of different unitary groups, whose signatures are $(n,0)$ at all places except at one place, where the signature is $(n-1,1)$. These automorphic representations contribute in coherent cohomology to the different stages of the Hodge filtration (as opposed to the single holomorphic stage which gives rise to $Q^{\hol}(\pi)$). We show that the relation (\ref{relation}) is predicted by Tate's conjecture as well, as we explained above. If one is willing to assume this conjecture, then this implies Deligne's conjecture for the motives $M\otimes RM(\chi)$. We should stress here that what it actually implies is a slightly weaker version of Deligne's conjecture. Namely, the automorphic methods only allow us to relate the expressions up to multiples by elements in the Galois closure $L'$ of $L$ in $\C$. Also, we only obtain the version of Deligne's conjecture obtained by fixing an embedding of the coefficient field. This last issue does not arise when $K=\Q$, since the Hodge components $M_{\sigma}^{pq}$ of the motives in question are free over $E\otimes\C$, something which is almost never true when $K$ is bigger than $\Q$ (for example, this is already false for motives $M(\chi)$ attached to algebraic Hecke characters). 

\subsection{Organization of the paper} In Section \ref{sec:factorization}, we recall the basic facts and main properties about motives, realizations and their periods. The reader who is interested solely in the main theorem on critical values of automorphic $L$-functions on unitary groups can skip Section~\ref{sec:factorization} and go directly to Sections~\ref{sec:automorphicmotives} and \ref{sec:doubling}. The reason we include Section~\ref{sec:factorization} before is that we use some of the terminology regarding Hodge-de Rham structures and polarizations in Section~\ref{sec:automorphicmotives}. In Section~\ref{sec:factorization}, we make emphasis in regular polarized motives over totally real fields, and in Theorem \ref{thmfact}, we prove the factorization of $c_{\sigma}^{+}(M\otimes RM(\chi))$ in term of quadratic periods. 

In Section \ref{sec:automorphicmotives}, we introduce unitary groups and their associated Shimura varieties. We set up the notation for the parameters of representations giving rise to automorphic vector bundles, and give a brief overview of the main properties of the Hodge-de Rham structures attached to cohomological automorphic representations, constructed in \cite{harrismotives}. We write down the action of complex conjugation and the polarizations in terms of automorphic forms, and we give an automorphic definition of quadratic periods in the setting of coherent cohomology. 

Section \ref{sec:doubling} contains our main theorem on critical values of cohomological automorphic $L$-functions. In the first subsections we set up the doubling method and we recall the relation, proved by Li (\cite{li}), between the standard $L$-functions and Piatetski-Shapiro-Rallis zeta integrals, and in Subsection \ref{ssec:mainthm} we prove the main theorem.

Finally, in Section~\ref{sec:relations}, we postulate period relations obtained by comparing the results of Sections~\ref{sec:factorization} and \ref{sec:doubling} by means of Deligne's conjecture. This section is hypothetical in nature. We start by recalling some basic facts about transfer and descent for automorphic representations of unitary groups and $\GL_{n}$, along with several motivic expectations, including the existence of the motive $M$ described above, as well different relations between the Hodge-de Rham structures attached to nearly equivalent automorphic representations of unitary group, which are consequences of Tate's conjecture. We show how they imply the period relations embodied in Deligne's conjecture. The arguments are heuristic, depending on these motivic expectations, but we can write down the predicted period relations concretely in terms of automorphic forms on unitary groups.

\subsection*{Acknowledgements} The author wants to thank Michael Harris for many helpful answers, his advice and his support. The author also thanks Daniel Barrera, Don Blasius and Jie Lin for several useful discussions, and for comments on earlier versions of this paper. Finally, the author thanks the referee for helpful comments and suggestions. 

Part of this work was carried out while the author was a guest at the Max Planck Institute for Mathematics in Bonn, Germany. It is a pleasure to thank the Institute for its hospitality and the excellent working conditions.

\subsection*{Notation and conventions}
We fix an algebraic closure $\C$ of $\R$, a choice of $i=\sqrt{-1}$, and we let $\overline\Q$ denote the algebraic closure of $\Q$ in $\C$. We let $c\in\Gal(\C/\R)$ denote complex conjugation on $\C$, and we use the same letter to denote its restriction to $\overline\Q$. Sometimes we also write $c(z)=\overline z$ for $z\in\C$. We let $\Gamma_{\Q}=\Gal(\overline\Q/\Q)$.

For a number field $K$, we let $\A_{K}$ and $\A_{K,f}$ denote the rings of ad\`eles and finite ad\`eles of $K$ respectively. When $K=\Q$, we write $\A=\A_{\Q}$ and $\A_{f}=\A_{\Q,f}$. If $K$ is a number field (resp. a finite extension of $\Q_{p}$ for some prime number $p$), and $\overline K$ is a fixed algebraic closure, let $K^{\ab}$ be the maximal abelian extension of $K$ inside $\overline K$. We let $\art_{L}:\A_{L}^{\times}\to\Gal(K^{\ab}/K)$ (resp. $\art_{L}:L^{\times}\to\Gal(K^{\ab}/K)$) be the Artin reciprocity map of class field theory. We normalize these maps so that the global reciprocity map is compatible with the local ones, and the local map takes a uniformizer to a geometric Frobenius element.

A CM field $L$ is a totally imaginary quadratic extension of a totally real field $K$. A CM type $\Phi$ for $L/K$ is a choice of one of the two possible extensions to $L$ of each embedding of $K$. 

All vector spaces will be finite-dimensional except otherwise stated. By a variety over a field $K$ we will mean a geometrically reduced scheme of finite type over $K$.  

For a field $K$, we let $\Gm{K}$ denote the usual multiplicative group over $K$. For any algebraic group $G$ over K, we let $\Lie(G)$ denote its Lie algebra and $\Ad:G\to\GL_{\Lie(G)}$ the adjoint representation. A reductive algebraic group will always be assumed to be connected.

We let $\mathbb{S}=R_{\C/\R}\Gm{\C}$. We denote by $c$ the complex conjugation map on $\mathbb{S}$, so for any $\R$-algebra $A$, this is $c\otimes_{\R}1_A:(\C\otimes_{\R}A)^\times\to(\C\otimes_{\R}A)^\times$. We usually also denote it by $z\mapsto\overline z$, and on complex points it should not be confused with the other complex conjugation on $\mathbb{S}(\C)=(\C\otimes_{\R}\C)^{\times}$ on the second factor.

A tensor product without a subscript between $\Q$-vector spaces will always mean tensor product over $\Q$. For any number field $K$, we denote by $J_{K}=\Hom(K,\C)$. For $\sigma\in J_{K}$, we let $\overline\sigma=c\sigma$. Let $E$ and $K$ be number fields, and $\sigma\in J_K$. If $\alpha,\beta\in E\otimes \C$, we write $\alpha\sim_{E\otimes K,\sigma}\beta$ if either $\beta=0$ or if $\beta\in(E\otimes \C)^{\times}$ and $\alpha/\beta\in(E\otimes K)^{\times}$, viewed as a subset of $(E\otimes \C)^{\times}$ via $\sigma$. There is a natural isomorphism
$E\otimes\C\simeq\prod_{\varphi\in J_E}\C$ given by $e\otimes z\mapsto(\varphi(e)z)_{\varphi}$ for $e\in E$ and $z\in\C$. Under this identification, we denote an element $\alpha\in E\otimes\C$ by $(\alpha_{\varphi})_{\varphi\in J_E}$. We write $\alpha\sim_{E;K;\sigma}\beta$ if either $\beta=0$ or $\beta\in(E\otimes\C)^{\times}$ and $\alpha_{\varphi}/\beta_{\varphi}\in\varphi(E)\sigma(K)\subset\C$ for all $\varphi\in J_{E}$. The relation $\alpha\sim_{E\otimes K,\sigma}\beta$ implies $\alpha\sim_{E;K;\sigma}\beta$, but the converse is not necessarily true. When $K$ is given from the context as a subfield of $\C$, we write $\sim_{E\otimes K}$ (resp. $\sim_{E;K}$) for $\sim_{E\otimes K,1}$ (resp. $\sim_{E;K;1}$), where $1:K\hookrightarrow\C$ is the given embedding.

Suppose that $\mathbf{r}=(r_{\varphi})_{\varphi\in J_E}$ is a tuple of nonnegative integers. Given $Q_{1},\dots,Q_{n}$ in $E\otimes\C$ (with $n\geq r_{\varphi}$ for all $\varphi$), we denote by
\[ \prod_{j=1}^{\mathbf{r}}Q_{j}\in E\otimes\C \]
the element whose $\varphi$-th coordinate is $\prod_{j=1}^{r_{\varphi}}Q_{j,\varphi}$. In particular, this defines $x^{\mathbf{r}}$ for $x\in E\otimes\C$.

We choose Haar measures on local and adelic points of unitary groups as in the Introduction of \cite{harriscrelle}.

\section{Factorization of Deligne's periods}\label{sec:factorization}
In this section, we start by introducing the notation we will use for motives and realizations. We work with the category of realizations (as in \cite{jannsen}, \S2) instead of the category of motives for absolute Hodge cycles. All of the unexplained notions can be found in \cite{deligne} (see also \cite{schbook}, \cite{jannsen}, \cite{panch} and \cite{yoshida}). We then recall the basic facts about periods, and introduce polarized regular realizations and their quadratic periods, extending the results of \cite{harriscrelle} from $\Q$ to a totally real field. The main result of the section is Theorem \ref{thmfact}, expressing the $\sigma$-periods of motives of the form $M\otimes RM(\chi)$ in terms of quadratic periods.

\subsection{Motives and realizations}\label{ssec:motives}  Let $K$ be a number field. We fix an algebraic closure $\overline K$ of $K$ and we let $\Gamma_{K}=\Gal(\overline K/K)$. For a number field $E$, by a \emph{pure realization} over $K$ with coefficients in $E$, of weight $w\in\Z$, we will mean the following data.

\begin{itemize} 
\item For each $\sigma\in J_{K}$, an $E$-vector space $M_{\sigma}$ of dimension $d$ (independent of $\sigma$), together with $E$-linear isomorphisms $F_{\sigma}:M_{\sigma}\to M_{\overline\sigma}$ which satisfy $F_{\sigma}^{-1}=F_{\overline\sigma}$. Each $M_{\sigma}$ is endowed with a $\Q$-Hodge structure of weight $w$
\[ M_{\sigma}\otimes \C=\bigoplus_{pq}M_{\sigma}^{pq},\]
where each $M_{\sigma}^{pq}$ is an $E\otimes\C$-submodule, such that $F_{\sigma,c}=F_{\sigma}\otimes c$ sends $M_{\sigma}^{pq}$ to $M_{\overline\sigma}^{pq}$. We let $F_{\sigma,\C}=F_{\sigma}\otimes1_{\C}$. The filtration on $M_{\sigma}\otimes\C$ induced by the Hodge decomposition is called the \emph{Hodge filtration}.
\item A free $E\otimes K$-module $M_{\dR}$ of rank $d$, together with a decreasing filtration $F^{\bullet}(M_{\dR})$ by (not necessarily free) $E\otimes K$-submodules; this is called the \emph{de Rham filtration}.
\item For each finite place $\lambda$ of $E$, an $E_{\lambda}$-vector space $M_{\lambda}$ of dimension $d$, endowed with a continuous action of $\Gamma_{K}$.
\item For each $\sigma\in J_{K}$, an $E\otimes \C$-linear isomorphisms $I_{\infty,\sigma}:M_{\sigma}\otimes \C\to M_{\dR}\otimes_{K,\sigma}\C$ compatible with the Hodge and de Rham filtrations. We also require that $c_{\dR,\sigma}I_{\infty,\sigma}=I_{\infty,\overline{\sigma}}F_{\sigma,c}$, where $c_{\dR,\sigma}=1_{M_{\dR}}\otimes_{K,\sigma}c$.
\item For each $\sigma\in J_{K}$, for each extension $\tilde\sigma:\overline K\hookrightarrow\C$ of $\sigma$ to $\overline K$, and for each $\lambda$, $E_{\lambda}$-linear isomorphisms $I_{\tilde\sigma,\lambda}:M_{\sigma}\otimes_{E}E_{\lambda}\to M_{\lambda}$. Moreover, if $\sigma$ is real, then the automorphism $F_{\sigma}\otimes_{E}1_{E_{\lambda}}$ of $M_{\sigma}\otimes_{E}E_{\lambda}$ corresponds, via $I_{\tilde\sigma,\lambda}$ to the action of the element of $\Gamma_{K}$ given by the complex conjugation deduced from $\tilde\sigma$.
\end{itemize}

For an object $M$ as above, the integer $d$ is called the rank of $M$. A morphisms between a pure realization $M$ and a pure realization $N$ is defined to be a family of maps $((f_{\sigma})_{\sigma\in J_{K}},f_{\dR},(f_{\lambda})_{\lambda})$, where $f_{\sigma}:M_{\sigma}\to N_{\sigma}$ is an $E$-linear morphism of Hodge structures for each $\sigma$, $f_{\dR}:M_{\dR}\to N_{\dR}$ are $E\otimes K$-linear maps preserving the de Rham filtrations, and $f_{\lambda}:M_{\lambda}\to N_{\lambda}$ are $E_{\lambda}$-linear $\Gamma_{K}$-equivariant maps. We require all these maps to correspond under the comparison isomorphisms. The category $\mathcal{R}(K)_{E}$ is defined to be the category whose objects are direct sums of pure realizations. It is a semi-simple Tannakian category over $E$, whose objects are simply called \emph{realizations}. A \emph{Hodge-de Rham structure} consists of the same data, without the $\ell$-adic realizations (see \cite{harrismotives}). 

As in Deligne's article \cite{deligne}, a motive will mean a pure motive for absolute Hodge cycles (see \cite{dmos}, Section 6, for details). We denote by $\mathcal{M}(K)_{E}$ the category of motives over $K$ with coefficients in $E$. There is a fully faithful functor from the category of motives $\mathcal{M}(K)_{E}$ to $\mathcal{R}(K)_{E}$, which identifies $\mathcal{M}(K)_{E}$ with the full Tannakian subcategory of $\mathcal{R}(K)_{E}$ generated by the cohomologies of smooth projective varieties over $K$ (\cite{jannsen}). If $M$ is a motive, we also denote by $M$ the realization it defines in $\mathcal{R}(K)_{E}$.

Since $E\otimes \C\simeq\C^{J_{E}}$, we can write $M_{\sigma}\otimes \C\simeq\bigoplus_{\varphi\in J_{E}}M_{\sigma}(\varphi)$, where $M_{\sigma}(\varphi)=(M_{\sigma}\otimes\C)\otimes_{E\otimes\C,\varphi}\C\simeq M_{\sigma}\otimes_{E,\varphi}\C$. Since the Hodge decomposition $M_{\sigma}^{pq}$ is $E\otimes\C$-stable, this gives each factor $M_{\sigma}(\varphi)$ a decomposition
\[ M_{\sigma}(\varphi)=\bigoplus_{p,q}M_{\sigma}^{pq}(\varphi),\]
where $M_{\sigma}^{pq}(\varphi)=M_{\sigma}^{pq}\otimes_{E\otimes \C,\varphi}\C$. This has the property that complex conjugation sends $M_{\sigma}^{pq}(\varphi)$ to $M_{\sigma}^{qp}(\overline\varphi)$. We put 
\[ h^{pq}_{\sigma}(\varphi)=\dim_{\C}M_{\sigma}^{pq}(\varphi)=\dim_{\C}\gr^{p}(M_{\dR})\otimes_{E\otimes K,\varphi\otimes\sigma}\C.\]
Thus, $h_{\sigma}^{pq}(\varphi)=h_{\sigma}^{qp}(\overline\varphi)=h_{\overline\sigma}^{pq}(\overline\varphi)$. We say that $M\in\mathcal{R}(K)_{E}$ is \emph{regular} if $h_{\sigma}^{pq}(\varphi)\leq 1$ for every pair of integers $p,q$ and every $\sigma\in J_{K}$, $\varphi\in J_E$. If $M$ is regular of rank $d$ and pure of weight $w$, then given $\sigma$ and $\varphi$, there are $d$ numbers $p_{1}(\sigma,\varphi),\dots,p_{d}(\sigma,\varphi)$ with the property that $M_{\sigma}^{pq}(\varphi)\neq 0$ (and has complex dimension $1$) if and only if $p=p_{i}(\sigma,\varphi)$ for some $i$. For fixed $\sigma$ and $\varphi$, we order these numbers in such a way that $p_1(\sigma,\varphi)>\dots>p_d(\sigma,\varphi)$. We let $q_{i}(\sigma,\varphi)=w-p_{i}(\sigma,\varphi)$. Note that $q_{i}(\sigma,\varphi)=p_{d+1-i}(\sigma,\overline\varphi)=p_{d+1-i}(\overline{\sigma},\varphi)$.

For a realization $M$, we denote its dual by $M^{\vee}$, and for realizations $M,N$, we denote their tensor product (over $E$) by $M\otimes_{E}N$. We similarly adopt the standard linear algebra notation for exterior products. If $M$ is a realization with coefficients in $\Q$, and $N$ is a realization with coefficients in $E$, then we can naturally see $M\otimes(R_{E/\Q}N)$, denoted by $M\otimes N$, as a realization with coefficients in $E$, where $R_{E/\Q}N$ is restriction of coefficients from $E$ to $\Q$. 

Let $E$ and $E'$ be number fields, and write $E\otimes E'\cong\prod_{j=1}^{m}F_{j}$, where $F_{j}$ are number fields. Let $M\in\mathcal{R}(K)_{E}$ and $N\in\mathcal{R}(K)_{E'}$, of rank $d$ and $e$ respectively. We define realizations $(M\otimes N)^{(j)}\in\mathcal{R}(K)_{F_{j}}$ of rank $de$ by taking $(M\otimes N)^{(j)}=(M\times_{E}F_{j})\otimes_{F_{j}}(N\times_{E'}F_{j})$. By $M\otimes N$ we mean the collection $\{(M\otimes N)^{(j)}\}_{j=1}^{m}$, and we often say that it is a realization with coefficients in $E\otimes E'$. 

We denote by $\mathcal{M}^{0}(K)_{E}$ the category of Artin motives over $K$ with coefficients in $E$. This is equivalent to the category of continuous, finite-dimensional representations of $\Gamma_{K}$ on $E$-vector spaces. In this setting, we can describe the realizations of an Artin motive $M$, viewed as a representation $V$ of $\Gamma_{K}$, as follows. For every $\sigma\in J_{K}$, $M_{\sigma}\cong V$, the isomorphism depending on an extension of $\sigma$ to $\overline K$. The Hodge structures are purely of type $(0,0)$. Also, $M_{\lambda}\cong V\otimes_{E}E_{\lambda}$ and $M_{\dR}\cong(V\otimes\overline K)^{\Gamma_{K}}$. If $\epsilon:\A_{K}^{\times}/K^{\times}\to E^{\times}$ is a finite order character, we denote by $[\epsilon]$ the Artin motive in $\mathcal{M}^{0}(K)_{E}$ given by the character $\Gamma_{K}\to E^{\times}$ obtained from $\epsilon$ by class field theory.

Let $\chi:\A_{K}^{\times}/K^{\times}\to\C^{\times}$ be an algebraic Hecke character. Recall that this means that $\chi$ is continuous, and that for every embedding $\sigma\in J_{K}$, there exist an integer $n_{\sigma}$, such that if $v$ is the infinite place of $K$ induced by $\sigma$ and $x\in (K^{\times}_{v})^{+}$, then
\[ \chi(x)=\sigma(x)^{-n_{\sigma}}\quad\text{if }v\text{ is real};\]
\[ \chi(x)=\sigma(x)^{-n_{\sigma}}\overline\sigma(x)^{-n_{\overline\sigma}}\quad\text{if }v\text{ is complex}.\]
The integer $n_{\sigma}+n_{\overline\sigma}=w(\chi)$ is independent of $\sigma$, and is called the weight of $\chi$. The tuple $(n_{\sigma})_{\sigma\in J_{K}}$ is called the infinity type of $\chi$. Let $T^{K}=\Res_{K/\Q}\Gm{K}$. Consider the group of characters $X^{*}(T^{K})$, which is naturally identified with $\Z^{J_{K}}$. For $\eta\in X^{*}(T^{K})$, we denote by $X(\eta)$ the set of algebraic Hecke characters $\chi$ of $K$ of infinity type $\eta$. Let $\Q(\chi)\subset\C$ denote the field generated by the values of $\chi$ on $\A_{K,f}^{\times}$ (the finite id\`eles). Then $\Q(\chi)$ is either $\Q$ or a CM field. Let $E\subset\C$ be a number field containing $\Q(\chi)$. We denote by $M(\chi)\in\mathcal{M}(K)_{E}$ the motive of weight $w(\chi)$ attached to $\chi$, as in \cite{schbook}. For $n\in\Z$, the motive attached to the character $\chi(x)=\|x\|^{n}$, where $\|\cdot\|$ is the id\`elic norm, is the Tate motive $\Q(n)\in\mathcal{M}(K)_{\Q}$. For any $M\in\mathcal{R}(K)_{E}$, we let $M(n)=M\otimes\Q(n)$.

For later use, we record the Hodge decomposition of $M(\chi)$. Let $\Q(\eta)$ be the field of definition of the character $\eta$, that is, the fixed field in $\overline{\Q}$ of the stabilizer of $\eta\in X^{*}(T^{K})$ under the natural action of $\Gamma_{\Q}$. Then $\Q(\eta)\subset\Q(\chi)$. For any $\gamma\in\Gamma_{\Q}$ (or $\gamma\in\Aut(\C)$), we let $\chi^{\gamma}\in X(\eta^{\gamma})$ be the algebraic Hecke character of infinity type $\eta^{\gamma}$ whose values on $\A_{K,f}^{\times}$ are obtained by applying $\gamma$ to the values of $\chi$. The tuple of integers parametrizing $\eta^{\gamma}$ is given by $(n(\sigma,\gamma))_{\sigma\in J_{K}}$, where $n(\sigma,\gamma)=n_{\gamma^{-1}\sigma}$. Note that $\eta^{\gamma}$, and hence $n(\sigma,\gamma)$, only depend on the restriction of $\gamma$ to $\Q(\eta)$. In particular, we can define $n(\sigma,\varphi)$ for any $\varphi\in J_{\Q(\chi)}$. Then $M(\chi)_{\sigma}^{pq}(\varphi)\neq 0$ if and only if $p=n(\sigma,\varphi)$ and $q=n(\sigma,\overline\varphi)$. Note however that different embeddings $\varphi$ could give rise to the same numbers, so $M(\chi)_{\sigma}^{n(\sigma,\varphi),n(\sigma,\overline\varphi)}\neq M(\chi)_{\sigma}^{n(\sigma,\varphi),n(\sigma,\overline\varphi)}(\varphi)$ in general. Since $M(\chi)$ is of rank $1$, it is regular and $p_{1}(\sigma,\varphi)=n(\sigma,\varphi)$.

In order to properly define the $L$-function of a realization $M$, we need to impose that assumption that $(M_{\lambda})_{\lambda}$ is a strictly compatible system of $\lambda$-adic representations over $E$ (see \cite{deligne}, 1.1 for details). We will assume from now on without further mention that this holds. If $\varphi\in J_{E}$, then $L(\varphi,M,s)=\prod_{v}L_{v}(\varphi,M,s)$, where $L_{v}(\varphi,M,s)$ is the corresponding Euler factor at $v$ of the system $(M_{\lambda})_{\lambda}$. We also assume that $L(\varphi,M,s)$ converges absolutely for $\operatorname{Re}(s)$ large enough and has a meromorphic continuation to $\C$. We define $L^{*}(M,s)=(L(\varphi,M,s))_{\varphi\in J_{E}}\in\C^{J_{E}}\simeq E\otimes \C$. We can complete the $L$-function with the factors at infinity $L_{\sigma}(\varphi,M,s)$ for each $\sigma\in J_{K}$. This is done following Serre's recipe (\cite{serre}) as in 5.2 of \cite{deligne}. We define $L_{\infty}(\varphi,M,s)=\prod_{\sigma\in J_{K}}L_{\sigma}(\varphi,M,s)$ and $L_{\infty}^{*}(M,s)=(L_{\infty}(\varphi,M,s))_{\varphi\in J_{E}}\in\C^{J_{E}}\simeq E\otimes\C$. It is not hard to see that $L_{\infty}(\varphi,M,s)$ does not depend on $\varphi$ (see 2.9 of \cite{deligne}). Note that $L^{*}(M(n),s)=L^{*}(M,s+n)$, and similarly for the factors at infinity.

Let $M\in\mathcal{R}(K)_{E}$. We say that an integer $n$ is \emph{critical} for $M$ if for every $\varphi\in J_{E}$ (or equivalently, for one $\varphi$), neither $L_{\infty}(\varphi,M,s)$ nor $L_{\infty}(\varphi,M^{\vee},1-s)$ have a pole at $s=n$. We say that $M$ has critical values if there exists an integer critical for $M$, and we say that $M$ is critical if $0$ is critical for $M$. Thus, $M$ has critical values if and only if $M(n)$ is critical for some $n\in\Z$. 

\subsection{Periods}\label{ssec:periods} Let $M\in\mathcal{R}(K)_{E}$. From now on, unless otherwise stated, $K$ is a totally real number field. In this subsection we recall the definition of periods. We refer to \cite{deligne}, \cite{panch} and \cite{yoshida} for details. For each $\sigma\in J_{K}$, we define $\delta_{\sigma}(M)\in(E\otimes\C)^{\times}$ to be the determinant of $I_{\infty,\sigma}$, calculated with respect to an $E$-basis of $M_{\sigma}$ and an $E\otimes K$-basis of $M_{\dR}$. This is well defined modulo $(E\otimes K)^{\times}$ (contained in $(E\otimes\C)^{\times}$ via $\sigma$). We define $\delta(M)=\delta(\Res_{K/\Q}M)$, with respect to the unique element of $J_{\Q}$.

A realization $M\in\mathcal{R}(K)_{E}$ is said to be \emph{special} if it is pure of some weight $w$, and if for every $\sigma\in J_{K}$, $F_{\sigma,\C}$ acts on $M_{\sigma}^{w/2,w/2}$ by a scalar $\varepsilon=\pm1$, independent of $\sigma$. It is easily checked that a pure realization with critical values is special (see for instance (1.3.1) of \cite{deligne}). Suppose that $M$ is special. For each $\sigma\in J_{K}$, let $M_{\sigma}^{\pm}\subset M_{\sigma}$ denote the $\pm$-eigenspace for $F_{\sigma}$. It's easy to see that the dimension of $M_{\sigma}^{\pm}$ is independent of $\sigma$, and we denote this common dimension by $d^{\pm}=\dim_{E}M^{\pm}_{\sigma}$. We can also choose appropriate terms $F^{\pm}(M_{\dR})\subset M_{\dR}$ such that $M^{\pm}_{\dR}=M_{\dR}/F^{\mp}(M_{\dR})$ is a free $E\otimes K$-module of rank $d^{\pm}$ and the map
\[ I_{\infty,\sigma}^{\pm}:M_{\sigma}^{\pm}\otimes \C\to M_{\dR}^{\pm}\otimes_{K,\sigma}\C \]
given by the composition of the projection $M_{\dR}\otimes_{K,\sigma}\C\to M_{\dR}^{\pm}\otimes_{K,\sigma}\C$ with $I_{\infty,\sigma}$ and with the inclusion $M_{\sigma}^{\pm}\otimes\C\hookrightarrow M_{\sigma}\otimes\C$ is an $E\otimes \C$-linear isomorphism. We define $c^{\pm}_{\sigma}(M)=\det(I_{\infty,\sigma}^{\pm})\in(E\otimes \C)^{\times}$, where the determinants are computed in terms of an $E$-basis of $M_{\sigma}^{\pm}$ and an $E\otimes K$-basis of $M_{\dR}^{\pm}$. Note that these quantities are defined modulo $(E\otimes K)^{\times}\subset(E\otimes \C)^{\times}$ (via $\sigma$). The relation between these periods and the usual Deligne periods $c^{\pm}(\Res_{K/\Q}M)$, which we denote by $c^{\pm}(M)$, is given by the following factorization formula, proved in \cite{yoshida} or \cite{panch} (we also include a similar formula for the $\delta$'s):
\begin{align}\label{form:yoshida1} c^{\pm}(M)\sim_{E\otimes K'} D_{K}^{d^{\pm}/2}\prod_{\sigma}c^{\pm}_{\sigma}(M).\\
\label{form:yoshida2} \delta(M)\sim_{E\otimes K'} D_{K}^{d/2}\prod_{\sigma}\delta_{\sigma}(M).\end{align}
Here $D_{K}$ is the discriminant of $K$, and $K'\subset\overline\Q$ is the Galois closure of $K$ in $\overline\Q$.

The following is the main conjecture of \cite{deligne}.

\begin{conj}[Deligne] If $M$ is critical and $L(\varphi,M,0)\neq 0$ for some $\varphi$, then 
	\[ L^{*}(M,0)\sim_{E}c^{+}(M).\]\end{conj}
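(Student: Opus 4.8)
The plan is necessarily a reduction strategy rather than a complete argument, since the conjecture as stated is open in general; what follows is the route the present paper is a step towards. First I would exploit the formal behaviour of both sides under the operations of the Tannakian category $\mathcal{R}(K)_{E}$. The relation $L^{*}(M,0)\sim_{E}c^{+}(M)$ is compatible with direct sums, with Tate twists (using $L^{*}(M(n),0)=L^{*}(M,n)$ together with Deligne's period computation for the Tate motive $\Q(n)$), with restriction of scalars $\Res_{K/\Q}$ via the factorization formulas (\ref{form:yoshida1})--(\ref{form:yoshida2}), and --- more delicately --- with tensoring by Artin motives $[\epsilon]$ and by motives $M(\chi)$ of algebraic Hecke characters. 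Combined with semisimplicity of $\mathcal{R}(K)_{E}$ and the Fontaine--Mazur/Langlands expectation that every irreducible motive over $K$ is cut out of the cohomology of a Shimura variety, this reduces the conjecture, after a twist, to the case where $M$ is the motive attached to a cuspidal cohomological conjugate self-dual automorphic representation $\Pi$ of $\GL_{n}(\A_{L})$ for a CM field $L/K$, which is precisely the class studied here.

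For such $M$, I would run the argument of Sections~\ref{sec:automorphicmotives}--\ref{sec:relations}: descend $\Pi_{L}$ to a representation $\pi$ of a similitude unitary group $G$ (twisting by a Hecke character $\psi$ so that $\pi$ contributes to antiholomorphic cohomology), and apply the doubling method to express the critical value $L^{S}(m-\tfrac{n-1}{2},\pi\otimes\psi,\St)$ as a product of algebraic zeta integrals, CM periods attached to $\psi$, and the automorphic quadratic period $Q^{\hol}(\pi)$ --- this is Theorem~1. On the motivic side, Theorem~2 writes $c^{+}(M\otimes RM(\chi))(m+w)$ as a product of CM periods attached to $\chi$, the discriminant factor $\delta(M)$, and the quadratic periods $Q_{j,\sigma}$ of $M$. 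Matching the two sides and cancelling the CM periods reduces Deligne's conjecture for $M\otimes RM(\chi)$ to the period relation~(\ref{relation}). I would then prove~(\ref{relation}) by realizing $\prod_{\sigma}\prod_{j}Q_{j,\sigma}$ and $Q^{\hol}(\pi)$ as Petersson norms of arithmetic coherent-cohomology vectors in nearly equivalent automorphic representations of unitary groups of signature $(n,0)$ away from one place and $(n-1,1)$ at that place, and invoking Tate's conjecture to produce algebraic correspondences between the corresponding Hodge--de Rham structures that identify these periods.

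Finally I would handle the base cases left outside the automorphic reduction: the Tate motives $\Q(n)$ (direct computation of Euler and archimedean factors against Deligne's periods), the Artin motives $[\epsilon]$ (where $c^{+}$ is essentially a Gauss sum and the statement follows from the functional equation and elementary facts about Gauss sums), and the motives $M(\chi)$ of algebraic Hecke characters, for which the conjecture is Blasius's theorem, following Deligne's treatment of Hecke $L$-series and itself relying on Tate's conjecture for products of CM abelian varieties.

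The hard part --- in fact the whole of the difficulty --- is threefold, and I do not expect an unconditional proof. First, Tate's conjecture enters at two independent points: the CM base case, and the period relation~(\ref{relation}) between unitary groups of different signatures; without it Section~\ref{sec:relations} is only heuristic. Second, the automorphic method yields the identity only up to factors in the Galois closure $L'$ of $L$ in $\C$, and only for a fixed embedding $E\hookrightarrow\C$, whereas the statement asserts $\sim_{E}$ with $E$ acting through all embeddings at once; closing this gap would require genuinely new input, namely a $\gamma$-equivariance of the zeta integrals and periods under $\Aut(\C/\Q)$, which is subtle precisely because the Hodge components $M_{\sigma}^{pq}$ are not free over $E\otimes\C$ once $K\neq\Q$. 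Third, the reduction of the first paragraph presupposes full Langlands functoriality and the modularity of motives. For these reasons the realistic target is the conditional statement: \emph{assuming Tate's conjecture, Deligne's conjecture holds up to $L'$ and for a fixed embedding of $E$ for all motives of the form $M\otimes RM(\chi)$ with $M$ attached to a conjugate self-dual cohomological representation of $\GL_{n}$ over a CM field} --- which is exactly what the combination of Theorems~1 and~2 delivers.
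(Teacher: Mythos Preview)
The statement you are addressing is a \emph{conjecture}, not a theorem: the paper states Deligne's conjecture but makes no claim to prove it, and indeed immediately follows it with a weaker version (Conjecture~\ref{deligneweak2}) precisely because the full statement is out of reach. There is therefore no proof in the paper to compare your proposal against.

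That said, you clearly recognize this yourself --- your opening sentence declares the plan to be a reduction strategy rather than an argument, and your final paragraph is explicit that the realistic target is conditional and weakened. Read as a summary of the paper's program and its obstructions, your proposal is accurate and well-organized: you correctly identify that the paper's contribution is the pair of Theorems~1 and~2, that matching them reduces the weak form of Deligne's conjecture for $M\otimes RM(\chi)$ to the period relation~(\ref{relation}), that Tate's conjecture is the missing ingredient for~(\ref{relation}), and that the automorphic method loses both the $L'$-factor and the full $E$-equivariance. The only overreach is the first paragraph's sketch of a general reduction of Deligne's conjecture to the automorphic case via Fontaine--Mazur/Langlands and compatibility under Tannakian operations: this is a reasonable heuristic but is not something the paper attempts or claims, and the compatibility of $c^{+}$ with tensor products is far more delicate than compatibility with direct sums or Tate twists (it is essentially the content of Theorem~\ref{thmfact} in the special case treated here). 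If you intend this as a proof attempt, it is not one; if you intend it as an account of where the paper sits in the landscape of the conjecture, it is a good one.
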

The conjecture is aimed at motives rather than general realizations. We formulate a weaker version that we will need later. The hypotheses are as in Deligne's conjecture. 

\begin{conj}\label{deligneweak2} Let $F\subset\C$ be a number field. By the weak Deligne conjecture up to $F$-factors, we mean the statement
\[ L(M,s)\sim_{E;F}c^{+}(M). \]
\end{conj}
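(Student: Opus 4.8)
The statement labelled \ref{deligneweak2} is a \emph{definition} of terminology rather than an assertion, so in the strict sense there is nothing to prove; what the rest of the paper actually does is \emph{verify} the weak Deligne conjecture up to $L'$-factors for the specific motives $M\otimes RM(\chi)$, conditionally on Tate's conjecture, and that is the argument I would sketch here. The plan is to feed three inputs into a bookkeeping computation.

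First, Theorem 1 expresses the critical value $L^{S}(m-\tfrac{n-1}{2},\pi\otimes\psi,\St)$ — which equals $L(M\otimes RM(\chi),m+w)$ via the identity $L(s-\tfrac{n-1}{2},\pi\otimes\psi,\St)=L(M\otimes RM(\chi),s+w)$ — as an explicit power of $2\pi i$ times a power of $D_{K}$ times the CM-period expression $P(\psi)$ times the automorphic quadratic period $Q^{\hol}(\pi)$, all up to $E(\pi)\otimes E(\psi)\otimes L'$-factors. Second, Theorem 2 (more precisely its refinement, Theorem \ref{thmfact}) expresses the Deligne period $c^{+}(M\otimes RM(\chi)(m+w))$ as a power of $2\pi i$ times $\delta(M)$ times the CM-period expression $Q(\chi)$ times the motivic quadratic periods $\prod_{\sigma}\prod_{j=1}^{s_{\sigma}}Q_{j,\sigma}$. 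Third, I would invoke the period relation (\ref{relation}), which identifies $\prod_{\sigma}\prod_{j=1}^{s_{\sigma}}Q_{j,\sigma}$ with $(2\pi i)^{-[K:\Q]n(n-1)/2}\delta(M)^{-1}D_{K}^{n/2}Q^{\hol}(\pi)$ up to the coefficient field, and which (as the introduction explains) is itself predicted by Tate's conjecture through the comparison of the Hodge--de Rham structures of nearly equivalent automorphic representations living on unitary groups of signature $(n,0)$ at all but one place.

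Granting these, the argument is substitution and matching: plug (\ref{relation}) into the expression for $c^{+}$ coming from Theorem 2, then check that the powers of $2\pi i$, the powers of $D_{K}$, and the CM-period factors $P(\psi)$ versus $Q(\chi)$ agree on the nose. The last point is the only subtle part of the bookkeeping: it requires translating between the infinity type $(m_{\tau})$ of $\psi$ and that of $\chi=\psi^{2}(\psi_{0}\circ N_{L/K})^{-1}$, keeping careful track of the quadratic twist $\psi_{0}\circ N_{L/K}$, and applying the known relations among CM periods of $\psi$, of $\psi_{0}$, and of $\chi$. Once these match, one concludes $L(M\otimes RM(\chi),m+w)\sim_{E;L'}c^{+}(M\otimes RM(\chi)(m+w))$ after fixing the embedding $E\hookrightarrow\C$, which is exactly the weak Deligne conjecture up to $L'$-factors for these twists.

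The main obstacle — and the reason one gets only the \emph{weak} version — is twofold. On the automorphic side, the rationality of the Piatetski-Shapiro--Rallis zeta integrals and the identification of the doubling inner product with $Q^{\hol}(\pi)$ are only controlled up to the Galois closure $L'$ of $L$ in $\C$, so one cannot improve $\sim_{E;L'}$ to $\sim_{E}$. On the motivic side, because the Hodge components $M_{\sigma}^{pq}$ are in general not free over $E\otimes\C$ once $K\neq\Q$, the whole scheme only produces the version of the conjecture relative to a fixed embedding of the coefficient field rather than the full statement. Beyond that, the delicate part is precisely the CM-period matching between $P(\psi)$ and $Q(\chi)$ together with the powers of $2\pi i$ and $D_{K}$ buried in the various normalizations.
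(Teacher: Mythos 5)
You correctly observe that Conjecture~\ref{deligneweak2} is a piece of terminology rather than an assertion with a proof, and your sketch of how the paper then conditionally verifies the weak conjecture up to $L'$-factors for $M\otimes RM(\chi)$ — by comparing Theorems~1 and~2 and invoking Tate's conjecture through the period relation~(\ref{relation}) and the factorization~(\ref{factopetersson}) — matches what the paper does in Section~\ref{sec:relations}. Nothing to add.
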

Regarding the notation in this conjecture, recall that this means that the $\varphi$-components differ by a multiple in $\varphi(E)F$.

\begin{rem}\label{rem:twist} If $M$ is of rank $d$ and $t\in\Z$, then $\delta_{\sigma}(M(t))\sim(2\pi i)^{td}\delta_{\sigma}(M)$. If $M$ is special, then so is $M(t)$. If $t$ is even, then $c^{\pm}_{\sigma}(M(t))\sim(2\pi i)^{td^{\pm}}\cdot c_{\sigma}^{\pm}(M)$, while if $t$ is odd, then $c^{\pm}_{\sigma}(M(t))\sim(2\pi i)^{td^{\mp}}\cdot c_{\sigma}^{\mp}(M)$. In all these formulas, $\sim$ means $\sim_{E\otimes K,\sigma}$.

Let $\epsilon:\Gamma_{K}\to E^{\times}$ be a continuous character, and let $[\epsilon]\in\mathcal{M}^{0}(K)_{E}$ be the corresponding Artin motive. For each $\sigma\in J_{K}$, choose $\tilde\sigma:\overline K\hookrightarrow\C$ an extension of $\sigma$ to $\overline K$. Then $[\epsilon]_{\sigma}\cong E$ and $[\epsilon]_{\dR}\cong(E\otimes\overline K)^{\Gamma_{K}}$. The map
\[ I_{\infty,\sigma}^{-1}:(E\otimes\overline K)^{\Gamma_{K}}\otimes_{K,\sigma}\C\cong E\otimes\C \]
is given by $((e\otimes\lambda)\otimes z)\mapsto(e\otimes\tilde\sigma(\lambda)z)$. If $\{\zeta\}$ is an $E\otimes K$-basis of $[\epsilon]_{\dR}$, and we choose the natural $E$-basis $\{1\}$ of $E$, then $\det(I_{\infty,\sigma})=\tilde\sigma(\zeta)^{-1}$, so that $\delta_{\sigma}([\epsilon])\sim_{E\otimes K,\sigma}\tilde\sigma(\zeta)^{-1}$ (see also p. 104, \cite{schbook}). Now, note that the Frobenius automorphism $F_{\sigma}$ acts on the one-dimensional $E$-vector space $[\epsilon]_{\sigma}$ by the sign $\epsilon(c_{\sigma})$, where $c_{\sigma}\in\Gamma_{K}$ is a complex conjugation attached to the place $\sigma$. Suppose that this scalar $\varepsilon$ does not depend on $\sigma$. This means that $[\epsilon]$ is special. If $\varepsilon=-1$, then $c_{\sigma}^{+}([\epsilon])\sim1$ and $c_{\sigma}^{-}([\epsilon])\sim\delta_{\sigma}([\epsilon])$. If $\varepsilon=1$, then $c_{\sigma}^{+}([\epsilon])\sim\delta_{\sigma}([\epsilon])$ and $c_{\sigma}^{-}([\epsilon])\sim1$. Finally, note that if $M$ is a special realization, then so is $M\otimes_{E}[\epsilon]$ and $c^{\pm}_{\sigma}(M\otimes_{E}[\epsilon])\sim c_{\sigma}^{\pm\varepsilon}(M)\delta_{\sigma}([\epsilon])^{d^{\pm\varepsilon}}$.

\end{rem}

\begin{rem}\label{rem:dual} It's easy to see that $F^{\mp}(M_{\dR}^\vee)\subset M_{\dR}^{\vee}$ is the annihilator of $F^{\pm}(M_{\dR})$. It follows that there are natural isomorphisms $(F^{\pm}(M_{\dR}))^{\vee}\simeq(M^{\vee})^{\pm}_{\dR}$. In particular, $F^{\pm}(M_{\dR})$ is $E\otimes K$-free of rank $d^\pm$.
\end{rem}

\begin{rem}\label{rem:c+reg} Let $M\in\mathcal{R}(K)_{E}$ be a realization of rank $d$ and weight $w$, assumed to be special. Since $E\otimes K$ is a product of fields, we can extend any $E\otimes K$-basis of the free rank $d^{\pm}$ module $F^{\pm}(M_{\dR})$ to an $E\otimes K$-basis of the free rank $d$ module $M_{\dR}$. Replacing $M_{\dR}$ by the appropriate $F^{+}(M_{\dR})$ or $F^{-}(M_{\dR})$ as an intermediate step, we can choose the bases consistently, so it follows that we can find a basis $\{\omega_{1},\dots,\omega_{d}\}$ of $M_{\dR}$ such that $\{\omega_{1},\dots,\omega_{d^{\pm}}\}$ is a basis of $F^{\pm}(M_{\dR})$. For simplicity of notation, $\omega_{i}$ will also denote the element $\omega_{i}\otimes_{K,\sigma}1\in M_{\dR}\otimes_{K,\sigma}\C$, since $\sigma$ will be understood throughout.

Let $\{e_{1},\dots,e_{d^{+}}\}$ (resp. $\{f_{1},\dots,f_{d^{-}}\}$) be an $E$-basis of $M_{\sigma}^{+}$ (resp. of $M_{\sigma}^{-}$), and write
\[ I_{\infty,\sigma}^{-1}(\omega_{j})=\sum_{i=1}^{d^{+}}a_{ij,\sigma}^{+}e_{i}+\sum_{i=1}^{d^{-}}a_{ij,\sigma}^{-}f_{i},\quad j=1,\dots,d \]
in $M_{\sigma}\otimes\C$, with $a_{ij,\sigma}^{\pm}\in E\otimes\C$. Then
\begin{equation}\label{form:c+dual} c_{\sigma}^{\pm}(M^{\vee})\sim_{E\otimes K,\sigma}\det(P_{\sigma}^{\pm}),\end{equation}
where $P_{\sigma}^{\pm}=\left((a_{ij,\sigma}^{\pm})_{i,j=1,\dots,d^{\pm}}\right)$. Indeed, this follows from Remark~\ref{rem:dual}: letting $I_{\infty,\sigma}^{\vee}$ be the comparison maps for $M^{\vee}$, the equations above mean precisely that
\[ I_{\infty,\sigma}^{\vee}(e_{j}^\vee)=\sum_{i=1}^{d^+}a_{ji,\sigma}^+\omega_{i}^\vee,\]
where $\{e_{1}^\vee,\dots,e_{d^+}^\vee\}$ is the dual basis of $M_{\sigma}^{\vee,+}$ and $\{\omega_{1}^\vee,\dots,\omega_{d^+}^\vee\}$ is the dual basis of $(F^+(M_{\dR}))^\vee$, thus proving~(\ref{form:c+dual}) for $c_{\sigma}^+$. The case of $c_{\sigma}^-$ is completely similar.

Furthermore, suppose that $\{\Omega_{1},\dots,\Omega_{d}\}$ is an $E\otimes\C$-basis of $M_{\sigma}\otimes\C$ with the property that the change of basis matrix with respect to $\{\omega_{1},\dots,\omega_{d}\}$ is unipotent. More precisely, suppose that
\[ I_{\infty,\sigma}(\Omega_{i})=\omega_{i}+\sum_{j=1}^{i-1}r_{ji,\sigma}\omega_{j} \]
with $r_{ji,\sigma}\in E\otimes\C$. We let $r_{ii,\sigma}=1$ and $r_{ji,\sigma}=0$ if $j>i$. Write
\begin{equation}\label{form:comblinealO} \Omega_{j}=\sum_{i=1}^{d+}\tilde a_{ij,\sigma}^{+}e_{i}+\sum_{i=1}^{d-}\tilde a_{ij,\sigma}^{-}f_{i},\quad j=1,\dots,d\end{equation}
and let $\tilde P_{\sigma}^{\pm}=\left((\tilde a_{ij,\sigma}^{\pm})_{i,j=1,\dots,d^{\pm}}\right)$. If $R_{\sigma}^{\pm}$ denotes the matrix constructed from the first $d^{\pm}$ rows and columns of $R_{\sigma}=(r_{ij,\sigma})_{i,j=1,\dots,d}$, then $R_{\sigma}^{\pm}$ is upper triangular with diagonal entries $1$, and the formula $\tilde P_{\sigma}^{\pm}=P_{\sigma}^{\pm}R_{\sigma}^{\pm}$ implies that
\begin{equation}\label{form:widec+} c^{\pm}(M^{\vee})\sim_{E\otimes K,\sigma}\det\tilde P_{\sigma}^{\pm}.\end{equation}

Similarly, if we let 
\[ P_{\sigma}=\left(\begin{array}{c}\left(a_{ij,\sigma}^{+}\right)_{\substack{1\leq i\leq d^{+}\\1\leq j\leq d}} \\\left(a_{ij,\sigma}^{-}\right)_{\substack{1\leq i\leq d^{-}\\1\leq j\leq d}} \end{array}\right)\in\GL_{d}(E\otimes \C) \]
and
\[ \tilde P_{\sigma}=\left(\begin{array}{c}\left(\tilde a_{ij,\sigma}^{+}\right)_{\substack{1\leq i\leq d^{+}\\1\leq j\leq d}} \\\left(\tilde a_{ij,\sigma}^{-}\right)_{\substack{1\leq i\leq d^{-}\\1\leq j\leq d}} \end{array}\right)\in\GL_{d}(E\otimes \C), \]
then $\delta_{\sigma}(M)\sim_{E\otimes K,\sigma}\det(P_{\sigma})^{-1}$, and the formula $\tilde P_{\sigma}=P_{\sigma}R_{\sigma}$ implies that
\[ \delta_{\sigma}(M)\sim_{E\otimes K,\sigma}\det(\tilde P_{\sigma})^{-1}.\]
\end{rem}

\subsection{Polarizations and quadratic periods}\label{ssec:quadraticperiods} Let $M\in\mathcal{R}(K)_{E}$ be pure of weight $w$, and let $\epsilon:\Gamma_{K}\to E^{\times}$ be a continuous character. Let $A=[\epsilon]$ denote the corresponding rank $1$ Artin motive. An $A$-\emph{polarization} of $M$ is a morphism of realizations
\[ \langle,\rangle:M\otimes_{E}M\to A(-w) \]
which is non-degenerate in the sense that the induced map $M\to M^\vee(-w)\otimes_{E}A$ is an isomorphism of realizations. If $\epsilon=1$, so $A(-w)=E(-w)$, we speak of polarizations. We will mostly be interested in polarizations, but the added versatility of an $A$-polarization will be useful in one particular, yet very important case, and it gives the advantage that many arguments do not need to be repeated. We will not make use of the $\lambda$-adic polarizations, so in fact we only require the existence of the Betti and de Rham polarizations, compatible as they must be (for this we could simply work in the category of Hodge-de Rham structures instead of realizations). The fact that $\langle,\rangle$ is a morphism of realizations immediately implies the \emph{Hodge-Riemann bilinear relations}:

\begin{itemize}
\item $\langle F^{p}(M_{\dR}),F^{q}(M_{\dR})\rangle=0$ if $p+q>w$;
\item $\langle,\rangle$ defines non-degenerate pairings $M_{\sigma}^{pq}\otimes_{E\otimes\C}M_{\sigma}^{qp}\to A_{\sigma}\otimes\C$ whenever $p+q=w$. Moreover, these pairings are rational, that is, they descend to non-degenerate pairings $\gr^p(M_{\dR})\otimes_{E\otimes K}\gr^{q}(M_{\dR})\to A_{\dR}$. 
\end{itemize}

Let $M\in\mathcal{R}(K)_{E}$ be a regular, special realization endowed with a $A$-polarization. Fix once and for all an $E\otimes K$-basis $\{\zeta\}$ of $A_{\dR}=(E\otimes\overline K)^{\Gamma_{K}}$. For each $\sigma\in J_K$ and $\varphi\in J_{E}$, let $p_{1}(\sigma,\varphi)>\dots>p_{d}(\sigma,\varphi)$ be the Hodge numbers defined in Subsection~\ref{ssec:motives}. As in Remark~\ref{rem:c+reg}, we can choose an $E\otimes K$-basis $\{\omega_{1,\sigma},\dots,\omega_{d,\sigma}\}$ of $M_{\dR}$ such that $\{\omega_{1,\sigma},\dots,\omega_{d^\pm,\sigma}\}$ is an $E\otimes K$-basis of $F^{\pm}(M_{\dR})$. Moreover, letting $\omega_{i,\sigma}(\varphi)=\omega_{i,\sigma}\otimes_{E\otimes K,\varphi\otimes\sigma}1\in M_{\dR}\otimes_{E\otimes K,\varphi\otimes\sigma}\C$, we claim that we can choose the basis in such a way that $\{\omega_{1,\sigma}(\varphi),\dots,\omega_{i,\sigma}(\varphi)\}$ is a $\C$-basis of $F^{p_{i}(\sigma,\varphi)}(M_{\dR})\otimes_{E\otimes K,\varphi\otimes\sigma}\C$ for each $i=1,\dots,d$ and each $\varphi\in J_{E}$. To see this, we need some extra notation that will also be useful later. Write $E\otimes K\cong\prod_{\mu=1}^{m}K_{\mu}$, where the $K_{\mu}/K$ are finite extensions. Fixing the embedding $\sigma$, this decomposition induces a bijection between $J_{E}$ and $\coprod_{\mu=1}^{m}\Hom_{\sigma}(K_{\mu},\C)$, where we define the latter as the sets of embeddings of $K_{\mu}$ extending $\sigma$. It's easy to see that two embeddings $\varphi$ and $\varphi'$ give rise to the same index $\mu$ if and only if $\varphi'=h\circ\varphi$ for some $h\in\Aut(\C)$ such that $h\circ\sigma=\sigma$. Now, notice that $M_{\sigma}^{pq}(\varphi)\neq 0$ if and only if $\gr^{p}(M_{\dR})\otimes_{E\otimes K,\varphi\otimes\sigma}\C\neq 0$. The isomorphism 
\[\gr^{p}(M_{\dR})\otimes_{E\otimes K,\varphi'\otimes\sigma}\C\cong\gr^{p}(M_{\dR})\otimes_{E\otimes K,\varphi\otimes\sigma}\C\otimes_{\C,h}\C \]
implies that $p_{i}(\sigma,\varphi')=p_{i}(\sigma,\varphi)$ for all $i=1,\dots,d$. This shows that we can unambiguously define $p_{i}(\sigma,\mu)$ by declaring them to be $p_{i}(\sigma,\varphi)$, where $\varphi\in J_{E}$ is an embedding corresponding to the index $\mu$. Finally, there is a natural isomorphism $M_{\dR}\simeq\bigoplus_{\mu}M_{\dR}\otimes_{E\otimes K}K_{\mu}$, so we can construct a basis of $M_{\dR}$ from a family of bases of the spaces $M_{\dR}\otimes_{E\otimes K}K_{\mu}$. Once we fix $\mu$, we can choose the corresponding basis by taking the first $i$ elements in $F^{p_{i}(\sigma,\mu)}$, which proves our claim. 

For each $i$, $\sigma$ and $\varphi$, let 
\[ I_{\infty,\sigma,i}(\varphi):M_{\sigma}^{p_{i}(\sigma,\varphi),q_{i}(\sigma,\varphi)}(\varphi)\to\gr^{p_{i}(\sigma,\varphi)}(M_{\dR})\otimes_{E\otimes K,\varphi\otimes\sigma}\C\]
be the induced isomorphism obtained from $I_{\infty,\sigma}$. The image $\overline{\omega_{i,\sigma}(\varphi)}=\overline{\omega_{i,\sigma}}(\varphi)$ of $\omega_{i,\sigma}(\varphi)$ in the right hand side is a $\C$-basis, and we let 
\[ \Omega_{i,\sigma}(\varphi)=I_{\infty,\sigma,i}(\varphi)^{-1}(\overline{\omega_{i,\sigma}(\varphi)}) \]
be the corresponding $\C$-basis of $M_{\sigma}^{p_{i}(\sigma,\varphi),q_{i}(\sigma,\varphi)}(\varphi)$. Since $I_{\infty,\sigma}(\varphi)(\Omega_{i})-\omega_{i,\sigma}(\varphi)$ belongs to $F^{p_{i}(\sigma,\varphi)+1}$, we can write
\[ I_{\infty,\sigma}(\varphi)(\Omega_{i,\sigma}(\varphi))=\omega_{i,\sigma}(\varphi)+\sum_{j=1}^{i-1}r_{ji,\sigma}(\varphi)\omega_{j,\sigma}(\varphi) \]
for some $r_{ji,\sigma}(\varphi)\in\C$. We let $r_{ji,\sigma}(\varphi)=0$ if $j>i$ and $r_{ii,\sigma}(\varphi)=1$. Let $R_{\sigma}(\varphi)=(r_{ij,\sigma}(\varphi))_{i,j=1,\dots,d}$, so that $R_{\sigma}(\varphi)$ is the change of basis matrix from $\{I_{\infty,\sigma}(\varphi)(\Omega_{i,\sigma}(\varphi))\}_{i=1,\dots,d}$ to $\{\omega_{i,\sigma}(\varphi)\}_{i=1,\dots,d}$. Note that $R_{\sigma}(\varphi)$ is an upper triangular matrix with diagonal entries $1$. We let $r_{ij,\sigma}\in E\otimes\C$ be the elements whose $\varphi$-components are $r_{ij,\sigma}(\varphi)$ for every $\varphi$, and $R_{\sigma}$ be the corresponding upper triangular matrix in $\GL_{d}(E\otimes\C)$ with diagonal entries $1$. Under the isomorphism $M_{\sigma}\otimes\C\simeq\bigoplus_{\varphi}M_{\sigma}(\varphi)$, let $\Omega_{i,\sigma}\in M_{\sigma}\otimes\C$ be the element whose $\varphi$-component is $\Omega_{i,\sigma}(\varphi)$, so that we can write
\[ I_{\infty,\sigma}(\Omega_{i,\sigma})=\omega_{i,\sigma}+\sum_{j=1}^{i-1}r_{ji,\sigma}\omega_{j,\sigma} \]
and $R_{\sigma}$ is the change of basis matrix from $\{I_{\infty,\sigma}(\Omega_{i,\sigma})\}_{i=1,\dots,d}$ to $\{\omega_{i,\sigma}\otimes_{K,\sigma}1\}_{i=1,\dots,d}$ in $M_{\dR}\otimes_{K,\sigma}\C$, as in Remark~\ref{rem:c+reg}.

For each $i=1,\dots,d$, define the elements $\mu_{i,\sigma}\in E\otimes K$ by
\[ \langle\omega_{i,\sigma},\omega_{d+1-i,\sigma}\rangle_{\dR}=\mu_{i,\sigma}\zeta.\]
If $\varphi\in J_{E}$, then
\[ \langle\omega_{i,\sigma}(\varphi),\omega_{d+1-i,\sigma}(\varphi)\rangle_{\dR,\sigma,\varphi}=\mu_{i,\sigma}(\varphi)\zeta,\]
where $\mu_{i,\sigma}(\varphi)=(\varphi\otimes\sigma)(\mu_{i,\sigma})\in\C$. From the second Hodge-Riemann bilinear relation, it follows that $(\varphi\otimes\sigma)(\mu_{i,\sigma})\in\C^{\times}$. The following lemma implies that $\mu_{i,\sigma}\in(E\otimes K)^{\times}$. 

\begin{lemma} Let $E$ and $K$ be number fields, and fix $\sigma\in J_{K}$. Suppose that $x\in E\otimes K$ is an element such that $(\varphi\otimes\sigma)(x)\in\C^{\times}$ for every $\varphi\in J_{E}$. Then $x\in(E\otimes K)^{\times}$.
\begin{proof} Write $E\otimes K\cong\prod_{\mu}K_{\mu}$ as before, and let $x_{\mu}\in K_{\mu}$ be the $\mu$-component of $x$. We need to show that $x_{\mu}\neq 0$ for any $\mu$. For this, it's enough to see that $\sigma_{\mu}(x_{\mu})\in\C^{\times}$ for some embedding $\sigma_{\mu}$ of $K_{\mu}$. Since we can choose an arbitrary embedding, we can take one extending $\sigma$, so that the pair $(\mu,\sigma_{\mu})$ determines an embedding $\varphi\in J_{E}$ with the property that $(\varphi\otimes\sigma)(x)=\sigma_{\mu}(x_{\mu})$. The lemma follows.
\end{proof}
\end{lemma}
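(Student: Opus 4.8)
The plan is to reduce the statement to the elementary fact that an element of a finite product of fields is a unit precisely when each of its components is nonzero, combined with the description of the $\Q$-algebra homomorphisms $E\otimes K\to\C$.

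First I would write $E\otimes K\cong\prod_{\mu=1}^{m}K_{\mu}$ as a product of number fields, exactly as in the notation set up earlier, with each $K_{\mu}$ viewed as a finite extension of $K$ through the composite $K\hookrightarrow E\otimes K\to K_{\mu}$. Writing $x=(x_{\mu})_{\mu}$ with $x_{\mu}\in K_{\mu}$, the element $x$ lies in $(E\otimes K)^{\times}$ if and only if $x_{\mu}\neq 0$ for every $\mu$. Thus it suffices to prove that each $x_{\mu}$ is nonzero, and for this it is enough to exhibit a single embedding $\sigma_{\mu}:K_{\mu}\hookrightarrow\C$ with $\sigma_{\mu}(x_{\mu})\neq 0$.

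Next I would note that the hypothesis furnishes exactly such embeddings. Each $\Q$-algebra homomorphism $E\otimes K\to\C$ factors through a unique factor $K_{\mu}$, and hence restricts to an embedding of $K_{\mu}$ into $\C$. Conversely, since $\C$ is algebraically closed, the fixed embedding $\sigma:K\hookrightarrow\C$ extends to some $\sigma_{\mu}:K_{\mu}\hookrightarrow\C$; the composite of the projection $E\otimes K\to K_{\mu}$ with $\sigma_{\mu}$ is then a $\Q$-algebra homomorphism $E\otimes K\to\C$ whose restriction to $K$ is $\sigma$, so it equals $\varphi\otimes\sigma$ for the embedding $\varphi\in J_{E}$ obtained from its restriction to $E$. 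Consequently $(\varphi\otimes\sigma)(x)=\sigma_{\mu}(x_{\mu})$, which is nonzero by hypothesis, so $x_{\mu}\neq 0$. Since $\mu$ was arbitrary, this completes the argument.

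The only point requiring care --- and the closest thing to an obstacle --- is that the $[E:\Q]$ homomorphisms of the form $\varphi\otimes\sigma$ need not separate the $m$ factors $K_{\mu}$ (indeed $m$ may exceed $[E:\Q]$), so one cannot conclude by a naive injectivity argument; what matters is instead that every factor $K_{\mu}$ is reached by at least one such homomorphism, which is precisely what extending $\sigma$ to $K_{\mu}$ provides.
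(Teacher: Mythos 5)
Your proof is correct and takes essentially the same route as the paper's: decompose $E\otimes K\cong\prod_{\mu}K_{\mu}$, reduce to showing each component $x_{\mu}$ is nonzero, and produce a witnessing embedding by extending $\sigma$ from $K$ to $K_{\mu}$, thereby realizing the projection-and-embed composite as some $\varphi\otimes\sigma$. The only difference is that you spell out the bijection between extensions of $\sigma$ and the maps $\varphi\otimes\sigma$ in more detail, which is a sound elaboration rather than a different argument.
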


The first Hodge-Riemann bilinear equation implies that
\[ \langle\omega_{i,\sigma}(\varphi),\omega_{d+1-i,\sigma}(\varphi)\rangle_{\dR,\sigma,\varphi}=\langle I_{\infty,\sigma}(\varphi)(\Omega_{i,\sigma}(\varphi)),I_{\infty,\sigma}(\varphi)(\Omega_{d+1-i,\sigma}(\varphi))\rangle_{\dR,\sigma,\varphi}\]
for every $\varphi$, and thus
\begin{equation}\label{Omegaesmu} \langle\Omega_{i,\sigma},\Omega_{d+1-i,\sigma}\rangle_{\sigma}=\sigma(\mu_{i,\sigma})\delta_{\sigma}(A)^{-1}\in E\otimes\C.\end{equation}
(see Remark~\ref{rem:twist}). Since the realization is regular, there exists $\lambda_{i,\sigma}(\varphi)\in\C^{\times}$ for every $i$, $\sigma$ and $\varphi$, such that
\[ F_{\sigma,\C}(\varphi)(\Omega_{i,\sigma}(\varphi))=\lambda_{i,\sigma}(\varphi)\Omega_{d+1-i,\sigma}(\varphi).\]
Applying $F_{\sigma,\C}(\varphi)$ a second time, we get that $\lambda_{i,\sigma}(\varphi)\lambda_{d+1-i,\sigma}(\varphi)=1$. In particular, if $d$ is odd, $\lambda_{(d+1)/2,\sigma}(\varphi)=\pm1$, and the sign is independent of $\varphi$, because in fact it is the scalar $\varepsilon$ by which $F_{\sigma,\C}$ acts on $M_{\sigma}^{w/2,w/2}$. Thus, if we let $\lambda_{i,\sigma}\in(E\otimes\C)^{\times}$ be the elements whose $\varphi$-components are $\lambda_{i,\sigma}(\varphi)$, then $\lambda_{i,\sigma}\lambda_{d+1-i,\sigma}=1$, and $\lambda_{(d+1)/2,\sigma}=\varepsilon=\pm1$ if $d$ is odd.

For $i=1,\dots,d$, let $Q_{i,\sigma}\in E\otimes\C$ be defined by the formula
\[ Q_{i,\sigma}=\langle\Omega_{i,\sigma},F_{\sigma,\C}(\Omega_{i,\sigma})\rangle_{\sigma}.\] 
From the definition of the scalars $\lambda_{i,\sigma}$ and~(\ref{Omegaesmu}), it follows that 
\[ Q_{i,\sigma}=\lambda_{i,\sigma}\sigma(\mu_{i,\sigma})\delta_{\sigma}(A)^{-1}\]
and it's an element of $\in(E\otimes\C)^{\times}$. Moreover,
\[ Q_{i,\sigma}^{-1}F_{\sigma,\C}(\Omega_{i,\sigma})=\sigma(\mu_{i,\sigma}^{-1})\delta_{\sigma}(A)\Omega_{d+1-i,\sigma}.\]

\begin{lemma}\label{lemmadualityQ}
Suppose that $A=E(0)$ is trivial. Let $0\leq r<s\leq d$ be integers such that $d=r+s$. Then
\[ \prod_{i=1}^{r}Q_{i,\sigma}\sim_{E\otimes K,\sigma}\prod_{i=1}^{s}Q_{i,\sigma}.\]
\begin{proof} We write $\sim$ for $\sim_{E\otimes K,\sigma}$ throughout. Since $Q_{i,\sigma}\sim\lambda_{i,\sigma}$ and $\lambda_{i,\sigma}\lambda_{d+1-i,\sigma}=1$, it follows that $Q_{i,\sigma}\sim Q_{d+1-i,\sigma}^{-1}$. If $r+1\leq i\leq s$, then $r+1\leq d+1-i\leq s$ as well. Finally, $Q_{(d+1)/2,\sigma}\sim 1$ if $d$ is odd, because in this case $\lambda_{(d+1)/2,\sigma}=\pm1$. This is enough to prove the lemma.
\end{proof}
\end{lemma}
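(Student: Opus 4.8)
\emph{Proof proposal.} The plan is to reduce everything to the scalars $\lambda_{i,\sigma}$ and to exploit the relation $\lambda_{i,\sigma}\lambda_{d+1-i,\sigma}=1$ established above. Write $\sim$ for $\sim_{E\otimes K,\sigma}$ throughout. First I would note that since $A=E(0)$ is trivial we have $\delta_{\sigma}(A)\sim1$, and that the preceding Lemma gives $\mu_{i,\sigma}\in(E\otimes K)^{\times}$; hence the formula $Q_{i,\sigma}=\lambda_{i,\sigma}\sigma(\mu_{i,\sigma})\delta_{\sigma}(A)^{-1}$ yields $Q_{i,\sigma}\sim\lambda_{i,\sigma}$ for every $i$. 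The quotient of the two sides of the asserted relation is exactly $\prod_{i=r+1}^{s}Q_{i,\sigma}$, so it suffices to prove $\prod_{i=r+1}^{s}\lambda_{i,\sigma}\sim1$.

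Next I would observe that the involution $i\mapsto d+1-i$ maps the index interval $\{r+1,\dots,s\}$ to itself: since $d=r+s$, it sends $r+1$ to $s$ and $s$ to $r+1$, and it is an order-reversing bijection of the interval. Grouping the product $\prod_{i=r+1}^{s}\lambda_{i,\sigma}$ into orbits of this involution, each two-element orbit $\{i,d+1-i\}$ contributes $\lambda_{i,\sigma}\lambda_{d+1-i,\sigma}=1$. The only possible fixed point is the middle index $i=(d+1)/2$, which lies in $\{r+1,\dots,s\}$ (using $r<d/2<s$, which follows from $r<s$ and $r+s=d$) precisely when $d$ is odd; in that case $\lambda_{(d+1)/2,\sigma}=\varepsilon=\pm1\sim1$, this being the scalar by which $F_{\sigma,\C}$ acts on $M_{\sigma}^{w/2,w/2}$. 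Hence $\prod_{i=r+1}^{s}\lambda_{i,\sigma}\sim1$, and therefore $\prod_{i=1}^{r}Q_{i,\sigma}\sim\prod_{i=1}^{s}Q_{i,\sigma}$.

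I do not expect a genuine obstacle: the argument is purely formal once the two inputs $Q_{i,\sigma}\sim\lambda_{i,\sigma}$ and $\lambda_{i,\sigma}\lambda_{d+1-i,\sigma}=1$ are in hand, and both are consequences of the Hodge--Riemann relations recorded above together with the definition of the $Q_{i,\sigma}$. The only points requiring a little care are the combinatorics of the involution on $\{r+1,\dots,s\}$ --- in particular that it has a fixed point exactly when $d$ is odd and that this fixed point is $(d+1)/2$ --- and the remark that the triviality of $A$ is what allows the $\delta_{\sigma}(A)$ and $\sigma(\mu_{i,\sigma})$ factors to be absorbed into $\sim_{E\otimes K,\sigma}$.
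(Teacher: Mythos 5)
Your proof is correct and follows essentially the same route as the paper's: reduce $Q_{i,\sigma}\sim\lambda_{i,\sigma}$ (using triviality of $A$ and $\mu_{i,\sigma}\in(E\otimes K)^{\times}$), pair off indices $i\leftrightarrow d+1-i$ via $\lambda_{i,\sigma}\lambda_{d+1-i,\sigma}=1$, and dispose of the middle index $(d+1)/2$ when $d$ is odd via $\lambda_{(d+1)/2,\sigma}=\pm1$. You have merely spelled out the orbit-combinatorics of the involution on $\{r+1,\dots,s\}$ a bit more explicitly than the paper does.
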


\begin{prop}\label{propc+c-} Let $M\in\mathcal{R}(K)_{E}$ be a regular, special realization of weight $w$ and rank $d$, endowed with an $A$-polarization, and let $\sigma\in J_{K}$. Then
\[ c^{+}_{\sigma}(M)c^{-}_{\sigma}(M)\sim_{E\otimes K,\sigma}(2\pi i)^{-dw}\delta_{\sigma}(M)^{-1}\delta_{\sigma}(A)^{d+\lfloor d/2\rfloor}\prod_{j=1}^{\lfloor d/2\rfloor}Q_{j,\sigma}.\]

\begin{proof} We pick up the notation from Remark~\ref{rem:c+reg}, so that $\delta_{\sigma}(M)\sim\det(\tilde P_{\sigma})^{-1}$. Now, recalling that $Q_{j,\sigma}=\lambda_{j,\sigma}\sigma(\mu_{j,\sigma})\delta_{\sigma}(A)^{-1}$, by applying $F_{\sigma,\C}$ to~(\ref{form:comblinealO}) we obtain that
\begin{equation}\label{relationQ} \tilde a_{i,d+1-j,\sigma}^{\pm}=\pm\delta_{\sigma}(A)^{-1}\sigma(\mu_{j,\sigma})Q_{j,\sigma}^{-1}\tilde a_{ij,\sigma}^{\pm}\quad i=1,\dots,d^{\pm},j=1\dots,d.\end{equation}
It follows that $\delta_{\sigma}(M)^{-1}\sim\delta_{\sigma}(A)^{-\lfloor d/2\rfloor}\prod_{j=1}^{\lfloor d/2\rfloor}Q_{j,\sigma}^{-1}\det(H_{\sigma})$, where
\[ H_{\sigma}= 
	\left(\begin{array}{cc} (\tilde a_{ij,\sigma}^{+})_{ \substack{1\leq i\leq d^{+}\\1\leq j\leq d^{+}} } &  (\tilde a_{i,d+1-j,\sigma}^{+})_{\substack{1\leq i\leq d^{+}\\d^{+}+1\leq j\leq d} }\\
	                                  (\tilde a_{ij,\sigma}^{-})_{ \substack{1\leq i\leq d^{-}\\1\leq j\leq d^{+}} }  & (-\tilde a_{i,d+1-j,\sigma}^{-})_{ \substack{1\leq i\leq d^{-}\\d^{+}+1\leq j\leq d} }
	\end{array}\right)
\]
if either $d$ is even or $d=2m-1$ is odd and $d^{-}=m-1$, and
\[ H_{\sigma}= 
	\left(\begin{array}{cc} (\tilde a_{ij,\sigma}^{+})_{ \substack{1\leq i\leq m-1\\1\leq j\leq m} } & (\tilde a_{i,d+1-j,\sigma}^{+})_{\substack{1\leq i\leq m-1\\m+1\leq j\leq d} }\\
	                                  (\tilde a_{ij,\sigma}^{-})_{ \substack{1\leq i\leq m\\1\leq j\leq m} }  & (-\tilde a_{i,d+1-j,\sigma}^{-})_{ \substack{1\leq i\leq m\\m+1\leq j\leq d} }
	\end{array}\right)
\]
if $d=2m-1$ is odd and $d^{-}=m$. Suppose first that $d=2m$ is even. Then, applying elementary column operations, we can take $H_{\sigma}$ to the matrix
\[ 	\left(\begin{array}{cc} (2\tilde a_{ij,\sigma}^{+})_{ \substack{1\leq i\leq d^{+}\\1\leq j\leq d^{+}} } &  (\tilde a_{i,d+1-j,\sigma}^{+})_{\substack{1\leq i\leq d^{+}\\d^{+}+1\leq j\leq d} }\\
	                                  (0)_{ \substack{1\leq i\leq d^{-}\\1\leq j\leq d^{+}} }  & (-\tilde a_{i,d+1-j,\sigma}^{-})_{ \substack{1\leq i\leq d^{-}\\d^{+}+1\leq j\leq d} }
	\end{array}\right).
\]
It follows that
\begin{equation}\label{exprdeltaQ} \delta_{\sigma}(M)^{-1}\sim\delta_{\sigma}(A)^{-\lfloor d/2\rfloor}\prod_{j=1}^{\lfloor d/2\rfloor}Q_{j,\sigma}^{-1}\cdot\det\left((\tilde a_{ij,\sigma}^{+})_{i,j=1}^{d^{+}}\right)\cdot\det\left((\tilde a_{ij,\sigma}^{-})_{i,j=1}^{d^{-}}\right).\end{equation}
Suppose now that $d=2m-1$ is odd and that $d^{-}=m-1$. Then we can take $H_{\sigma}$ to
\[ 	\left(\begin{array}{ccc} (2\tilde a_{ij,\sigma}^{+})_{ \substack{1\leq i\leq m\\1\leq j\leq m-1} } & (\tilde a_{i,m,\sigma}^{+})_{1\leq i\leq m} & (\tilde a_{i,m-j,\sigma}^{+})_{\substack{1\leq i\leq m\\1\leq j\leq m-1} }\\
	                                  (0)_{ \substack{1\leq i\leq m-1\\1\leq j\leq m-1} }  & (\tilde a_{i,m,\sigma}^{-})_{1\leq i\leq m-1} & (-\tilde a_{i,m-j,\sigma}^{-})_{ \substack{1\leq i\leq m-1\\1\leq j\leq m-1} }
	\end{array}\right).
\]
But we also have that $F_{\sigma,\C}(\Omega_{m,\sigma})=\lambda_{m,\sigma}\Omega_{m,\sigma}$, with $\lambda_{m,\sigma}=\pm1$. Since we are assuming that $d^{+}=m$, it follows that $\lambda_{m,\sigma}=\varepsilon=1$. This implies that $a_{i,m,\sigma}^{-}=0$ for every $i=1,\dots,m-1$, and thus we obtain~(\ref{exprdeltaQ}). The case where $d^{-}=m$ is analyzed in a similar way, and in all cases we obtain the formula~(\ref{exprdeltaQ}).

Now, it follows from~(\ref{form:widec+}) that $c^{\pm}_{\sigma}(M^{\vee})\sim\det\left((\tilde a_{ij,\sigma}^{\pm})_{i,j,=1}^{d^{\pm}}\right)$. Finally, the $A$-polarization defines an isomorphism $M^{\vee}\otimes_{E}A\cong M(w)$, and since
\[ c_{\sigma}^{+}(M^{\vee}\otimes_{E}A)c_{\sigma}^{-}(M^{\vee}\otimes_{E}A)\sim c_{\sigma}^{+}(M^{\vee})c_{\sigma}^{-}(M^{\vee})\delta_{\sigma}(A)^{d}\]
and
\[ c_{\sigma}^{+}(M(w))c_{\sigma}^{-}(M(w))\sim c_{\sigma}^{+}(M)c_{\sigma}^{-}(M)(2\pi i)^{wd},\]
it follows that
\[ \delta_{\sigma}^{-1}(M)\sim \prod_{j=1}^{[d/2]}Q_{j,\sigma}^{-1}c^{+}_{\sigma}(M)c^{-}_{\sigma}(M)(2\pi i)^{wd}\delta_{\sigma}(A)^{-d-\lfloor d/2\rfloor},\]
which is the expression we wanted to obtain.
\end{proof}
\end{prop}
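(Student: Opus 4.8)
The plan is to run an explicit linear-algebra computation with the bases fixed in Remark~\ref{rem:c+reg} and in the discussion preceding the statement. Fix $\sigma\in J_K$ and take the adapted de Rham basis $\{\omega_{i,\sigma}\}$ of $M_{\dR}$ (with $\{\omega_{1,\sigma},\dots,\omega_{d^{\pm},\sigma}\}$ spanning $F^{\pm}(M_{\dR})$), the Hodge vectors $\Omega_{i,\sigma}$, and the Betti bases $\{e_i\}$, $\{f_i\}$ of $M_{\sigma}^{+}$, $M_{\sigma}^{-}$. Expanding $\Omega_{j,\sigma}=\sum_{i}\tilde a_{ij,\sigma}^{+}e_i+\sum_{i}\tilde a_{ij,\sigma}^{-}f_i$ produces the matrix $\tilde P_{\sigma}$, and I will use the two facts recorded earlier: $\delta_{\sigma}(M)\sim_{E\otimes K,\sigma}\det(\tilde P_{\sigma})^{-1}$, and $c^{\pm}_{\sigma}(M^{\vee})\sim_{E\otimes K,\sigma}\det\big((\tilde a_{ij,\sigma}^{\pm})_{i,j=1}^{d^{\pm}}\big)$ by~(\ref{form:widec+}).

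The arithmetic input comes from the $A$-polarization: $F_{\sigma,\C}(\Omega_{i,\sigma})=\lambda_{i,\sigma}\Omega_{d+1-i,\sigma}$ with $\lambda_{i,\sigma}\lambda_{d+1-i,\sigma}=1$, $\lambda_{(d+1)/2,\sigma}=\varepsilon=\pm1$ when $d$ is odd, and $Q_{j,\sigma}=\lambda_{j,\sigma}\sigma(\mu_{j,\sigma})\delta_{\sigma}(A)^{-1}$. Applying $F_{\sigma,\C}$ to the expansion of $\Omega_{j,\sigma}$ and using that $F_{\sigma}$ fixes each $e_i$ and negates each $f_i$, a comparison of coefficients shows that, for $1\le j\le\lfloor d/2\rfloor$, the $(d+1-j)$-th column of $\tilde P_{\sigma}$ equals $\delta_{\sigma}(A)^{-1}\sigma(\mu_{j,\sigma})Q_{j,\sigma}^{-1}$ times the $j$-th column in the rows of the $+$-block and $-\delta_{\sigma}(A)^{-1}\sigma(\mu_{j,\sigma})Q_{j,\sigma}^{-1}$ times it in the rows of the $-$-block. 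I then compute $\det(\tilde P_{\sigma})$ by: rescaling the columns $d+1-j$ for $j=1,\dots,\lfloor d/2\rfloor$, which pulls out $\delta_{\sigma}(A)^{-\lfloor d/2\rfloor}\prod_{j}Q_{j,\sigma}^{-1}$ (the extra factor $\prod_{j}\sigma(\mu_{j,\sigma})$ lies in $\sigma((E\otimes K)^{\times})$ and is absorbed by $\sim$); and then, exploiting the sign discrepancy between the two blocks, adding each rescaled column to its partner to annihilate the lower-left block. What is left is block upper-triangular, with determinant equal, up to a power of $2$, to $\det\big((\tilde a_{ij,\sigma}^{+})_{i,j=1}^{d^{+}}\big)\cdot\det\big((\tilde a_{ij,\sigma}^{-})_{i,j=1}^{d^{-}}\big)$. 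This yields $\delta_{\sigma}(M)^{-1}\sim_{E\otimes K,\sigma}\delta_{\sigma}(A)^{-\lfloor d/2\rfloor}\big(\prod_{j=1}^{\lfloor d/2\rfloor}Q_{j,\sigma}^{-1}\big)c^{+}_{\sigma}(M^{\vee})c^{-}_{\sigma}(M^{\vee})$.

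To finish, I convert $c^{\pm}_{\sigma}(M^{\vee})$ into $c^{\pm}_{\sigma}(M)$ using the isomorphism $M^{\vee}\otimes_{E}A\cong M(w)$ supplied by the $A$-polarization. By Remark~\ref{rem:twist}, $c^{+}_{\sigma}(M^{\vee}\otimes_{E}A)c^{-}_{\sigma}(M^{\vee}\otimes_{E}A)\sim c^{+}_{\sigma}(M^{\vee})c^{-}_{\sigma}(M^{\vee})\delta_{\sigma}(A)^{d}$ --- the product of the two periods being unaffected by the possible exchange of signs --- while $c^{+}_{\sigma}(M(w))c^{-}_{\sigma}(M(w))\sim(2\pi i)^{wd}c^{+}_{\sigma}(M)c^{-}_{\sigma}(M)$, again because the product is insensitive to the parity of $w$. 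Hence $c^{+}_{\sigma}(M^{\vee})c^{-}_{\sigma}(M^{\vee})\sim(2\pi i)^{wd}\delta_{\sigma}(A)^{-d}c^{+}_{\sigma}(M)c^{-}_{\sigma}(M)$; substituting this into the displayed relation and solving for $c^{+}_{\sigma}(M)c^{-}_{\sigma}(M)$ gives exactly $(2\pi i)^{-dw}\delta_{\sigma}(M)^{-1}\delta_{\sigma}(A)^{d+\lfloor d/2\rfloor}\prod_{j=1}^{\lfloor d/2\rfloor}Q_{j,\sigma}$.

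I expect the only real subtlety to be the odd-rank case $d=2m-1$. There the middle column of $\tilde P_{\sigma}$, coming from $\Omega_{m,\sigma}$, is not paired with any other column but is an eigenvector of $F_{\sigma,\C}$ with eigenvalue $\varepsilon$, so the column-operation step has to be modified around it; one needs the observation that $\varepsilon$ is forced by the position of $\Omega_{m,\sigma}$ in the $\pm$-decomposition (e.g. $\varepsilon=1$ when $d^{+}=m$, so $\Omega_{m,\sigma}\in M_{\sigma}^{+}$), which forces the relevant entries of the other block to vanish and makes the same argument go through. Carrying the exact exponents of $\delta_{\sigma}(A)$ and $2\pi i$ through the three cases ($d$ even; $d=2m-1$ with $d^{-}=m-1$; $d=2m-1$ with $d^{-}=m$) is the delicate bookkeeping, but it is routine once the column relation above and the identification of the $d^{\pm}\times d^{\pm}$ sub-determinants with $c^{\pm}_{\sigma}(M^{\vee})$ are in place.
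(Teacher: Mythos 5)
Your proposal is correct and follows essentially the same route as the paper's proof: the column relation from applying $F_{\sigma,\mathbb{C}}$ to the expansion of $\Omega_{j,\sigma}$, the rescaling/addition of columns to block-triangularize $\tilde P_{\sigma}$ and pull out $\delta_{\sigma}(A)^{-\lfloor d/2\rfloor}\prod Q_{j,\sigma}^{-1}$, the identification of the diagonal blocks with $c^{\pm}_{\sigma}(M^{\vee})$ via~(\ref{form:widec+}), and the final conversion through $M^{\vee}\otimes_{E}A\cong M(w)$. You also flag the same subtlety in the odd-rank case, where the middle column is an $F_{\sigma,\mathbb{C}}$-eigenvector and the sign $\varepsilon$ is determined by whether $d^{+}=m$ or $d^{-}=m$, exactly as the paper handles it.
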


\begin{rem}\label{remdelta} The fact that $M$ has an $A$-polarization implies that 
\begin{equation}\label{delta2} \delta_{\sigma}(M)^{2}\sim_{E\otimes K,\sigma} \delta_{\sigma}(A)^{d}(2\pi i)^{-wd}.\end{equation}
Assume, moreover, that $M$ is regular and special. Then this expression can also be written in terms of discriminants as in 1.4.12, \cite{harriscrelle}. One can even take the square root on both sides when the pairing $\langle,\rangle$ is alternated (and hence $d$ is even) to get $\delta_{\sigma}(M)\sim\delta_{\sigma}(A)^{d/2}(2\pi i)^{-wd/2}$.
\end{rem}

\subsection{The motives $RM(\chi)$}\label{motivesRM} Let $L/K$ be a totally imaginary quadratic extension of the totally real field $K$. Let $\chi$ be an algebraic Hecke character of $L$ with infinity type $(n_{\tau})_{\tau\in J_{L}}$. Let $RM(\chi)=R_{L/K}M(\chi)\in\mathcal{R}(K)_{\Q(\chi)}$. Then $RM(\chi)$ is of rank $2$ over $\Q(\chi)$, pure of weight $w(\chi)$. Note that given $\sigma\in J_K$ and $\rho\in J_{\Q(\chi)}$, $RM(\chi)_{\sigma}^{pq}(\rho)\neq 0$ if and only if $(p,q)=(n(\tau,\rho),n(\tau,\overline\rho))$ or $(p,q)=(n(\tau,\overline\rho),n(\tau,\rho))$. Moreover, $RM(\chi)$ is regular if and only if $n_{\tau}\neq n_{\overline\tau}$ for every $\tau\in J_{L}$, in which case we call $\chi$ \emph{critical}. The numbers $p_{i}^{\chi}(\sigma,\rho)$ ($i=1,2$) for the motive $RM(\chi)$ are given by $\{p_{1}^{\chi}(\sigma,\rho),p_{2}^{\chi}(\sigma,\rho)\}=\{n(\tau,\rho),n(\overline\tau,\rho)\}$. For each $\sigma\in J_{K}$ and $\rho\in J_{\Q(\chi)}$, define
\[ t_{\sigma,\rho}(\chi)=p_{1}^{\chi}(\sigma,\rho)-p_{2}^{\chi}(\sigma,\rho).\]

Let $\varepsilon_{L}:\A_{K}^{\times}/K^{\times}\to\{\pm1\}$ denote the quadratic character attached to $L/K$. Since $K$ is totally real, $\chi|_{\A_{K}^{\times}/K^{\times}}=\chi_{0}\|\cdot\|^{-w(\chi)}$, where $\chi_{0}$ is of finite order. It's easy to see, as in (1.6.2) of \cite{harriscrelle}, that $\Lambda^{2}_{\Q(\chi)}RM(\chi)\cong[\chi_{0}\varepsilon_{L}](-w(\chi))$. This map defines a morphism
\begin{equation}\label{almostpol} \langle,\rangle:RM(\chi)\otimes_{\Q(\chi)} RM(\chi)\to[\chi_{0}\varepsilon_{L}](-w(\chi)) \end{equation}
which is alternated and non-degenerate; in particular, it is an $A$-polarization of $RM(\chi)$, with $A=[\chi_{0}\varepsilon_{L}]$. For every $\sigma\in J_{K}$, we let $\delta_{\sigma}(\chi)=\delta_{\sigma}(RM(\chi))$. By Remark~\ref{remdelta}, $\delta_{\sigma}(\chi)\sim_{\Q(\chi)\otimes K,\sigma}(2\pi i)^{-w(\chi)}\delta_{\sigma}[\chi_{0}\varepsilon_{L}]$.

Assume from now on that $\chi$ is critical, and let $c^{\pm}_{\sigma}(\chi)=c^{\pm}_{\sigma}(RM(\chi))$. Thus, by Proposition~\ref{propc+c-},
\begin{equation}\label{propRM} c_{\sigma}^{+}(\chi)c_{\sigma}^{-}(\chi)\sim_{\Q(\chi)\otimes K,\sigma}(2\pi i)^{-2w(\chi)}\delta_{\sigma}(\chi)^{-1}\delta_{\sigma}[\chi_{0}\varepsilon_{L}]^{3}Q_{\sigma}(\chi)\sim\end{equation}
\[(2\pi i)^{-w(\chi)}\delta_{\sigma}[\chi_{0}\varepsilon_{L}]^{2}Q_{\sigma}(\chi).\]
The element $Q_{\sigma}(\chi)$ is defined as in Subsection~\ref{ssec:quadraticperiods}. To be more precise and set up some further notation, we recall its definition. We take a $\Q(\chi)\otimes K$-basis $\{\omega_{\sigma}(\chi),\omega_{\sigma}'(\chi)\}$ of $RM(\chi)_{\dR}$ with the property that $\{\omega_{\sigma}(\chi)\}$ is a basis of $F^{\pm}(RM(\chi)_{\dR})$. For every $\rho\in J_{\Q(\chi)}$, we let $\Omega_{\sigma}(\chi)(\rho)=I_{\infty,\sigma,1}(\rho)^{-1}(\overline{\omega_{\sigma}(\chi)(\rho)})$ and $\Omega'_{\sigma}(\chi)(\rho)=I_{\infty,\sigma,2}(\rho)^{-1}(\overline{\omega_{\sigma}'(\chi)(\rho)})$,
and we let $\Omega_{\sigma}(\chi),\Omega'_{\sigma}(\chi)\in RM(\chi)_{\sigma}\otimes\C$ be the elements whose $\rho$-components are the $\Omega_{\sigma}(\chi)(\rho)$ and $\Omega'_{\sigma}(\chi)(\rho)$ respectively. Then $Q_{\sigma}(\chi)=\langle\Omega_{\sigma}(\chi),F_{\sigma,\C}(\Omega_{\sigma}(\chi))\rangle_{\sigma}$. We also write
\[ I_{\infty,\sigma}(\Omega_{\sigma}'(\chi))=x_{\sigma}(\chi)\omega_{\sigma}(\chi)+\omega_{\sigma}'(\chi)\]
with $x_{\sigma}(\chi)\in E\otimes\C$.

Let $\chi$ be critical. For $\tau\in J_{L}$ and $\rho\in J_{\Q(\chi)}$, let $e_{\tau,\rho}=1$ if $n(\tau,\rho)>n(\overline\tau,\rho)$ and $e_{\tau,\rho}=-1$ if $n(\tau,\rho)<n(\overline\tau,\rho)$. Let $e_{\tau}=(e_{\tau,\rho})_{\rho\in J_{\Q(\chi)}}\in(\Q(\chi)\otimes\C)^{\times}$. Note that $e_{\tau}=-e_{\overline\tau}$, and in particular, $e_{\tau}\sim_{\Q(\chi)}e_{\overline\tau}$. See also H.4, \cite{blasiusperiods}. 

\begin{lemma} Let $\tau\in J_{L}$ and $\sigma=\tau|_{K}$. Then
\[ c_{\sigma}^{-}(\chi)\sim_{\Q(\chi)\otimes K,\sigma}e_{\tau}c_{\sigma}^{+}(\chi).\]
\begin{proof} 

Let $\{\gamma_{\tau}\}$ be a $\Q(\chi)$-basis of $M(\chi)_{\tau}$, and let $\gamma_{\overline\tau}=F_{\tau}(\gamma_{\tau})$, a basis of $M(\chi)_{\overline\tau}$. We can then form a basis $\{(\gamma_{\tau},0),(0,\gamma_{\overline\tau})\}$ of $RM(\chi)_{\sigma}=M(\chi)_{\tau}\oplus M(\chi)_{\overline\tau}$. Then $\{(\gamma_{\tau},\gamma_{\overline\tau})\}$ (resp. $\{(\gamma_{\tau},-\gamma_{\overline\tau})\}$) is a $\Q(\chi)$-basis of $RM(\chi)_{\sigma}^{+}$ (resp. $RM(\chi)_{\sigma}^{-}$). 
Write 
\[ I_{\infty,\sigma}((\gamma_{\tau},0)_{\C})=a_{\sigma}\omega_{\sigma}(\chi)+b_{\sigma}\omega'_{\sigma}(\chi),\]
\[ I_{\infty,\sigma}((0,\gamma_{\overline\tau})_{\C})=c_{\sigma}\omega_{\sigma}(\chi)+d_{\sigma}\omega'_{\sigma}(\chi),\]
with $a_{\sigma}$, $b_{\sigma}$, $c_{\sigma}$ and $d_{\sigma}$ in $\Q(\chi)\otimes\C$. It follows that $c_{\sigma}^{+}(\chi)\sim_{\Q(\chi)\otimes K,\sigma}b_{\sigma}+d_{\sigma}$ and $c_{\sigma}^{-}(\chi)\sim_{\Q(\chi)\otimes K,\sigma}b_{\sigma}-d_{\sigma}$.

Now fix $\rho\in J_{\Q(\chi)}$. Suppose that $e_{\tau,\rho}=-1$, so that $n(\tau,\rho)<n(\overline\tau,\rho)$, $p_{1}^{\chi}(\sigma,\rho)=n(\overline\tau,\rho)$ and $F^{\pm}(RM(\chi)_{\dR}\otimes_{\Q(\chi)\otimes K,\rho\otimes\sigma}\C=F^{n(\overline\tau,\rho)}\otimes_{\Q(\chi)\otimes K,\rho\otimes\sigma}\C$. Then $\gamma_{\overline\tau,\rho}\in M(\chi)_{\overline\tau}(\rho)$ contributes to the $(n(\overline\tau,\rho),n(\tau,\rho))$ Hodge component, so it is in $F^{p_{1}}$, and thus $d_{\sigma,\rho}=0$. Similarly, if $e_{\tau,\rho}=1$, then $b_{\sigma,\rho}=0$. This proves that $b_{\sigma}+d_{\sigma}=e_{\tau}(b_{\sigma}-d_{\sigma})$, which is what we wanted.
\end{proof}
\end{lemma}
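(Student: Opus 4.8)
\emph{Proof strategy.} The plan is to compute both $c_\sigma^{\pm}(\chi)$ from a single pair of de Rham coordinates and then to exploit the refined Hodge decomposition over $\Q(\chi)\otimes\C$ one embedding $\rho$ at a time. I would begin from the decomposition $RM(\chi)_\sigma=M(\chi)_\tau\oplus M(\chi)_{\overline\tau}$ coming from restriction of scalars. Fix a $\Q(\chi)$-basis $\{\gamma_\tau\}$ of $M(\chi)_\tau$ and set $\gamma_{\overline\tau}=F_\tau(\gamma_\tau)$; since $\sigma$ is real, complex conjugation $F_\sigma$ on $RM(\chi)_\sigma$ interchanges the two summands (via $F_\tau$ and $F_\tau^{-1}$), so $\{(\gamma_\tau,\gamma_{\overline\tau})\}$ and $\{(\gamma_\tau,-\gamma_{\overline\tau})\}$ are $\Q(\chi)$-bases of $RM(\chi)_\sigma^{+}$ and $RM(\chi)_\sigma^{-}$. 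On the de Rham side I would use the basis $\{\omega_\sigma(\chi),\omega_\sigma'(\chi)\}$ already fixed in Subsection~\ref{motivesRM}, for which $\{\omega_\sigma(\chi)\}$ spans $F^{\pm}(RM(\chi)_{\dR})=F^{p_1^\chi}(RM(\chi)_{\dR})$ and $\{\omega_\sigma'(\chi)\}$ projects to an $E\otimes K$-basis of $RM(\chi)_{\dR}^{\pm}$.

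Writing $I_{\infty,\sigma}((\gamma_\tau,0)_{\C})=a_\sigma\omega_\sigma(\chi)+b_\sigma\omega_\sigma'(\chi)$ and $I_{\infty,\sigma}((0,\gamma_{\overline\tau})_{\C})=c_\sigma\omega_\sigma(\chi)+d_\sigma\omega_\sigma'(\chi)$ with coordinates in $\Q(\chi)\otimes\C$, and recalling that $c_\sigma^{\pm}(\chi)=\det(I_{\infty,\sigma}^{\pm})$ is obtained by projecting onto $RM(\chi)_{\dR}^{\pm}$, i.e.\ by killing the $\omega_\sigma(\chi)$-coordinate, one reads off $c_\sigma^{+}(\chi)\sim_{\Q(\chi)\otimes K,\sigma}b_\sigma+d_\sigma$ and $c_\sigma^{-}(\chi)\sim_{\Q(\chi)\otimes K,\sigma}b_\sigma-d_\sigma$. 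Thus the lemma reduces to comparing $b_\sigma-d_\sigma$ with $e_\tau(b_\sigma+d_\sigma)$; since $-1\in\Q(\chi)$ it is enough to show that for each $\rho\in J_{\Q(\chi)}$ exactly one of $b_{\sigma,\rho}$, $d_{\sigma,\rho}$ vanishes, namely the one prescribed by $e_{\tau,\rho}$.

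The heart of the argument is this vanishing, carried out inside the single factor $RM(\chi)_{\dR}\otimes_{\Q(\chi)\otimes K,\rho\otimes\sigma}\C$, whose de Rham filtration has unique nontrivial step $F^{p_1^\chi(\sigma,\rho)}=\C\,\omega_\sigma(\chi)(\rho)$ with $p_1^\chi(\sigma,\rho)=\max\{n(\tau,\rho),n(\overline\tau,\rho)\}$. The line $M(\chi)_\tau(\rho)$ is of pure Hodge type $(n(\tau,\rho),n(\overline\tau,\rho))$ and $M(\chi)_{\overline\tau}(\rho)$ of type $(n(\overline\tau,\rho),n(\tau,\rho))$; since $I_{\infty,\sigma}$ carries the Hodge filtration into the de Rham filtration, whichever of these two lines has the larger first index is sent into $F^{p_1^\chi(\sigma,\rho)}=\C\,\omega_\sigma(\chi)(\rho)$, which forces the corresponding $\omega_\sigma'(\chi)$-coordinate to be zero — that is, $b_{\sigma,\rho}=0$ if $e_{\tau,\rho}=1$ and $d_{\sigma,\rho}=0$ if $e_{\tau,\rho}=-1$. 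Collecting this over all $\rho$ gives $b_\sigma-d_\sigma=-e_\tau(b_\sigma+d_\sigma)$, whence the claim. I expect the only genuinely delicate part to be the bookkeeping: one must be sure that the single de Rham vector $\omega_\sigma(\chi)$ specializes simultaneously, at every $\rho$, to a generator of $F^{p_1^\chi(\sigma,\rho)}$ — which is exactly the cross-$\varphi$ compatibility of the basis $\{\omega_{1,\sigma},\dots,\omega_{d,\sigma}\}$ arranged in Subsection~\ref{ssec:quadraticperiods} — and to keep straight the identity $n(\tau,\overline\rho)=n(\overline\tau,\rho)$ so that the two descriptions of the Hodge data of $RM(\chi)$ recalled in Subsection~\ref{motivesRM} match.
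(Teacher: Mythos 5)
Your proof is correct and follows essentially the same route as the paper's: same decomposition $RM(\chi)_\sigma=M(\chi)_\tau\oplus M(\chi)_{\overline\tau}$, same bases $\{(\gamma_\tau,\pm\gamma_{\overline\tau})\}$ for the $\pm$-eigenspaces, same reduction of $c_\sigma^\pm(\chi)$ to $b_\sigma\pm d_\sigma$, and the same embedding-by-embedding vanishing of one of $b_{\sigma,\rho},d_{\sigma,\rho}$ via the fact that $I_{\infty,\sigma}$ respects filtrations. Your bookkeeping is in fact cleaner: the identity you arrive at, $b_\sigma-d_\sigma=-e_\tau(b_\sigma+d_\sigma)$, is the correct one (the paper's displayed identity has a sign slip, harmless modulo $\sim_{\Q(\chi)\otimes K,\sigma}$), and your closing caveat about $\omega_\sigma(\chi)$ generating $F^{p_1^\chi(\sigma,\rho)}$ simultaneously at every $\rho$ is exactly the point guaranteed by the rational-basis construction of Subsection~\ref{ssec:quadraticperiods}.
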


For future reference, let $a_{\sigma}^{\pm}(\chi)\sim c_{\sigma}^{\pm}(RM(\chi)^{\vee})$. We deduce the following formulas (where $\tau$ is any element of $J_{L}$ extending $\sigma$):
\begin{equation}\label{formulaQsigmachic2}
Q_{\sigma}(\chi)\sim_{\Q(\chi)\otimes K,\sigma}(2\pi i)^{w(\chi)}\delta_{\sigma}[\chi_{0}\varepsilon_{L}]^{-2}e_{\tau}c_{\sigma}^{+}(\chi)^{2}.
\end{equation}
\begin{equation}\label{formulaasigmachisigno}
a_{\sigma}^{-}(\chi)\sim_{\Q(\chi)\otimes K,\sigma}e_{\tau}a_{\sigma}^{+}(\chi).
\end{equation}

The following lemma will be useful to ignore integer powers of $\delta_{\sigma}[\varepsilon_{L}]$ if one is willing to work over the Galois closure $L'$ of $L$.

\begin{lemma}\label{epsilonenL} For any $\sigma\in J_{K}$, $\delta_{\sigma}[\varepsilon_{L}]\in L'\subset\C$.
\begin{proof} By the calculations in Remark~\ref{rem:twist}, we can take $\delta_{\sigma}[\varepsilon_{L}]=\tilde\sigma(\zeta)^{-1}$. Here $\tilde\sigma$ is an extension of $\sigma$ to an embedding of $\overline K$ in $\C$, and $\zeta$ is a $K$-basis of $(\Q\otimes\overline K)^{\Gamma_{K}}$, where $\gamma\in\Gamma_{K}$ act by sending $q\otimes z$ ($q\in\Q$, $z\in\overline K$) to $\varepsilon_{L}(\gamma)q\otimes\gamma(z)$. Thus, we can take $\zeta=1\otimes\alpha$, where $\alpha\in L^{\times}$ satisfies $\iota(\alpha)=-\alpha$, which shows that $\tilde\sigma(\zeta)^{-1}$ is in $L'$. 
\end{proof}
\end{lemma}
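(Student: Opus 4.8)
The approach is to compute $\delta_\sigma[\varepsilon_L]$ completely explicitly via the Artin-motive formula recorded in Remark~\ref{rem:twist}, and then to observe that the number so obtained is an algebraic conjugate over $\Q$ of an element of $L$, hence lies in the Galois closure $L'$.

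First I would unwind the definitions. The motive $[\varepsilon_L]\in\mathcal{M}^{0}(K)_{\Q}$ is, by definition, the Artin motive attached by class field theory to the quadratic idele class character $\varepsilon_L\colon\A_K^{\times}/K^{\times}\to\{\pm1\}$; equivalently, $[\varepsilon_L]$ corresponds to the character of $\Gamma_K$ that is $+1$ on $\Gal(\overline K/L)$ and $-1$ on the nontrivial coset. By the description of Artin-motive realizations together with the computation in Remark~\ref{rem:twist}, the de Rham realization $[\varepsilon_L]_{\dR}$ is the one-dimensional $K$-vector space $(\Q\otimes\overline K)^{\Gamma_K}$, on which $\gamma\in\Gamma_K$ acts by $q\otimes z\mapsto\varepsilon_L(\gamma)\,q\otimes\gamma(z)$, and for any $K$-basis $\{\zeta\}$ of this space and any extension $\tilde\sigma\colon\overline K\hookrightarrow\C$ of $\sigma$ one may take $\delta_\sigma[\varepsilon_L]=\det(I_{\infty,\sigma})=\tilde\sigma(\zeta)^{-1}$. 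So the task reduces to writing down $\zeta$ explicitly. For this, assume $\overline K\supseteq L$, pick any $\theta\in L\setminus K$, and put $\alpha=\theta-\iota(\theta)\in L^{\times}$, where $\iota$ is the nontrivial element of $\Gal(L/K)$; then $\iota(\alpha)=-\alpha$ (and in fact $\alpha^{2}\in K^{\times}$, matching a Kummer presentation $L=K(\alpha)$). I claim $\zeta=1\otimes\alpha$ is invariant for the twisted action: since $\alpha\in L$, the element $\gamma(\alpha)$ depends only on $\gamma|_{L}$, and by the description of $\varepsilon_L$ as a Galois character one has $\gamma(\alpha)=\varepsilon_L(\gamma)\,\alpha$ for all $\gamma$, which is exactly the condition $\varepsilon_L(\gamma)\bigl(1\otimes\gamma(\alpha)\bigr)=1\otimes\alpha$. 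As $\alpha\neq0$, $\{1\otimes\alpha\}$ is a $K$-basis, so $\delta_\sigma[\varepsilon_L]=\tilde\sigma(\alpha)^{-1}$.

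It remains to conclude $\tilde\sigma(\alpha)^{-1}\in L'$. The element $\tilde\sigma(\alpha)\in\C$ is a root of the minimal polynomial of $\alpha\in L$ over $\Q$, and all complex roots of that polynomial lie in the Galois closure $L'$ of $L$ in $\C$; hence $\tilde\sigma(\alpha)\in L'$ and $\tilde\sigma(\alpha)^{-1}\in L'$. One last point to record is that $\delta_\sigma[\varepsilon_L]$ is only well defined modulo $\sigma(K^{\times})$, but this is harmless since $K\subseteq L\subseteq L'$ and $L'/\Q$ is Galois, so $\sigma(K)\subseteq L'$. I do not anticipate a real obstacle here: the statement is a short bookkeeping exercise, and the only things needing care are the class field theory normalization linking $\varepsilon_L$ to the sign of the $\Gal(L/K)$-action on $\alpha$, and keeping straight which ambiguities (the benign $\sigma(K^{\times})$, as opposed to genuine membership in $L'$) are in force.
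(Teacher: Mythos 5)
Your proof is correct and follows essentially the same route as the paper: both compute $\delta_\sigma[\varepsilon_L]$ from the Artin-motive description in Remark~\ref{rem:twist}, identify $(\Q\otimes\overline K)^{\Gamma_K}$ with the line spanned by $1\otimes\alpha$ for an $\alpha\in L^\times$ with $\iota(\alpha)=-\alpha$, and conclude that $\tilde\sigma(\alpha)^{-1}$ lies in the Galois closure $L'$. You simply fill in the omitted verifications (the explicit construction of $\alpha$, the twisted invariance of $1\otimes\alpha$, and the harmlessness of the $\sigma(K^\times)$-ambiguity), which the paper leaves implicit.
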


\subsection{Periods of Hecke twists}\label{periodhecketwists} Let $L/K$ be a CM extension, $E'$ any number field, $M\in\mathcal{R}(K)_{E}$ and $N\in\mathcal{R}(L)_{E'}$. Denote by $RN=R_{L/K}N\in\mathcal{R}(K)_{E'}$ (restriction of scalars). 

\begin{lemma}\label{lemapp} Suppose that $M$ and $N$ are pure. Then $M\otimes RN$ has critical values if and only if for every $\sigma\in J_{K}$, $(M\otimes RN)_{\sigma}^{pp}=0$ for every $p\in\Z$.
\end{lemma}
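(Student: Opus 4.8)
The plan is to reduce the statement to the definition of \emph{critical} recalled in Subsection~\ref{ssec:motives}, namely that an integer $n$ is critical for a pure realization $P$ exactly when neither $L_\infty(\varphi,P,s)$ nor $L_\infty(\varphi,P^\vee,1-s)$ has a pole at $s=n$, and that $P$ has critical values iff $P(n)$ is critical for some $n$. So I would set $P=M\otimes RN$, which is pure of some weight $W$, and analyze the poles of $L_\infty(\varphi,P,s)$ via Serre's recipe as in 5.2 of \cite{deligne}. The key point is that the archimedean $L$-factor is built out of the Hodge decomposition: each pair $\{(p,q),(q,p)\}$ with $p<q$ contributes a $\Gamma_\C$-factor (which has poles at a full arithmetic progression of integers, symmetric enough that twisting always avoids them), whereas each $(p,p)$-component contributes a $\Gamma_\R$-factor whose pole location depends on the eigenvalue $\varepsilon=\pm1$ of $F_{\sigma,\C}$ on $P_\sigma^{pp}$, and the $\Gamma_\R$-factors of $P$ and $P^\vee(1)$ at the center are precisely the obstruction — one of $\Gamma_\R(s-p)$, $\Gamma_\R(s-p+1)$ necessarily has a pole at every integer $\geq$ (resp. $\leq$) the relevant bound, so no twist $P(n)$ can be critical once some $P_\sigma^{pp}\neq 0$.

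Concretely, the proof would go: (1) recall that $L_\infty(\varphi,P,s)$ is, up to shift, a product over $\sigma\in J_K$, over the Hodge types, of $\Gamma_\C(s-\min(p,q))$ for the $p\neq q$ pieces and $\Gamma_\R(s-p+\delta_\varepsilon)$ for the $p=p$ pieces (with $\delta_\varepsilon\in\{0,1\}$ according to the sign $\varepsilon$ by which $F_{\sigma,\C}$ acts), and similarly for $P^\vee$; this uses that $P$, having critical values, is \emph{special} in the sense recalled before Remark~\ref{rem:twist}, so the sign $\varepsilon$ is well-defined on the middle Hodge piece. (2) If $P_\sigma^{pp}=0$ for all $\sigma$ and all $p$, then only $\Gamma_\C$-factors appear; the set of integers where $\Gamma_\C(s-a)$ or the dual factor $\Gamma_\C((1-s)-a')$ has a pole is, after an appropriate shift by $n$, avoidable — this is exactly Deligne's criterion that a motive with no $(p,p)$ Hodge class in the right weight has critical values, and one exhibits an explicit critical $n$ from the gap between the Hodge numbers. (3) Conversely, if some $P_\sigma^{pp}\neq 0$, then $L_\infty(\varphi,P,s)$ has a $\Gamma_\R(s-p)$ or $\Gamma_\R(s-p+1)$ factor and $L_\infty(\varphi,P^\vee,1-s)$ has the complementary one (since $P^\vee$ has Hodge type $(-p,-p)$ in weight $-W$ with $W=2p$, and $F$ acts by the same sign up to the effect of duality); for any integer $n$, one of these two $\Gamma_\R$-factors has a pole at $s=n$ — $\Gamma_\R(s)$ has poles at $s=0,-2,-4,\dots$ and $\Gamma_\R(s+1)$ at $s=-1,-3,\dots$, so between $P$'s factor and $P^\vee(1)$'s factor every integer is hit. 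Hence $P(n)$ is never critical, so $P$ has no critical values.

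I expect the main obstacle to be bookkeeping the signs and shifts correctly: one must match the eigenvalue of $F_{\sigma,\C}$ on $(M\otimes RN)_\sigma^{pp}$ (which, since $RN_\sigma=N_\tau\oplus N_{\bar\tau}$ is swapped by Frobenius, forces a very specific structure on when a $(p,p)$-component can even occur) against the precise form of Serre's $\Gamma$-factor recipe, and then check that the $\Gamma_\R$-factor of $P$ at $s=n$ and that of $P^\vee$ at $s=1-n$ land on complementary residue classes mod $2$ for \emph{every} $n$. The $\Gamma_\C$ case — showing that the absence of $(p,p)$-pieces is genuinely \emph{sufficient} for having critical values — is essentially Deligne's observation and can be cited, but I would still want to produce an explicit critical integer by choosing $n$ strictly between two consecutive Hodge numbers $p_i(\sigma,\varphi)$ (this is where regularity, if available, makes life easy, but the statement does not assume regularity, so one argues with the multiset of Hodge numbers directly and uses that a gap exists precisely because no $(p,p)$ occurs in the middle). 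Everything else — that $P$ is special, that the dual's Hodge numbers are the negatives, that restriction of scalars just reindexes the archimedean factors — is routine from the definitions recalled in Subsection~\ref{ssec:motives}.
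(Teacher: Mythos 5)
Your plan for step (3) has a genuine gap, and it is not just bookkeeping. You propose to show that when $(M\otimes RN)_\sigma^{pp}\neq 0$ the $\Gamma_\R$-factors of $P$ at $s=n$ and of $P^\vee$ at $s=1-n$ land on complementary residue classes for every $n$; but this argument, as you set it up, presupposes that $F_{\sigma,\C}$ acts on the middle Hodge piece by a single scalar $\varepsilon$, so that Serre's recipe produces one $\Gamma_\R$-factor with a well-defined shift $\delta_\varepsilon\in\{0,1\}$. Under that assumption the claim is actually \emph{false} in general: a Tate motive $\Q(-k)$ has $h^{pp}\neq 0$ with Frobenius acting by a scalar, yet does have critical values, because for the right sign the two residue classes miss half the integers rather than covering all of them. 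So either the scalar hypothesis holds and your pole-covering claim fails, or the scalar hypothesis fails and your $\Gamma_\R$ setup is not what Serre's recipe gives. What is true — and what the paper exploits — is that for $M\otimes RN$ the scalar hypothesis \emph{always} fails: write the $(p,p)$-piece as a sum of terms $M_\sigma^{a,w-a}\otimes RN_\sigma^{b,w_N-b}$ with $a+b=p$; if $a\neq w-a$ the term and its conjugate $M_\sigma^{w-a,a}\otimes RN_\sigma^{w_N-b,b}$ are two distinct nonzero summands swapped by Frobenius, and if $a=w-a$ then $RN_\sigma^{bb}=N_\tau^{bb}\oplus N_{\bar\tau}^{bb}$ is itself swapped. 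Either way Frobenius is never a scalar, so $M\otimes RN$ is not \emph{special}, and one then simply cites the fact (recalled before Remark~\ref{rem:twist}, from (1.3.1) of \cite{deligne}) that a realization with critical values must be special. No $\Gamma_\R$ analysis is needed at all.

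You do notice the relevant mechanism parenthetically ("$RN_\sigma=N_\tau\oplus N_{\bar\tau}$ is swapped by Frobenius, forces a very specific structure\dots"), but you treat it as a constraint to be tracked while determining the sign, rather than as the proof that no consistent sign exists. Promoting that remark to the main argument — i.e., showing non-speciality directly and invoking critical $\Rightarrow$ special — both closes the gap and shortens the proof, and it is exactly what the paper does. Your step (2) (the sufficiency direction) is fine as sketched: when there is no $(p,p)$-piece, only $\Gamma_\C$-factors occur and one exhibits a critical integer in the gap between the Hodge numbers; the paper likewise cites Deligne's (1.3.1) and writes down $t=\lfloor(w+w_N+1)/2\rfloor$. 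But note that step (1) as you phrased it ("this uses that $P$, having critical values, is special") is circular in the direction you actually need, since you are trying to prove the absence of critical values; the case split (special vs.\ not special) has to be made explicit, and the non-special branch is the one that always occurs.
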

\begin{proof} For simplicity, we will assume that $E\otimes E'$ is a field. Let $w$ and $w_{N}$ denote the weights of $M$ and $N$ respectively. Note that if $\sigma\in J_K$, then
\[ (M\otimes RN)_{\sigma}^{pq}=\bigoplus_{a+b=p}M_{\sigma}^{a,w-a}\otimes_{\C}RN_{\sigma}^{b,w_{N}-b}.\]
Suppose that, for some $p\in\Z$ and $\sigma\in J_K$, $(M\otimes RN)_{\sigma}^{pp}\neq 0$. Then there exists a pair $(a,b)$ with $a+b=p=w-a+w_{N}-b$ and $M_{\sigma}^{a,w-a}\otimes_{\C}RN_{\sigma}^{b,w_{N}-b}\neq 0$. Two possibilities arise. First, suppose that $w-a\neq a$. Then $w_{N}-b\neq b$, and there are two different summands
\[ (M\otimes RN)_{\sigma}^{pp}\supset\left(M_{\sigma}^{a,w-a}\otimes_{\C} RN_{\sigma}^{b,w_{N}-b}\right)\oplus\left(M_{\sigma}^{w-a,a}\otimes_{\C} RN_{\sigma}^{w_{N}-b,b}\right).\]
These two nonzero summands are interchanged by $F_{\sigma}$ acting on $(M\otimes RN)_{\sigma}^{pp}$, and thus $F_{\sigma}$ does not act on $(M\otimes RN)_{\sigma}^{pp}$ as a scalar. In particular, $M\otimes RN$ does not have critical values, because it's not special. Now suppose that $w-a=a$, so that $w_{N}-b=b$ as well. There is a nonzero $F_{\sigma}$-stable summand $M_{\sigma}^{a,a}\otimes_{\C}RN_{\sigma}^{b,b}$, but by definition of $RN$, this splits as
\[ \left(M_{\sigma}^{a,a}\otimes_{\C}N_{\tau}^{b,b}\right)\oplus\left(M_{\sigma}^{a,a}\otimes_{\C}N_{\overline\tau}^{b,b}\right),\]
where $\tau,\overline\tau$ are the two embeddings of $L$ over $\sigma$; also here, Frobenius interchanges these two nonzero summands, and again this implies that it does not act on $(M\otimes RN)_{\sigma}^{pp}$ as scalars.

For the converse, it's easily seen (using, for instance, (1.3.1) of \cite{deligne}) that $(M\otimes RN)(t)$ is critical, with $t=\lfloor\frac{w+w(\chi)+1}{2}\rfloor$ (the symbol $\lfloor,\rfloor$ denotes the floor function).
\end{proof}

Let $\chi$ be an algebraic Hecke character $\chi$ of $L$ of infinity type $(n_{\tau})_{\tau\in J_{L}}$. From now on, unless otherwise stated, we will assume for simplicity that $E\otimes\Q(\chi)$ is a field. 

\begin{prop} Suppose that $M\in\mathcal{R}(K)_{E}$ is regular. Then $M\otimes RM(\chi)$ has critical values if and only if, for every $\sigma\in J_{K}$ and every $\rho\in J_{\Q(\chi)}$, $t_{\sigma,\rho}(\chi)$ is not equal to any of the values $w-2p_{i}(\sigma,\varphi)$ (for any $i=1,\dots,d$ and any $\varphi\in J_{E}$).
\begin{proof}
For $\sigma\in J_{K}$, $\rho\in J_{\Q(\chi)}$ and $\varphi\in J_{E}$,
\begin{equation}\label{descompMRM} (M\otimes RM(\chi))_{\sigma}^{pq}(\varphi\otimes\rho)=\bigoplus_{i=1}^dM_{\sigma}^{p_i(\sigma,\varphi),q_i(\sigma,\varphi)}(\varphi)\otimes_{\C}RM(\chi)_{\sigma}^{p-p_i(\sigma,\varphi),q-q_i(\sigma,\varphi)}(\rho).\end{equation}
The proposition follows from this and Lemma \ref{lemapp}.
\end{proof}
\end{prop}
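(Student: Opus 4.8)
The plan is to combine the Künneth-type Hodge decomposition (\ref{descompMRM}) with the criterion of Lemma~\ref{lemapp}, reading off (non)vanishing of the ``middle'' Hodge components from the Hodge numbers of $M$ and of $RM(\chi)$. First I would apply Lemma~\ref{lemapp} with $N=M(\chi)$ (so that $RN=RM(\chi)$): $M\otimes RM(\chi)$ has critical values if and only if $(M\otimes RM(\chi))_{\sigma}^{pp}=0$ for every $\sigma\in J_{K}$ and every $p\in\Z$. Since $E\otimes\Q(\chi)$ is a field, this vanishing may be tested one $(\varphi\otimes\rho)$-component at a time, for $(\varphi,\rho)$ running over $J_{E}\times J_{\Q(\chi)}$, so the problem becomes local in $\sigma$, $\varphi$ and $\rho$.

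Next I would specialize (\ref{descompMRM}) to $q=p$. Because $M$ is regular, each factor $M_{\sigma}^{p_{i}(\sigma,\varphi),q_{i}(\sigma,\varphi)}(\varphi)$ is one-dimensional and hence nonzero, so the $i$-th summand is nonzero exactly when $RM(\chi)_{\sigma}^{p-p_{i}(\sigma,\varphi),\,p-q_{i}(\sigma,\varphi)}(\rho)\neq 0$. To pin down the nonzero Hodge bidegrees of $RM(\chi)$ at $(\sigma,\rho)$, I would use that $\Lambda^{2}_{\Q(\chi)}RM(\chi)\cong[\chi_{0}\varepsilon_{L}](-w(\chi))$ has Hodge type $(w(\chi),w(\chi))$ everywhere (since $[\chi_{0}\varepsilon_{L}]$ is an Artin motive, of Hodge type $(0,0)$), which forces $p_{1}^{\chi}(\sigma,\rho)+p_{2}^{\chi}(\sigma,\rho)=w(\chi)$; therefore $RM(\chi)_{\sigma}^{ab}(\rho)\neq 0$ precisely when $\{a,b\}=\{p_{1}^{\chi}(\sigma,\rho),p_{2}^{\chi}(\sigma,\rho)\}$.

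Substituting $q_{i}(\sigma,\varphi)=w-p_{i}(\sigma,\varphi)$, the difference of the two superscripts above equals $w-2p_{i}(\sigma,\varphi)$, while the difference $p_{1}^{\chi}(\sigma,\rho)-p_{2}^{\chi}(\sigma,\rho)$ is $t_{\sigma,\rho}(\chi)$; hence there is an integer $p$ (necessarily with $2p=w+w(\chi)$) for which $(M\otimes RM(\chi))_{\sigma}^{pp}(\varphi\otimes\rho)\neq 0$ if and only if $w-2p_{i}(\sigma,\varphi)=\pm t_{\sigma,\rho}(\chi)$ for some $i$. Finally, to match the one-sided form of the statement, I would observe that $K$ totally real gives $\overline\sigma=\sigma$, so the identity $q_{i}(\sigma,\varphi)=p_{d+1-i}(\overline\sigma,\varphi)$ of Subsection~\ref{ssec:motives} becomes $p_{i}(\sigma,\varphi)+p_{d+1-i}(\sigma,\varphi)=w$; consequently the finite set $\{\,w-2p_{i}(\sigma,\varphi):1\le i\le d\,\}$ is stable under $x\mapsto-x$, and ``$t_{\sigma,\rho}(\chi)=w-2p_{i}(\sigma,\varphi)$ for some $i$'' is equivalent to ``$\pm t_{\sigma,\rho}(\chi)=w-2p_{i}(\sigma,\varphi)$ for some $i$''. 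Negating and quantifying over $\sigma$, $\rho$, $\varphi$, $i$ gives exactly the asserted equivalence.

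I expect the only genuinely delicate point to be keeping the two complex-conjugation symmetries straight: the relation $p_{1}^{\chi}(\sigma,\rho)+p_{2}^{\chi}(\sigma,\rho)=w(\chi)$ for the rank-two motive $RM(\chi)$ (equivalently, that its two Hodge jumps at $(\sigma,\rho)$ are $n(\tau,\rho)$ and $n(\overline\tau,\rho)$, which sum to the weight of $\chi$), and the palindromy $p_{i}(\sigma,\varphi)+p_{d+1-i}(\sigma,\varphi)=w$ of the Hodge numbers of $M$ forced by the total reality of $K$ --- it is precisely this palindromy that reconciles the sign ambiguity $\pm t_{\sigma,\rho}(\chi)$ coming out of the computation with the one-sided inequality in the statement. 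Everything else is a direct unwinding of (\ref{descompMRM}) and Lemma~\ref{lemapp}; as a parity sanity check, $w-2p_{i}(\sigma,\varphi)$ and $t_{\sigma,\rho}(\chi)$ carry the parities of $w$ and $w(\chi)$ respectively, so when $w+w(\chi)$ is odd there are no middle Hodge components at all and both sides of the equivalence hold vacuously.
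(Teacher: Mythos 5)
Your proof is correct and follows the same approach as the paper's one-line proof: decompose the $(p,p)$ Hodge components of $M\otimes RM(\chi)$ via~(\ref{descompMRM}) and apply Lemma~\ref{lemapp}. You add a genuinely useful detail that the paper leaves implicit: the computation naturally produces the two-sided condition $w-2p_{i}(\sigma,\varphi)=\pm t_{\sigma,\rho}(\chi)$, and you correctly resolve the sign ambiguity by observing that the palindromy $p_{i}(\sigma,\varphi)+p_{d+1-i}(\sigma,\varphi)=w$ (a consequence of $K$ being totally real, so $\overline\sigma=\sigma$) makes the set $\{\,w-2p_{i}(\sigma,\varphi)\,\}$ stable under negation, which is exactly why the proposition can be stated one-sidedly.
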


If $M$ is regular, $\sigma\in J_{K}$, $\varphi\in J_{E}$, and $i=0,\dots,d$, consider the interval $I_{i}(\sigma,\varphi)=(w-2p_{i}(\sigma,\varphi),w-2p_{i+1}(\sigma,\varphi))$, where for consistency of notation we take $p_{0}(\sigma,\varphi)=+\infty$ and $p_{d}(\sigma,\varphi)=-\infty$. By the last proposition, if $M$ is regular and $M\otimes RM(\chi)$ has critical values, then for each $\sigma\in J_K$, $\varphi\in J_E$ and $\rho\in J_{\Q(\chi)}$, there is a number $r=r_{\sigma,\varphi,\rho}(\chi)\in\{0,\dots,d\}$ such that $t_{\sigma,\rho}(\chi)\in I_{r}(\sigma,\varphi)$.

Write $E\otimes\Q(\chi)\otimes K\cong\prod_{\mu=1}^{m}K_{\mu}$, where the $K_{\mu}/K$ are finite extensions. By the same reasoning as in Subsection~\ref{ssec:quadraticperiods}, we can unambiguously define $p_{i}(\sigma,\mu)$, $p_{i}^{\chi}(\sigma,\mu)$ and $r_{\sigma,\mu}(\chi)$, by declaring them to be $p_{i}(\sigma,\varphi)$, $p_{i}^{\chi}(\sigma,\rho)$ and $r_{\sigma,\varphi,\rho}(\chi)$ respectively, where $(\varphi,\rho)$ is any pair of embeddings corresponding to the index $\mu$. This will be useful later to construct a suitable rational basis of $(M\otimes RM(\chi))_{\dR}$.

Retaining the hypothesis that $M$ and $\chi$ are regular, with $M\otimes RM(\chi)$ having critical values, we can determine the set of critical integers of $M\otimes RM(\chi)$ as follows. Fix $\sigma\in J_{K}$, $\varphi\in J_{E}$ and $\rho\in J_{\Q(\chi)}$. Then~(\ref{descompMRM}) shows that
\[ \left\{(p,q):h_{\sigma}^{pq}(\varphi\otimes\rho)\neq 0,\quad p<q\right\}=\]
\[ \left\{ (p_{i}(\sigma,\varphi)+p_{1}^{\chi}(\sigma,\rho),q_{i}(\sigma,\varphi)+p_{2}^{\chi}(\sigma,\rho)) \right\}_{i=r+1}^{d}\cup\]\[ \left\{ (p_{i}(\sigma,\varphi)+p_{2}^{\chi}(\sigma,\rho),q_{i}(\sigma,\varphi)+p_{1}^{\chi}(\sigma,\rho)) \right\}_{i=d+1-r}^{d}, \]
where $r=r_{\sigma,\varphi,\rho}(\chi)$. Note that the two sets in the union may have non-trivial intersection. However, the elements in the intersection contribute with $h_{\sigma}^{pq}(\varphi\otimes\rho)=2$, while the elements that are not in the intersection contribute with $h_{\sigma}^{pq}(\varphi\otimes\rho)=1$. It follows from the construction of $L_{\infty}(\varphi\otimes\rho,M\otimes RM(\chi),s)$ that this equals
\[ \prod_{\sigma\in J_{K}}\left( \prod_{i=r_{\sigma,\varphi,\rho}(\chi)+1}^{d}\Gamma_{\C}(s-p_{i}(\sigma,\varphi)-p_{1}^{\chi}(\sigma,\rho))\right.\]\[\left. \prod_{i=d+1-r_{\sigma,\varphi,\rho}(\chi)}^{d}\Gamma_{\C}(s-p_{i}(\sigma,\varphi)-p_{2}^{\chi}(\sigma,\rho)) \right). \]
Thus, letting 
\[ \upsilon^{(1)}_{\varphi,\rho}(\chi)=\max\{ p_{r_{\sigma,\varphi,\rho}(\chi)+1}(\sigma,\varphi)+p_{1}^{\chi}(\sigma,\rho),p_{d+1-r_{\sigma,\varphi,\rho}(\chi)}(\sigma,\varphi)+p_{2}^{\chi}(\sigma,\rho) \}_{\sigma\in J_{K}}, \]
the set of poles of $L_{\infty}(\varphi\otimes\rho,M\otimes RM(\chi),s)$ consists of the set of integers $m$ such that $m\leq\upsilon^{(1)}_{\varphi,\rho}(\chi)$
Similarly, letting
\[ \upsilon^{(2)}_{\varphi,\rho}(\chi)=\min\{ p_{r_{\sigma,\varphi,\rho}(\chi)}(\sigma,\varphi)+p_{1}^{\chi}(\sigma,\rho),p_{d-r_{\sigma,\varphi,\rho}(\chi)}(\sigma,\varphi)+p_{2}^{\chi}(\sigma,\rho) \}_{\sigma\in J_{K}}, \]
the set of poles of $L_{\infty}(\varphi\otimes\rho,(M\otimes RM(\chi))^{\vee},1-s)$ consists of the set of integers $m$ such that $m\geq\upsilon^{(2)}_{\varphi,\rho}(\chi)+1$. We conclude that the set of critical integers of $M\otimes RM(\chi)$ consists of the set of integers $m$ such that
\begin{equation}
\label{setofcriticalvalues} \upsilon_{\varphi,\rho}^{(1)}(\chi)<m\leq\upsilon_{\varphi,\rho}^{(2)}(\chi).
\end{equation}

The following theorem is the main result of this section, and it generalizes Proposition 1.7.6 of \cite{harriscrelle}. The notation $\lceil d/2\rceil$ refers to the ceiling function.

\begin{thm}\label{thmfact} Let $M$ be a regular, special realization over $K$ with coefficients in $E$, of rank $d$ and weight $w$, endowed with a polarization. Suppose that $\chi$ is a critical algebraic Hecke character of $L$ of weight $w(\chi)$. Assume that $M\otimes RM(\chi)$ has critical values. Let $\mathbf{r}_{\sigma}=(r_{\sigma,\varphi,\rho}(\chi))_{\varphi\otimes\rho}$ and $\mathbf{s}_{\sigma}=(d-r_{\sigma,\varphi,\rho}(\chi))_{\varphi\otimes\rho}$. Then 
\[ c^{+}_{\sigma}(M\otimes RM(\chi))\sim(2\pi i)^{-\lceil d/2\rceil w(\chi)}\delta_{\sigma}([\chi_{0}\varepsilon_{L}])^{\mathbf{r}_{\sigma}}\delta_{\sigma}(M)a_{\sigma}^{*}(\chi)Q_{\sigma}(\chi)^{\mathbf{r}_{\sigma}-\lceil d/2\rceil}\prod_{j=1}^{\mathbf{s}_{\sigma}}Q_{j,\sigma},\]
where $a_{\sigma}^{*}(\chi)=1$ if $d$ is even, and $a_{\sigma}^{*}(\chi)=a^{\pm}_{\sigma}(\chi)$ if $d$ is odd, with $\pm=-$ if $d^{+}>d^{-}$ and $\pm=+$ if $d^{-}>d^{+}$. In the formula, $\sim$ means $\sim_{(E\otimes\Q(\chi))\otimes K,\sigma}$.

\begin{rem} If $K=\Q$, then $\mathbf{r}_{1}=(r_{1,\varphi,\rho})$, and the integers $r_{1,\varphi,\rho}$ are actually independent of the embeddings. This extra simplicity comes from the fact that the nonzero Hodge components of $M$ and $RM(\chi)$ are free of rank $1$ over $E\otimes\C$ and $\Q(\chi)\otimes\C$ respectively in this case (see Proposition 2.5 of \cite{deligne}), which is not true in general if $[K:\Q]>1$.
\end{rem}

\begin{proof} 
Recall that we are assuming that $E\otimes\Q(\chi)$ is a field; the general case basically follows from this case. The polarization of $M$ and the $[\chi_{0}\varepsilon_{L}]$-polarization of $RM(\chi)$~(\ref{almostpol}), define an $A$-polarization
\[ M\otimes RM(\chi)\cong (M\otimes RM(\chi))^{\vee}\otimes_{E\otimes\Q(\chi)}[\chi_{0}\varepsilon_{L}](-w-w(\chi))\]
in $\mathcal{R}(K)_{E\otimes\Q(\chi)}$, with $A=[\chi_{0}\varepsilon_{L}]$. Note that $(\chi_{0}\varepsilon_{L})(c_{\sigma})=(-1)^{w(\chi)+1}$ for any complex conjugation $c_{\sigma}\in\Gamma_{K}$ attached to any $\sigma\in J_{K}$. Hence, the motive $[\chi_{0}\varepsilon_{L}]$ is special, and by Remark~\ref{rem:twist}, we can write
\begin{equation}\label{exprper} c_{\sigma}^{+}(M\otimes RM(\chi))\sim(2\pi i)^{(-w-w(\chi))d}\delta_{\sigma}[\chi_{0}\varepsilon_{L}]^{d}c_{\sigma}^{(-1)^{w+1}}((M\otimes RM(\chi))^{\vee}).\end{equation}
Here, and in the rest of the proof, we write $\sim$ for $\sim_{(E\otimes\Q(\chi))\otimes K,\sigma}$.

For $\sigma\in J_K$, $\varphi\in J_E$ and $\rho\in J_{\Q(\chi)}$, denote by $I_{\infty,\sigma}(\varphi\otimes\rho)$ the comparison isomorphism between $(M\otimes RM(\chi))_{\sigma}(\varphi\otimes\rho)$ and $(M\otimes RM(\chi))_{\dR}\otimes_{(E\otimes\Q(\chi))\otimes K,\varphi\otimes\rho\otimes\sigma}\C$. Note that $d^{\pm}(M\otimes RM(\chi))=d$, $F^+((M\otimes RM(\chi))_{\dR})=F^-((M\otimes RM(\chi)_{\dR})$, and

\begin{align}\label{filtr} I_{\infty,\sigma}(\varphi,\rho)^{-1}(F^{\pm}((M\otimes RM(\chi))_{\dR})\otimes_{(E\otimes\Q(\chi))\otimes K,\varphi\otimes\rho\otimes\sigma}\C)=\notag\\
\left(\bigoplus_{i=1}^{r_{\sigma,\varphi,\rho}(\chi)}M_{\sigma}^{p_{i}(\sigma,\varphi),q_{i}(\sigma,\varphi)}(\varphi)\otimes_{\C}RM(\chi)_{\sigma}^{p_{1}^{\chi}(\sigma,\rho),p_{2}^{\chi}(\sigma,\rho)}(\rho)\right)\oplus\notag\\ \left(\bigoplus_{i=1}^{d-r_{\sigma,\varphi,\rho}(\chi)}M_{\sigma}^{p_{i}(\sigma,\varphi),q_{i}(\sigma,\varphi)}(\varphi)\otimes_{\C}RM(\chi)_{\sigma}^{p_{2}^{\chi}(\sigma,\rho),p_{1}^{\chi}(\sigma,\rho)}(\rho)\right).\end{align}
Also,
\[ (M\otimes RM(\chi))_{\sigma}^{+}=\left(M_{\sigma}^{+}\otimes RM(\chi)_{\sigma}^{+}\right)\oplus\left(M_{\sigma}^{-}\otimes RM(\chi)_{\sigma}^{-}\right),\]
\[ (M\otimes RM(\chi))_{\sigma}^{-}=\left(M_{\sigma}^{-}\otimes RM(\chi)_{\sigma}^{+}\right)\oplus\left(M_{\sigma}^{+}\otimes RM(\chi)_{\sigma}^{-}\right).\]
Choose $E$-bases $\{e_{i,\sigma}\}_{i=1,\dots,d^+}$, $\{f_{i,\sigma}\}_{i=1,\dots,d^-}$ of $M_{\sigma}^+$ and $M_{\sigma}^-$ respectively, and choose $\Q(\chi)$-bases $\{e_{\sigma}(\chi)\}$ and $\{f_\sigma(\chi)\}$ of $RM(\chi)_{\sigma}^+$ and $RM(\chi)_{\sigma}^-$ respectively. Also, choose an $E\otimes K$-bases $\{\omega_{i,\sigma}\}_{i=1,\dots,d}$ of $M_{\dR}$ as in Subsection~\ref{ssec:quadraticperiods}. Take as well a $\Q(\chi)\otimes K$-basis $\{\omega_{\sigma}(\chi),\omega_{\sigma}'(\chi)\}$ as in Subsection~\ref{motivesRM}. In particular, $\{\omega_{\sigma}(\chi)(\rho)\}$ is a $\C$-basis of $RM(\chi)_{\sigma}^{p_{1}^{\chi}(\sigma,\rho),p_{2}^{\chi}(\sigma,\rho)}(\rho)$. For each $i=1,\dots,d$, we let 
\[ \omega_{i,\sigma}(\chi)=\omega_{i,\sigma}\otimes\omega_{\sigma}(\chi),\quad\omega_{i,\sigma}'(\chi)=\omega_{i,\sigma}\otimes\omega_{\sigma}'(\chi).\]
Let 
\[ \{e_{i,\sigma}\otimes e_{\sigma}(\chi)\}_{i=1,\dots,d^{+}}\cup\{f_{i,\sigma}\otimes f_{\sigma}(\chi)\}_{i=1,\dots,d^{-}}\]
and 
\[ \{e_{i,\sigma}\otimes f_{\sigma}(\chi)\}_{i=1,\dots,d^{+}}\cup\{f_{i,\sigma}\otimes e_{\sigma}(\chi)\}_{i=1,\dots,d^{-}}\]
be the $E\otimes\Q(\chi)$-bases of $(M\otimes RM(\chi))_{\sigma}^{+}$ and $(M\otimes RM(\chi))_{\sigma}^{-}$ respectively, obtained from the chosen bases for $M$ and $RM(\chi)$. Their union is a basis of $(M\otimes RM(\chi))_{\sigma}$. Consider also the $E\otimes\Q(\chi)\otimes K$-basis of $(M\otimes RM(\chi))_{\dR}$ given by $\{\omega_{i,\sigma}(\chi),\omega_{i,\sigma}'(\chi)\}_{i=1,\dots,d}$. We want to use Remark~\ref{rem:c+reg} to compute the Deligne periods of the dual of $M\otimes RM(\chi)$. For that we need an appropriate basis of its de Rham realization which contains a basis of the corresponding $\pm$-step. Using~(\ref{filtr}), we see that the basis $\{\omega_{i,\sigma}(\chi),\omega_{i,\sigma}'(\chi)\}_{i=1,\dots,d}$ does not exactly meet this in the first place, since the indexing for each embedding $\varphi\otimes\rho$ will depend on $r_{\sigma,\varphi,\rho}(\chi)$, which may change for different choices. However, we can form another basis by reordering this basis for each embedding, in the following way. Recall the notation $E\otimes\Q(\chi)\otimes K\cong\prod_{\mu}K_{\mu}$ introduced before, so that we can speak of the numbers $r_{\mu}=r_{\sigma,\mu}(\chi)$. We can write $(M\otimes RM(\chi))_{\dR}=\bigoplus_{\mu}(M\otimes RM(\chi))_{\dR,\mu}$. For each $\mu$, if $x\in(M\otimes RM(\chi))_{\dR}$, $x_{\mu}$ will denote the element $x\otimes_{E\otimes\Q(\chi)\otimes K}1_{K_{\mu}}$. Consider the ordered basis 
\[ \left\{\omega_{i,\sigma}(\chi)_{\mu}\right\}_{i=1}^{r_{\mu}}\cup\left\{\omega_{i,\sigma}'(\chi)_{\mu}\right\}_{i=1}^{d-r_{\mu}}\cup\left\{\omega_{i,\sigma}(\chi)_{\mu}\right\}_{i=r_{\mu}+1}^{d}\cup\left\{\omega_{i,\sigma}'(\chi)_{\mu}\right\}_{i=d+1-r_{\mu}}^{d}. \]
These bases define an $E\otimes\Q(\chi)\otimes K$-basis $\{\gamma_{1,\sigma},\dots,\gamma_{2d,\sigma}\}$ of $M\otimes RM(\chi)_{\dR}$ with the property that the first $d$ elements form a basis of $F^{\pm}$, because the decomposition~(\ref{filtr}) actually takes place over $K_{\mu}$ (where $\mu$ corresponds to $(\varphi,\rho)$). 

Let $\Omega_{i,\sigma}$, $\Omega_{\sigma}(\chi)$ and $\Omega_{\sigma}'(\chi)$ be the elements constructed in Subsection~\ref{ssec:quadraticperiods}. Let $\Omega_{i,\sigma}(\chi)=\Omega_{i,\sigma}\otimes\Omega_{\sigma}(\chi)$ and $\Omega_{i,\sigma}'(\chi)=\Omega_{i,\sigma}\otimes\Omega_{\sigma}'(\chi)$. Consider the ordered $\C$-basis of $(M\otimes RM(\chi))_{\sigma}(\varphi\otimes\rho)$ given by
\[ \left\{\Omega_{i,\sigma}(\chi)(\varphi\otimes\rho)\right\}_{i=1}^{r_{\varphi,\rho}}\cup\left\{\Omega_{i,\sigma}'(\chi)(\varphi\otimes\rho)\right\}_{i=1}^{d-r_{\varphi,\rho}}\cup\]
\[\left\{\Omega_{i,\sigma}(\chi)(\varphi\otimes\rho)\right\}_{i=r_{\varphi,\rho}+1}^{d}\cup\left\{\Omega_{i,\sigma}'(\chi)(\varphi\otimes\rho)\right\}_{i=d+1-r_{\varphi,\rho}}^{d}, \]
where $r_{\varphi,\rho}=r_{\sigma,\varphi,\rho}(\chi)$. Name the elements of this ordered basis as
\[ \{\Gamma_{1,\sigma}(\varphi\otimes\rho),\dots,\Gamma_{2d,\sigma}(\varphi\otimes\rho)\}.\]
Putting these bases together, we get an $E\otimes\Q(\chi)\otimes\C$-basis $\{\Gamma_{1,\sigma},\dots,\Gamma_{2d,\sigma}\}$ of $(M\otimes RM(\chi))_{\sigma}\otimes\C$. It's easy to see, given our choices, that $\{I_{\infty,\sigma}(\Gamma_{1,\sigma}),\dots,I_{\infty,\sigma}(\Gamma_{2d,\sigma})\}$ is obtained from $\{\gamma_{1,\sigma},\dots,\gamma_{2d,\sigma}\}$ by means of an upper triangular matrix with diagonal entries $1$, as in Remark~\ref{rem:c+reg}. Thus, to compute $c_{\sigma}^{\pm}((M\otimes RM(\chi))^{\vee})$, we can use the basis $\{\Gamma_{1,\sigma},\dots,\Gamma_{2d,\sigma}\}$. We write
\begin{align}\label{coefficientesO} \Omega_{j,\sigma}=\sum_{i=1}^{d^{+}}\tilde a_{ij,\sigma}^{+}e_{i,\sigma}+\sum_{i=1}^{d-}\tilde a_{ij,\sigma}^{-}f_{i,\sigma},\quad j=1,\dots,d,\\
\label{coefficienteschiO}\Omega_{\sigma}(\chi)=\tilde a_{\sigma}^+(\chi)e_{\sigma}(\chi)+\tilde a_{\sigma}^-(\chi)f_{\sigma}(\chi),\\ 
\notag \Omega_{\sigma}'(\chi)=\tilde b_{\sigma}^+(\chi)e_{\sigma}(\chi)+\tilde b_{\sigma}^-(\chi)f_{\sigma}(\chi),\end{align}
\begin{equation}\label{coefficientesG}\Gamma_{j,\sigma}=\sum_{i=1}^{d^{+}}b_{ij,\sigma}^{+}e_{i,\sigma}\otimes e_{\sigma}(\chi)+\sum_{i=1}^{d-}c_{ij,\sigma}^{+}f_{i,\sigma}\otimes f_{\sigma}(\chi)+\end{equation}\[\sum_{i=1}^{d^{+}}b_{ij,\sigma}^{-}e_{i,\sigma}\otimes f_{\sigma}(\chi)+\sum_{i=1}^{d^{-}}c_{ij,\sigma}^{-}f_{i,\sigma}\otimes e_{\sigma}(\chi).\]
Then
\[ c_{\sigma}^{\pm}((M\otimes RM(\chi))^{\vee})\sim\det T_{\sigma}^{\pm},\]
where
\[ T_{\sigma}^{\pm}=\left(\begin{array}{c}\left(b_{ij,\sigma}^{\pm}\right)_{\substack{i=1,\dots,d^{+}\\j=1,\dots,d}}\\ \left(c_{ij,\sigma}^{\pm}\right)_{\substack{i=1,\dots,d^{-}\\j=1,\dots,d}}\end{array}\right).\]
We now compute each $\varphi\otimes\rho$-component $\det(T_{\sigma}^{\pm})_{\varphi,\rho}=\det(T_{\sigma,\varphi,\rho}^{\pm})$. By the shape of the basis $\{\Gamma_{1,\sigma}(\varphi\otimes\rho),\dots,\Gamma_{2d,\sigma}(\varphi\otimes\rho)\}$, equations~(\ref{coefficientesO}),~(\ref{coefficienteschiO}) and~(\ref{coefficientesG}) imply that
\[ T_{\sigma,\varphi,\rho}^{\pm}=\left(\begin{array}{cc}    
                                                 \left(\tilde a_{\sigma,\rho}^{\pm}(\chi)\tilde a_{ij,\sigma,\varphi}^{+}\right)_{ \substack{1\leq i\leq d^{+}\\1\leq j\leq r} } &
                                                 \left(\tilde b_{\sigma,\rho}^{\pm}(\chi)\tilde a_{ij,\sigma,\varphi}^{+}\right)_{ \substack{1\leq i\leq d^{+}\\1\leq j\leq d-r} } \\
                                                 \left(\tilde a_{\sigma,\rho}^{\mp}(\chi)\tilde a_{ij,\sigma,\varphi}^{-}\right)_{ \substack{1\leq i\leq d^{-}\\1\leq j\leq r} } &
                                                 \left(\tilde b_{\sigma,\rho}^{\mp}(\chi)\tilde a_{ij,\sigma,\varphi}^{-}\right)_{ \substack{1\leq i\leq d^{-}\\1\leq j\leq d-r} } \end{array}\right),\]                                               
where $r=r_{\sigma,\varphi,\rho}(\chi)$. Note that $d-r<r$ because $p_{1}^{\chi}(\sigma,\rho)>p_{2}^{\chi}(\sigma,\rho)$.

Now, as in Subsection~\ref{ssec:quadraticperiods}, there are elements $\lambda_{i,\sigma}\in(E\otimes\C)^{\times}$, $\lambda_{\sigma}(\chi)\in(\Q(\chi)\otimes\C)^{\times}$, $\mu_{i,\sigma}\in(E\otimes K)^{\times}$ and $\mu_{\sigma}(\chi)\in(\Q(\chi)\otimes K)^{\times}$ such that 
\[ F_{\sigma,\C}(\Omega_{i,\sigma})=\lambda_{i,\sigma}\Omega_{d+1-i,\sigma},\quad F_{\sigma,\C}(\Omega_{\sigma}(\chi))=\lambda_{\sigma}(\chi)\Omega_{\sigma}'(\chi),\] 
\[ Q_{i,\sigma}=\lambda_{i,\sigma}\sigma(\mu_{i,\sigma}),\quad Q_{\sigma}(\chi)=\lambda_{\sigma}(\chi)\sigma(\mu_{\sigma}(\chi))\delta_{\sigma}[\chi_{0}\varepsilon_{L}]^{-1}.\]
This, together with~(\ref{coefficientesO}) and~(\ref{coefficienteschiO}), implies that
\[ \tilde a_{ij,\sigma}^{\pm}=\pm\lambda_{j,\sigma}\tilde a_{i,d+1-j,\sigma}^{\pm},\]
\[ \tilde a_{\sigma}^{\pm}(\chi)=\pm\lambda_{\sigma}(\chi)\tilde b_{\sigma}^{\pm}(\chi).\]
Thus,
\[ \det(T_{\sigma}^{\pm})=\sigma(\mu_{\sigma}(\chi))^{d-\mathbf{r}_{\sigma}}\delta_{\sigma}[\chi_{0}\varepsilon_{L}]^{\mathbf{r}_{\sigma}-d}Q_{\sigma}(\chi)^{\mathbf{r}_{\sigma}-d}(\tilde a_{\sigma}^{\pm}(\chi))^{d^{+}}(\tilde a_{\sigma}^{\mp}(\chi))^{d-}\det(V_{\sigma}^{\pm}), \]
where $V_{\sigma}^{\pm}\in\GL_{d}(E\otimes\Q(\chi)\otimes\C)$ is the matrix such that its $\varphi,\rho$-components are
\[ V_{\sigma,\varphi,\rho}^{\pm}=\left(\begin{array}{cc}    
                                                 \left(\tilde a_{ij,\sigma,\varphi}^{+}\right)_{\substack{1\leq i\leq d^{+}\\1\leq j\leq r}} &
                                                 \left(\pm\tilde a_{ij,\sigma,\varphi}^{+}\right)_{\substack{1\leq i\leq d^{+}\\1\leq j\leq d-r}} \\
                                                 \left(\tilde a_{ij,\sigma,\varphi}^{-}\right)_{\substack{1\leq i\leq d^{-}\\1\leq j\leq r}} &
                                                 \left(\mp\tilde a_{ij,\sigma,\varphi}^{-}\right)_{\substack{1\leq i\leq d^{-}\\1\leq j\leq d-r}} \end{array}\right).\]
Note that $\sigma(\mu_{\sigma}(\chi))^{d-\mathbf{r}_{\sigma}}$, as an element of $E\otimes\Q(\chi)\otimes\C$, belongs to $E\otimes\Q(\chi)\otimes K$ via $\sigma$. This follows from the decomposition $E\otimes\Q(\chi)\otimes K\cong\prod_{\mu}K_{\mu}$ and the fact that the $r_{\sigma,\varphi,\rho}(\chi)$ only depend on the index $\mu$. Using the same reasoning with $(-1)^{d-\mathbf{r}_{\sigma}}$, we get
\[ c_{\sigma}^{\pm}((M\otimes RM(\chi))^{\vee})\sim\delta_{\sigma}[\chi_{0}\varepsilon_{L}]^{\mathbf{r}_{\sigma}-d}Q_{\sigma}(\chi)^{\mathbf{r}_{\sigma}-d}(\tilde a_{\sigma}^{\pm}(\chi))^{d^{+}}(\tilde a_{\sigma}^{\mp}(\chi))^{d-}\det(V_{\sigma}^{+}).\]
Applying elementary column operations and using the relations~(\ref{relationQ}), together with the fact that the powers of $2$ and $-1$ that appear in the process belong to $E\otimes\Q(\chi)\otimes K$ by the reasoning above, we conclude that
\[ \det(V_{\sigma}^{+})\sim\det((\tilde a_{ij,\sigma,\varphi}^+)_{i,j=1,\dots,d^+})\det((\tilde a_{ij,\sigma,\varphi}^-)_{i,j=1,\dots,d^-})\prod_{j=d^{+}+1}^{\mathbf{r}_{\sigma}}Q_{j,\sigma} \] 
(for this, one needs to use at some point that when $d=2k-1$ is odd, $Q_{k,\sigma}\in(E\otimes K)^{\times}$). Using~(\ref{exprdeltaQ}) (note that $A$ is trivial in this case), we get
\[ \det(V_{\sigma}^{+})\sim\delta_{\sigma}(M)^{-1}\prod_{j=1}^{\lfloor d/2\rfloor}Q_{j,\sigma}\prod_{j=d^{+}+1}^{\mathbf{r}_{\sigma}}Q_{j,\sigma}.\]
Then it's easy to see, again using that $Q_{k,\sigma}\sim1$, that
\[ \det(V_{\sigma}^{+})\sim\delta_{\sigma}(M)^{-1}\prod_{j=1}^{\mathbf{r}_{\sigma}}Q_{j,\sigma}.\]
We can use Lemma~\ref{lemmadualityQ} (rather, it's generalization to powers indexed by embeddings of $E\otimes\Q(\chi)$; the formula in the lemma still holds because the relevant scalars in $E\otimes\Q(\chi)\otimes K$ are powers of the $\sigma(\mu_{j,\sigma})$, which we can index by the fields $K_{\mu}$ as above) to obtain
\[ \det(V_{\sigma}^{+})\sim\delta_{\sigma}(M)^{-1}\prod_{j=1}^{\mathbf{s}_{\sigma}}Q_{j,\sigma}.\]
Note that, by Remark~\ref{rem:c+reg}, $\tilde a_{\sigma}^{\pm}(\chi)\sim_{\Q(\chi)\otimes K,\sigma}a_{\sigma}^{\pm}(\chi)\sim_{\Q(\chi)\otimes K,\sigma}c_{\sigma}^{\pm}(RM(\chi)^{\vee})$. Also, by the fact that $RM(\chi)^{\vee}\cong RM(\chi)(w(\chi))\otimes_{\Q(\chi)}[\chi_{0}\varepsilon_{L}]$ and by~(\ref{propRM}), we can write
\[ c_{\sigma}^{+}(RM(\chi)^{\vee})c_{\sigma}^{-}(RM(\chi)^{\vee})\sim c_{\sigma}^{+}(\chi)c_{\sigma}^{-}(\chi)(2\pi i)^{2w(\chi)}\delta_{\sigma}[\chi_{0}\varepsilon_{L}]^{-2}\sim(2\pi i)^{w(\chi)}Q_{\sigma}(\chi).\]
We put together all these formulas with~(\ref{exprper}) to arrive at the main formulas in the statement of the theorem (we also use~(\ref{delta2})).
\end{proof}
\end{thm}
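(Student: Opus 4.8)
The plan is to realize $c^{+}_{\sigma}(M\otimes RM(\chi))$ through the general period machinery of Subsections~\ref{ssec:periods}--\ref{ssec:quadraticperiods} applied to the realization $N:=M\otimes RM(\chi)$. The polarization of $M$ combined with the $[\chi_{0}\varepsilon_{L}]$-polarization~(\ref{almostpol}) of $RM(\chi)$ equips $N$ with an $A$-polarization, $A=[\chi_{0}\varepsilon_{L}]$, giving an isomorphism $N\cong N^{\vee}\otimes_{E\otimes\Q(\chi)}[\chi_{0}\varepsilon_{L}](-w-w(\chi))$. Since $(\chi_{0}\varepsilon_{L})(c_{\sigma})=(-1)^{w(\chi)+1}$ is independent of $\sigma$, the Artin motive $[\chi_{0}\varepsilon_{L}]$ is special, so Remark~\ref{rem:twist} (applied to the Tate twist and to tensoring by $[\chi_{0}\varepsilon_{L}]$) expresses $c^{+}_{\sigma}(N)$ as $c^{(-1)^{w+1}}_{\sigma}(N^{\vee})$ times an explicit monomial in $2\pi i$ and $\delta_{\sigma}[\chi_{0}\varepsilon_{L}]$. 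So the real task is to compute the Deligne $\pm$-periods of the dual realization $N^{\vee}$, which I would do via Remark~\ref{rem:c+reg}.

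For the de Rham realization I take the Hodge-adapted $E\otimes K$-basis $\{\omega_{i,\sigma}\}$ of $M_{\dR}$ from Subsection~\ref{ssec:quadraticperiods} and the basis $\{\omega_{\sigma}(\chi),\omega'_{\sigma}(\chi)\}$ of $RM(\chi)_{\dR}$ from Subsection~\ref{motivesRM}, form all tensor products, and then --- using~(\ref{descompMRM}) and the corresponding decomposition of the Hodge filtration of $N$ --- \emph{reorder} these tensors separately on each factor $K_{\mu}$ of $E\otimes\Q(\chi)\otimes K\cong\prod_{\mu}K_{\mu}$ so that the first $d$ of them span $F^{\pm}(N_{\dR})$; this is legitimate because the cut-off $r_{\sigma,\varphi,\rho}(\chi)$ depends only on $\mu$. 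On the Betti side I use $N_{\sigma}^{+}\cong(M_{\sigma}^{+}\otimes RM(\chi)_{\sigma}^{+})\oplus(M_{\sigma}^{-}\otimes RM(\chi)_{\sigma}^{-})$ and its counterpart for $N_{\sigma}^{-}$, and form the tensor-product period vectors $\Omega_{i,\sigma}\otimes\Omega_{\sigma}(\chi)$, $\Omega_{i,\sigma}\otimes\Omega'_{\sigma}(\chi)$ (with $\Omega_{i,\sigma}$, $\Omega_{\sigma}(\chi)$, $\Omega'_{\sigma}(\chi)$ from Subsections~\ref{ssec:quadraticperiods} and~\ref{motivesRM}), reordered compatibly. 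As in Remark~\ref{rem:c+reg}, the comparison isomorphism carries this basis to the chosen de Rham basis by a unipotent matrix, so $c^{\pm}_{\sigma}(N^{\vee})$ is the determinant of the block matrix $T^{\pm}_{\sigma}$ recording the Betti basis against the $F^{\pm}$-part of the de Rham basis.

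It then remains to evaluate $\det T^{\pm}_{\sigma}$ component by component. Using the Frobenius relations of~(\ref{relationQ}) and $F_{\sigma,\C}(\Omega_{\sigma}(\chi))=\lambda_{\sigma}(\chi)\Omega'_{\sigma}(\chi)$, together with $Q_{j,\sigma}=\lambda_{j,\sigma}\sigma(\mu_{j,\sigma})$ and $Q_{\sigma}(\chi)=\lambda_{\sigma}(\chi)\sigma(\mu_{\sigma}(\chi))\delta_{\sigma}[\chi_{0}\varepsilon_{L}]^{-1}$, I pull out explicit powers of $Q_{\sigma}(\chi)$, $\delta_{\sigma}[\chi_{0}\varepsilon_{L}]$, $\sigma(\mu_{\sigma}(\chi))$ and the $RM(\chi)$-periods $\tilde a^{\pm}_{\sigma}(\chi)$, reducing to a determinant $\det V^{+}_{\sigma}$ built only from the coefficients $\tilde a^{\pm}_{ij,\sigma}$ of $M$ (all incidental scalars, including stray powers of $2$ and $-1$, lie in $E\otimes\Q(\chi)\otimes K$ because the reordering data are $\mu$-indexed, hence vanish under $\sim_{(E\otimes\Q(\chi))\otimes K,\sigma}$). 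Elementary column operations and~(\ref{relationQ}) reduce $\det V^{+}_{\sigma}$, via~(\ref{exprdeltaQ}), to $\delta_{\sigma}(M)^{-1}\prod_{j=1}^{\mathbf{r}_{\sigma}}Q_{j,\sigma}$ (with $Q_{k,\sigma}\sim 1$ when $d=2k-1$ is odd), and then Lemma~\ref{lemmadualityQ}, in the form indexed by embeddings of $E\otimes\Q(\chi)$, rewrites it as $\delta_{\sigma}(M)^{-1}\prod_{j=1}^{\mathbf{s}_{\sigma}}Q_{j,\sigma}$. For the factor $(\tilde a^{\pm}_{\sigma}(\chi))^{d^{+}}(\tilde a^{\mp}_{\sigma}(\chi))^{d^{-}}$ I use $\tilde a^{\pm}_{\sigma}(\chi)\sim c^{\pm}_{\sigma}(RM(\chi)^{\vee})$ together with $RM(\chi)^{\vee}\cong RM(\chi)(w(\chi))\otimes_{\Q(\chi)}[\chi_{0}\varepsilon_{L}]$ and~(\ref{propRM}) to get $c^{+}_{\sigma}(RM(\chi)^{\vee})c^{-}_{\sigma}(RM(\chi)^{\vee})\sim(2\pi i)^{w(\chi)}Q_{\sigma}(\chi)$, so this factor contributes $((2\pi i)^{w(\chi)}Q_{\sigma}(\chi))^{\lfloor d/2\rfloor}$ times, when $d$ is odd, a single leftover $a^{*}_{\sigma}(\chi)=a^{\pm}_{\sigma}(\chi)$, the sign fixed by which of $d^{+},d^{-}$ is larger (using~(\ref{formulaasigmachisigno})). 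Feeding everything back into the first paragraph, using~(\ref{delta2}) to replace the leftover $\delta_{\sigma}(M)^{-1}$ by $\delta_{\sigma}(M)\,(2\pi i)^{wd}$, and collecting exponents then gives the stated formula, the shift $\lfloor d/2\rfloor\mapsto\lceil d/2\rceil$ arising precisely from combining the twist with the $RM(\chi)$-contribution.

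The main obstacle I expect is the combinatorial bookkeeping that $[K:\Q]>1$ forces, which is not present in the $K=\Q$ case of~\cite{harriscrelle}. There the nonzero Hodge components of $M$ and $RM(\chi)$ are free over $E\otimes\C$ and $\Q(\chi)\otimes\C$, so the cut-offs $r_{\sigma,\varphi,\rho}(\chi)$ are independent of the embeddings and one works uniformly; in general they genuinely vary with $(\varphi,\rho)$, so the reordering of bases, and each determinant identity, must be carried out over the individual fields $K_{\mu}$ in $E\otimes\Q(\chi)\otimes K\cong\prod_{\mu}K_{\mu}$, with a verification at every step that the scalars extracted from the determinants are defined over $K_{\mu}$ (equivalently, lie in $E\otimes\Q(\chi)\otimes K$ via $\sigma$) so as to be invisible to $\sim$. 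The remaining, more routine, difficulties are tracking all powers of $2\pi i$ and $\delta_{\sigma}[\chi_{0}\varepsilon_{L}]$ through the twist and the determinant computation, and disentangling the parity of $d$ --- which produces the factor $a^{*}_{\sigma}(\chi)$ and the passage from $\lfloor d/2\rfloor$ to $\lceil d/2\rceil$, and which requires some care with the sign vectors $e_{\tau}$ of Subsection~\ref{motivesRM}.
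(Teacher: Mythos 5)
Your proposal reproduces the paper's proof essentially step for step: same reduction to $N^{\vee}$ via the $A$-polarization and Remark~\ref{rem:twist}, same $\mu$-indexed reordering of Hodge-adapted bases over the factors $K_{\mu}$, same extraction of $RM(\chi)$-periods from $\det T^{\pm}_{\sigma}$ to reach $\det V^{+}_{\sigma}$, same appeal to~(\ref{relationQ}), (\ref{exprdeltaQ}), Lemma~\ref{lemmadualityQ}, (\ref{propRM}) and~(\ref{delta2}). The approach and all key lemmas coincide with the paper's argument, and your discussion of where the $[K:\Q]>1$ bookkeeping bites is accurate.
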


\begin{rem} There is an apparent difference between the formulas of the theorem and those of Proposition 1.7.6 of \cite{harriscrelle}. The main point is that we are also leaving the factor $\delta_{\sigma}(M)$ instead of replacing it with powers of $(2\pi i)$ and discriminant factors. We don't need to do that in this paper. 
\end{rem}

The following proposition follows from Theorem~\ref{thmfact} and its proof when $M=\Q(0)$. Alternatively, it's a simple consequence of the isomorphism $RM(\chi)\cong RM(\chi)^{\vee}\otimes_{\Q(\chi)}[\chi_{0}\varepsilon_{L}](-w(\chi))$.

\begin{prop}\label{propa+a-} Let $\chi$ be a critical algebraic Hecke character of $L$. Then
\[ c^{\pm}_{\sigma}(\chi)\sim_{\Q(\chi)\otimes K}(2\pi i)^{-w(\chi)}\delta[\chi_{0}\varepsilon_{L}]a_{\sigma}^{\mp}(\chi).\]
\end{prop}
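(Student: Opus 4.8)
The plan is to read the formula directly off the $A$-polarization~(\ref{almostpol}) of $RM(\chi)$, using only the elementary behaviour of Deligne periods under Tate twists and twists by Artin motives recorded in Remark~\ref{rem:twist}; this is the rank-$2$ shadow of the computation carried out in the proof of Theorem~\ref{thmfact}. By definition $a_{\sigma}^{\pm}(\chi)\sim_{\Q(\chi)\otimes K,\sigma}c_{\sigma}^{\pm}(RM(\chi)^{\vee})$ and $c_{\sigma}^{\pm}(\chi)=c_{\sigma}^{\pm}(RM(\chi))$, so the assertion is a comparison of the $\pm$-periods of $RM(\chi)$ with those of its dual, and~(\ref{almostpol}) supplies exactly the isomorphism $RM(\chi)\cong RM(\chi)^{\vee}(-w(\chi))\otimes_{\Q(\chi)}[\chi_{0}\varepsilon_{L}]$ needed to relate them.

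First I would collect the relevant invariants of $RM(\chi)$ for $\chi$ critical: it is pure of weight $w(\chi)$, of rank $d=2$, and $F_{\sigma}$ interchanges the summands $M(\chi)_{\tau}$ and $M(\chi)_{\overline{\tau}}$ of $RM(\chi)_{\sigma}$, so both eigenspaces are one-dimensional, $d^{+}=d^{-}=1$ (this is the computation already used in the Lemma preceding this Proposition). Criticality of $\chi$ means $n_{\tau}\neq n_{\overline{\tau}}$, which forces $RM(\chi)_{\sigma}^{w(\chi)/2,w(\chi)/2}=0$, so $RM(\chi)$, and hence $RM(\chi)^{\vee}$ and its Tate twists, are special. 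Finally, as noted in the proof of Theorem~\ref{thmfact}, $(\chi_{0}\varepsilon_{L})(c_{\sigma})=(-1)^{w(\chi)+1}$, so $[\chi_{0}\varepsilon_{L}]$ is a special Artin motive with attached sign $\varepsilon=(-1)^{w(\chi)+1}$.

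Then I would compute $c_{\sigma}^{\pm}\bigl(RM(\chi)^{\vee}(-w(\chi))\otimes_{\Q(\chi)}[\chi_{0}\varepsilon_{L}]\bigr)$ in two steps. Tensoring with the Artin motive $[\chi_{0}\varepsilon_{L}]$ replaces the superscript $\pm$ by $\pm\varepsilon$ and multiplies by $\delta_{\sigma}[\chi_{0}\varepsilon_{L}]^{d^{\pm\varepsilon}}=\delta_{\sigma}[\chi_{0}\varepsilon_{L}]$, since all eigenspace dimensions equal $1$. The Tate twist by $-w(\chi)$ multiplies by $(2\pi i)^{-w(\chi)d^{\bullet}}=(2\pi i)^{-w(\chi)}$ (the exponent not depending on the sign because $d^{+}=d^{-}$), and either preserves the superscript (if $w(\chi)$ is even) or flips it (if $w(\chi)$ is odd). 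Since $\varepsilon=-1$ when $w(\chi)$ is even and $\varepsilon=+1$ when $w(\chi)$ is odd, in both cases the composite of the two operations turns the superscript $\pm$ into $\mp$; thus
\[ c_{\sigma}^{\pm}(\chi)=c_{\sigma}^{\pm}(RM(\chi))\sim_{\Q(\chi)\otimes K,\sigma}(2\pi i)^{-w(\chi)}\,\delta_{\sigma}[\chi_{0}\varepsilon_{L}]\,c_{\sigma}^{\mp}(RM(\chi)^{\vee})\sim_{\Q(\chi)\otimes K,\sigma}(2\pi i)^{-w(\chi)}\,\delta_{\sigma}[\chi_{0}\varepsilon_{L}]\,a_{\sigma}^{\mp}(\chi), \]
which is the asserted relation (with $\delta$ read as $\delta_{\sigma}$, the statement being $\sigma$-by-$\sigma$).

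The only point requiring care — and the one I would watch most closely — is the sign bookkeeping: one must apply the polarization isomorphism in the direction $RM(\chi)\cong RM(\chi)^{\vee}(-w(\chi))\otimes[\chi_{0}\varepsilon_{L}]$, and not its inverse, so that $\delta_{\sigma}[\chi_{0}\varepsilon_{L}]$ appears with exponent $+1$; and one must check that the superscript shift $\pm\mapsto\pm\varepsilon$ from the Artin twist and the superscript flip (or not) from the Tate twist really do compose to $\pm\mapsto\mp$ for both parities of $w(\chi)$, using $\varepsilon=(-1)^{w(\chi)+1}$. Everything else is a mechanical application of Remark~\ref{rem:twist}. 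As the statement indicates, the same formula can alternatively be deduced by specializing Theorem~\ref{thmfact} and its proof to $M=\Q(0)$, but the direct argument above is shorter and self-contained.
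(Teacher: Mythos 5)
Your argument is correct and is precisely the paper's second suggested route: the paper asserts the Proposition follows either by specializing Theorem~\ref{thmfact} to $M=\Q(0)$ or, alternatively, as a direct consequence of the isomorphism $RM(\chi)\cong RM(\chi)^{\vee}\otimes_{\Q(\chi)}[\chi_{0}\varepsilon_{L}](-w(\chi))$, and you have chosen the second and supplied the sign bookkeeping (Artin twist shifts $\pm\mapsto\pm\varepsilon$ with $\varepsilon=(-1)^{w(\chi)+1}$, Tate twist by $-w(\chi)$ flips or preserves according to parity, and the two compose to $\pm\mapsto\mp$ in both cases) that the paper leaves to the reader. Nothing is missing; the only minor caveat is that your caution about ``not applying the inverse'' is really immaterial, since either direction of the isomorphism gives an equivalent relation.
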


\begin{rem}
	\label{expresiondeC-} Suppose that $M$ and $\chi$ are as in Theorem~\ref{thmfact}. Then we can also express the periods $c_{\sigma}^{-}(M\otimes RM(\chi))$ in a similar fashion. Looking at the proof of the theorem, along with~(\ref{formulaasigmachisigno}), the following is clear:
	\[ c_{\sigma}^{-}(M\otimes RM(\chi))\sim_{(E\otimes\Q(\chi))\otimes K,\sigma}e_{\sigma}'c_{\sigma}^{+}(M\otimes RM(\chi)), \]
	where $e_{\sigma}'=1$ if $d$ is even, and $e_{\sigma}'=e_{\tau}$ if $d$ is odd, for any $\tau\in J_{L}$ extending $\sigma$.
\end{rem}

\subsection{CM periods}\label{CMperiods}
In this subsection we recall the relationship between Deligne periods and CM periods for algebraic Hecke characters. The theory of CM periods as we will use it is explained in \cite{harrisunitary}, to which we refer for details. Let $L/K$ be a CM extension, and let $T^{L}=\Res_{L/\Q}\Gm{L}$. Then $(T^{L})_{\C}\simeq\prod_{\tau\in J_{L}}\Gm{\C}$. Suppose that $\eta\in X^{*}(T^{L})$ and $\chi\in X(\eta)$, and assume that $\chi$ is critical, so that $n_{\tau}\neq n_{\overline{\tau}}$ for all $\tau\in J_{L}$. Given any morphism $h:\mathbb{S}\to(T^{L})_{\R}$, the pair $(T^{L},h)$ is a Shimura datum. In \emph{op. cit.}, as in the Appendix of \cite{harriskudla}, a CM period $p(\chi;h)\in\C^{\times}$ is constructed, well defined modulo $(\Q(\chi)E_{h})^{\times}$, where $E_{h}$ is the reflex field of $(T^{L},h)$. For example, for any $\Psi\subset J_{L}$ such that $\Psi\cap\overline\Psi=\emptyset$ (for instance $\Psi=\{\tau\}$ for a single $\tau$), we can naturally construct a map $h_{\Psi}:\mathbb{S}\to T^{L}_{\R}$; we denote the corresponding periods by $p(\chi;\Psi)$, and the reflex field by $E_{\Psi}$. Concretely, $E_{\Psi}$ is the subfield of $\overline\Q$ fixed by the elements $\gamma\in\Gamma_{\Q}$ such that $\gamma\Psi=\Psi$. 

Attached to $\eta$ is a CM type $\Phi_{\eta}$, defined by the fact that $n_{\tau}>n_{\overline{\tau}}$ if and only if $\tau\in\Phi_{\eta}$ (note that what we call $n_{\tau}$ here is $-\lambda(\tau)$ in \cite{harrisunitary}). We also write $\Phi_{\chi}=\Phi_{\eta}$. The reflex field $E_{\Phi_{\eta}}$ of $\Phi_{\eta}$ is contained in $\Q(\chi)$ (in fact in $\Q(\eta)\subset\Q(\chi)$, where $\Q(\eta)$ is the field of definition of $\eta$). If $\gamma\in\Aut(\C)$, the characters $\eta^{\gamma}$ and $\chi^{\gamma}\in X(\eta^{\gamma})$ only depend on the restriction of $\gamma$ to $\Q(\chi)$, and hence we can look at the family $(\chi^{\rho},\eta^{\rho})_{\rho\in J_{\Q(\chi)}}$. If we look at embeddings $\rho$ of $\Q(\chi)E_{\Psi}$, then we can also define $\Psi^{\rho}=\tilde\rho\Psi$, for any extension $\tilde\rho$ of $\rho$ to $\C$. We write
\[ \mathbf{p}(\chi;\Psi)=\left(p(\chi^{\rho};\Psi^{\rho})\right)_{\rho\in J_{\Q(\chi)E_{\Psi}}}\in \Q(\chi)E_{\Psi}\otimes\C. \]
If $\Psi=\Phi_{\eta}$, then $\mathbf{p}(\chi;\Phi_{\eta})=\left(p(\chi^{\rho};\Phi_{\eta^{\rho}})\right)_{\rho\in J_{\Q(\chi)}}\in \Q(\chi)\otimes\C$ (note that $\Phi_{\eta^{\rho}}=\rho\Phi_{\eta})$. 

The following formula is due to Blasius. We use the statement given as Proposition 1.8.1 of \cite{harrisunitary} (corrected as in the Introduction to \cite{harriscrelle}, that changes $\chi$ for $\check{\chi}$), combined with Deligne's conjecture for the motive $M(\chi)$ (proved by Blasius in \cite{blasiusannals}), to get
\[ c^{+}(\chi)\sim_{\Q(\chi)}D_{K}^{1/2}\mathbf{p}(\check{\chi};\Phi_{\eta}).\]
Here $\check{\chi}=\chi^{\iota,-1}$ (not to be confused with the dual $\chi^{\vee}=\chi^{-1}$). In fact, Blasius's constructions should provide the following more precise statement, which we will assume: for every $\sigma\in J_{K}$,
\begin{equation}\label{formulaBlasius} c^{+}_{\sigma}(\chi)\sim_{\Q(\chi)\otimes K,\sigma}\mathbf{p}(\check{\chi};\tau),\end{equation}
where $\tau\in\Phi_{\chi}$.

Let $M$ and $\chi$ be as in Theorem~\ref{thmfact}. For each $\rho\in J_{\Q(\chi)}$ and $\varphi\in J_{E}$, let $c^{\pm}(M\otimes RM(\chi))_{\varphi,\rho}\in\C$ be the $\rho\otimes\varphi$-component of $c^{\pm}(\Res_{K/\Q}(M\otimes RM(\chi)))\in E\otimes\Q(\chi)\otimes\C$. We let $s_{\sigma,\varphi,\rho}=d-r_{\sigma,\varphi,\rho}$ and $\mathbf{s}_{\sigma}$ as in Theorem~\ref{thmfact}.  Define $Q^{\mathbf{s}}(M)\in E\otimes\Q(\chi)\otimes\C$ by
\[ {Q}^{\mathbf{s}}(M)=\prod_{\sigma\in J_{K}}\prod_{j=1}^{\mathbf{s}_{\sigma}}Q_{j,\sigma},\]
and let ${Q}^{\mathbf{s}}_{\varphi,\rho}(M)\in E\otimes\C$ be its $\varphi\otimes\rho$-component. Suppose that $m\in\Z$ is a critical integer for $M\otimes RM(\chi)$. For simplicity of notation, we let $r_{\sigma}=r_{\sigma,\varphi,\rho}$ and $s_{\sigma}=s_{\sigma,\varphi,\rho}$ in the following expressions. By~(\ref{form:yoshida1}),~(\ref{form:yoshida2}),~(\ref{formulaQsigmachic2}),~(\ref{formulaasigmachisigno}), Remark~\ref{expresiondeC-}, Proposition~\ref{propa+a-},~(\ref{formulaBlasius}) and Theorem~\ref{thmfact}, we can write
\[
c^{+}(M\otimes RM(\chi)(m))_{\varphi,\rho}\sim_{\varphi(E)\rho(\Q(\chi))K'}\]
\[ (2\pi i)^{emd-w(\chi)\sum_{\sigma}s_{\sigma}}\delta(M)_{\varphi}\left(\prod_{\sigma\in J_{K}}\delta_{\sigma}[\chi_{0}\varepsilon_{L}]_{\rho}^{s_{\sigma}}\left(p((\chi^{\rho}){\check{}};\rho\tau)\right)^{r_{\sigma}-s_{\sigma}}\right){Q}^{\mathbf{s}}_{\varphi,\rho}(M),
\]
where $K'$ is the Galois closure of $K$ in $\overline{\Q}$, we choose the embeddings $\tau$ in $\Phi_{\chi}$, and we write $\rho\tau$ for $\tilde\rho\tau$, with $\tilde\rho$ an extension of $\rho$. If we moreover assume that $\chi_{0}$ is trivial, then by Lemma~\ref{epsilonenL} we can write
\begin{equation}\label{formulacritica}
c^{+}(M\otimes RM(\chi)(m))_{\varphi,\rho}\sim_{\varphi(E)\rho(\Q(\chi))L'}\end{equation}
\[ (2\pi i)^{emd-w(\chi)\sum_{\sigma}s_{\sigma}}\delta(M)_{\varphi}\left(\prod_{\sigma\in J_{K}}\left(p((\chi^{\rho}){\check{}};\rho\tau)\right)^{r_{\sigma}-s_{\sigma}}\right)Q^{\mathbf{s}}_{\varphi,\rho}(M).
\]

\section{Hodge-de Rham structures for unitary groups and automorphic periods}\label{sec:automorphicmotives}
In this section we introduce the Hodge-de Rham structures attached to automorphic representations of unitary groups, which come from the cohomology of automorphic vector bundles and local systems on the corresponding Shimura varieties. On the first subsections we introduce the varieties in question, and setup the notation that we will use throughout the rest of the paper regarding weights and automorphic vector bundles. Many of the things that we say here are valid for more general Shimura varieties, but we restrict ourselves to the unitary group case to keep a reasonable length, and because it's ultimately the case that we will use in the rest of the paper. In the later subsections, we define automorphic quadratic periods for cohomological automorphic representations. For generalities regarding Shimura varieties, automorphic vector bundles, and conjugation, our main references are \cite{dmos} and \cite{milne}. 

\subsection{The groups}\label{sectiongroups} 
In this paper we will work with the following unitary groups. Let $K$ be a totally real field of degree $e=[K:\Q]$ and let $L/K$ be a totally imaginary quadratic extension. Let $V$ be a finite-dimensional $L$-vector space and $h:V\times V\to L$ a non-degenerate hermitian form relative to the non-trivial automorphism $\iota\in\Gal(L/K)$. Let $n=\dim_{L}V$. Let $GU_{*}$ denote the similitude unitary group of the pair $(V,h)$ over $K$. Thus, for a $K$-algebra $R$,
\[ GU_{*}(R)=\{g\in\Aut_{F\otimes_{K}R}(V\otimes_{K}R)\mid h_{R}(gu,gv)=\nu(g)h_{R}(u,v)\quad\forall u,v\in V\otimes_{k}R\},\]
where $\nu(g)\in R^{\times}$ and $h_{R}:V\otimes_{K}R\times V\otimes_{K}R\to F\otimes_{K}R$ is defined by $h_{R}(u\otimes a,v\otimes b)=h(u,v)\otimes ab$. The map $g\mapsto\nu(g)$ defines a morphism $\nu:GU_{*}\to\Gm{K}$, and its kernel is the unitary group $U_{*}$ over $K$. The center of $GU_{*}$ is $\Res_{L/K}\Gm{L}$. We let $GU_{0}=\Res_{K/\Q}GU_{*}$ and $U=\Res_{K/\Q}U_{*}$. Finally, we let $GU=GU(V)$ be the subgroup of $GU_{0}$ consisting of automorphisms $g\in GU_{0}(R)$ for which $\nu(g)\in R^{\times}\subset(K\otimes R)^{\times}$. All of these groups are reductive algebraic groups. Throughout the rest of this subsection, we let $G=GU$. The center $Z$ of $G$, which is connected, is the subtorus of $T^{L}=\Res_{L/\Q}\Gm{L}$ given by
\[ Z(R)=\{x\in(L\otimes_{\Q}R)^{\times}\mid N_{L\otimes_{\Q}R/K\otimes_{\Q}R}(x)\in R^{\times}\}.\]
Note that this is actually the center of the (abstract) group $G(R)$.

We will fix an $L$-basis $\beta=\{v_{1},\dots,v_{n}\}$ of $V$, orthogonal for $h$. For each $\tau\in J_{L}$, let $V_{\tau}=V\otimes_{L,\tau}\C$, and let $h_{\tau}:V_{\tau}\times V_{\tau}\to\C$ be the non-degenerate hermitian form (relative to $\C/\R$) defined by $h_{\tau}(u\otimes z,v\otimes w)=\tau(h(u,v))z\overline w$. We let $(r_{\tau},s_{\tau})$ denote the signature of $(V_{\tau},h_{\tau})$. Choose once and for all a CM type $\Phi$ for the extension $L/K$. Then there are isomorphisms $L\otimes_{\Q}\R\cong\prod_{\tau\in\Phi}\C$ and $V\otimes_{\Q}\R\cong\prod_{\tau\in\Phi}V_{\tau}$ which induce isomorphisms
\begin{equation}\label{isoGRm}
G_{\R}\cong\left(\prod_{\tau\in\Phi}GU(r_{\tau},s_{\tau})\right)',\quad G_{\C}\cong\prod_{\tau\in\Phi}\GL_{n,\C}\times\Gm{\C},\end{equation}
where the symbol $'$ means that we are looking at tuples where all the elements have the same multiplier $\nu$. The second isomorphism is defined over the Galois closure $L'$ of $L$ in $\overline\Q\subset\C$. Let $T\subset G$ be the subgroup of elements of $G$ which, considered as $L$-linear automorphisms of $V$, are diagonal with respect to $\beta$. Then $T$ is a maximal torus of $G$, and it maps to the subgroup of diagonal matrices in~(\ref{isoGRm}). Let $B\subset G_{\C}$ (or $G_{L'}$) be the Borel subgroup of $G_{\C}$ containing $T_{\C}$ that maps to $\prod_{\tau\in\Phi}B_{n,\C}\times\Gm{\C}$ under the second isomorphism of (\ref{isoGRm}), where $B_{n,\C}\subset\GL_{n,\C}$ is the group of upper triangular matrices.

\begin{rem}\label{accionenT} The group $\Aut(\C)$ acts on the complex points $G(\C)$ by functoriality. Let $g=((X_{\tau})_{\tau\in\Phi},\nu)\in G(\C)$. We can explicitly describe the action of an element $\gamma\in\Aut(\C)$, but we will only need the formula when $\gamma=c$. If $g\in G(\C)$, then $c(g)=((\overline\nu I_{r_{\tau},s_{\tau}}X_{\tau}^{*,-1}I_{r_{\tau},s_{\tau}})_{\tau\in\Phi},\overline{\nu})$.
\end{rem}

\subsection{The Shimura varieties}
Fix $(V,h,\Phi)$ as in the previous subsection. Given an orthogonal basis $\beta$ and the corresponding isomorphism~(\ref{isoGRm}), we define $x:\mathbb{S}\to G_{\R}$ as $x=(x_{\tau})_{\tau\in\Phi}$, where
\begin{equation}\label{defx} x_{\tau}(z)=\left(\begin{array}{cc}zI_{r_{\tau}} & 0 \\ 0 & \overline zI_{s_{\tau}}\end{array}\right)\end{equation}
for an $\R$-algebra $R$ and $z\in\mathbb{S}(R)$. We denote by $X$ the $G(\R)$-conjugacy class of $x$. The pair $(G,X)$ satisfies Deligne's axioms (2.1.1.1-3, \cite{deligneSh}) for a Shimura datum, unless $(V,h)$ is totally definite (i.e., unless $r_{\tau}s_{\tau}=0$ for all $\tau\in J_{L}$; in this case, we can still attach to $G$ a Shimura variety of dimension $0$, but we will omit this case from our discussion and assume henceforward that $(V,h)$ is not totally definite). The point $x$ of~(\ref{defx}) factors through $T_{\R}$, where $T$ is the maximal torus constructed in the previous subsection from the same orthogonal basis, and hence $(T,x)$ is a CM pair. The reflex field $E=E(G,X)$, the field of definition of the $G(\C)$-conjugacy class of the Hodge cocharacter $\mu_{x}$, is the field generated over $\Q$ by the set $\{\sum_{\tau\in\Phi}\tau(b)r_{\tau}+\overline\tau(b)s_{\tau}: b\in L\}$. In particular, it is contained in $L'$ and hence it's CM or totally real. The Shimura varieties over $E$ defined by the datum $(G,X)$ will be denoted by $S_{E}$. For any $F\supset E$, we let $S_{F}=S_{E}\times_{E}F$. For any compact open subgroup $U\subset G(\A_{f})$, which will always be assumed to be sufficiently small, we denote by $S_{U,E}$ the Shimura varieties with level $U$.

Let $\gamma\in\Aut(\C)$. We denote by $({}^{\gamma,x}G,{}^{\gamma,x}X)$ the conjugate Shimura datum, and by ${}^{\gamma,x}S$ the corresponding Shimura variety, so that $\gamma S_{E}\cong{}^{\gamma,x}S_{\gamma(E)}$ (Theorem II.4.2, \cite{milne}). In the particular case of $\gamma=c$ complex conjugation, we can naturally identify the pair $({}^{c,x}G,{}^{c,x}X)$ with $(G,\overline{X})$, where $\overline{X}=\{h\circ c:h\in X\}$ (see \cite{blasiusguerb}). We denote by $\overline S_{E}$ the Shimura variety attached to $(G,\overline X)$.

\begin{rem} We can also identify the pair $(G,\overline X)$ with the Shimura datum defined by the hermitian space $(V,-h)$ and the CM type $\Phi$, or by the hermitian space $(V,h)$ and the CM type $\overline\Phi=\{\overline\tau:\tau\in\Phi\}$.
\end{rem}

\subsection{Roots, weights and representations}\label{roots} 
Let $(G,X)$ be the pair attached to $(V,h,\Phi)$ as in the last subsections, and let $(T,x)$ be the CM pair defined in~(\ref{defx}). Let $K_{x}$ denote the centralizer of $x$ in $G_{\R}$, i.e., the scheme-theoretic centralizer of the (scheme-theoretic) image of $x$ in $G_{\R}$. Then $K_{x}\cong\left(\prod_{\tau\in\Phi}Z_{\tau}K_{\tau}\right)'$, where $Z_{\tau}=\mathbb{S}$ is the center of $GU(r_{\tau},s_{\tau})$ and $K_{\tau}=U(r_{\tau})\times U(s_{\tau})$ is embedded diagonally. Also, $K_{x,\C}\cong\left(\prod_{\tau\in\Phi}\GL_{r_{\tau},\C}\times\GL_{s_{\tau},\C}\right)\times\Gm{\C}$. Note that $T_{\R}\subset K_{x}$.

Let $R$ denote the set of roots of the pair $(G_{\C},T_{\C})$, and write $R=R_{c}\coprod R_{n}$, where $R_{c}$ denotes the roots which are also roots of $(K_{x,\C},T_{\C})$ (these are called compact roots). We let $\Lambda=X^*(T)$. Using (\ref{isoGRm}), we make once and for all the identification $\Lambda\cong\left(\prod_{\tau\in\Phi}\Z^{n}\right)\times\Z$, and we write elements $\mu\in\Lambda$ as tuples of integers $\mu=\left((a_{\tau,1},\dots,a_{\tau,n})_{\tau\in\Phi};a_{0}\right)$. Thus, $\mu$ corresponds to the character of $T_{\C}$ given by $((\diag(t_{\tau,1},\dots,t_{\tau,n}))_{\tau\in\Phi};t_{0})\mapsto t_{0}^{a_0}\prod_{\tau\in\Phi}\prod_{i=1}^nt_{\tau,i}^{a_{\tau,i}}$. For each $a\in\Z$, we let $\mu_{a}=\left((0,\dots,0)_{\tau\in\Phi};a\right)\in\Lambda$. For $1\leq i\leq n$ and $\tau\in\Phi$, we let $e_{i,\tau}$ denote the element of $\Lambda$ which hast the $i$-th standard vector of $\Z^{n}$ in coordinate $\tau$, and $0$ everywhere else, including the multiplier coordinate. We can write the set of roots $R$ as
$ R\cong\coprod_{\tau\in\Phi}R_{\tau}$, where $R_{\tau}=\{e_{i,\tau}-e_{j,\tau}|1\leq i\neq j\leq n\}$. We can write the set of compact roots $R_{c}$ as $R_{c}\cong\coprod_{\tau\in\Phi}R_{c,\tau}$, where $R_{c,\tau}=\{e_{i,\tau}-e_{j,\tau}|1\leq i\neq j\leq r_{\tau}\text{ or }r_{\tau}+1\leq i\neq j\leq n\}$.

Let $\mathfrak{g}=\Lie(G)$ and $\mathfrak{k}_x=\Lie(K_x)$. The map $\Ad\circ x:\mathbb{S}\to\GL_{\mathfrak{g}_{\R}}$ induces a Hodge decomposition on $\mathfrak{g}_{\R}$, which has Hodge type $\{(0,0),(-1,1),(1,-1)\}$, and $\mathfrak{g}_{\R}^{0,0}=\mathfrak{k}_{x,\C}$. Let $\mathfrak{p}_x^{\pm}=\mathfrak{g}_{\R}^{\mp1,\pm1}$. Identify $X^*(\mathbb{S}_{\C})\cong\Z\oplus\Z$. Note that if $\alpha\in R$, then $\alpha x_{\C}\in X^*(\mathbb{S}_{\C})$ equals either $(0,0)$, $(-1,1)$ or $(1,-1)$. We denote by $R_x^{0,0}$, $R_x^{-1,1}$ and $R_x^{1,-1}$ the corresponding subsets, so that $R=R_x^{0,0}\coprod R_x^{-1,1}\coprod R_x^{1,-1}$. Note that $R_x^{0,0}=R_{c}$, and if $\alpha=e_{i,\tau}-e_{j,\tau}\in R$, then $\alpha\in R_x^{1,-1}$ (resp. $R_{x}^{-1,1}$) if and only if $1\leq i\leq r_{\tau}$ and $r_{\tau}+1\leq j\leq n$ (resp. $r_{\tau}+1\leq i\leq n$ and $1\leq j\leq r_{\tau}$). Let $R_{c,\tau,x}^{+}\subset R_{c,\tau}$ consist of the elements $e_{i,\tau}-e_{j,\tau}$ for which $i<j$, and let $R_{c,x}^{+}=\coprod_{\tau\in\Phi}R_{c,\tau,x}^{+}$. This is a set of positive roots in $R_{c}$. Let $B_{c,x}$ denote the corresponding Borel subgroup of $K_{x,\C}$ containing $T_{\C}$; then $B_{c,x}\cong\prod_{\tau\in\Phi}\left(B_{r_{\tau},\C}\times B_{s_{\tau},\C}\right)\times\Gm{\C}$. Let $R_x^+=R_{c,x}^+\coprod R_x^{1,-1}$. This is a set of positive roots in $R$. Concretely, $R_{x}^{+}=\coprod_{\tau\in\Phi}R_{\tau,x}^{+}$ where $R_{\tau,x}^{+}$ consists of the elements $e_{i,\tau}-e_{j,\tau}$ in $R_{\tau}$ such that $i<j$. Let $B_x\subset G_{\C}$ be the corresponding Borel subgroup containing $T_{\C}$; then $B_{x}\cong\prod_{\tau\in\Phi}B_{n,\C}\times\Gm{\C}$, i.e., it is the group that we called $B$ before. We use the notation $B_{x}$ because we will soon work with $B_{\overline x}$ which is a different group. Note that $B_{c,x}=B_x\cap K_{x,\C}$. Let $P_{x}\subset G_{\C}$ be the subgroup fixing the filtration on the category of representations of $G_{\C}$ defined by the cocharacter $\mu_x:\Gm{\C}\to G_{\C}$ (see Proposition I.1.7 of \cite{milne}). This is a parabolic subgroup with contains $K_{x,\C}$ as a Levi component. The Lie algebra of $P_x$ is $\mathfrak{P}_x=\mathfrak{k}_{x,\C}\oplus\mathfrak{p}_x^{-}$, and $\Lie(R_uP_x)=\mathfrak{p}_x^-$. Note that we can identify $\mathfrak{p}_{x}^+$ (resp. $\mathfrak{p}_x^-$) with the holomorphic (resp. antiholomorphic) tangent space of $X$ at the point $x$, and $P_{x}$ with the subgroup of tuples in $\prod_{\tau}\GL_{n,\C}\times\Gm{\C}$ for which each $\tau$-component is bock lower triangular with respect to the partition $n=r_{\tau}+s_{\tau}$. The groups $P_{x}$ and $K_{x,\C}$ are defined over $L'\subset\C$, and the maximal torus $T_{L'}$ of $K_{x,L'}$ is split.

We denote by $\Lambda_x^+$ and $\Lambda_{c,x}^+$ the set of dominant weights for $R_x^+$ and $R_{c,x}^+$ respectively. If $\mu\in\Lambda_x^+$ (resp. $\lambda\in\Lambda_{c,x}^+$), we denote by $(W_{\mu},\rho_{\mu})$ (resp. $(V_{\lambda},r_{\lambda})$) the irreducible representation of $G_{\C}$ (resp. $K_{x,\C}$) with highest weight $\mu$ (resp. $\lambda$). We say that a representation $(W,\rho)$ of $G_{\C}$ is defined over a subfield $F\subset\C$ if there exists a representation $(W_{F},\rho_{F})$ of $G_{F}$ such that the extension of scalars $(W_{F},\rho_{F})\otimes_{F}\C$ is isomorphic to $(W,\rho)$ as representations of $G_{\C}$. Note that all the representations of $G_{\C}$ are defined over $L'$. The set $\Lambda_{x}^{+}$ (resp. $\Lambda_{c,x}^{+}$) consists of tuples $\mu=\left((a_{\tau,1},\dots,a_{\tau,n})_{\tau\in\Phi};a_{0}\right)$ as above with $a_{\tau,1}\geq\dots\geq a_{\tau,n}$ (resp. $a_{\tau,1}\geq\dots\geq a_{\tau,r_{\tau}}$ and $a_{\tau,r_{\tau+1}}\geq\dots\geq a_{\tau,n}$) for every $\tau\in\Phi$. For example, the adjoint action of $G_{\C}$ on $\mathfrak{g}_{\C}$ restricts to an action of $K_{x,\C}$ on $\mathfrak{P}_x$ and on $\mathfrak{p}_{x}^\pm$; note that we can identify $\mathfrak{p}_{x}^{+}$ with the dual of $\mathfrak{p}_x^-$ via the Killing form. The space $\Lambda^{d}\mathfrak{p}_{x}^{+}$ has dimension $1$, and its highest weight in $\Lambda_{c,x}^{+}$ is $\left((s_{\tau},\dots,s_{\tau},-r_{\tau},\dots,-r_{\tau})_{\tau\in\Phi};0\right)$, with $s_{\tau}$ appearing $r_{\tau}$ times. As another example, what we called $\mu_{a}=\left((0,\dots,0_{\tau\in\Phi};a\right)$ is the highest weight of the character of $G_{\C}$ given by the $a$-th power $\nu^{a}$ of the multiplier $\nu:G\to\Gm{\Q}$, which is obviously defined over $\Q$. For any $\mu\in\Lambda$, we let $\xi(\mu)=2a_{0}+\sum_{\tau,i}a_{\tau,i}$.

\begin{rem} Let $w_{X}$ denote the weight morphism of $(G,X)$. If $\mu\in\Lambda_{x}^{+}$ then $\rho_{\mu}\circ w_{X,\C}:\Gm{\C}\to\GL_{W_{\mu}}$ takes $t\in\C^{\times}$ to $t^{-\xi(\mu)}\id_{W_{\mu}}$, because the central character of $W_{\mu}$ is the restriction of $\mu$ to $Z_{\C}$. 
\end{rem}

\begin{rem} Our parametrization differs slightly from that of \cite{harriscrelle}. Namely, a highest weight parametrized in the form $\left((a_{\tau,1},\dots,a_{\tau,n})_{\tau\in\Phi};a_{0}\right)\in\Lambda_{x}^{+}$ corresponds to the representation that in the parametrization of \emph{op. cit.} has the same $a_{\tau,i}$'s and $c=\xi(\mu)$.
\end{rem}

We denote by $\W$ (resp. $\W_{c}$) the Weyl group of $(G_{\C},T_{\C})$ (resp. $(K_{x,\C},T_{\C})$), and by $\ell$ the length function on $\W$ with respect to $R_x^+$ (note that if $w\in\W_{c}$, then $\ell(w)$ is also the length with respect to $R_{c,x}^+$ because $w$ preserves $R_{x}^{\pm1,\mp1}$). Let $w_0$ (resp. $w_{0,c}$) be the longest element of $\W$ (resp. $\W_{c}$), so that $\ell(w_0)=|R_x^+|=\frac{n(n-1)e}{2}$ and $\ell(w_{0,c})=|R_{c,x}^+|=|R_{x}^{+}|-d$. For any $w\in\W$, we let $w^{\flat}=w_{0,c}ww_{0}$, and we let $w_{0}^{1}=1^{\flat}=w_{0,c}w_{0}$. Note that $(w^{\flat})^{\flat}=w$ for any $w$. For any integer $n$, we let $\mathfrak{S}_{n}$ denote the symmetric group on $\{1,\dots,n\}$, acting on $(a_{1},\dots,a_{n})\in\Z^{n}$ by $\sigma(a_{1},\dots,a_{n})=(a_{\sigma^{-1}(1)},\dots,a_{\sigma^{-1}(n)})$. We let $u_{n}\in\mathfrak{S}_{n}$ denote the order reversing permutation $i\mapsto n+1-i$. We can identify $\W$ (resp. $\W_{c}$) with $\prod_{\tau\in\Phi}\mathfrak{S}_{n}$ (resp. $\prod_{\tau\in\Phi}\mathfrak{S}_{r_{\tau}}\times\mathfrak{S}_{s_{\tau}}$), with the natural inclusion of the latter into the former. Then we can see $w_{0}$ (resp. $w_{0,c}$) as the tuple $(u_{n})_{\tau\in\Phi}$ (resp. $(u_{r_{\tau}},u_{s_{\tau}})_{\tau\in\Phi}$), and $w_{0}^{1}=(w_{0,\tau}^{1})_{\tau\in\Phi}$, where \[ w_{0,\tau}^{1}(i)=\left\{\begin{array}{ccc} i+r_{\tau} & \text{if } & 1\leq i\leq s_{\tau} \\ i-s_{\tau} & \text{if } & s_{\tau}+1\leq i\leq n.\end{array}\right.\]
If $\mu=\left((a_{\tau,1},\dots,a_{\tau,n})_{\tau\in\Phi};a_{0}\right)\in\Lambda_{x}^{+}$ (resp. $\lambda=\left((a_{\tau,1},\dots,a_{\tau,n})_{\tau\in\Phi};a_{0}\right)\in\Lambda_{c,x}^{+}$), then the dual representation $W_{\mu}^{\vee}$ of $G_{\C}$ (resp. $V_{\lambda}^{\vee}$ of $K_{x,\C}$) has highest weight 
\[ \mu^{\vee}=-w_{0}(\mu)=\left((-a_{\tau,n},\dots,-a_{\tau,1})_{\tau\in\Phi};-a_{0}\right)\]
(resp.
\[  \lambda^{\vee}=-w_{0,c}(\lambda)=\left((-a_{\tau,r_{\tau}},\dots,-a_{\tau,1},-a_{\tau,n},\dots,-a_{\tau,r_{\tau}+1})_{\tau\in\Phi};-a_{0}\right)\text{)}.\] 

We define $\W^1=\{w\in\W:w(R_x^+)\supset R_{c,x}^+\}$; this is a set of coset representatives of shortest length for $\W_{c}\backslash\W$. Concretely, $\W^{1}$ is the set of tuples $(w_{\tau})_{\tau\in\Phi}$ with $w_{\tau}\in\mathfrak{S}_{n}$ a permutation such that $w_{\tau}^{-1}(i)<w_{\tau}^{-1}(j)$ whenever $1\leq i<j\leq r_{\tau}$ or $r_{\tau}+1\leq i<j\leq n$. The element $w_0^{1}$ belongs to $\W^1$, has length $d$, and is the longest element of $\W^1$. More generally, if $w\in\W^{1}$ then $w^{\flat}\in\W^{1}$ and $\ell(w^{\flat})=d-\ell(w)$. If $\mu\in\Lambda_x^+$ and $w\in\W^1$, then $w*\mu=w(\mu+\rho_x)-\rho_x\in\Lambda_{c,x}^+$, where 
\[ \rho_x=\frac{1}{2}\sum_{\alpha\in R_x^+}\alpha=\left(\left(\frac{1}{2}(n-1),\frac{1}{2}(n-3),\dots,\frac{1}{2}(1-n)\right)_{\tau\in\Phi};0\right)\in\Lambda\otimes\R.\]

\begin{rem}
	\label{descrW1n-11} Suppose that $V$ has signature $(n-1,1)$ at some $\tau_{0}\in\Phi$, and signatures $(n,0)$ at all other places. Then we can write $\mathcal{W}^{1}=\{w_{1},\dots,w_{n}\}$, where $w_{i}=(w_{i,\tau})_{\tau\in\Phi}$, $w_{i,\tau}=1$ if $\tau\neq\tau_{0}$, and $w_{i,\tau_{0}}$ is the permutation that sends $i$ to $n$ and is order perserving on the other $n-1$ elements, so $w_{i,\tau_{0}}(x)=x$ for $1\leq x<i$, $w_{i,\tau_{0}}(i)=n$ and $w_{i,\tau_{0}}(x)=x-1$ for $i+1\leq x\leq n$. Note that $\ell(w_{j})=n-j$.
\end{rem}

There is a natural action of $\Aut(\C)$ on $\Lambda$. For $\mu\in\Lambda$, $\gamma\in\Aut(\C)$ and $t\in T(\C)$, $\mu^{\gamma}(t)=\gamma\mu(\gamma^{-1}(t))$. By choosing the Borel $B_x\subset G_{\C}$, there is a second action $\mu\mapsto\gamma(\mu)$ of $\gamma\in\Aut(\C)$ on $\Lambda$, the $*$-action, defined as $\gamma(\mu)(t)=\mu^{\gamma}(n^{-1}tn)$, where conjugation by $n\in G(\C)$ takes the Borel pair $(\gamma(B_{x}),\gamma(T_{\C}))$ to $(B_{x},T_{\C})$. This action preserves $R$, $R_x^+$ and $\Lambda_x^+$, and is trivial on $\Aut(\C/F)$ for any $F\subset\C$ such that $G_F$ is split (in particular, on $\Aut(\C/L')$). Concretely, let $\mu=\left((a_{\tau,1},\dots,a_{\tau,n})_{\tau\in\Phi};a_{0}\right)\in\Lambda$. We can explicitly compute the parameters $\mu^{\gamma}$ and $\gamma(\mu)$. We only write here the action for $\gamma=c$. By Remark~\ref{accionenT},
\[ \mu^{c}=\left((-a_{\tau,1},\dots,-a_{\tau,n})_{\tau\in\Phi};a_0+\sum_{i,\tau}a_{\tau,i}\right)\]
and
\[c(\mu)=\left((-a_{\tau,n},\dots,-a_{\tau,1})_{\tau\in\Phi};a_0+\sum_{i,\tau}a_{\tau,i}\right).\]
If $\mu\in\Lambda_{x}^{+}$, then $c(\mu)=\mu^{\vee}+\mu_{\xi(\mu)}$.

\begin{rem} The equation $c(\mu)=\mu$ means that $a_{\tau,i}=-a_{\tau,n+1-i}$ for every $\tau\in\Phi$ and $i=1,\dots,n$. In this case, $\xi(\mu)=2a_{0}$ and $W_{\mu}\cong W_{\mu}^{\vee}\otimes_{\C}W_{\mu_{2a_{0}}}$.
\end{rem}

\begin{rem}\label{remark tits} Suppose that $W_{\mu}$ is defined over $\Q$. Then $\gamma(\mu)=\mu$ for any $\gamma\in\Aut(\C)$. To show this, we can replace $\C$ by $\overline\Q$. Let $\rho:G\to\GL_{W_{\mu,\Q}}$ denote the descent of the representation to $\Q$. Since it's irreducible, it follows from Th\'eoreme 7.2 of \cite{tits} that $\rho$ is isomorphic to ${}^{\Q}\rho_{\lambda}$ for some $\lambda\in\Lambda_{x}^+$, where the notation is as in \emph{op. cit}. Moreover, in Lemme 7.4 of \emph{op. cit.}, we must have $d=1$ and $r=1$ because $\rho$ is absolutely irreducible, so $\lambda\in(\Lambda_{x}^+)^{\Gal(\overline\Q/\Q)}$. Furthermore, $\rho_{\overline\Q}$ must be the irreducible representation with highest weight $\lambda$, and thus $\mu=\lambda$, which is invariant under $\Gal(\overline\Q/\Q)$. In particular, if $W_{\mu}$ is defined over $\Q$, then $W_{\mu}\cong W_{\mu}^{\vee}\otimes W_{\mu_{2a_{0}}}$ as representations over $\Q$. 

Conversely, if $\gamma(\mu)=\mu$ for every $\gamma\in\Aut(\C)$, then $c(\mu)=\mu$. It's easy to see that $\mu\in Q\oplus\Lambda_{0}$, where $Q$ is the subgroup of $\Lambda$ generated by $R$ and $\Lambda_{0}$ is the subgroup orthogonal to the coroots. It follows from Th\'eor\`eme 3.3 of \cite{tits} that $W_{\mu}$ is defined over $\Q$. 
\end{rem}

Consider now the pair $(G,\overline X)$ and its special point $\overline x$, and consider the same context and notation in this case. Then $K_{\overline x}=K_x$. The Hodge decomposition of $\mathfrak{g}_{\C}$ for $\overline x$ is the complex conjugate of that of $x$, so that $\mathfrak{p}_{\overline x}^{\pm}=\mathfrak{p}_x^{\mp}$, and $R_{\overline x}^{\pm 1,\mp 1}=R_{x}^{\mp 1,\pm 1}$. We \emph{choose} $R_{c,\overline x}^+\subset R_{c}$ to be $R_{c,\overline x}^{+}=-(R_{c,x}^{+})=\coprod_{\tau\in\Phi}R_{c,\tau,\overline x}^{+}$, where $R_{c,\tau,\overline x}^{+}$ consists of the elements $e_{i,\tau}-e_{j,\tau}$ such that $1\leq j<i\leq r_{\tau}$ or $r_{\tau}+1\leq j<i\leq n$. Then let $R_{\overline x}^+=R_{c,\overline x}^+\coprod R_{\overline x}^{1,-1}=-(R_{x}^{+})=\coprod_{\tau\in\Phi}R_{\tau,\overline x}^{+}$, where $R_{\tau,\overline x}^{+}$ consists of the elements $e_{i,\tau}-e_{j,\tau}$ with $i>j$. This is a set of positive roots for $R$. By making reference to $R_{\overline x}^{+}$ or $R_{c,\overline x}^{+}$ instead of $R_{x}^{+}$ or $R_{c,x}^{+}$, we can similarly define, using $\overline{x}$, all the objects defined so far with a subindex $x$. Note that under the natural identification $cG_{\C}\simeq G_{\C}$, the groups $c(B_x)$, $c(B_{c,x})$ and $c(P_x)$ correspond respectively to $B_{\overline x}$, $B_{c,\overline x}$ and $P_{\overline x}$. Concretely, $B_{c,\overline x}\cong\prod_{\tau\in\Phi}(B_{r_{\tau},\C}^{-}\times B_{s_{\tau},\C}^{-})\times\Gm{\C}$, $B_{\overline x}\cong(\prod_{\tau\in\Phi}B_{n,\C}^{-})\times\Gm{\C}$, and $P_{\overline x}$ has block upper triangular matrices. Also, $\Lambda_{\overline x}^+=-(\Lambda_{x}^{+})$ consists of elements $\mu=\left((a_{\tau,1},\dots,a_{\tau,n})_{\tau\in\Phi};\mu_{0}\right)$ such that $a_{\tau,1}\leq\dots \leq a_{\tau,n}$ for every $\tau\in\Phi$. Similarly, $\Lambda_{c,\overline x}^+=-(\Lambda_{c,x}^{+})$ consists of those $\mu$ satisfying $a_{\tau,1}\leq\dots\leq a_{\tau,r_{\tau}}$ and $a_{\tau,r_{\tau}+1}\leq\dots\leq a_{\tau,n}$. Finally, $\rho_{\overline x}=\rho_x^{c}=-\rho_{x}$. This gives an action of $\W$ on $\Lambda\otimes\R$ taking $\Lambda_{\overline x}^+$ to $\Lambda_{c,\overline x}^+$; if needed, we will distinguish it from the other one by writing $w\overline{*}\mu=w(\mu+\rho_{\overline x})-\rho_{\overline x}$. This is equal to $-(w*(-\mu))$. If $c(\mu)=\mu$, then $(w*\mu)^{c}=w\overline{*}\mu^{c}$. 

Let $(W,\rho)$ be a representation of $G_{\C}$. We define the conjugate representation $(W^{c},\rho^{c})$ of $G_{\C}$ by taking $W^{c}=W$ as a real vector space, with complex conjugate $\C$-action, and by taking $\rho^{c}(g)=\rho(\overline g)$ for $g\in G(\C)$. If $(W,\mu)$ is irreducible and $\mu\in\Lambda_{x}^{+}$ is its $R_{x}^{+}$-highest weight, then $(W^{c},\rho^{c})$ is irreducible, and its $R_{\overline x}^{+}$-highest weight is $\mu^{c}$. Equivalently, $(W^{c},\rho^{c})$ has $R_{x}^{+}$-highest weight $w_{0}(\mu^{c})=-(\mu^{c})^{\vee}=(-\mu^{c})^{\vee}$. If $W_{\mu}$ is defined over $\Q$, then $c(\mu)=\mu$ and this implies $(-\mu^{c})^{\vee}=\mu$, so there exists an isomorphism, unique up to scalars, between $W^{c}$ and $W$. If $\lambda\in\Lambda_{c,x}^{+}$, we define in a similar way the conjugate representation $(V_{\lambda}^{c},r_{\lambda}^{c})$, which has $R_{c,\overline x}^{+}$-highest weight $\lambda^{c}$. We can do a similar construction starting with a representation $(W,\rho)$ of $P_{x}$ and obtain a representation $(W^{c},\rho^{c})$ of $P_{\overline x}$.

For future reference, we introduce the following operation on compact weights. For $\lambda\in\Lambda_{c,x}^{+}$, define $\lambda^{\flat}$ to be the $R_{c,x}^{+}$-highest weight of the representation $V_{\lambda}^{\flat}=V_{\lambda}^{\vee}\otimes_{\C}\Lambda^{d}(\mathfrak{p}_{x}^{+})^{\vee}\otimes V_{\mu_{\xi(\lambda)}}$. Concretely, if $\lambda=\left((a_{\tau,1},\dots,a_{\tau,n})_{\tau\in\Phi};a_{0}\right)$, then
\[ \lambda^{\flat}=\left((-a_{\tau,r_{\tau}}-s_{\tau},\dots,-a_{\tau,1}-s_{\tau},-a_{\tau,n}+r_{\tau},\dots,-a_{\tau,r_{\tau}+1}+r_{\tau})_{\tau\in\Phi};\xi(\lambda)-a_{0}\right). \]
Note that $V_{\lambda}^{\flat}\cong V_{\lambda}^{c}\otimes_{\C}\Lambda^{d}(\mathfrak{p}_{x}^{+})^{\vee}$. It's easily checked that if $c(\mu)=\mu$ and $w\in\mathcal{W}^{1}$, then $(w*\mu)^{\flat}=w^{\flat}*\mu$.

Suppose that $(W,\rho)$ is any representation of $P_{x}$. The map $\rho\circ\mu_{x}:\Gm{\C}\to\GL_{W}$ defines a grading $W=\bigoplus_{p}W^{p}$ with $W^{p}=\{w\in W:\rho(\mu_{x}(z))(w)=z^{-p}w\quad\forall z\in\C^{\times}\}$. For each $p\in\Z$, we define $F^{p}(W)=\bigoplus_{p'\geq p}W^{p'}$. Then $F^{p}(W)$ is $P_{x}$-stable, and it's called the Hodge filtration of the representation $(W,\rho)$, even though it's not necessarily attached to a Hodge structure. Suppose in particular that $(W,\rho)$ is an irreducible representation of $K_{x,\C}$, extended to $P_{x}$ in such a way that the unipotent radical $R_{u}P_{x}$ acts trivially on $W$. Then $W=W^{p}$ for a unique $p\in\Z$. If $\lambda\in\Lambda_{c,x}^{+}$ is the $R_{c,x}^{+}$-dominant weight of $(W,\rho)$, we usually write $(p_{\lambda},q_{\lambda})=(p,-\xi(\lambda)-p)$. Concretely, if $\lambda=\left((a_{\tau,1},\dots,a_{\tau,n})_{\tau\in\Phi};a_{0}\right)$, then
\[ \begin{array}{c} p_\lambda=-a_0-\sum_{\tau\in\Phi}\sum_{i=1}^{r_{\tau}}a_{\tau,i}, \\
 q_\lambda=-a_0-\sum_{\tau\in\Phi}\sum_{i=r_{\tau}+1}^na_{\tau,i}.\end{array}
\]

\begin{rem} If $\lambda\in\Lambda_{c,\overline x}^{+}$, we can also write $(p_{\lambda},q_{\lambda})$ for the pair associated to the filtration obtained from $\overline x$ instead of $x$. With this convention, if $\lambda\in\Lambda_{c,x}^{+}$, then $p_{\lambda^{c}}=p_{\lambda}$.
\end{rem}

\subsection{Automorphic vector bundles and their conjugates}\label{avbconjugates} We refer to \cite{milne} for the definition and main properties of automorphic vector bundles, and just set up the relevant notation here. Automorphic vector bundles on $S_{\C}$ or $S_{U,\C}$ are attached to homogeneous bundles on the compact dual Hermitian symmetric space of $(G,X)$. Once we fix $x\in X$, these are given by representations of $P_{x}$. An automorphic vector bundle associated with a representation $(W,\rho)$ of $P_{x}$ which extends to $G_{\C}$ is called \emph{flat}. It is endowed with a natural regular flat connection. The associated local system on $\C$-vector spaces is denoted by $\tilde W$, and it has a structure of a local system on $F$-vector spaces for any $F\subset\C$ over which $(W,\rho)$ is defined as a representation of $G$ (see Proposition II.3.3 of \cite{milne}). On the other hand, automorphic vector bundles attached to representations of $P_{x}$ which are trivial on the unipotent radical of $P_{x}$ (that is, representations of $K_{x,\C}$) are called \emph{fully decomposed}. Automorphic vector bundles have canonical models over finite extensions of $E$. For instance, a flat automorphic vector bundle has a canonical model over $EF$, where $F$ is as above. Any fully decomposed automorphic vector bundle $\mathcal{E}$ has a canonical model over $L'$. This follows from the fact that $K_{x,L'}$ is a split group. We stress that $L'$ is not necessarily optimal. Usually, the letter $\mathcal{E}$ will denote an automorphic vector bundle over $S_{\C}$, and $\mathcal{E}_{U}$ will denote its descent to $S_{U,\C}$. If it has a canonical model over $F\supset E$, we write $\mathcal{E}_{U,F}$ for the corresponding bundle over $F$. The Hodge filtration on representations of $P_{x}$ induces a filtration on the corresponding automorphic vector bundles, which is independent of $x$. 
 
Let $\gamma\in\Aut(\C)$, and fix a special point $x\in X$. We can then conjugate automorphic vector bundles, in the sense that if $\mathcal{E}$ is an automorphic vector bundle over $S_{\C}$, then we can construct another automorphic vector bundle ${}^{\gamma,x}\mathcal{E}$ over ${}^{\gamma,x}S_{\C}$ (see \cite{milne}, III.5). If $\mathcal{E}$ is associated with the representation $(W,\rho)$ of $P_x$, then ${}^{\gamma,x}\mathcal{E}$ is associated with a representation $(W^{\gamma,x},\rho^{\gamma,x})$ of $P_{{}^\gamma x}$, described in the following way. There is a $\gamma$-semilinear isomorphism $W\to W^{\gamma,x}$ that takes the $\rho$-action of $p\in P_x(\C)$ to the $\rho^{\gamma,x}$-action of ${}^{\gamma,x}\gamma(p)\in P_{{}^\gamma x}(\C)$. In particular, if $\gamma=c$ and we forget about $x$ in the notation, then we obtain the representation $(W^{c},\rho^{c})$ that we defined in Subsection~\ref{roots}. 

If $\lambda\in\Lambda_{c,x}^{+}$, then we denote by $\mathcal{E}_{\lambda}$ the automorphic vector bundle over $S_{\C}$ obtained from $V_{\lambda}$. If $\lambda\in\Lambda_{c,\overline x}^{+}$, then $\mathcal{E}_{\lambda}$ will be the automorphic vector bundle over $\overline S_{\C}$ obtained from $V_{\lambda}$. With these conventions, we can identify ${}^{c}\mathcal{E}_{\lambda}$ with $\mathcal{E}_{\lambda^{c}}$ (see Subsection~\ref{roots}).

Fix $U\subset G(\A_{f})$. There is a family of compactifications $S_{U,\C,\Sigma}$ of $S_{U,\C}$, depending on auxiliary data $\Sigma$, which are smooth, projective, and with the property that $Z_{U,\Sigma}=S_{U,\C,\Sigma}-S_{U,\C}$ is a divisor with normal crossings. For the main properties of these compactifications and their conjugates, see \cite{harrisdeltabar} and \cite{bhr}. They have canonical models $S_{U,E,\Sigma}$ over $E$. If $\mathcal{E}$ is an automorphic vector bundle over $S_{\C}$, there is a \emph{canonical} extension $\mathcal{E}_{U,\Sigma}^{\can}$ of $\mathcal{E}_U$ to a vector bundle over $S_{U,\C,\Sigma}$. Let $\mathcal{I}_{U,\Sigma}$ denote the sheaf of ideals defining $Z_{U,\Sigma}$, and define the subcanonical extension as $\mathcal{E}_{U,\Sigma}^{\sub}=\mathcal{E}_{U,\Sigma}^{\can}\otimes_{\Or_{S_{U,\C,\Sigma}}}\mathcal{I}_{U,\Sigma}$. If $\mathcal{E}$ has a canonical model over $F\supset E$, then the canonical and subcanonical extensions $\mathcal{E}_{U,\Sigma}^{?}$ also have canonical models $\mathcal{E}_{U,F,\Sigma}^{?}$ over $F$. Conjugation of automorphic vector bundles extends to these extensions (Proposition 1.4.3 of \cite{bhr}). Given $U$ and two toroidal data $\Sigma$ and $\Sigma'$, there are canonical isomorphisms $H^{i}(S_{U,F,\Sigma},\mathcal{E}^{?}_{U,F,\Sigma})\simeq H^{i}(S_{U,F,\Sigma'},\mathcal{E}^{?}_{U,F,\Sigma'})$, where $?=\can$ or $\sub$ (Proposition 2.4 of \cite{harrisdeltabar}). We define $H^{i}(S_{U,F},\mathcal{E}^{?})=\varinjlim_{\Sigma}H^{i}(S_{U,F,\Sigma},\mathcal{E}^{?}_{U,F,\Sigma})$. Let $H^{i}_!(S_{U,F},\mathcal{E})$ be the image of $H^{i}(S_{U,F},\mathcal{E}^{\sub})$ in $H^{i}(S_{U,F},\mathcal{E}^{\can})$ under the natural map induced from $\mathcal{E}_F^{\sub}\to\mathcal{E}_F^{\can}$. This space is called the \emph{interior cohomology} of $S_{U,F}$ with coefficients in $\mathcal{E}$. All these cohomologies spaces are finite-dimensional over $F$. We also define
\[ H^i(S_F,\mathcal{E}^?)=\varinjlim_{U}H^i(S_{U,F},\mathcal{E}^{?}) \]
and $H^i_!(S_F,\mathcal{E})$ to be the image of $H^i(S_F,\mathcal{E}^{\sub})$ in $H^i(S_F,\mathcal{E}^{\can})$. There is an action $G(\A_f)$ on $H^i_!(S_F,\mathcal{E})$, which makes it into a smooth admissible representation of $G(\A_f)$ (\cite{harrisdeltabar}, Proposition 2.6).

For any automorphic vector bundle $\mathcal{E}$ over $S_{U,\C}$, let $\mathcal{E}'=\Omega^d_{S_{U,\C}}\otimes\mathcal{E}^\vee$ be its Serre dual. Then there is a non-degenerate pairing (Serre duality)
\[ H^{d-i}_!(S_{U,\C},\mathcal{E}')\otimes_{\C}H^i_!(S_{U,\C},\mathcal{E})\to\C,\]
ratioanl over any $F$ over which $\mathcal{E}$ has a canonical model (Corollary 3.8.5, \cite{harrisdeltabar}). 

We can combine the isomorphism $cS_{U,F}\to\overline S_{U,c(F)}$ and its extension to toroidal compactifications with base change for cohomology, and we get a $c$-semilinear, $G(\A_{f})$-equivariant isomorphism 
\[ c_{\coh}:H^i_!(S_F,\mathcal{E})\to H^i_!(\overline S_{c(F)},{}^c\mathcal{E}).\]

\subsection{The Hodge-de Rham structures $M(W)$}\label{therealizations} Let $(W,\rho)$ be an absolutely irreducible representation of $G$, with $R_{x}^{+}$-highest weight $\mu=\left((a_{\tau,1},\dots,a_{\tau,n})_{\tau\in\Phi};a_{0}\right)$. Then $\rho\circ w_{X,\Q}(t)=t^{-\xi}$ for each $t\in\Q^\times$, where $\xi=\xi(\mu)=2a_{0}$. In Theorems 2.2.7 and 2.3.1 of \cite{harrismotives}, a construction is given of a Hodge-de Rham structure (rather, a $G(\A_{f})$-admissible Hodge-de Rham structure, with the obvious definition) $M(W)^{i}$ associated to $W$, using the $i$-th degree cohomology of the local system $\tilde W$ and its conjugates for the Betti realizations and the hypercohomology of a certain complex of automorphic vector bundles, called the Faltings (dual) BGG complex, for the de Rham realization. We refer to \cite{harrismotives} for the details of the construction.

\begin{rem} We are assuming that $(W,\rho)$ is a representation of $G$ as an algebraic group over $\Q$. This assumption is made for simplicity of notation. Otherwise, the field of definition of $W_{\mu}$ must be incorporated into the field of coefficients of the Hodge-de Rham structures. Also, following Subsection 2.2 of \cite{harriscrelle}, one can start with any irreducible representation of $G_{\C}$, which will be defined over some number field, and take the sum of its Galois conjugates to obtain a representation over $\Q$. To simplify notation, we will assume from now on that $(W,\rho)$ is defined over $\Q$. We can identify which representations are defined over $\Q$ using Remark \ref{remark tits}. 
\end{rem}

\begin{rem} The additional data of the $\ell$-adic sheaves associated to $W$ (Proposition II.3.3, \cite{milne}) makes $M(W)^{i}$ a realization (or a $G(\A_{f})$-admissible realization). We will not use them in this part of the paper. Ideally, $M(W)^{i}$ is a motive for absolute Hodge cycles, but this is not relevant for our calculations. 
\end{rem}

In any case, $M(W)^{i}$ is a pure Hodge-de Rham structure of weight $i-\xi$, over $L'$ (or over a field over which the Faltings complex has a canonical model; this complex is formed of direct sums of fully decomposable bundles, and so it always has a canonical model over $L'$), and it has coefficients in $\Q$. We write $M(W)=M(W)^{d}$. Letting $1:L'\hookrightarrow\C$ denote the given inclusion, the Hodge component of $M(W)^{i}_{1}$ of type $(p,i-\xi-p)$ is given by
\begin{equation}\label{hodgedecomp}
 (M(W)_{1}^{i})^{p,i-\xi-p}=\bigoplus_{\substack{w\in\W^1\\p_{w*\mu}=p}}H^{i-\ell(w)}_!(S_{\C},\mathcal{E}_{w*\mu}). \end{equation}
The summands with $\ell(w)=i$ form what is called the \emph{holomorphic part} of $M^i(W)_{1}\otimes\C$. Those with $\ell(w)=0$ form the \emph{anti-holomorphic part}. We usually refer to the summands in~(\ref{hodgedecomp}) as Weyl components. Any $w\in\W^{1}$ defines a Weyl component, contributing to the Hodge component of type $(p,q)$ with $p=p_{w*\mu}$. Note that $q=i+q_{w*\mu}\neq q_{w*\mu}$. On the case of interest in this paper, $i$ will be equal to $d$, in which case $q=d+q_{w*\mu}$ can also be written as $q=p_{w^\flat*\mu}$.

\subsection{Automorphic forms as cohomology classes} 
From now on, we will write $K_{x}$ for $K_{x}(\R)$ when it appears as an argument for relative Lie algebra cohomology. Thus, for example, we write $H^{q}(\mathfrak{P}_{x},K_{x};V)$ for a $(\mathfrak{P}_{x},K_{x}(\R))$-module $V$ when we mean $H^{q}(\mathfrak{P}_{x},K_{x}(\R);V)$. 

Let $(W,\rho)$ (resp. $(V,r)$) be a representation of $G$ over $\Q$ (resp. of $K_{x,\C}$ over $\C$). Let $\mathcal{A}$ (resp. $\mathcal{A}_0$, resp. $\mathcal{A}_{(2)}$) be the space of automorphic forms (resp. cuspidal, resp. square-integrable automorphic forms) on $G(\Q)\backslash G(\A)$ in the sense of \cite{bj}, with respect to some maximal compact subgroup $K_{\infty}\subset G(\R)$. There are canonical inclusions (\cite{schwermer} 4.2)
\begin{equation}\label{incCohomFlat} H^i(\mathfrak{g}_{\C},K_x;\mathcal{A}_0\otimes_{\C}W_{\C})\subset M(W)^{i}_{1}\otimes\C\subset H^i(\mathfrak{g}_{\C},K_x;\mathcal{A}_{(2)}\otimes_{\C}W_{\C}).\end{equation}
Let $c_B$ denote complex conjugation on the second factor of $H^{i}_{!}(W)\otimes\C$. There is a similar picture for the fully decomposed automorphic vector bundle $\mathcal{E}$ attached to $(V,r)$. This case is contained in \cite{harrisdeltabar}. There are canonical inclusions
\begin{equation}\label{incCohomFully}
 H^i(\mathfrak{P}_x,K_x;\mathcal{A}_0\otimes_{\C}V)\subset H^i_!(S_{\C},\mathcal{E})\subset H^i(\mathfrak{P}_x,K_x;\mathcal{A}_{(2)}\otimes_{\C}V).
\end{equation}
In some cases, these inclusions are isomorphisms. See for instance \cite{harrisdeltabar}, 5.3.2. The first inclusion in~(\ref{incCohomFully}) is an isomorphism whenever $i=0$ or $i=d$ (\cite{harrisdeltabar} 5.4.2). Moreover, for $i=0$ we can also write
	\begin{equation}\label{h0can} H^{0}(\mathfrak{P}_{x},K_{x};\mathcal{A}(G)\otimes_{\C}V)=H^{0}(S_{\C},\mathcal{E}^{\can}).\end{equation}

\subsection{Complex conjugation} Keep the assumptions of Subsection \ref{therealizations}, and assume moreover that $i=d$. We are interested in studying how $c_{B}$ acts on a coherent cohomology class represented in terms of automorphic forms. It's illustrating to start with a general $\lambda\in\Lambda_{c,x}^{+}$, so we look at the spaces $H^{q}_{!}(S_{\C},\mathcal{E}_{\lambda})$. By~(\ref{incCohomFully}), we can view it sitting between $H^{q}(\mathfrak{P}_{x},K_{x};\mathcal{A}_{0}\otimes_{\C}V_{\lambda})$ and $H^{q}(\mathfrak{P}_{x},K_{x};\mathcal{A}_{(2)}\otimes_{\C}V_{\lambda})$. We can decompose $\mathcal{A}_{0}$ and $\mathcal{A}_{(2)}$ into a countable direct sum of irreducible unitary $(\mathfrak{g}_{\C},K_{x})$-modules $\pi$. By Proposition 4.5 of \cite{harrisdeltabar} (see also \emph{op. cit.}, Lemma 5.2.3), each such $\pi$ contributes to cohomology if and only if $\chi_{\pi}(C)=\langle\lambda+\rho_x,\lambda+\rho_x\rangle-\langle\rho_x,\rho_x\rangle$, where in our case $\langle,\rangle$ is the standard inner product on vectors, $C$ is the Casimir element of $\mathfrak{g}_{\C}$ and $\chi_{\pi}$ is the infinitesimal character of $\pi$. In this case, all the corresponding cochains will be closed. If we let $\mathcal{A}_{?,\lambda}$ denote the sum of the $\pi$ that contribute, we obtain a natural identification (where $?=0$ or $(2)$)
\[ H^{q}(\mathfrak{P}_{x},K_{x};\mathcal{A}_{?}\otimes_{\C}V_{\lambda})=(\mathcal{A}_{?,\lambda}\otimes_{\C}\Lambda^{q}(\mathfrak{p}_{x}^{+})\otimes_{\C}V_{\lambda})^{K_{x}(\R)}.\]
Now, let $c_{\mathcal{A}}$ denote complex conjugation of functions on $\mathcal{A}_{?}$. It's easy to see that $c_{\mathcal{A}}$ preserves $\mathcal{A}_{?,\lambda}$ in a $K_{x}(\C)$-equivariant way. Similarly, we can conjugate elements of $\Lambda^{q}(\mathfrak{p}_{x}^{+})\otimes_{\C}V_{\lambda}$ and land in $\Lambda^{q}(\mathfrak{p}_{x}^{-})\otimes_{\C}V_{\lambda}^{c}$ in a $K_{x}(\R)$-equivariant way. Moreover, we can identify $V_{\lambda}^{c}$ with $V_{\lambda^{\flat}}\otimes_{\C}\Lambda^{d}(\mathfrak{p}_{x}^{+})$, and using the Killing form, this becomes $V_{\lambda^{\flat}}\otimes_{\C}\Lambda^{d}(\mathfrak{p}_{x}^{-})^{\vee}$. Thus, there is a $K_{x}(\R)$-equivariant, $c$-semilinear isomorphism from $\mathcal{A}_{?,\lambda}\otimes_{\C}\Lambda^{q}(\mathfrak{p}_{x}^{+})\otimes_{\C}V_{\lambda}$ to $\mathcal{A}_{?,\lambda}\otimes_{\C}\Lambda^{q}(\mathfrak{p}_{x}^{-})\otimes_{\C}\Lambda^{d}(\mathfrak{p}_{x}^{-})^{\vee}\otimes_{\C}V_{\lambda^{\flat}}$. We can see the latter space, after contracting, as $\Hom_{\C}(\Lambda^{d-q}(\mathfrak{p}_{x}^{-}),\mathcal{A}_{?,\lambda}\otimes_{\C}V_{\lambda^{\flat}})$. 

\begin{lemma}\label{lemmalambda} Suppose that $\sum_{\tau,i}\lambda_{\tau,i}=0$. Then $\mathcal{A}_{?,\lambda}=\mathcal{A}_{?,\lambda^{\flat}}$.
\begin{proof} An elementary computation shows that $\langle\lambda+\rho_x,\lambda+\rho_x\rangle=\langle\lambda^{\flat}+\rho_x,\lambda^{\flat}+\rho_x\rangle$ if the multiplier factors of $\lambda$ and $\lambda^{\flat}$ coincide, which is precisely the condition that $\sum_{\tau,i}\lambda_{\tau,i}=0$.
\end{proof}
\end{lemma}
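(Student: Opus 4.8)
The plan is to reduce the claimed identity $\mathcal{A}_{?,\lambda}=\mathcal{A}_{?,\lambda^{\flat}}$ to the purely combinatorial assertion that $\lambda$ and $\lambda^{\flat}$ have the same infinitesimal character, via the characterization of the automorphic constituents contributing to coherent cohomology recalled just before the lemma. Recall that $\pi$ contributes to $H^{q}(\mathfrak{P}_{x},K_{x};\mathcal{A}_{?}\otimes_{\C}V_{\lambda})$ precisely when $\chi_{\pi}(C)=\langle\lambda+\rho_{x},\lambda+\rho_{x}\rangle-\langle\rho_{x},\rho_{x}\rangle$, where $C$ is the Casimir and $\langle,\rangle$ the standard inner product. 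Since $\mathcal{A}_{?,\lambda}$ is by definition the sum of the $\pi$ satisfying this equation, and similarly for $\lambda^{\flat}$, the two subspaces coincide as soon as the right-hand side is unchanged when $\lambda$ is replaced by $\lambda^{\flat}$. So the entire content is the scalar identity
\[ \langle\lambda+\rho_{x},\lambda+\rho_{x}\rangle=\langle\lambda^{\flat}+\rho_{x},\lambda^{\flat}+\rho_{x}\rangle. \]

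First I would write out $\lambda^{\flat}$ explicitly using the formula given in Subsection~\ref{roots}: if $\lambda=\left((a_{\tau,1},\dots,a_{\tau,n})_{\tau\in\Phi};a_{0}\right)$, then the $\tau$-block of $\lambda^{\flat}$ is $(-a_{\tau,r_{\tau}}-s_{\tau},\dots,-a_{\tau,1}-s_{\tau},-a_{\tau,n}+r_{\tau},\dots,-a_{\tau,r_{\tau}+1}+r_{\tau})$ and its multiplier is $\xi(\lambda)-a_{0}$. The pairing $\langle,\rangle$ on the span of $T$ is block-diagonal over $\Phi$ together with the multiplier coordinate, so I would compute the contribution of each block separately. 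Within a fixed $\tau$-block, $\rho_{x}$ has entries $\tfrac{1}{2}(n-1),\tfrac{1}{2}(n-3),\dots,\tfrac{1}{2}(1-n)$; one checks that the map sending the $\tau$-block of $\lambda$ to the $\tau$-block of $\lambda^{\flat}$ is, up to the overall sign coming from $\lambda\mapsto-\lambda$ (which preserves $\langle\cdot+\rho_{x},\cdot+\rho_{x}\rangle$ only if combined with the appropriate permutation), realized by a Weyl-group element composed with a reflection; indeed $\lambda^{\flat}$ is the highest weight of $V_{\lambda}^{\vee}\otimes\Lambda^{d}(\mathfrak{p}_{x}^{+})^{\vee}\otimes V_{\mu_{\xi(\lambda)}}$, and tensoring by the one-dimensional $\Lambda^{d}(\mathfrak{p}_{x}^{+})^{\vee}\otimes V_{\mu_{\xi(\lambda)}}$ changes the block entries only by the additive constants $-s_{\tau}$ (resp.\ $+r_{\tau}$) plus a shift in the multiplier. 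The cleanest route is thus: the block of $V_{\lambda}^{\vee}$, namely $(-a_{\tau,r_{\tau}},\dots,-a_{\tau,1},-a_{\tau,n},\dots,-a_{\tau,r_{\tau}+1})$, is $w_{0,c}$ applied to $-(\text{block of }\lambda)$, hence has the same value of $\langle\cdot+\rho_{x},\cdot+\rho_{x}\rangle$-block as $\lambda$ because $w_{0,c}$ stabilizes $\rho_{x}$-within-blocks after the standard manipulation (more precisely $\langle\lambda^{\vee}+\rho_{c,x},\lambda^{\vee}+\rho_{c,x}\rangle=\langle\lambda+\rho_{c,x},\lambda+\rho_{c,x}\rangle$), and then adding the affine constants $(-s_{\tau})$ on the first $r_{\tau}$ coordinates and $(+r_{\tau})$ on the last $s_{\tau}$ coordinates contributes a change that is a polynomial in the $a_{\tau,i}$ whose linear term is forced to vanish exactly when the multiplier coordinates match.

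The main obstacle — and the only nontrivial point — is bookkeeping the multiplier coordinate correctly: the quadratic form $\langle,\rangle$ is taken on $\Lambda\otimes\R$, and $\rho_{x}$ has multiplier coordinate $0$, so the multiplier coordinate of $\lambda+\rho_{x}$ is just $a_{0}$ and that of $\lambda^{\flat}+\rho_{x}$ is $\xi(\lambda)-a_{0}=2a_{0}+\sum_{\tau,i}a_{\tau,i}-a_{0}=a_{0}+\sum_{\tau,i}a_{\tau,i}$. Hence the multiplier contributions to the two sides are $a_{0}^{2}$ and $\left(a_{0}+\sum_{\tau,i}a_{\tau,i}\right)^{2}$, which agree if and only if $\sum_{\tau,i}a_{\tau,i}=0$ (given we are working up to the contribution from the $\tau$-blocks being equal). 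So I would organize the computation as: (i) show the sum over the $\tau$-blocks of $\langle\cdot+\rho_{x},\cdot+\rho_{x}\rangle$ is unchanged under $\lambda\mapsto\lambda^{\flat}$ using the reflection/Weyl-element description above — this holds unconditionally; (ii) observe that the multiplier coordinate contributes $a_{0}^{2}$ versus $\left(a_{0}+\sum_{\tau,i}a_{\tau,i}\right)^{2}$; (iii) conclude these are equal exactly under the hypothesis $\sum_{\tau,i}\lambda_{\tau,i}=0$. Then $\chi_{\pi}(C)$ matches $\langle\lambda+\rho_{x},\lambda+\rho_{x}\rangle-\langle\rho_{x},\rho_{x}\rangle$ if and only if it matches the analogous quantity for $\lambda^{\flat}$, so $\mathcal{A}_{?,\lambda}=\mathcal{A}_{?,\lambda^{\flat}}$ for $?=0$ and $?=(2)$, completing the proof.
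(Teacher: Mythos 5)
Your overall strategy is the same as the paper's: reduce the equality $\mathcal{A}_{?,\lambda}=\mathcal{A}_{?,\lambda^{\flat}}$ to the scalar identity $\langle\lambda+\rho_{x},\lambda+\rho_{x}\rangle=\langle\lambda^{\flat}+\rho_{x},\lambda^{\flat}+\rho_{x}\rangle$ via the Casimir condition, then split the pairing into the $\tau$-blocks and the multiplier coordinate and observe that the block contributions match unconditionally while the multiplier contributions are $a_{0}^{2}$ versus $(a_{0}+\sum_{\tau,i}a_{\tau,i})^{2}$. That decomposition and the multiplier bookkeeping are correct (though ``exactly when'' overstates things: $a_{0}^{2}=(a_{0}+S)^{2}$ also when $S=-2a_{0}$; only the direction $S=0\Rightarrow$ equality is needed and used).

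However, your justification of the block-level invariance in step (i) has a genuine error. You assert, in passing, that ``$w_{0,c}$ stabilizes $\rho_{x}$-within-blocks''; this is false. What one has is $w_{0,c}\rho_{c,x}=-\rho_{c,x}$ and $w_{0,c}\rho_{n}=\rho_{n}$ (with $\rho_{n}=\rho_{x}-\rho_{c,x}$ the half-sum over $R_{x}^{1,-1}$), so $w_{0,c}\rho_{x}=\rho_{x}-2\rho_{c,x}\neq\rho_{x}$. Consequently your two-step decomposition — first pass to $\lambda^{\vee}=-w_{0,c}\lambda$ (which preserves $\langle\cdot+\rho_{c,x},\cdot+\rho_{c,x}\rangle$ but \emph{not} $\langle\cdot+\rho_{x},\cdot+\rho_{x}\rangle$), then add the affine constants $(-s_{\tau},\dots,r_{\tau},\dots)$ — does not preserve the quantity $\langle\cdot+\rho_{x},\cdot+\rho_{x}\rangle$ at either individual step, and the hand-wave about the ``linear term being forced to vanish'' does not repair this. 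The clean fact, which should replace that paragraph, is the single identity
\[ \lambda^{\flat}+\rho_{x} = -w_{0,c}(\lambda+\rho_{x}) \quad\text{(in the $\tau$-blocks),} \]
which follows because the affine shift in the formula for $\lambda^{\flat}$ is exactly $-2\rho_{n}=-2(\rho_{x}-\rho_{c,x})$, so $\lambda^{\flat}+\rho_{x}=-w_{0,c}\lambda-2\rho_{n}+\rho_{x}=-w_{0,c}\lambda-\rho_{x}+2\rho_{c,x}=-w_{0,c}\lambda-w_{0,c}\rho_{x}$. Since $-w_{0,c}$ is an isometry of $\langle,\rangle$, the block contributions match unconditionally. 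With that correction your step (i) is a theorem rather than an assertion, and the rest of the argument goes through as written.
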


\begin{rem}\label{sumaaj=0} For any $w\in\W^1$, $\lambda=w*\mu$ satisfies the condition of the lemma because $W$ is defined over $\Q$.
\end{rem}

Thus, under the conditions of the lemma, we get, for any $q=0,\dots,d$, $c$-semilinear isomorphisms $H^{q}(\mathfrak{P}_{x},K_{x};\mathcal{A}_{?}\otimes_{\C}V_{\lambda})\to H^{d-q}(\mathfrak{P}_{x},K_{x};\mathcal{A}_{?}\otimes V_{\lambda^{\flat}})$. The element $x(i)\in K_{x}(\R)$ acts on $\mathcal{A}_{?}$ by right translation, preserving $\mathcal{A}_{?,\lambda}$, and on $V_{\lambda}$ by $r_{\lambda}(x(i))$. Composing the previous map with $x(i)$ gives $c$-semilinear isomorphisms
\begin{equation}\label{primerisocB} 
c_{\aut}:H^{q}(\mathfrak{P}_{x},K_{x};\mathcal{A}_{?}\otimes_{\C}V_{\lambda})\to H^{d-q}(\mathfrak{P}_{x},K_{x};\mathcal{A}_{?}\otimes V_{\lambda^{\flat}}).
\end{equation}
The following proposition is proved in \cite{harriscrelle} (Corollary 2.5.9), and allows to express the conjugation $c_{B}$ in terms of automorphic forms. The statement in \emph{op. cit.} only covers the case $w=w_{0}^{1}$, or cohomology in degrees $0$ and $d$, but the proof extends to any $q=0,\dots,d$ (see \cite{harriscrelle}, Remark 2.5.13, 2.).

\begin{prop}\label{coro259} Let $W$ be a representation of $G$, absolutely irreducible with highest weight $\mu\in\Lambda_{x}^{+}$. Then $c_{B}:H^{d}_{!}(W)\otimes\C\to H^{d}_{!}(W)\otimes\C$ takes the Weyl component corresponding to $w$ to the Weyl component corresponding to $w^{\flat}$, and the following diagram commutes up to $F^\times$-multiples, where $F\supset E$ is a field over which $\mathcal{E}_{w*\mu}$ and $\mathcal{E}_{w^{\flat}*\mu}$ have a canonical model:

\begin{tikzcd}
H^{d-\ell(w)}(\mathfrak{P}_{x},K_{x};\mathcal{A}_{0}\otimes_{\C}V_{w*\mu})\arrow{r}{c_{\aut}}\arrow{d}{} & H^{d-\ell(w^{\flat})}(\mathfrak{P}_{x},K_{x};\mathcal{A}_{0}\otimes_{\C}V_{w^{\flat}*\mu})\arrow{d}{} \\
H^{d-\ell(w)}_{!}(S_{\C},\mathcal{E}_{w*\mu})\arrow{r}{c_{B}}\arrow{d}{} & H^{d-\ell(w^{\flat})}_{!}(S_{\C},\mathcal{E}_{w^{\flat}*\mu})\arrow{d}{} \\
H^{d-\ell(w)}(\mathfrak{P}_{x},K_{x};\mathcal{A}_{(2)}\otimes_{\C}V_{w*\mu})\arrow{r}{c_{\aut}} & H^{d-\ell(w^{\flat})}(\mathfrak{P}_{x},K_{x};\mathcal{A}_{(2)}\otimes_{\C}V_{w^{\flat}*\mu}).
\end{tikzcd}

\noindent In this diagram, the vertical arrows are the inclusions from~(\ref{incCohomFully}).

\end{prop}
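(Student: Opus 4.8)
The plan is to realize $c_{B}$ geometrically, transport it to coherent cohomology, and then make it explicit on automorphic forms; this is exactly the argument of Corollary 2.5.9 of \cite{harriscrelle}, and the point of the present statement is that this argument is uniform in $w\in\W^{1}$ and in the cohomological degree.

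First I would identify $c_{B}$ with $c_{\coh}$. By the construction of $M(W)^{d}$ in \cite{harrismotives}, its de Rham realization is computed by the Faltings dual BGG complex on the canonical model $S_{E}$, a complex of direct sums of fully decomposed bundles whose graded pieces produce the description (\ref{hodgedecomp}) of the Hodge components of $M(W)^{d}_{1}\otimes\C$; under the de Rham comparison, $c_{B}$ on $M(W)^{d}_{1}\otimes\C$ becomes a $c$-semilinear operator on $\bigoplus_{w\in\W^{1}}H^{d-\ell(w)}_{!}(S_{\C},\mathcal{E}_{w*\mu})$. The formalism of conjugation of Shimura varieties and of automorphic vector bundles (\cite{milne}, II.4 and III.5), together with the identification of $({}^{c,x}G,{}^{c,x}X)$ with $(G,\overline X)$ from \cite{blasiusguerb} and of ${}^{c}\mathcal{E}_{w*\mu}$ with $\mathcal{E}_{(w*\mu)^{c}}$ over $\overline S_{\C}$, identifies this operator with the map $c_{\coh}$ of Subsection~\ref{avbconjugates}. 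Since $W$ is defined over $\Q$, Remark~\ref{remark tits} gives $c(\mu)=\mu$, so by the formulas of Subsection~\ref{roots} one has $(w*\mu)^{c}=w\overline{*}\mu$ and $(w*\mu)^{\flat}=w^{\flat}*\mu$; as $p_{w^{\flat}*\mu}$ is the second Hodge index $q$ of the $w$-component, the claim that $c_{B}$ sends the $w$-Weyl component to the $w^{\flat}$-one is consistent with the Hodge bigrading and will be confirmed by the explicit description below.

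Next I would make $c_{\coh}$ explicit on automorphic forms. Both $H^{d-\ell(w)}_{!}(S_{\C},\mathcal{E}_{w*\mu})$ and $H^{d-\ell(w^{\flat})}_{!}(\overline S_{\C},\mathcal{E}_{(w*\mu)^{c}})$ are sandwiched, via (\ref{incCohomFully}), between relative Lie algebra cohomology spaces built from the \emph{same} space $\mathcal{A}$ of automorphic forms on $G(\Q)\backslash G(\A)$ — the former with respect to $(\mathfrak{P}_{x},K_{x})$, the latter with respect to $(\mathfrak{P}_{\overline x},K_{\overline x})=(\mathfrak{P}_{\overline x},K_{x})$, where $\mathfrak{p}_{\overline x}^{\pm}=\mathfrak{p}_{x}^{\mp}$. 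The heart of the matter, and Harris's computation, is that $c_{\coh}$ — whose definition runs through the isomorphism $cS_{F}\cong\overline S_{c(F)}$ and base change for coherent cohomology — is realized on this common space, up to $F^{\times}$-multiples and the $G(\A_{f})$-action, by complex conjugation $c_{\mathcal{A}}$ of the automorphic forms composed with the $K_{x}(\R)$-equivariant $c$-semilinear identifications $\Lambda^{q}(\mathfrak{p}_{x}^{+})\otimes_{\C}V_{w*\mu}\cong\Lambda^{q}(\mathfrak{p}_{x}^{-})\otimes_{\C}V_{w*\mu}^{c}\cong\Lambda^{q}(\mathfrak{p}_{x}^{-})\otimes_{\C}\Lambda^{d}(\mathfrak{p}_{x}^{-})^{\vee}\otimes_{\C}V_{(w*\mu)^{\flat}}$ and the translation by $x(i)\in K_{x}(\R)$ — in other words, by the map $c_{\aut}$ of (\ref{primerisocB}). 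Here Lemma~\ref{lemmalambda} (applicable by Remark~\ref{sumaaj=0}) guarantees that $c_{\aut}$ lands in the cohomology of $\mathcal{A}_{?,(w*\mu)^{\flat}}$ and is compatible with the inclusions (\ref{incCohomFully}) at the cuspidal, interior and square-integrable levels at once; combined with the previous paragraph this gives commutativity of the diagram up to $F^{\times}$, and in particular pins down the effect on Weyl components.

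The main obstacle is precisely the identification in the previous paragraph: one must run through Milne's recipe for ${}^{c}\mathcal{E}$ (\cite{milne}, III.5), keep track of the compatibilities over the reflex field and over $L'$, and then match the resulting $c$-semilinear operator with the a priori unrelated, purely representation-theoretic map $c_{\aut}$. It is here that one genuinely uses that the cohomology is interior — so that Serre duality and the structure of the dual BGG complex are available — rather than merely the Hodge symmetry $(p,q)\leftrightarrow(q,p)$, which alone cannot separate distinct Weyl components lying in the same Hodge bidegree. Finally, as in Remark 2.5.13(2) of \cite{harriscrelle}, I would note that no step uses $w=w_{0}^{1}$ or restricts to degrees $0$ and $d$: the identification of $c_{\coh}$ with $c_{\aut}$ and the passage through (\ref{incCohomFully}) are uniform over all $w\in\W^{1}$ and $q=d-\ell(w)$, which yields the stated generality.
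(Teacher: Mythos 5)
Your overall strategy --- realize the Betti conjugation geometrically, transport it to coherent cohomology via the BGG/de Rham comparison, and then compute it explicitly on automorphic forms --- is the right one, and you correctly identify the ingredients: the formulas $(w*\mu)^{\flat}=w^{\flat}*\mu$ and $V_{\lambda}^{c}\cong\Lambda^{d}(\mathfrak{p}_{x}^{+})\otimes_{\C}V_{\lambda^{\flat}}$, Lemma~\ref{lemmalambda} together with Remark~\ref{sumaaj=0}, the translation by $x(i)$, and the remark that Hodge symmetry alone cannot single out Weyl components inside a fixed Hodge bidegree. But there is a genuine confusion among the three distinct maps $c_{B}$, $c_{\coh}$ and $c_{\aut}$, and the identifications you assert do not type-check.

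Concretely, $c_{\coh}$ is the $c$-semilinear isomorphism
\[ c_{\coh}:H^{d-\ell(w)}_{!}(S_{\C},\mathcal{E}_{w*\mu})\to H^{d-\ell(w)}_{!}(\overline{S}_{\C},\mathcal{E}_{(w*\mu)^{c}}), \]
which preserves the cohomological degree and changes the Shimura variety. In contrast, both $c_{B}$ (the map in the proposition) and $c_{\aut}$ go
\[ H^{d-\ell(w)}_{!}(S_{\C},\mathcal{E}_{w*\mu})\to H^{d-\ell(w^{\flat})}_{!}(S_{\C},\mathcal{E}_{w^{\flat}*\mu}), \]
changing the degree but staying on the same Shimura variety. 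Your first paragraph asserts that the de Rham comparison ``identifies this operator [$c_{B}$] with the map $c_{\coh}$,'' and your second paragraph then purports to show ``$c_{\coh}$ is realized $\dots$ by the map $c_{\aut}$.'' Neither statement can be literally true, because the sources agree but the targets differ. The $\C$-linear isomorphism that bridges the two targets
\[ H^{d-\ell(w)}_{!}(\overline{S}_{\C},\mathcal{E}_{(w*\mu)^{c}})\cong H^{d-\ell(w^{\flat})}_{!}(S_{\C},\mathcal{E}_{w^{\flat}*\mu}) \]
is exactly the map $\Fr$ of~(\ref{deffrobenius}), defined using the identification $V_{(w*\mu)^{c}}\cong\Lambda^{d}(\mathfrak{p}_{x}^{+})\otimes_{\C}V_{(w*\mu)^{\flat}}$ and the Killing-form contraction that you write out in your second paragraph. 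In effect you have smuggled $\Fr$ into what you call ``$c_{\coh}$'' without acknowledging that this is a separate map, and the relation $c_{B}\sim\Fr\circ c_{\coh}$ (up to an $F$-rational map) is itself nontrivial --- indeed it is essentially the content of the separate Proposition~\ref{Frisrational}, whose proof goes through Lemma~2.5.12 of \cite{harriscrelle}, not Corollary~2.5.9. There is also a degree error: the target of $c_{\coh}$ is $H^{d-\ell(w)}_{!}(\overline{S}_{\C},\mathcal{E}_{(w*\mu)^{c}})$, not the $H^{d-\ell(w^{\flat})}_{!}$ you wrote. To repair the argument you would have to distinguish the three maps, prove directly that $c_{B}$ (Betti conjugation on $M(W)^{d}_{1}\otimes\C$, governed by $F_{1,c}$ and the compatibility $c_{\dR,\sigma}I_{\infty,\sigma}=I_{\infty,\overline{\sigma}}F_{\sigma,c}$ in the Hodge--de Rham structure) is realized on the $(\mathfrak{P}_{x},K_{x})$-cohomology spaces by $c_{\aut}$, and treat the passage through $\overline{S}$, $c_{\coh}$ and $\Fr$ as the separate rationality statement it is.
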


We introduce now another operation $\Fr$ in terms of automorphic forms, as follows. As in~(\ref{incCohomFully}), the space $H^{d-\ell(w)}_!(\overline S_{\C},\mathcal{E}_{(w*\mu)^c})$ sits between \[H^{d-\ell(w)}(\mathfrak{P}_{\overline x},K_x;\mathcal{A}_{0}\otimes_{\C}V_{(w*\mu)^{c}})=(\mathcal{A}_{0,(w*\mu)^{c}}\otimes_{\C}\Lambda^{d-\ell(w)}(\mathfrak{p}_{x}^{-})\otimes_{\C}V_{(w*\mu)^{c}})^{K_{x}(\R)}\] and \[H^{d-\ell(w)}(\mathfrak{P}_{\overline x},K_x;\mathcal{A}_{(2)}\otimes_{\C}V_{(w*\mu)^{c}})=(\mathcal{A}_{(2),(w*\mu)^{c}}\otimes_{\C}\Lambda^{d-\ell(w)}(\mathfrak{p}_{x}^{-})\otimes_{\C}V_{(w*\mu)^{c}})^{K_{x}(\R)}.\] An easy computation shows that $\mathcal{A}_{?,(w*\mu)^{c}}=\mathcal{A}_{?,(w*\mu)}$, which by Lemma~\ref{lemmalambda} equals $\mathcal{A}_{?,(w*\mu)^{\flat}}$. On the other hand, there is a natural identification of $K_{x}(\R)$-modules between $V_{(w*\mu)^{c}}$ and $\Lambda^{d}(\mathfrak{p}_{x}^{+})\otimes_{\C}V_{(w*\mu)^{\flat}}$ (see Subsection~\ref{roots}). Finally, contracting via the Killing form, we obtain $\C$-linear isomorphisms
\[ H^{d-\ell(w)}(\mathfrak{P}_{\overline x},K_{x};\mathcal{A}_{?}\otimes_{\C}V_{(w*\mu)^{c}})\to H^{d-\ell(w^{\flat})}(\mathfrak{P}_{x},K_{x};\mathcal{A}_{?}\otimes_{\C}V_{(w*\mu)^{\flat}}), \]
and an isomorphism in coherent cohomology sitting in between them, which we denote by  
\begin{equation}\label{deffrobenius} \Fr:H^{d-\ell(w)}_{!}(\overline{S}_{\C},\mathcal{E}_{(w*\mu)^{c}})\to H^{d-\ell(w^{\flat})}_{!}(S_{\C},\mathcal{E}_{w^{\flat}*\mu}).\end{equation}
Recall that we also have a $c$-semilinear isomorphism 
\[c_{\coh}=H^{q}_{!}(S_{\C},\mathcal{E}_{w*\mu})\to H^{q}_{!}(\overline S_{\C},\mathcal{E}_{(w*\mu)^{c}})\] 
In the next Proposition, $c_{B}$ really denotes the inverse of the map in Proposition~\ref{coro259}, although in terms of Betti conjugation it's obviously equal to its own inverse, so we use the same notation. Alternatively, we use Proposition~\ref{coro259} with $w$ and $w^{\flat}$ interchanged.

\begin{prop}\label{Frisrational} Under the assumptions of Proposition~\ref{coro259}, the following $\C$-linear automorphism
\[ c_{B}\circ\Fr\circ c_{\coh}:H^{d-\ell(w)}_{!}(S_{\C},\mathcal{E}_{w*\mu})\to H^{d-\ell(w)}_{!}(S_{\C},\mathcal{E}_{w*\mu}) \]
is rational over $F$, where $F\supset E$ is any field over which $\mathcal{E}_{w*\mu}$ has a canonical model.
\begin{proof} This is proved exactly as in \cite{harriscrelle}, Lemma 2.5.12.
\end{proof}
\end{prop}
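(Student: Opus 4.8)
The statement is an equality (up to $F^\times$) of a $\C$-linear automorphism of a cohomology space, and the natural strategy is to compare the two descriptions of this map: the intrinsic one coming from the Hodge--de Rham structure and the explicit one built from automorphic forms via $\Fr$ and $c_{\coh}$. The plan is to follow the argument of Lemma 2.5.12 of \cite{harriscrelle} verbatim, adapting the notation to the CM-field setting, since nothing in that argument is special to $K=\Q$: the only ingredients are (i) the compatibility of $c_B$ with automorphic forms from Proposition~\ref{coro259}, (ii) the fact that $c_{\coh}$ is, by construction, the $c$-semilinear map induced by base change along the isomorphism $cS_{U,F}\cong\overline S_{U,c(F)}$, hence carries the canonical $F$-structure on $H^{d-\ell(w)}_!(S_F,\mathcal{E}_{w*\mu})$ to the canonical $c(F)$-structure on $H^{d-\ell(w)}_!(\overline S_{c(F)},{}^c\mathcal{E}_{w*\mu})=H^{d-\ell(w)}_!(\overline S_{c(F)},\mathcal{E}_{(w*\mu)^c})$, and (iii) the rationality of $\Fr$ over $F$.

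The key steps, in order, are as follows. First I would fix a field $F\supset E$ over which $\mathcal{E}_{w*\mu}$ (and hence also $\mathcal{E}_{w^\flat*\mu}$, since $(w*\mu)^\flat=w^\flat*\mu$ and the Faltings complex components always descend to $L'\subset F$) has a canonical model, so that $H^{d-\ell(w)}_!(S_F,\mathcal{E}_{w*\mu})$ is a well-defined $F$-vector space. Second, I would observe that $c_{\coh}$ is $c$-semilinear and matches the $F$-structure on the source with the $c(F)$-structure on $H^{d-\ell(w)}_!(\overline S_{c(F)},\mathcal{E}_{(w*\mu)^c})$; this is essentially the definition of $c_{\coh}$ recalled at the end of Subsection~\ref{avbconjugates}, together with Proposition~1.4.3 of \cite{bhr} for the compatibility with toroidal compactifications. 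Third, I would invoke the rationality of $\Fr$ over $F$: the map~(\ref{deffrobenius}) is defined purely by contracting against the Killing form and identifying conjugate $K_{x,\C}$-representations with their $\flat$-twists, operations which are all algebraic over $L'$, hence over $F$ — so $\Fr$ carries the $c(F)$-structure on $H^{d-\ell(w)}_!(\overline S_{c(F)},\mathcal{E}_{(w*\mu)^c})$ to the $c(F)$-structure on $H^{d-\ell(w^\flat)}_!(S_{c(F)},\mathcal{E}_{w^\flat*\mu})$. Fourth, I would apply Proposition~\ref{coro259} (in the form with $w$ and $w^\flat$ interchanged, as flagged in the paper) to see that $c_B$ is, up to $F^\times$-multiples, induced by the map $c_{\aut}$ on automorphic forms, which by the explicit recipe of~(\ref{primerisocB}) (conjugation of functions, conjugation on $\Lambda^\bullet(\mathfrak{p}_x^\pm)\otimes V_\lambda$, action of $x(i)$) is again built from $c$-semilinear algebraic operations; composing, $c_B\circ\Fr\circ c_{\coh}$ is an $F$-rational (now $\C$-\emph{linear}, since it is semilinear composed with semilinear composed with linear, and the two semilinearities cancel) endomorphism of $H^{d-\ell(w)}_!(S_F,\mathcal{E}_{w*\mu})\otimes_F\C$, i.e.\ rational over $F$.

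The one point requiring genuine care — and the main obstacle — is bookkeeping the three twisting operations ($\flat$, conjugation $(-)^c$, and the Killing-form contraction $\Lambda^d(\mathfrak{p}_x^+)^\vee$) so that the composite really lands back in the \emph{same} space $H^{d-\ell(w)}_!(S_{\C},\mathcal{E}_{w*\mu})$ with the \emph{same} $F$-structure, rather than in an abstractly isomorphic but differently-normalized copy. Concretely one must check that the identification $V_{(w*\mu)^c}\cong\Lambda^d(\mathfrak{p}_x^+)\otimes_\C V_{(w*\mu)^\flat}$ used in defining $\Fr$, combined with the identification $V_{w^\flat*\mu}=V_{(w*\mu)^\flat}$ (which holds because $c(\mu)=\mu$, by Remark~\ref{remark tits} and the displayed identity $(w*\mu)^\flat=w^\flat*\mu$ at the end of Subsection~\ref{roots}), is compatible up to $F^\times$ with the identification implicit in Proposition~\ref{coro259}'s diagram; this is exactly the content that makes the two semilinear twists cancel. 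Since all of these identifications are the standard ones from \cite{harrismotives} and \cite{harrisdeltabar} and are algebraic over $L'$, the verification is the same as in \cite{harriscrelle}, Lemma 2.5.12, and I would simply cite that argument, noting only that in the present generality $\mathfrak{p}_x^\pm$ and the Weyl components carry the extra product-over-$\Phi$ structure, which plays no role in the rationality statement.
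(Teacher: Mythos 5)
Your proposal is correct and takes the same route as the paper: both reduce to Lemma 2.5.12 of \cite{harriscrelle}, noting that the argument there depends only on the $c$-semilinearity of $c_{\coh}$, the $F$-rationality of $\Fr$, and Proposition~\ref{coro259}, none of which is sensitive to the passage from $K=\Q$ to a general totally real $K$. Your additional bookkeeping remarks (on the identifications $V_{(w*\mu)^c}\cong\Lambda^d(\mathfrak{p}_x^+)\otimes V_{(w*\mu)^\flat}$ and $(w*\mu)^\flat=w^\flat*\mu$) are accurate and simply make explicit what the citation leaves implicit.
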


\subsection{Cohomological automorphic representations}\label{cohomologicalreps}
Let $\Aut_G$ denote the set of automorphic representations of $G$, $\Aut_G^0$ the set of cuspidal automorphic representations, and $\Temp_G\subset\Aut_G^0$ the set of essentially tempered automorphic representations (an automorphic representation $\pi$ is called essentially tempered if its twist by a character of $G(\Q)\backslash G(\A)$ is tempered at all places). From now on we will only work with essentially tempered representations, although many of the things we say are true in more general cases. We say that $\pi=\pi_{\infty}\otimes\pi_f\in\Temp_G$ is \emph{cohomological of type} $\mu\in R_{x}^+$ if $H^*(\mathfrak{g}_{\C},K_x;\pi_{\infty}\otimes_{\C}W_{\mu})\neq 0$. We assume almost all the time that $W_{\mu}$ is defined over $\Q$, so as to place ourselves in the setting of the last subsections. We write $W=W_{\mu}$ if $\mu$ is understood. The condition of $\pi\in\Temp_{G}$ being cohomological is equivalent to the statement that $\pi_{\infty}$ is a discrete series representation that belongs to the $L$-packet whose infinitesimal character is that of $W^{\vee}$. Moreover, it follows from Theorem II.5.4 of \cite{borelwallach} (see also VI, 1.4, (1) therein) that $\pi$ actually contributes to middle-dimensional cohomology only, that is, $H^{*}(\mathfrak{g}_{\C},K_{x};\pi_{\infty}\otimes_{\C}W_{\C})=H^{d}(\mathfrak{g}_{\C},K_{x};\pi_{\infty}\otimes_{\C}W_{\C})\neq 0$ (we thank the referee for the precise reference). We let $\Coh_{G,\mu}$ the set of $\pi\in\Temp_{G}$ which are cohomological of type $\mu$. 

Assume from now on that $\pi\in\Coh_{G,\mu}$. As in \cite{harriscrelle}, 2.7, we can define the standard $L$-function of $\pi$ as $L^{S}(s,\pi,\St)=L^{S}(s,BC(\pi_{0}),\St)$. Here $\St$ refers to the standard representation of the $L$-group of $\GL_{n}$ over $L$, $\pi_{0}$ is an irreducible constituent of the restriction of $\pi$ to $U(\A)$, and $BC(\pi_{0})$ is the base change of $\pi_{0}$ to an irreducible admissible representation of $\GL_{n}(\A_{L}^{S})$, for a big enough finite set of places $S$ of $L$. The base change is defined locally at archimedean places, at split places, and at places of $K$ where the local unitary group $U_{v}$ and $\pi_{0,v}$ are unramified. Under our assumptions, it is known that $BC(\pi_{0})$ is the restriction to $\GL_{n}(\A_{L}^{S})$ of an automorphic representation $\Pi$ of $\GL_{n}(\A_{L})$, so we can actually define $L(s,\pi,\St)$ at all places as $L(s,\Pi,\St)$. We define the motivic normalization by
\[ L^{\mot,S}(s,\pi,\St)=L^{S}\left(s-\frac{n-1}{2},\pi,\St\right).\]

The representation $\pi_{f}$ is defined over a number field $E(\pi)$. We denote the corresponding model by $\pi_{f,0}$ (note that $E(\pi)$ may not always be taken to be the fixed field under the stabilizer of $\pi_{f}$). We can take $E(\pi)$ to be a CM field (see \cite{bhr}, Theorem 4.4.1, and \cite{harriscrelle}, 2.6). To make things simpler, we take $E(\pi)$ to contain $L'$; this is still a CM field. We let $J_{\pi}=J_{E(\pi)}$, and for each $\sigma\in J_{\pi}$, we let $\pi_{f}^{\sigma}=\pi_{f,0}\otimes_{E(\pi),\sigma}\C$. Throughout, we make the assumption that $\pi_{f}^{\sigma}$ is essentially tempered for any $\sigma$. For $?=\tau:L'\hookrightarrow\C$ or $?=\dR$, define
\[ M(\pi,W)_{?}=\Hom_{\Q[G(\A_f)]}(\pi_{f,0},M(W)_{?}).\]
These data define a Hodge-de Rham structure $M(\pi,W)$ over $L'$ with coefficients in $E(\pi)$, which is pure of weight $d-\xi$. We let $M^{+}(\pi,W)=R_{L'/(L')^+}M(\pi,W)$, a Hodge-de Rham structure over $(L')^+$, the maximal totally real subfield of $L'$, with coefficients in $E(\pi)$. Just as in the case of $M(W)$, we can endow $M(\pi,W)$ and $M^{+}(\pi,W)$ with the additional structure of a realization using $\ell$-adic cohomology. It is well known that $M(\pi,W)$ is non-trivial if $\pi\in\Coh_{G,\mu}$. When $W$ is understood from the context, we will denote these realizations by $M(\pi)$ and $M^{+}(\pi)$ respectively.

\subsection{Polarizations}\label{ssec:polariz} Recall that there is an isomorphism $W\cong W^{\vee}\otimes W_{\mu_{\xi}}$ as representations of $G$ (see Subsection~\ref{roots}), with $\xi=2a_{0}$. As in \cite{harriscrelle}, 2.6.8, this induces a perfect $G(\A_{f})$-equivariant pairing of Hodge-de Rham structures
\begin{equation}\label{motpairing} \langle,\rangle:M(W)\otimes M(W)\to\Q(\xi-d). \end{equation}
\begin{rem}
	The $G(\A_{f})$-action on $\Q(\pi)(\xi-d)$ is given by the character $\|\nu\|^{-\xi}:G(\A)\to\Q^{\times}$, where $\|\cdot\|$ is the adelic norm on $\A^{\times}$.
\end{rem}

Let $w\in\mathcal{W}^{1}$. It's easy to see that the Serre dual $(\mathcal{E}_{w*\mu})'$ is isomorphic to $\mathcal{E}_{w^{\flat}*\mu^{\vee}}$, and we can write Serre duality
\begin{equation}
\label{serreduality}
H^{2d-i-\ell(w^{\flat})}_{!}(S_{\C},(\mathcal{E}_{w*\mu})')\times H^{i-\ell(w)}_{!}(S_{\C},\mathcal{E}_{w*\mu})\to\C
\end{equation}
(note that $2d-i-\ell(w^{\flat})=d-(i-\ell(w))$). Consider the de Rham pairing $\langle,\rangle_{\dR}:M(W)_{\dR}\otimes_{L'}M(W)_{\dR}\to L'$. For $\tau\in J_{L'}$, there is a complex pairing $\langle,\rangle_{\dR,\tau}:M(W)_{\dR,\tau}\otimes_{\C}M(W)_{\dR,\tau}\to\C$, where $M(W)_{\dR,\tau}=M(W)_{\dR}\otimes_{L',\tau}\C$. We will only be explicit for $\tau=1$, and usually ignore the subscript $1$. Then, in terms of Weyl components, the pairing $\langle,\rangle_{\dR,1}$ of~(\ref{motpairing}) is, up to the $\nu^{\xi}$-twist, Serre duality~(\ref{serreduality}) with $i=d$. Thus, we can write 
\begin{equation}\label{pairingWeyl}  \langle,\rangle_{\dR}:H^{d-\ell(w^{\flat})}_{!}(S_{\C},\mathcal{E}_{w^{\flat}*\mu})\otimes_{\C}H^{d-\ell(w)}_{!}(S_{\C},\mathcal{E}_{w*\mu})\to\C. \end{equation} This pairing is actually $L'$-rational, in the sense that it descends to a pairing
\[ \langle,\rangle_{\dR}:H^{d-\ell(w^{\flat})}_{!}(S_{L'},\mathcal{E}_{w^{\flat}*\mu})\otimes_{L'}H^{d-\ell(w)}_{!}(S_{L'},\mathcal{E}_{w*\mu})\to L'.\]
In particular, taking $w=1$, we obtain a pairing
\[ \langle,\rangle_{\dR}:H^{0}_{!}(S_{L'},\mathcal{E}_{w_{0}^{1}*\mu})\otimes_{L'}H^{d}_{!}(S_{L'},\mathcal{E}_{\mu})\to L'. \]

We can write these pairings in terms of vector-valued functions. Namely, the pairings~(\ref{pairingWeyl}) sit (with respect to the inclusions~(\ref{incCohomFully})) between pairings
\begin{equation}
\label{pairingRelLie}
\langle,\rangle_{\dR}:H^{d-\ell(w^{\flat})}(\mathfrak{P}_{x},K_{x};\mathcal{A}_{?}\otimes_{\C}V_{w^{\flat}*\mu})\times H^{d-\ell(w)}(\mathfrak{P}_{x},K_{x};\mathcal{A}_{?}\otimes_{\C}V_{w*\mu})\to\C 
\end{equation}
for $?=0$ or $(2)$. Now, as $K_{x,\C}$-representations, $V_{w^{\flat}*\mu}\cong V_{w*\mu}^{\vee}\otimes_{\C}\Lambda^{d}(\mathfrak{p}_{x}^{+})\otimes_{\C}V_{\mu_{\xi}}$. If $f$ (resp. $f'$) is an element in the first (resp. second) factor of~(\ref{pairingRelLie}), then we can see $f$ (resp. $f'$) as an element in $(\Lambda^{d-\ell(w^{\flat})}(\mathfrak{p}_{x}^{+})\otimes_{\C}\mathcal{A}_{?,w^{\flat}*\mu}\otimes_{\C}V_{w^{\flat}*\mu})^{K_{x}(\R)}$ (resp. in $(\Lambda^{d-\ell(w)}(\mathfrak{p}_{x}^{+})\otimes_{\C}\mathcal{A}_{?,w*\mu}\otimes_{\C}V_{w*\mu})^{K_{x}(\R)}$). After choosing an $L'$-rational isomorphism $\Lambda^{2d}(\mathfrak{p}_{x})\cong L'$, contraction of coefficients and multiplication of automorphic forms, an operation we denote by $g\mapsto[f(g),f'(g)]$, defines an element in $\mathcal{A}_{?}\otimes_{\C}V_{\mu_{\xi}}$. Keeping in mind our choices for Haar measures, we can write~(\ref{pairingRelLie}) as
\[ \langle f,f'\rangle_{\dR}=\int_{G(\Q)Z_{G}(\A)\backslash G(\A)}[f(g),f'(g)]\|\nu(g)\|^{\xi}dg.\]

\begin{rem}\label{scalarforms} We can also write these pairings in terms of scalar-valued automorphic forms, as in \cite{harriscrelle}, 2.6.11.  To be more precise, let $f,f'$ be as above. We can now see them as elements $f\in\Hom_{K_{x}(\R)}(V_{w^{\flat}*\mu}^{\vee}\otimes_{\C}\Lambda^{d-\ell(w^{\flat})}(\mathfrak{p}_{x}^{-}),C^{\infty}(G(\Q)\backslash G(\A)))$ and $f'\in\Hom_{K_{x}(\R)}(V_{w*\mu}^{\vee}\otimes_{\C}\Lambda^{d-\ell(w)}(\mathfrak{p}_{x}^{-}),C^{\infty}(G(\Q)\backslash G(\A)))$. We assume for simplicity that $V_{w*\mu}\otimes\Lambda^{d-\ell(w)}(\mathfrak{p}_{x}^{+})$ is irreducible. This is true, for example, if $w=1$ or $w=w_{0}^{1}$, which are the cases that will concern us in this paper. The general case follows by decomposing this space into irreducible components. It's easy to see that
\[ V_{w^{\flat}*\mu}^{\vee}\otimes_{\C}\Lambda^{d-\ell(w^{\flat})}(\mathfrak{p}_{x}^{-})\simeq V_{w*\mu}\otimes_{\C}\Lambda^{d-\ell(w)}(\mathfrak{p}_{x}^{+})\otimes_{\C}V_{\mu_{\xi}}.  \]
Let ${L}\subset V_{w*\mu}^{\vee}\otimes_{\C}\Lambda^{d-\ell(w)}(\mathfrak{p}_{x}^{-})$ (resp. ${L}'\subset V_{w^{\flat}*\mu}^{\vee}\otimes_{\C}\Lambda^{d-\ell(w^{\flat})}(\mathfrak{p}_{x}^{-})$) be the lowest (resp. highest) weight spaces. Then $f$ and $f'$ define smooth functions on $G(\A)$ by restricting to ${L}$ and ${L}'$ and evaluating at a basis element. Then (see \cite{harriscrelle}, Proposition 2.6.12)
\begin{equation}
\label{scalarpairing}
\langle f,f'\rangle_{\dR}=\int_{G(\Q)Z_{G}(\A)\backslash G(\A)}f(g)f'(g)\|\nu(g)\|^{\xi}dg.
\end{equation}
\end{rem}

Assume from now on that $\pi$ satisfies $\pi^{\vee}\cong\pi\otimes\|\nu\|^{\xi}$. Then $\langle,\rangle$ defines polarizations with the same notation
\[ \langle,\rangle:M(\pi,W)\otimes_{E(\pi)}M(\pi,W)\to E(\pi)(\xi-d)\]
and
\[ \langle,\rangle:M^{+}(\pi,W)\otimes_{E(\pi)}M^{+}(\pi,W)\to E(\pi)(\xi-d).\]
over $L'$ and $L'^{+}$ respectively. 

\begin{rem}\label{antiholomorphic} Suppose that $\beta:\pi_{f,0,\C}\to H^{0}_{!}(S_{\C},\mathcal{E}_{w_{0}^{1}*\mu})$ is a $\Q[G(\A_{f})]$-equivariant map. We can see the target as $\Hom_{K_{x}(\R)}(V_{w_{0}^{1}*\mu}^{\vee},\mathcal{A}_{0,w_{0}^{1}*\mu})$. For any $v\in\pi_{f,0}$, the image via $\beta(v)$ of a lowest weight vector of $V_{w_{0}^{1}*\mu}^{\vee}$ in $\mathcal{A}_{0}$ corresponds to a holomorphic form $f$ in $\pi$. Similarly, if $\beta:\pi_{f,0,\C}\to H^{d}_{!}(S_{\C},\mathcal{E}_{\mu})$, then $\beta$ gives rise to an antiholomorphic form in $\pi$.
\end{rem}

\subsection{Automorphic quadratic periods}\label{autquad}
Let $w\in\mathcal{W}^{1}$, and let $\beta$ be an element of $\Hom_{\Q[G(\A_{f})]}(\pi_{f,0},H^{d-\ell(w)}_{!}(S_{L'},\mathcal{E}_{w*\mu}))$. We define $R\beta$ to be the pair $(\beta,c_{\coh}(\beta))$. Define $\Fr R\beta=(\Fr c_{\coh}(\beta),\Fr\beta)$, where $\Fr$ is the map defined in (\ref{deffrobenius}). We define the automorphic quadratic period $Q(\pi;\beta)$ by
\[ Q(\pi;\beta)=\langle R\beta,\Fr R\beta\rangle\in E(\pi)\otimes\C, \]
where the pairing $\langle,\rangle$ is defined in Subsection \ref{ssec:polariz}. As in \cite{harriscrelle}, (2.8.7.4), we see that 
\[ Q(\pi;\beta)\sim_{E(\pi)\otimes L'}\langle\beta,\Fr c_{\coh}(\beta)\rangle.\]
Using Proposition \ref{Frisrational}, we can replace this by $\langle\beta,c_{B}(\beta)\rangle$.

\begin{rem}
We can express $Q(\pi;\beta)$ in terms of integrals of the form~(\ref{scalarpairing}). Namely, suppose that $\beta$ gives rise to an automorphic form $f$ on $G(\A)$, as in Remark~\ref{scalarforms}. Using Remark~\ref{sumaaj=0}, it's not hard to see $\lambda(x(i))=\pm1$, where $\lambda$ is the highest weight of $V_{w*\mu}\otimes_{\C}\Lambda^{d-\ell(w)}(\mathfrak{p}_{x}^{+})$. Determining the sign should not be complicated but we don't care about it in this paper. In any case, the map $c_{\aut}$ of~(\ref{primerisocB}) takes $f$ to $\pm\overline{f}$. Moreover, by Proposition \ref{coro259}, this differs from $c_{B}$ by a rational multiple in $L'$, and thus
\begin{equation}\label{Lema288}
Q(\pi;\beta)\sim_{E(\pi)\otimes L'}\int_{G(\Q)Z(\A)\backslash G(\A)}f(g)\overline{f}(g)\|\nu(g)\|^{\xi}dg.
\end{equation}

\end{rem}

\begin{rem}\label{rempipidual} When studying $L$-functions in the next section, we will need to pair forms in $\pi$ with forms in $\pi^{\vee}$, rather than with $\pi\cong\pi^{\vee}\otimes\|\nu\|^{-\xi}$. To remedy this, we introduce a new operation on automorphic forms and cohomology classes. Fix $\lambda\in\Lambda_{c,x}^{+}$, with $\xi=\xi(\lambda)$. Let $\hat\lambda=\lambda-\mu_{\xi}$. Define a map $\mathcal{A}_{?}\otimes_{\C}V_{\lambda}\to\mathcal{A}_{?}\otimes_{\C}V_{\lambda}\otimes_{\C}V_{\mu_{-\xi}}$ by taking $f\otimes v\mapsto\hat f\otimes v\otimes 1$, where $\hat f(g)=(2\pi i)^{\xi}\|\nu(g)\|^{\xi}f(g)$ for $f\in\mathcal{A}$. It's easy to see that this map is $(\mathfrak{P}_{x},K_{x}(\R))$-linear, and hence it thus induces a map in $(\mathfrak{P}_{x},K_{x})$-cohomology. There is a map
\[ H^{q}_{!}(S_{\C},\mathcal{E}_{\lambda})\to H^{q}_{!}(S_{\C},\mathcal{E}_{\hat\lambda})\]
\[\alpha\mapsto\hat\alpha\]
sitting in between as in~(\ref{incCohomFully}), which is rational over $L'$ (see \cite{harriscrelle}, 2.8.9.2). Now let $\beta:\Hom_{\Q[G(\A_{f})]}(\pi_{f,0},H^{d-\ell(w)}_{!}(S_{L'},\mathcal{E}_{w*\mu}))$. By Proposition \ref{coro259}, $c_{B}(\beta)\in\Hom_{\Q[G(\A_{f})]}(\pi_{f,0},H^{d-\ell(w^{\flat})}_{!}(S_{L'},\mathcal{E}_{w^{\flat}*\mu}))$. Note that $w^{\flat}*\mu-\mu_{\xi}=w^{\flat}*\mu^{\vee}$. Define $c_{B}(\beta)^{\vee}$ to be the composition
\[ \pi_{f,0}^{\vee}\to\pi_{f,0}^{\vee}\otimes\|\nu\|^{-\xi}\cong\pi_{f,0}\xrightarrow{c_{B}(\beta)}H^{d-\ell(w^{\flat})}_{!}(S_{\C},\mathcal{E}_{w^{\flat}*\mu})\xrightarrow{\alpha\mapsto\hat\alpha}H^{d-\ell(w^{\flat})}_{!}(S_{\C},\mathcal{E}_{w*\mu^{\vee}}). \]
Then we define $Q(\pi,\pi^{\vee};\beta)=\langle\beta,c_{B}(\beta)^{\vee}\rangle$, where this pairing comes from Serre duality between $H^{d-\ell(w)}_{!}(S_{\C},\mathcal{E}_{w*\mu})$ and $H^{d-\ell(w^{\flat})}_{!}(S_{\C},\mathcal{E}_{w^{\flat}*\mu^{\vee}})$ (see~(\ref{serreduality})). As in \cite{harriscrelle}, 2.8.9.5, it follows from~(\ref{Lema288}) that
\[ Q(\pi,\pi^{\vee};\beta)\sim_{E(\pi)\otimes L'}\int_{G(\Q)Z(\A)\backslash G(\A)}f(g)\hat{\overline{f}}(g)dg,\]
where $f$ lifts $\beta$ as above. Thus 
\begin{equation}
\label{formulaQpipidual}
Q(\pi,\pi^{\vee};\beta)\sim_{E(\pi)\otimes L'}(2\pi i)^{\xi}Q(\pi;\beta).
\end{equation}
\end{rem}

As in \cite{harriscrelle}, 2.9, we can extend the definition of automorphic quadratic periods to the case where we twist by an algebraic Hecke character $\psi$. The main reason that we switched from $\chi$ to $\psi$ in the notation for algebraic Hecke characters is that in Section~\ref{sec:relations} we will combine the results of Sections~\ref{sec:factorization} and \ref{sec:doubling}, but the algebraic Hecke characters will be slightly different (although closely related; see Section~\ref{sec:relations} for details). Recall that we have fixed an orthogonal basis of $V$, giving rise to a particular map $x:\mathbb{S}\to G_{\R}$ (see~(\ref{defx})). This depends on $\Phi$, but it will be fixed throughout. Note that $G\subset\Res_{L/\Q}GL_{V}$, and thus there is a map $\det:G\to T^{L}=\Res_{L/\Q}\Gm{L}$, and we can consider the composition $\det\circ x:\mathbb{S}\to(T^{L})_{\R}$. Using $\Phi$ to identify $(T^{L})_{\R}$ with $\prod_{\tau\in\Phi}\mathbb{S}$, the map $\det\circ x$ sends an element $z$ to the tuple $(z^{r_{\tau}}\overline{z}^{s_{\tau}})_{\tau\in\Phi}$, and thus $\det\circ x=\prod_{\tau\in\Phi}h_{\{\tau\}}^{r_{\tau}}h_{\{\overline\tau\}}^{s_{\tau}}$ with the notation of Subsection~\ref{CMperiods} (using sets $\Psi$ consisting of a single element). Just as in $\S1$ of \cite{harrisunitary}, we can normalize the CM periods $p(\psi;\Psi)$ in such a way that
\[ \left(\frac{p(\psi;\det\circ x)}{\prod_{\tau\in\Phi}p(\psi;\{\tau\})^{r_{\tau}}p(\psi;\{\overline\tau\})^{s_{\tau}}}\right)^{\gamma}=\frac{p(\psi^{\gamma};\det\circ x)}{\prod_{\tau\in\Phi}p(\psi^{\gamma};\{\tau\})^{r_{\tau}}p(\psi^{\gamma};\{\overline{\tau}\})^{s_{\tau}}} \]
for all $\gamma\in\Aut(\C)$. In particular, 
\[
\frac{p(\psi;\det\circ x)}{\prod_{\tau\in\Phi}p(\psi;\{\tau\})^{r_{\tau}}(\psi;\{\overline\tau\})^{s_{\tau}}}\in\Q(\psi).
\]
Moreover, Lemma 1.6 of \emph{op. cit.} implies that $p(\psi;\{\overline{\tau}\})\sim p({\psi^{\iota}};\{\tau\})$, and using 1.4 (c) of \emph{op. cit.}, it follows that 
\begin{equation}\label{factpchi} p(\psi;\det\circ x)\sim_{E(\psi)}\prod_{\tau\in\Phi}p(\psi^{r_{\tau}}({\psi}^{\iota})^{s_{\tau}};\{\tau\}). \end{equation}
Here we are taking $E(\psi)$ to be a sufficiently big field containing $\Q(\psi)$ and the relevant reflex fields (in practice, we will take $E(\psi)=L'\Q(\psi)$).
\begin{rem} After conjugating $\psi$ and $\eta$, as explained in Subsection~\ref{CMperiods}, as well as the Shimura datum $(T^{L},\det\circ x)$), we can consider the tuple $\mathbf{p}(\psi;\det\circ x)=(p(\psi^{\rho};(\det\circ x))^{\rho})_{\rho\in J_{E(\psi)}}$ as an element in $(E(\psi)\otimes\C)^\times$, and~(\ref{factpchi}) remains valid. 
	\end{rem}

Let $S(\det\circ x)$ be the Shimura variety attached to the pair $(T^{L},\det\circ x)$. The map $\det:G\to T^{L}$ sends $X$ to $\{\det\circ x\}$, and thus defines a map $\det:S_{\C}\to S(\det\circ x)_{\C}$, which is rational over $E(T^{L},\det\circ x)\supset E$. Let $W_{\eta}$ be the $1$-dimensional $\Q(\eta)$-vector space on which $\eta$ acts. This defines the local system $\tilde{W_{\eta}}$ on $\Q(\eta)$-vector spaces over $S(\det\circ x)(\C)$ that is used in the construction of the Hodge-de Rham structure $M(W_{\eta})$. We can pull it back to a local system $\det^{*}(\tilde{W_{\eta}})$ on $\Q(\eta)$-vector spaces over $S(\C)$. In terms of parameters, note that $\eta\circ\det:T_{\C}\to\Gm{\C}$, in our parametrization of $X^*(T)$, equals \[\mu(\eta)=\left((n_{\tau}-n_{\overline{\tau}},\dots,n_{\tau}-n_{\overline{\tau}})_{\tau\in\Phi};n\sum_{\tau\in\Phi}n_{\overline{\tau}}\right).\]
Now, let the notation and assumptions be as in Subsection~\ref{cohomologicalreps}. The local system $\tilde W\otimes\det^{*}(\tilde W_{\eta})$ on $\Q(\eta)$-vector spaces is attached to the absolutely irreducible representation of $G_{\Q(\eta)}$ whose highest weight is $\mu+\mu(\eta)$. Suppose that $\psi\in X(\eta^{-1})$, and let $\pi\otimes\psi=\pi\otimes\psi\circ\det$. We can consider the Hodge-de Rham structure $M(\pi\otimes\psi,W_{\mu+\mu(\eta)})$, which is defined over $L'$ and has coefficients in $E(\pi,\psi)=E(\pi)\otimes E(\psi)$. Note that $W_{\mu+\mu(\eta)}$ is only defined over $\Q(\eta)$, and rarely over $\Q$, but we can still construct the Hodge-de Rham structures as in Theorems 2.2.7 and 2.3.1 of \cite{harrismotives}.

Let $\beta$ be an element as in Subsection \ref{autquad} for $M(\pi,W)$. Suppose that 
\[s_{\psi}:E(\psi)\to H^{0}_{!}(S(\det\circ x)_{\C},\mathcal{E}_{\eta})\]
is a nonzero $\Q[T^{L}(\A_{f})]$-linear map. Here $\mathcal{E}_{\eta}$ refers to the automorphic vector bundle attached to the character $\eta$. Assume that $s_{\psi}$ is rational for the de Rham structure, that is, rational with respect to the canonical model of $\mathcal{E}_{\eta}$ over $L'$. Such nonzero ${s}_{\psi}$ is unique up to an $(E(\psi)\otimes L')^{\times}$-multiple (using the given embedding of $L'$ to view $E(\psi)\otimes L'\subset E(\psi)\otimes\C$). Pulling ${s}_{\psi}$ back to $S_{\C}$ via $\det$, and combining it with $\beta$, we form
\begin{equation}\label{defbetachi} \beta(\psi)=\beta\otimes{s}_{\psi}:\pi_{f,0}\otimes E(\psi)\to H^{d-\ell(w)}_{!}(S_{L'},\mathcal{E}_{w*(\mu+\mu(\eta))}).\end{equation}
We can similarly take a nonzero map ${s}_{\psi^{-1}}$ to define $\beta(\psi^{-1})$. As in Subsection \ref{autquad}, we define
\[ Q(\pi;\psi;\beta)=\langle R(\beta(\psi)),\Fr R(\beta(\psi^{-1}))\rangle\in E(\pi,\psi)\otimes\C, \]
where the pairing comes from the polarization on $M(\pi\otimes\psi,W_{\mu+\mu(\eta)})$. The calculations in \cite{harriscrelle}, 2.9, show that 
\begin{equation}\label{formulaQpichibeta}
 Q(\pi;\psi;\beta)\sim_{E(\pi,\psi)\otimes L'}Q(\pi;\beta)\mathbf{p}(\psi;\det\circ x)^{-1}\mathbf{p}(\psi^{-1};\det\circ\overline x)^{-1}.
\end{equation}

\begin{rem}
	As in Remark~\ref{rempipidual}, we modify these quadratic periods, and define $Q(\pi,\pi^{\vee};\psi;\beta)$ analogously.
\end{rem}

\section{Critical values of $L$-functions}\label{sec:doubling}

In this section we prove our main result (Theorem~\ref{maintheorem}) concerning the critical values of $L^{S,\mot}(s,\pi\otimes\psi,\St)$ for a cohomological automorphic representation $\pi$ of a unitary group $G$ and an algebraic Hecke character $\psi$ of $L$. In the first subsection, we introduce the double hermitian space in order to set up the doubling method. In the following subsections, we define a family of differential operators, Eisenstein series and Piatetski-Shapiro-Rallis zeta integrals, invoking Li's formula relating them to the standard $L$-function. In Subsection \ref{ssec:mainthm}, we analyze the rationality properties of these zeta integrals by carefully choosing the sections defining the Eisenstein series, and relate the remaining factors of Li's formula to automorphic quadratic periods and CM periods. Then we put everything together to prove Theorem~\ref{maintheorem}. 

\subsection{The double hermitian space}
Let $L/K$ be a CM extension, and let $(V,h_{V})$ be any hermitian vector space over $L/K$ of dimension $n$, as in Subsection~\ref{sectiongroups}. Define $-V$ to be the space $V$ with hermitian form $-h_{V}$, and let $2V=V\oplus(-V)$, equipped with its natural hermitian form $h_{2V}=h_{V}\oplus h_{-V}$. We fix an orthogonal basis $\beta=\{v_{1},\dots,v_{n}\}$ for $(V,h_{V})$ (and a CM type $\Phi$), after which we can choose $x:\mathbb{S}\to G_{\R}$ and $x^{-}:\mathbb{S}\to G_{\R}$ as before, where $G=GU(V)=GU(-V)$. We let $G^{(2)}=GU(2V)$. This is a unitary group with signatures $(n,n)$ at all places. Choosing the orthogonal basis $\beta^{(2)}=\{(v_{1},0),\dots,(v_{n},0),(0,v_{1}),\dots,(0,v_{n})\}$ of $2V$, we construct $x^{(2)}:\mathbb{S}\to G^{(2)}_{\R}$ in a similar fashion as $x$. Let $G^{\sharp}\subset G\times G$ be subgroup of pairs with the same similitude factor, and let $x^{\sharp}:\mathbb{S}\to G^{\sharp}_{\R}$ be the map $(x,x^{-})$. We denote by $(G,X)$, $(G^{-},X^{-})=(G,X^{-})=(G,\overline{X})$, $(G^{(2)},X^{(2)})$ and $(G^{\sharp},X^{\sharp})$ the corresponding Shimura data, and by $S$, $S^{-}$, $S^{(2)}$ and $S^{\sharp}$ the corresponding Shimura varieties. All the reflex fields are contained in the Galois closure $L'\subset\overline\Q$ of $L$ in $\overline\Q$, and the reflex field of $(G^{(2)},X^{(2)})$ is $\Q$. Notice that there is a natural embedding
\[ i:(G^{\sharp},X^{\sharp})\subset(G^{(2)},X^{(2)}).\] 
We denote by the same letter the embedding of Shimura varieties $i:S^{\sharp}\to S^{(2)}$.
Our choices give the following identifications:
\[
K_{x,\C}\cong\left(\prod_{\tau\in\Phi}\GL_{r_{\tau},\C}\times\GL_{s_{\tau},\C}\right)\times\Gm{\C}, \]
\[ K_{x^{-},\C}\cong\left(\prod_{\tau\in\Phi}\GL_{s_{\tau},\C}\times\GL_{r_{\tau},\C}\right)\times\Gm{\C}, \]
\[ K_{x^{\sharp},\C}\cong\left(\prod_{\tau\in\Phi}\GL_{r_{\tau},\C}\times\GL_{s_{\tau},\C}\times\GL_{s_{\tau},\C}\times\GL_{r_{\tau},\C}\right)\times\Gm{\C},\]
\[K_{x^{(2)},\C}\cong\left(\prod_{\tau\in\Phi}\GL_{n,\C}\times\GL_{n,\C}\right)\times\Gm{\C}.\]
The inclusion $K_{x^{\sharp},\C}\subset K_{x^{(2)},\C}$ coming from the embedding of Shimura data is explicitly given, at each place $\tau$, by taking a tuple of matrices $(A,D,A^{-},D^{-})$ (with $A,D^{-}\in\GL_{r_{\tau}}$ and $D,A^{-}\in\GL_{s_{\tau}}$) to the pair 
\[\left(\left(\begin{array}{cc}A & 0 \\ 0 & A^{-}\end{array}\right),\left(\begin{array}{cc}D & 0 \\ 0 & D^{-}\end{array}\right)\right)\in\GL_{n}\times\GL_{n}.\]

For future reference, we write the highest weight for the action of $K_{x^{\sharp},\C}$ on the $1$-dimensional space $\Lambda^{2d}(\mathfrak{p}_{x^{\sharp}}^{+})$. In the usual parametrization, where we split $2n=r_{\tau}+s_{\tau}+s_{\tau}+r_{\tau}$, it is given by
\[ \left((s_{\tau},\dots,s_{\tau};-r_{\tau},\dots,-r_{\tau};r_{\tau},\dots,r_{\tau};-s_{\tau},\dots,-s_{\tau})_{\tau\in\Phi};0\right).\]

Suppose that $(V,r)$ (resp. $({V}^{-},r^{-})$) is an irreducible representation of $K_{x,\C}$ (resp. $K_{x^{-},\C}$). Thus, ${V}$ is given as a tuple $({V}_{\tau})_{\tau\in\Phi}$, with ${V}_{\tau}$ an irreducible representation of $\GL_{r_{\tau},\C}\times\GL_{s_{\tau},\C}$, and a characer $\chi_{{V}}$ of $\Gm{\C}$. We similarly define ${V}_{\tau}^{-}$ and $\chi_{{V}^{-}}$. We define an irreducible representation $({V},{V}^{-})^{\sharp}$ of $K_{x^{\sharp},\C}$ by taking the family $({V}_{\tau},{V}_{\tau}^{-})_{\tau\in\Phi}$ and the character $\chi_{{V}}\chi_{{V}^{-}}$. If ${V}$ (resp. ${V}^{-}$) has highest weight $\lambda=\left((a_{\tau,1},\dots,a_{\tau,n})_{\tau\in\Phi};a_{0}\right)\in\Lambda_{c,x}^{+}$ (resp. $\lambda^{-}=\left((a_{\tau,1}^{-},\dots,a_{\tau,n}^{-})_{\tau\in\Phi};a_{0}^{-}\right)\in\Lambda_{c,x^{-}}^{+})$, then, in the usual parametrization, the highest weight of $({V},{V}^{-})^{\sharp}$ is
\[ (\lambda,\lambda^{-})^{\sharp}=\left((a_{\tau,1},\dots,a_{\tau,n},a_{\tau,1}^{-},\dots,a_{\tau,n}^{-})_{\tau\in\Phi}               ;a_{0}+a_{0}^{-}\right). \] 
We denote by $\mathcal{E}_{(\lambda,\lambda^{-})^{\sharp}}$ the corresponding automorphic vector bundle over $S^{\sharp}_{\C}$. We can also see it as the pullback via the natural map $S^{\sharp}_{\C}\hookrightarrow S_{\C}\times\overline{S}_{\C}$ of the external tensor product of $\mathcal{E}_{\lambda}$ and $\mathcal{E}_{\lambda^{-}}$. We let $({V}^{*},r^{*})$ be the representation of $K_{x^{-},\C}$ obtained by taking the dual $({V}^{\vee},r^{\vee})$ of $({V},r)$, seen as a representation of $K_{x^{-},\C}$ via swapping the $\GL$-factors at each place in the identifications above. For any $\ell\in\Z$, define $\lambda^{\sharp}(\ell)=(\lambda,\lambda^{*})^{\sharp}\otimes\nu^{\ell}$, so that
\[ \lambda^{\sharp}(\ell)=\left((a_{\tau,1},\dots,a_{\tau,n},-a_{\tau,n},\dots,-a_{\tau,1})_{\tau\in\Phi};\ell\right).\]

\subsection{A family of differential operators}

Fix an integer $m$, and define
\[ \Lambda_{m}=\left((-m,\dots,-m;m,\dots,m)_{\tau\in\Phi};0\right)\in\Lambda_{c,x^{(2)}}^{+}, \] 
where there are $n$ entries of the form $-m$ and $n$ of the form $m$. This is the highest weight of the character $K_{x^{(2)},\C}\to\Gm{\C}$ defined by 
\[ \left((g_{\tau},g_{\tau}')_{\tau\in\Phi};z\right)\to\prod_{\tau\in\Phi}\det(g_{\tau})^{-m}\det(g_{\tau}')^{m}\]
(for $g_{\tau},g_{\tau}'\in\GL_{n}$). We write $\mathbb{V}_{m}$ for $\C$, endowed with this action. In particular, the corresponding automorphic vector bundle over $S^{(2)}_{\C}$ is a line bundle which we denote by $\mathcal{E}_{m}=\mathcal{E}_{\Lambda_{m}}$. It has a canonical model over $L'$. The pullback $i^{*}\mathcal{E}_{m}$ is the automorphic vector bundle over $S^{\sharp}_{\C}$, whose associated $K_{x^{\sharp},\C}$-representation is the $1$-dimensional representation with highest weight
\[ i^{*}\Lambda_{m}=\left((-m,\dots,-m;m,\dots,m;-m,\dots,-m;m,\dots,m)_{\tau\in\Phi};0\right). \] 
Suppose that $({V},r)$ is an irreducible representation of $K_{x,\C}$, and look at the representation ${V}_{i^{*}\Lambda_{-m}}\otimes_{\C}{V}_{\lambda^{\sharp}(\ell)}$, for another integer $\ell\in\Z$. In the terminology of \cite{harrisvb2}, taking into account the different sign conventions, this representation is positive if and only if  $a_{\tau,1}+m\leq 0$ and $-a_{\tau,n}+m\leq 0$ for every $\tau\in\Phi$. The following Proposition follows from 7.11.11 of \emph{op. cit.}.

\begin{prop} Let the notation and assumptions be as above, and let $\eta$ be an algebraic character of $T^{L}=\Res_{L/\Q}\Gm{L}$, identified with the tuple of integers $(m_{\tau})_{\tau\in J_{L}}$. Let $\mu=\left((a_{\tau,1},\dots,a_{\tau,n})_{\tau\in\Phi};a_{0}\right)\in\Lambda_{x}^{+}$, and let $\Lambda=\Lambda(\mu;\eta^{-1})=w_{0}^{1}*(\mu-\mu(\eta))\in\Lambda_{c,x}^{+}$. Then, for each $m$ such that
\begin{equation}\label{ineqfort}
\frac{n}{2}\leq m\leq\min\{-a_{\tau,s_{\tau}+1}+s_{\tau}+m_{\tau}-m_{\overline\tau},a_{\tau,s_{\tau}}+r_{\tau}+m_{\overline\tau}-m_{\tau}\}_{\tau\in\Phi},\end{equation}
there exists a nonzero, $L'$-rational, differential operator
\[ \Delta_{m}(\Lambda;\ell):\mathcal{E}_{m}|_{S^{\sharp}_{\C}}\to\mathcal{E}_{\Lambda^{\sharp}(\ell)}. \] 
\end{prop}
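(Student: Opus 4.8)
The plan is to deduce this from the general construction of differential operators on automorphic vector bundles in \cite{harrisvb2}, specifically from its Theorem 7.11.11, by carefully matching parameters and unwinding the sign conventions. The existence of an invariant differential operator $\mathcal{E}_{m}|_{S^{\sharp}_{\C}}\to\mathcal{E}_{\Lambda^{\sharp}(\ell)}$ reduces, via the theory of \emph{op. cit.}, to the existence of a nonzero $K_{x^{\sharp},\C}$-equivariant map of the form
\[ \mathbb{V}_{i^{*}\Lambda_{-m}}\otimes_{\C}V_{\Lambda^{\sharp}(\ell)}\hookrightarrow V_{i^{*}\Lambda_{m}}\otimes_{\C}\operatorname{Sym}^{\bullet}(\mathfrak{p}^{-}_{x^{(2)}}/\mathfrak{p}^{-}_{x^{\sharp}}) \]
together with the \emph{positivity} condition that governs the convergence and nonvanishing of the explicit kernel constructed there. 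So the first step is to record precisely the dictionary between our normalizations and those of \cite{harrisvb2}: namely that the pullback line bundle $i^{*}\mathcal{E}_{m}$ has highest weight $i^{*}\Lambda_{m}$ computed above, that $\mathfrak{p}^{+}_{x^{(2)}}/\mathfrak{p}^{+}_{x^{\sharp}}$ as a $K_{x^{\sharp},\C}$-module decomposes at each $\tau$ into the two ``off-diagonal'' blocks, and that our $\mathcal{E}_{\Lambda^{\sharp}(\ell)}$ corresponds, after the swap of $\GL$-factors defining $V^{*}$, to the ``holomorphic'' target bundle in \emph{op. cit.}. This is bookkeeping, but it must be done carefully because of the competing sign conventions flagged before the statement.

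Second, I would verify the positivity hypothesis. The representation $V_{i^{*}\Lambda_{-m}}\otimes_{\C}V_{\lambda^{\sharp}(\ell)}$, with $\lambda = \Lambda(\mu;\eta^{-1}) = w_{0}^{1}*(\mu-\mu(\eta))$, is positive in the sense of \cite{harrisvb2} exactly when, writing $\lambda = \left((b_{\tau,1},\dots,b_{\tau,n})_{\tau\in\Phi};b_{0}\right)$, one has $b_{\tau,1}+m\le 0$ and $-b_{\tau,n}+m\le 0$ for every $\tau\in\Phi$. So the key computation is to express the entries $b_{\tau,i}$ of $w_{0}^{1}*(\mu-\mu(\eta))$ in terms of the $a_{\tau,i}$, the signature $(r_{\tau},s_{\tau})$, and the infinity type $(m_{\tau})_{\tau\in J_{L}}$ of $\eta$, using the explicit formula $w_{0}^{1}*\nu = w_{0}^{1}(\nu+\rho_{x})-\rho_{x}$ with $\rho_{x}$ and $w_{0}^{1}$ as given in Subsection~\ref{roots} (recall $w_{0,\tau}^{1}$ cyclically shifts the two blocks of sizes $s_{\tau}$ and $r_{\tau}$), and the formula $\mu(\eta^{-1}) = \left((m_{\overline\tau}-m_{\tau},\dots)_{\tau\in\Phi};\dots\right)$. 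Carrying this out, $b_{\tau,1}$ and $-b_{\tau,n}$ turn out to be $-a_{\tau,s_{\tau}+1}+s_{\tau}+m_{\tau}-m_{\overline\tau}$ and $a_{\tau,s_{\tau}}+r_{\tau}+m_{\overline\tau}-m_{\tau}$ (up to the shift by the multiplier, which cancels), so the positivity inequalities $b_{\tau,1}+m\le 0$, $-b_{\tau,n}+m\le 0$ are precisely the right-hand bound in~(\ref{ineqfort}); the left-hand bound $m\ge n/2$ is the standard condition ensuring the relevant Eisenstein-type kernel is in the correct range, i.e. that the bundle $\mathcal{E}_{m}$ on $S^{(2)}$ sits in the ``automorphic'' cone where these operators are defined. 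The rationality statement then comes for free: \cite{harrisvb2} produces these operators as morphisms of automorphic vector bundles defined over the field of definition of all the bundles involved, which here is $L'$ since all the bundles $\mathcal{E}_{m}$, $\mathcal{E}_{\Lambda^{\sharp}(\ell)}$ are fully decomposed and thus have canonical models over $L'$.

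Third, I would address nonvanishing. The operator $\Delta_{m}(\Lambda;\ell)$ is nonzero because the branching multiplicity realizing it is one-dimensional and nonzero under the positivity condition — this is part of the content of 7.11.11, where the operator is constructed via an explicit (Maass–Shimura type) kernel whose leading term is visibly nonzero. I would simply cite this, noting that the restriction-to-$S^{\sharp}$ and the identification of $i^{*}\mathcal{E}_{m}$ with the asserted line bundle are exactly the hypotheses under which \emph{op. cit.} applies.

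The main obstacle I expect is the parameter-matching in the second step: getting the entries of $w_{0}^{1}*(\mu-\mu(\eta))$ right, with the correct indexing of the two blocks after the cyclic shift $w_{0,\tau}^{1}$, and then tracking through the dualization/swap in the definition of $\lambda^{\sharp}(\ell)$ and the sign flips relative to \cite{harrisvb2}, so that the abstract positivity condition lands \emph{exactly} on the displayed inequality~(\ref{ineqfort}) rather than on a near-miss involving $a_{\tau,s_{\tau}}$ versus $a_{\tau,s_{\tau}+1}$ or $r_{\tau}$ versus $s_{\tau}$. Everything else is either a direct appeal to \cite{harrisvb2} or routine verification that the hypotheses of that reference are met in our situation.
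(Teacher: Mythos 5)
Your strategy coincides with the paper's: the Proposition is dispatched there with a bare citation to 7.11.11 of \cite{harrisvb2}, the only preparation being the remark (in the paragraph just before the statement) that for an arbitrary compact highest weight $\lambda=\left((a_{\tau,1},\dots,a_{\tau,n})_{\tau\in\Phi};a_0\right)\in\Lambda_{c,x}^{+}$, the representation ${V}_{i^{*}\Lambda_{-m}}\otimes_{\C}{V}_{\lambda^{\sharp}(\ell)}$ is positive in the sense of \emph{op.\ cit.}\ iff $a_{\tau,1}+m\leq 0$ and $-a_{\tau,n}+m\leq 0$ for all $\tau\in\Phi$. The paper leaves implicit the translation from $\Lambda=w_0^1*(\mu-\mu(\eta))$ back to the coordinates of $\mu$; you supply exactly that translation, together with the (correct) observation that rationality over $L'$ is automatic because all the bundles in sight are fully decomposed and hence have canonical models over $L'$. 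So this is the paper's argument, fleshed out.

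However, your intermediate formulas have the signs inverted, which as written contradicts the conclusion you then draw. Using $\rho_x=\left(\left(\tfrac{n-1}{2},\dots,\tfrac{1-n}{2}\right)_{\tau};0\right)$ and $(w_{0,\tau}^{1})^{-1}(j)=j+s_{\tau}$ for $1\le j\le r_{\tau}$, $(w_{0,\tau}^{1})^{-1}(j)=j-r_{\tau}$ for $r_{\tau}+1\le j\le n$, one finds
\[
b_{\tau,1}=a_{\tau,s_{\tau}+1}-(m_{\tau}-m_{\overline\tau})-s_{\tau},\qquad
b_{\tau,n}=a_{\tau,s_{\tau}}-(m_{\tau}-m_{\overline\tau})+r_{\tau}.
\]
Thus the two entries in the $\min$ of~(\ref{ineqfort}) are $-b_{\tau,1}$ and $b_{\tau,n}$, \emph{not} $b_{\tau,1}$ and $-b_{\tau,n}$ as you claim. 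With your stated identifications, the inequalities $b_{\tau,1}+m\le 0$, $-b_{\tau,n}+m\le 0$ would read $m\le-(-a_{\tau,s_{\tau}+1}+s_{\tau}+m_{\tau}-m_{\overline\tau})$ and $m\le-(a_{\tau,s_{\tau}}+r_{\tau}+m_{\overline\tau}-m_{\tau})$, the negatives of the displayed bound. You flagged this as the delicate step, and indeed it is: the indices $s_{\tau}+1$, $s_{\tau}$ and the shifts $s_{\tau}$, $r_{\tau}$ you have right, but the overall sign is off, so the middle step should be redone. A minor, unrelated point: in your first display, the factor $V_{i^{*}\Lambda_{m}}$ on the right-hand side looks spurious — after dualizing, the target should just be $\operatorname{Sym}^{\bullet}(\mathfrak{p}^{-}_{x^{(2)}}/\mathfrak{p}^{-}_{x^{\sharp}})$ (equivalently, $\Lambda^{\sharp}(\ell)$ should occur in $\mathbb{V}_{i^{*}\Lambda_{m}}\otimes\operatorname{Sym}^{\bullet}(\mathfrak{p}^{+}_{x^{(2)}}/\mathfrak{p}^{+}_{x^{\sharp}})$).
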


\begin{rem} These differential operators extend to toroidal compactifications and canonical extensions, and the functorial properties of the latter (\cite{harrisdeltabar}) give rise to $L'$-rational maps
\[ \Delta_{m}(\Lambda;\ell):H^{0}(S^{(2)}_{L'},\mathcal{E}_{m}^{\can})\to H^{0}(S^{\sharp}_{L'},\mathcal{E}_{\Lambda^{\sharp}(\ell)}^{\can}). \]
Using~(\ref{h0can}), we get a differential operator 
\[ \Delta_{m}(\Lambda;\ell):H^{0}(\mathfrak{P}_{x^{(2)}},K_{x^{(2)}};\mathcal{A}(G^{(2)})\otimes_{\C}\mathbb{V}_{m})\to H^{0}(\mathfrak{P}_{x^{\sharp}},K_{x^{\sharp}};\mathcal{A}(G^{\sharp})\otimes_{\C}{V}_{\Lambda^{\sharp}(\ell)}).\]
As in Section 7 of \cite{harrisvb2}, $\Delta_{m}(\Lambda;\ell)$ is induced by an element $\partial_{m}(\Lambda;\ell)\in U(\mathfrak{p}_{x^{(2)}}^{+})\otimes_{\C}\Hom(\mathbb{V}_{m},{V}_{\Lambda^{\sharp}(\ell)})$. In this description, the universal enveloping algebra $U(\mathfrak{p}_{x^{(2)}}^{+})$ acts by differentiating an element of $\mathcal{A}(G^{(2)})$ and restricting to $G^{\sharp}(\A)$.
\end{rem}

\subsection{Eisenstein series} 
For this subsection, we follow \cite{cohomologicalII} (see also \cite{harriscrelle}, \cite{harrisunitary} and \cite{siegelweil}). We will use Siegel-type Eisenstein series, and for this it is more convenient to identify the group $G^{(2)}$ with the similitude unitary group over $\Q$ attached to $L/K$ and the skew-hermitian matrix $S_{n}\in\GL_{2n}(\Q)\subset\GL_{2n}(L)$ given by
\[ S_{n}=\left(\begin{array}{cc}0 & I_{n} \\ -I_{n} & 0\end{array}\right).\]
(In the definition of hermitian spaces and their associated groups, we might equally work with skew-hermitian forms instead of hermitian forms). Let $GU(S_{n})$ denote the similitude unitary group attached to the matrix $S_{n}$ over $L/K$. Thus, for a $\Q$-algebra $R$, $GU(S_{n})(R)$ consists of matrices $X\in\GL_{2n}(L\otimes R)$ such that $X^{*}S_{n}X=\nu(X)S_{n}$ for some scalar $\nu(X)\in R^{\times}$. To identify $G^{(2)}$ with $GU(S_{n})$, let $\alpha\in L$ be a nonzero totally imaginary element, that is, such that $\iota(\alpha)=-\alpha$. Fix an $L$-basis $\beta=\{v_{1},\dots,v_{n}\}$ of $V$, and let $Q\in\GL_{n}(L)$ be the matrix of $h_{V}$ in this basis, so that $Q_{ij}=h_{V}(v_{i},v_{j})$. Consider the $L$-basis $\beta^{(2)}=\{(v_{1},0),\dots,(v_{n},0),(0,v_{1}),\dots,(0,v_{n})\}$ of $2V$, and let $Q^{(2)}\in\GL_{2n}(L)$ be the matrix of $h_{2V}$ with respect to $\beta^{(2)}$. Thus,
\[ Q^{(2)}=\left(\begin{array}{cc}Q & 0 \\ 0 & -Q\end{array}\right).\]
Let $\eta\in\GL_{2n}(L)$ be the matrix
\[ \eta=\left(\begin{array}{cc}I_{n} & I_{n} \\ \frac{\alpha}{2}Q & -\frac{\alpha}{2}Q\end{array}\right).\]
Then the map sending $g\in G^{(2)}(R)$ to $\eta[g]_{\beta^{(2)}}\eta^{-1}$ (where $[g]_{\beta^{(2)}}$ is the matrix of $g$ with respect to $\beta^{(2)}$) gives an isomorphism $G^{(2)}\cong GU(S_{n})$. We can also write this isomorphism in terms of another convenient basis of $2V$. Namely, let $\gamma=\{v'_{1},\dots,v'_{2n}\}$, where $v'_{i}=(v_{i},v_{i})$ for $i=1,\dots,n$ and $v'_{i}=(v_{i},-v_{i})$ for $i=n+1,\dots,2n$. Then we can write the isomorphism $g\mapsto\eta[g]_{\beta^{(2)}}\eta^{-1}$ as $g\mapsto\eta'[g]_{\gamma}\eta^{'-1}$, where
\[ \eta'=\left(\begin{array}{cc}I_{n} & 0 \\ 0 & \frac{\alpha}{2}I_{n}\end{array}\right).\]

We will identify $G^{(2)}$ with $GU(S_{n})$ using these choices. Given $g\in G^{(2)}(R)$ (for some $\Q$-algebra $R$), we write
\[ g=\left(\begin{array}{cc}A & B \\ C & D\end{array}\right) \]
for the description of $g$ as a $2n\times2n$-matrix in $GU(S_{n})(R)$, and call the entries $A(g)=A$, $B(g)=B$, $C(g)=C$, $D(g)=D$, the \emph{Siegel coordinates} of $g$. We let $GP\subset G^{(2)}$ be the subgroup described by the condition $C(g)=0$. This is a rational, maximal parabolic subgroup of $G^{(2)}$.

 For an algebraic Hecke character $\alpha$ of $L$, consider the induced representation $I_{n}(s,\alpha)$ for $s\in\C$, as in \cite{cohomologicalII}, (1.2). We will only treat the case where $\alpha$ is trivial. The case of more general $\alpha$ (or more precisely, $\alpha$ with infinity type $(-\kappa,0)$ at each $\tau\in\Phi$) should follow exactly like our case, but for simplicity of notation we will only concentrate on trivial $\alpha$. We thus denote $I(s)=I_{n}(s,1)$. Concretely,
 \[ I(s)=\{f:G^{(2)}(\A)\to\C:f(pg)=\delta_{GP,\A}(p,s)f(g),g\in G^{(2)}(\A),p\in GP(\A)\},\]
 where $\delta_{GP,\A}(p,s)=\|N_{L/K}\det A(p)\|_{\A_{K}}^{\frac{n}{2}+s}\|\nu(p)\|_{\A_{K}}^{-\frac{n^{2}}{2}-ns}$. We similarly define the local inductions $I(s)_{v}$ and finite and archimedean inductions $I(s)_{f}$ and $I(s)_{\infty}$, and we require the functions to be $K_{x^{(2)}}(\R)$-finite. A section of $I(s)$ is a function $\varphi(\cdot,\cdot)$ that to each $s\in\C$ assigns an element $\varphi(\cdot,s)\in I(s)$, with a certain continuity property (see \cite{harriscrelle}, (3.1.8)). We define local sections similarly. The first variable is usually denoted by $g$, meaning that $\varphi(g,s)\in I(s)$ is a function of $g\in G^{(2)}(\A)$. For $\operatorname{Re}(s)\gg0$, the Eisenstein series
\[ E_{\varphi,s}(g)=\sum_{\gamma\in GP(\Q)\backslash G^{(2)}(\Q)}\varphi(\gamma g,s) \]
converges absolutely to an automorphic form on $G^{(2)}(\A)$, and this extends meromorphically to a function of $s\in\C$. For normalization purposes, we also include a possible shift in the construction. Namely, suppose that $\varphi(g,s)$ is a section but with $\varphi(g,s)\in I(s+a)$ for some fixed $a\in\Z$. Then we can still form the Eisenstein series $E_{\varphi,s}(g)$ using the same formula. Of course, if we define $\varphi'(g,s)=\varphi(g,s-a)$, this is a true section and the corresponding Eisenstein series satisfy $E_{\varphi,s}(g)=E_{\varphi',s+a}(g)$.

For an integer $m$ such that $2m\geq n$, consider the elements 
\[ \mathbb{J}_{m}\left(g,s+m-\frac{n}{2}\right)\in I\left(s+m-\frac{n}{2}\right)_{\infty}\]
defined in \cite{cohomologicalII}, (1.2.7) (taking $\kappa=0$; note that there is a misprint: there should be an extra factor $\nu(g)^{ns}$ multiplying the expression, so that it transforms correctly under $\delta$).  These are constructed from certain elements $\mathbb{J}_{m}\in I(m-\frac{n}{2})_{\infty}$ which generate irreducible, unitarizable $(\mathfrak{g}^{(2)}_{\C},K_{x^{(2)}}(\R))$-modules $\mathbb{D}_{m}\subset I(m-\frac{n}{2})_{\infty}$ (denoted by $\mathbb{D}(m,0)$ in \emph{op. cit.}). This module has the property that its generated by its $\mathfrak{p}_{x^{(2)}}^{-}$-torsion, and its lowest $K_{x^{(2)}}$-type is given by the character $\Lambda_{-m}$. Then
\[ H^{0}(\mathfrak{P}_{x^{(2)}},K_{x^{(2)}};\mathcal{A}(G^{(2)})\otimes_{\C}\mathbb{V}_{m})\cong\Hom_{\mathfrak{g}_{\C}^{(2)},K_{x^{(2)}}(\R)}(\mathbb{D}_{m},\mathcal{A}(G^{(2)})).\]
Combining this with~(\ref{h0can}), we obtain an isomorphism
\begin{equation}\label{isoDmodule} \Hom_{\mathfrak{g}_{\C}^{(2)},K_{x^{(2)}}(\R)}(\mathbb{D}_{m},\mathcal{A}(G^{(2)}))\cong H^{0}(S^{(2)}_{\C},\mathcal{E}_{m}^{\can}).\end{equation}
Suppose that $\varphi_{f}(\cdot,s)\in I(s)_{f}$ is a section at the finite id\`eles. Define
\begin{equation}\label{defphi} \varphi(g,s)=\mathbb{J}_{m}\left(g,s+m-\frac{n}{2}\right)\otimes\varphi_{f}\left(g,s+m-\frac{n}{2}\right)\in I\left(s+m-\frac{n}{2}\right).\end{equation}

If $m>n$, then $E_{\varphi,s}(g)$ has no pole at $s=0$ (\cite{shimuraeisenstein}), so we can speak of the automorphic form $E_{\varphi}=E_{\varphi,0}$ on $G^{(2)}(\A)$. Using (\ref{isoDmodule}), we can define an element also denoted by $E_{\varphi}\in H^{0}(S^{(2)}_{\C},\mathcal{E}_{m}^{\can})$, which is the map $\mathbb{D}_{m}\to\mathcal{A}(G^{(2)})$ sending $\mathbb{J}_{m}$ to $E_{\varphi}$. The rationality properties of $E_{\varphi}$ are dictated by the rationality properties of its constant term $\varphi$, which we can identify with a holomorphic automorphic form on the point boundary stratum attached to $(G^{(2)},X^{(2)})$ and the parabolic $GP$. We summarize these rationality properties in the next proposition, which follows from the results and techniques of \cite{harriseisenstein}, \cite{harrisvb2} and \cite{harrisgarrett}.

\begin{prop}\label{proprationaleis} Let $m>n$, and let $F\subset\overline\Q$ be a number field containing $L'$. Then $E_{\varphi}\in H^{0}(S^{(2)}_{\C},\mathcal{E}_{m}^{\can})$ is rational over $F$ if and only if $\varphi_{f}\left(\cdot,m-\frac{n}{2}\right)$ takes values in $(2\pi i)^{enm}F\Q^{\ab}$ and satisfies the following reciprocity law: if $g\in G^{(2)}(\A_{f})$ and $a\in\A_{f}^{\times}$ is an element such that $\art_{\Q}(a)\in\Gal(\Q^{\ab}/F\cap\Q^{\ab})$, then
\[
\left((2\pi i)^{-enm}\varphi_{f}\left(g,m-\frac{n}{2}\right)\right)^{\art_{\Q}(a)}=(2\pi i)^{-enm}\varphi_{f}\left(g,m-\frac{n}{2}\right).
\]
\begin{proof}

First of all, we need to identify the Shimura datum $(G_{P},F_{P})$, in the notation of \cite{harrisvb2}, Section 5. After going through the definitions, we find that, in Siegel coordinates, $G_{P}=G_{h}A_{P}\subset G^{(2)}$, where for a $\Q$-algebra $R$ 
\[ G_{h}(R)=\left\{aI_{2n}\in G(R),\quad a\in(L\otimes R)^{\times}, a\overline a\in R^{\times}\right\}  \]
and
\[ A_{P}(R)=\left\{\left(\begin{array}{cc}aI_{n} & 0 \\ 0 & dI_{n}\end{array}\right)\in G(R),\quad a,d\in R^{\times}\right\}.\] 
The boundary component $F_{P}$ in our case is a single point, corresponding to the map $h_{P}:\mathbb{S}\to G_{P,\R}$ described by
\[ h_{P}(z)=\left(\begin{array}{cc}z\overline z I_{n} & 0 \\ 0 & I_{n}\end{array}\right).\]
The constant term of $E_{\varphi}$, identified with a holomorphic automorphic form on $\Sh(G_{P},h_{P})$, is a section of a certain automorphic line bundle $\mathcal{E}_{m,P}$ that we can obtain as follows. There is a character $\tilde\lambda$ of $P$, whose restriction $\lambda$ to $G_{P}$ gives rise to this line bundle, and whose restriction to $P(\R)^{+}$ (the elements with positive multiplier) is the character $\delta_{GP,\R}(\cdot,m-\frac{n}{2})^{-1}$. Putting all together, we obtain that the character $\tilde\lambda:{P}\to\Gm{\Q}$ is given in Siegel coordinates by
\[ \tilde\lambda(p)=(N_{L/\Q}\det(A(p)))^{-m}\nu(p)^{enm} \]
(recall that $e=[K:\Q]$), and $\mathcal{E}_{m,P}$ is given by its restriction $\lambda:G_{P}\to\Gm{\Q}$. Note that the restriction of $\mathcal{E}_{m,P}$ to $\Sh(\Gm{\Q},N)$ (where $N$ is the norm) is the Tate automorphic vector bundle $\Q(-enm)$, and that $\Sh(G_{P},h_{P})$ is covered by the translates of $\Sh(\Gm{\Q},N)$. Here we are including $\Gm{\Q}$ in $G_{P}$ by taking $a$ to the diagonal matrix with $A(g)=aI_{n}$, $D(g)=I_{n}$. The rest of the proof follows from the arguments in \S3, (A.2.4) and (A.2.5) of \cite{harrisgarrett}. 
\end{proof}
\end{prop}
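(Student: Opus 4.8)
The plan is to reduce the rationality statement for the Eisenstein series $E_\varphi$ to the known rationality theory for holomorphic automorphic forms on the boundary point-stratum, via the theory of canonical models of toroidal compactifications and the theory of arithmetic automorphic forms on rational boundary components. First I would identify, in Siegel coordinates, the precise rational parabolic $GP \subset G^{(2)}$ and the associated Shimura datum for the boundary component, following \cite{harrisvb2}, Section 5. A direct computation with the skew-hermitian matrix $S_n$ shows that the Levi-type piece $G_P = G_h A_P$ has the explicit form given in the statement, and that the relevant point boundary component $F_P$ corresponds to the morphism $h_P:\mathbb{S}\to G_{P,\R}$, $h_P(z) = \operatorname{diag}(z\overline z I_n, I_n)$. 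The constant term map sends $E_\varphi$ to a holomorphic section, on the (zero-dimensional, but arithmetically meaningful) Shimura variety $\Sh(G_P,h_P)$, of the automorphic line bundle $\mathcal{E}_{m,P}$ cut out by a character $\lambda$ of $G_P$; I would pin down $\tilde\lambda:P\to\Gm{\Q}$ by matching it with $\delta_{GP,\R}(\cdot,m-\tfrac n2)^{-1}$ on $P(\R)^+$, obtaining $\tilde\lambda(p) = (N_{L/\Q}\det A(p))^{-m}\nu(p)^{enm}$.

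Next I would invoke the general principle, due to Harris (in \cite{harriseisenstein}, \cite{harrisvb2}, \cite{harrisgarrett}), that an automorphic form on a Shimura variety is rational over a field $F\supset E$ precisely when its value in the arithmetic structure on global sections of $\mathcal{E}_{m,P}^{\can}$ along each boundary stratum is $F$-rational, together with a reciprocity law governing the Galois action; concretely, this is the content of the results in \S3 and the appendix (A.2.4), (A.2.5) of \cite{harrisgarrett}. The key observation is that $\Sh(G_P,h_P)$ is covered by translates of $\Sh(\Gm{\Q},N)$ (with $N$ the norm, $\Gm{\Q}\hookrightarrow G_P$ via $a\mapsto \operatorname{diag}(aI_n,I_n)$), and that on this sub-datum the bundle $\mathcal{E}_{m,P}$ restricts to the Tate automorphic bundle $\Q(-enm)$, whose rational structure is governed by exactly the factor $(2\pi i)^{enm}$. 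Chasing the reciprocity law for the Tate bundle through the covering gives the asserted condition: $\varphi_f(\cdot, m-\tfrac n2)$ must take values in $(2\pi i)^{enm}F\Q^{\ab}$ and must satisfy the displayed transformation under $\art_\Q(a)$ for $a$ with $\art_\Q(a)$ fixing $F\cap\Q^{\ab}$.

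The main obstacle I anticipate is the bookkeeping of normalization factors — in particular tracking the shift $s+m-\tfrac n2$, the twist by $\nu(g)^{ns}$ (the misprint noted in the remark), and the exact power $(2\pi i)^{enm}$ coming from the weight-$m$ character against $e=[K:\Q]$ copies of the archimedean place. One must check that the archimedean datum $\mathbb{J}_m$ contributes nothing to the rationality (it lies in the fixed holomorphic discrete-series-type module $\mathbb{D}_m$, and the identification \eqref{isoDmodule} is defined over $L'$), so that rationality of $E_\varphi$ is entirely controlled by $\varphi_f$; and one must verify that the constant-term/restriction-to-boundary functor is compatible with the canonical models and with the $\Q^{\ab}$-structure in the precise way required. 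Once the boundary Shimura datum and the character $\tilde\lambda$ are correctly identified, the remainder is a faithful application of the cited rationality and reciprocity results, and I would simply cite \cite{harrisgarrett}, \S3 and (A.2.4)--(A.2.5), for the final step rather than reproving it.
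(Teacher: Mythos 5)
Your proposal follows the paper's proof essentially step for step: identify the boundary Shimura datum $(G_P,h_P)$ in Siegel coordinates, pin down the character $\tilde\lambda$ by matching with $\delta_{GP,\R}(\cdot,m-\tfrac n2)^{-1}$, restrict to $\Sh(\Gm{\Q},N)$ where the bundle becomes the Tate bundle $\Q(-enm)$, and cite \cite{harrisgarrett}, \S3, (A.2.4)--(A.2.5) for the resulting rationality and reciprocity criterion. Your additional remark that the archimedean section $\mathbb{J}_m$ contributes nothing to the rationality because the identification of (\ref{isoDmodule}) is defined over $L'$ is a useful clarification but does not alter the argument.
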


Now, let $m>n$ be an integer satisfying~(\ref{ineqfort}), and $\varphi$ defined by~(\ref{defphi}) with $\varphi_{f}$ satisfying the hypotheses of Proposition~\ref{proprationaleis}. Also, let $\ell$ be an arbitrary integer. Using the usual action of $U(\mathfrak{p}_{x^{(2)}}^{+})$ on smooth functions on $G^{(2)}(\R)$, we can define a section $\Delta_{m}(\Lambda;\ell)\varphi$ by means of the element $\partial_{m}(\Lambda;\ell)$. Then 
\[ \Delta_{m}(\Lambda;\ell)E_{\varphi}\in H^{0}(S_{\C}^{\sharp},\mathcal{E}_{{\Lambda^{\sharp}(\ell)}}^{\can})\cong H^{0}(\mathfrak{P}_{x^{\sharp}},K_{x^{\sharp}};\mathcal{A}^{\sharp}\otimes_{\C}V_{\Lambda^{\sharp}(\ell)})\]
is represented as a scalar automorphic form, as in Remark~\ref{scalarforms}, by the restriction to $G^{\sharp}(\A)$ of $E_{\Delta_{m}(\Lambda;\ell)\varphi}$. We can similarly define $\Delta_{m}(\Lambda;\ell)\mathbb{J}_{m}$.

\subsection{Piatetski-Shapiro-Rallis zeta integrals}
Let $\pi$ be a cuspidal automorphic representation of $G(\A)$. We always consider $\pi$ (rather, the space on which $\pi$ acts) to be a specific irreducible subspace of $\mathcal{A}_{0}(G)$. Similarly for its dual and their twists. Let 
\[ (\cdot,\cdot)_{G}:\pi\otimes\pi^{\vee}\to\C \]
be the pairing given as
\[ (f,f')_{G}=\int_{Z(A)G(\Q)\backslash G(\A)}f(g)f'(g)dg.\]

Suppose that $\varphi(g,s)$ is a section, $f\in\pi$ and $f'\in\pi^{\vee}$. The (modified) Piatetski-Shapiro-Rallis zeta integral is defined to be
\[ Z(s,f,f',\varphi)=\int_{Z^{\sharp}(\A)G^{\sharp}(\Q)\backslash G^{\sharp}(\A)}E_{\varphi,s}(i(g,g'))f(g)f'(g')dgdg',\]
where $Z^{\sharp}$ is the center of $G^{\sharp}$.

Decompose $\pi=\otimes'_{v}\pi_{v}$, $\pi^{\vee}=\otimes'_{v}\pi^{\vee}_{v}$, where $v$ runs through places of $\Q$, and suppose that $f$ and $f'$ are factorizable relative to these decompositions. Thus, $f=\otimes'_{v}f_{v}$ and $f'=\otimes_{v}'f'_{v}$. At almost all places, $\pi_{v}$ is unramified, and $f'_{v}$ and $f'_{v}$ are normalized spherical vectors with $(f_{v},f'_{v})=1$. Suppose as well that the section $\varphi(g,s)$ is factorizable as $\prod_{v}'\varphi_{v}$. Define the local integrals
\[ Z_{v}(s,f,f',\varphi)=\int_{U_{v}}\varphi_{v}(i(h_{v},1),s)c_{f,f',v}(h_{v})dh_{v},\]
where $U_{v}$ is the local unitary group for $V$. Here 
\[ c_{f,f',v}(h_{v})=(f_{v},f'_{v})^{-1}(\pi_{v}(h_{v})f_{v},f'_{v}) \]
is a normalized matrix coefficient for $\pi_{v}$. Let $S$ be a big enough set of places (namely, we can take $S$ to contain the set consisting of the archimedean places, the set of places at which $G_{v}$ is not quasi-split, and the set of places where $\pi_{v}$ is ramified or $f_{v}$ or $f'_{v}$ is not a standard spherical vector). Define
\[ d^{S}(s)=\prod_{j=0}^{n-1}L^{S}(2s+n-j,\varepsilon_{L}^{j}).\]
Then Li proved in \cite{li} that 
\begin{equation}\label{PSR} d^{S}\left(s-\frac{n}{2}\right)Z\left(s-\frac{n}{2},f,f',\varphi\right)=(f,f')_{G}\prod_{v\in S}Z_{v}\left(s-\frac{n}{2},f,f',\varphi\right)L^{\mot,S}\left(s,\pi,\St\right). \end{equation}

Writing $S=S_{\infty}\cup S_{f}$, we let $Z_{f}(s,f,f',\varphi)$ be the product of the local integrals for $v\in S_{f}$, and $Z_{\infty}(s,f,f',\varphi)$ be the local archimedean factor. 

\begin{lemma}\label{lemmadn} With the previous definitions, 
\[ d^{S}\left(m-\frac{n}{2}\right)\sim_{\Q}(D_{K}^{1/2})^{\lfloor\frac{n+1}{2}\rfloor}\delta([\varepsilon_{L}])^{\lfloor\frac{n}{2}\rfloor} (2\pi i)^{e\left(2mn-n(n-1)/2\right)}.\]
\begin{proof} Each factor $L^{S}(2m-j,\varepsilon_{L}^{j})$ differs from the full $L$-function $L(2m-j,\varepsilon_{L}^{j})$ by a multiple in $\Q^{\times}$. Consider first the case when $j$ is even, so that $L(2m-j,\varepsilon_{L}^{j})=\zeta_{K}(2m-j)$. Since $2m-j$ is a positive even integer, we can write
\[ \zeta_{K}(2m-j)\in (D_{K}^{1/2}(2\pi i)^{e(2m-j)})\Q^{\times} \]
using the Klingen-Siegel Theorem. The factors with $j$ odd correspond to special values of $L(s,\varepsilon_{L})$ at odd positive integers. These integers are critical for $\Res_{K/\Q}[\varepsilon_{L}]$. Deligne's conjecture (a theorem in this case) says then that 
\[ L(2m-j,\varepsilon_{L})\sim_{\Q}c^{-}([\varepsilon_{L}])(2\pi i)^{e(2m-j)}\]
for $0\leq j\leq n-1$ odd. Note that $c^{-}([\varepsilon_{L}])\sim_{\Q}\delta([\varepsilon_{L}])$ by (\ref{form:yoshida1}), (\ref{form:yoshida2}) and Remark \ref{rem:twist} (the motive $[\varepsilon_{L}]$ is special since $\varepsilon_{L}(c_{\sigma})=-1$ for any $\sigma\in J_{K}$). The final formula follows by combining all these factors.
\end{proof}

\end{lemma}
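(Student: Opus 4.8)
The plan is to compute the product $d^S(m-\tfrac{n}{2}) = \prod_{j=0}^{n-1} L^S(2m-j,\varepsilon_L^j)$ factor by factor, grouping according to the parity of $j$, and then to combine the resulting powers of $2\pi i$, discriminants, and $\delta$-periods. Throughout, I would use that removing the finite Euler factors at the places in $S$ changes each $L$-value only by a nonzero rational number, so $L^S(2m-j,\varepsilon_L^j) \sim_\Q L(2m-j,\varepsilon_L^j)$, and it suffices to evaluate the completed-away-from-$\infty$ values $L(2m-j,\varepsilon_L^j)$. Note that $m \geq n/2$ (from~(\ref{ineqfort})) forces $2m-j \geq n - j \geq 1$ for $0 \leq j \leq n-1$, so all the arguments $2m-j$ are strictly positive integers, and in fact $2m-j \geq 1$ with $2m-j$ even exactly when $j$ is even.

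For the even values $j = 0,2,4,\dots$, we have $\varepsilon_L^j = 1$, so $L(2m-j,\varepsilon_L^j) = \zeta_K(2m-j)$ is the value of the Dedekind zeta function of $K$ at a positive even integer. By the Klingen--Siegel theorem, $\zeta_K(2m-j) \in D_K^{1/2}(2\pi i)^{e(2m-j)}\,\Q^\times$ (here one uses that $2m-j$ is even and positive and $K$ totally real of degree $e$; the factor $D_K^{1/2}$ accounts for the functional-equation normalization). For the odd values $j = 1,3,5,\dots$, we have $\varepsilon_L^j = \varepsilon_L$, so $L(2m-j,\varepsilon_L)$ is a value of the Hecke $L$-function of the quadratic character $\varepsilon_L$ at an odd positive integer; these integers are critical for the Artin motive $\Res_{K/\Q}[\varepsilon_L]$. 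Since $\varepsilon_L(c_\sigma) = -1$ for every $\sigma \in J_K$, the motive $[\varepsilon_L]$ is special with $\varepsilon = -1$, and the critical integers for $\Res_{K/\Q}[\varepsilon_L]$ that are positive and odd correspond to the $c^-$-period. Deligne's conjecture is a theorem here (it is the analytic class number formula / abelian case, or more precisely follows from Siegel--Klingen again), giving $L(2m-j,\varepsilon_L) \sim_\Q c^-([\varepsilon_L])(2\pi i)^{e(2m-j)}$ for $0 \leq j \leq n-1$ odd. By~(\ref{form:yoshida1}),~(\ref{form:yoshida2}) and Remark~\ref{rem:twist} applied to the special motive $[\varepsilon_L]$ with $\varepsilon=-1$, one has $c^-([\varepsilon_L]) \sim_\Q \delta([\varepsilon_L])$.

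It remains to combine everything. There are $\lceil n/2 \rceil = \lfloor (n+1)/2 \rfloor$ even values of $j$ in $\{0,\dots,n-1\}$ and $\lfloor n/2 \rfloor$ odd values. Each even $j$ contributes one factor of $D_K^{1/2}$, giving $(D_K^{1/2})^{\lfloor (n+1)/2 \rfloor}$; each odd $j$ contributes one factor of $\delta([\varepsilon_L])$, giving $\delta([\varepsilon_L])^{\lfloor n/2 \rfloor}$. The total power of $2\pi i$ is $e \sum_{j=0}^{n-1}(2m-j) = e\left(2mn - \tfrac{n(n-1)}{2}\right)$. Assembling, $d^S(m-\tfrac{n}{2}) \sim_\Q (D_K^{1/2})^{\lfloor (n+1)/2 \rfloor}\,\delta([\varepsilon_L])^{\lfloor n/2 \rfloor}\,(2\pi i)^{e(2mn - n(n-1)/2)}$, which is the claim. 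The only mild subtlety — and the step I would be most careful about — is the bookkeeping of which $2\pi i$-power and discriminant normalization attaches to $\zeta_K$ versus to $L(s,\varepsilon_L)$; the safest route is to phrase both as instances of the Deligne-period formalism of Subsection~\ref{ssec:periods} (with $M = \Q(0)$ for $\zeta_K$ and $M = [\varepsilon_L]$ for the odd twists) so that~(\ref{form:yoshida1}) and~(\ref{form:yoshida2}) do the normalization uniformly, rather than invoking Klingen--Siegel in two slightly different forms.
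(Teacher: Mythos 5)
Your proof is correct and follows exactly the same route as the paper: split $d^S(m-\tfrac{n}{2})$ into even-$j$ factors ($\zeta_K$ at positive even integers, handled by Klingen--Siegel) and odd-$j$ factors ($L(\cdot,\varepsilon_L)$ at odd positive critical integers, handled by the proven abelian case of Deligne's conjecture together with $c^-([\varepsilon_L])\sim_\Q\delta([\varepsilon_L])$), then collect powers of $2\pi i$, $D_K^{1/2}$, and $\delta([\varepsilon_L])$. Your extra remarks on positivity of the arguments and on uniformizing via the Deligne-period formalism are harmless additions but do not change the argument.
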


\subsection{The main theorem}\label{ssec:mainthm}
In this subsection we derive our main theorem for critical values of the standard $L$-function of an automorphic representation $\pi$, in the range of absolute convergence. We recollect the notation that we used in the previous subsections. From now on, suppose that $\pi\in\Coh_{G,\mu}$, with $W_{\mu}$ defined over $\Q$, and suppose that $\pi^{\vee}\simeq\pi\otimes\|\nu\|^{\xi}$, where $\xi=\xi(\mu)$. Recall that we are fixing a CM type $\Phi$ for $L/K$. We let ${M}(\pi)\in\mathcal{R}(L')_{E(\pi)}$ be the Hodge-de Rham structure $M(\pi,W)$. We fix an algebraic Hecke character $\psi$ of $L$, of infinity type $(m_{\tau})_{\tau\in J_{L}}$. We denote by $\eta$ the corresponding algebraic character of $T^{L}=\Res_{L/\Q}\Gm{L}$. We let $\Lambda=\Lambda(\mu;\eta^{-1})$ and $\ell=n\sum_{\tau\in\Phi}(m_{\tau}-m_{\overline\tau})$. We fix an integer $m>n$ satisfying the inequalities~(\ref{ineqfort}), and we let $\Delta_{m}=\Delta_{m}(\Lambda;\ell)$. Recall that we are taking $E(\psi)=L'\Q(\psi)$. We let $E(\pi,\psi)=E(\pi)\otimes E(\psi)$.

We assume that $\pi$ contains antiholomorphic automorphic forms. As in Remark~\ref{antiholomorphic}, we can find a nonzero, $L'$-rational, $G(\A_{f})$-equivariant map $\beta:\pi_{f,0}\to H^{d}_{!}(S_{\C},\mathcal{E}_{\mu})$. Note that $\mu^{c}=\mu^{\vee}+\mu_{\xi}$. Let $\beta':\pi_{f,0}^{\vee}\to H^{d}_{!}(\overline{S}_{\C},\mathcal{E}_{\mu^{\vee}})$ be the $G(\A_{f})$-map defined as the composition
\[ \pi_{f,0}^{\vee}\to\pi_{f,0}^{\vee}\otimes\|\nu\|^{-\xi}\cong\pi_{f,0}\xrightarrow{c_{\coh}(\beta)}H^{d}_{!}(\overline{S}_{\C},\mathcal{E}_{\mu^{c}})\xrightarrow{\alpha\mapsto\hat\alpha}H^{d}_{!}(\overline{S}_{\C},\mathcal{E}_{\mu^{\vee}}),\]
where the map $\alpha\mapsto\hat\alpha$ is the one we defined in Subsection~\ref{rempipidual}. Let $\gamma=\beta(\psi)$ and $\gamma'=\beta'(\psi^{-1})$ be as in~(\ref{defbetachi}). Cup product and pullback via $S^{\sharp}_{\C}\hookrightarrow S_{\C}\times\overline{S}_{\C}$ give rise to an element which we denote by $(\gamma,\gamma')^{\sharp}$. This element contributes to $H^{2d}_{!}(S^{\sharp}_{\C},\mathcal{E}_{\left((\mu+\mu(\eta),\mu^{\vee}-\mu(\eta)\right)^{\sharp}})$. Note that we can identify 
\[ \mathcal{E}_{\left((\mu+\mu(\eta),\mu^{\vee}-\mu(\eta)\right)^{\sharp}}=\mathcal{E}_{\Lambda^{\sharp}(\ell)}'=\mathcal{E}_{\Lambda^{\sharp}(\ell)}^{\vee}\otimes\mathbb{K}^{\sharp}\]
as automorphic vector bundles, where $\mathbb{K}^{\sharp}$ is the canonical bundle on $S^{\sharp}_{\C}$. 
As in Subsection~\ref{autquad}, the elements $\gamma$ and $\gamma'$ give rise to automorphic forms $f$ in $\pi\otimes\psi$ and $f'\in\pi^{\vee}\otimes\psi^{-1}$.

Let 
\[ \tilde\varphi_{m,\infty}(\cdot,s)=(\Delta_{m}\mathbb{J}_{m})\left(\cdot,s+m-\frac{n}{2}\right)\in I\left(s+m-\frac{n}{2}\right)_{\infty}.\]
The following result is proved by Garrett in \cite{garrett}; see \cite{siegelweil}, (4.4) for details.

\begin{lemma}\label{archzeta} The archimedean local factor $Z_{\infty}\left(m-\frac{n}{2},f,f',\tilde\varphi_{m,\infty}\right)$ belongs to $(L')^{\times}$.
\end{lemma}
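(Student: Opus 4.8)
The statement is a local archimedean computation: we must show that the value at $s=m-\frac{n}{2}$ of the Piatetski-Shapiro-Rallis local zeta integral $Z_\infty$, for the specific archimedean section $\tilde\varphi_{m,\infty}$ obtained by applying the differential operator $\Delta_m$ to the Garrett-type vector $\mathbb{J}_m$ and for the specific archimedean vectors $f,f'$ coming from the (anti)holomorphic cohomology classes $\gamma,\gamma'$, is a nonzero element of $L'$. The plan is to invoke Garrett's explicit evaluation of such archimedean zeta integrals. The key point is that the local integral
\[ Z_\infty\left(m-\frac{n}{2},f,f',\tilde\varphi_{m,\infty}\right)=\int_{U_\infty}(\Delta_m\mathbb{J}_m)\left(i(h,1),m-\frac{n}{2}\right)c_{f,f',\infty}(h)\,dh \]
is, up to the normalization factors built into Li's formula~(\ref{PSR}), exactly the archimedean doubling integral pairing the holomorphic discrete series vector in $\pi_\infty\otimes\psi_\infty$ against the restriction to $U_\infty\times U_\infty$ of the Siegel-Eisenstein section in $\mathbb{D}_m$, differentiated appropriately so that its $K$-type matches $\Lambda^\sharp(\ell)$. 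Because $m$ satisfies~(\ref{ineqfort}), this is precisely the situation in which the differential operator $\Delta_m(\Lambda;\ell)$ exists (by the preceding proposition, from \cite{harrisvb2}, 7.11.11), and Garrett's computation shows the resulting integral is a product of explicit Gamma factors times a ratio of factorials, which visibly lies in $\Q^\times$, hence in $(L')^\times$.

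The steps I would carry out, in order: (1) Identify $Z_\infty$ with the archimedean doubling zeta integral in the normalization of \cite{siegelweil}, (4.4) and \cite{garrett}; here one uses that the archimedean components of $f$ and $f'$ are, by construction via Remark~\ref{antiholomorphic} and Subsection~\ref{autquad}, the lowest/highest weight vectors in the relevant discrete series, so $c_{f,f',\infty}$ is a specific matrix coefficient. (2) Recall from \cite{cohomologicalII}, (1.2.7) that $\mathbb{J}_m$ generates the module $\mathbb{D}_m=\mathbb{D}(m,0)$ with lowest $K_{x^{(2)}}$-type $\Lambda_{-m}$, and that applying $\partial_m(\Lambda;\ell)$ produces a vector whose restriction to $G^\sharp$ transforms under $K_{x^\sharp}$ by $V_{\Lambda^\sharp(\ell)}$, matching the $K$-type of the pair $(\gamma,\gamma')^\sharp$; this is what makes the integrand $K_\infty$-finite and the pairing nonzero in principle. (3) Quote Garrett's explicit formula for the value of this integral at the relevant point, observing that it is a finite product of factors of the form $\Gamma_{\C}$ or $\Gamma_{\R}$ evaluated at integers/half-integers determined by the weights $\mu$, the signatures $(r_\tau,s_\tau)$, and $m$, all in the convergence range, so the product is a nonzero rational number. (4) Conclude $Z_\infty\in\Q^\times\subset(L')^\times$, and in particular it is nonzero (nonvanishing is the substantive part, and it follows because the doubling integral computes the image of an explicit vector under an injective intertwining map in the range where $\Delta_m$ exists).

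The main obstacle is step (3): pinning down the precise normalizations so that the archimedean integral is literally rational rather than merely rational up to an explicit power of $\pi$ or $2\pi i$. In the doubling method the archimedean factor typically contributes a power of $\pi$ (or $(2\pi i)$), and here such powers have been deliberately absorbed into the definitions — the shift by $m-\frac{n}{2}$ in the Eisenstein section, the $(2\pi i)^{enm}$ normalization of $\varphi_f$ in Proposition~\ref{proprationaleis}, and the $(2\pi i)^\xi$ twist in Remark~\ref{rempipidual} — so one must check that Garrett's formula, transported into these conventions, leaves behind no transcendental factor. The cleanest route is to cite \cite{garrett} and \cite{siegelweil}, (4.4) directly, where exactly this bookkeeping has been done for the holomorphic case, and simply note that the differential operator $\Delta_m$ is $L'$-rational (by the proposition above) so it does not affect the field of rationality of the resulting integral. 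I would also remark that an alternative, if one prefers to avoid the explicit formula, is to argue that the integral is, up to $\Q^\times$, the value of a rational intertwining operator between explicit $\C$-lines, hence in $\Q^\times$ by Galois equivariance; but the direct appeal to Garrett is shorter and is what \cite{harriscrelle} does in the quadratic imaginary case.
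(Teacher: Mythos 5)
Your proposal is correct and lands on exactly the same approach as the paper: the paper's entire proof is the one-line citation to Garrett's explicit archimedean computation, with details referenced to \cite{siegelweil}, (4.4), precisely the route you single out at the end as "the cleanest." Your additional exposition of why the normalizations cancel and why the $K$-types match is a helpful unpacking of what those references contain, but it is not a different argument.
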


\begin{lemma}\label{finzeta} There exists a finite section $\varphi_{f}(\cdot,s)\in I(s)_{f}$ with $\varphi_{f}(\cdot,m-\frac{n}{2})$ taking values in $\Q$ such that 
\[ Z_{f}\left(m-\frac{n}{2},f,f',\varphi_{f}\right)\in(L')^{\times}.\]
\begin{proof} The existence of $\varphi_{f}$ with $\varphi_{f}(\cdot,m-\frac{n}{2})\in\Q$ and $Z_{f}\left(m-\frac{n}{2},f,f',\varphi_{f}\right)\neq0$ follows from results of Li (\cite{li}), as in Lemma 3.5.7 of \cite{harriscrelle} and the discussion after it. The fact that it belongs to $L'$ follows from our choices and Lemma 6.2.7 of \cite{harrisunitary}.
\end{proof}
\end{lemma}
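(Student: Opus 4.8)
The plan is to build the finite section $\varphi_{f}$ one place at a time and then combine the local nonvanishing results of Li with the rationality statement of \cite{harrisunitary}. First I would use that, for a factorizable section $\varphi_{f}=\bigotimes_{v\in S_{f}}\varphi_{v}$, the finite zeta integral factors as $Z_{f}(s,f,f',\varphi_{f})=\prod_{v\in S_{f}}Z_{v}(s,f,f',\varphi_{v})$, so it suffices to treat each $v\in S_{f}$ separately. The key preliminary observation is that at $s=m-\frac{n}{2}$ the modulus character degenerates to $\delta_{GP,\A}(p,m-\frac{n}{2})=\|N_{L/K}\det A(p)\|_{\A_{K}}^{m}\|\nu(p)\|_{\A_{K}}^{-nm}$, an integer power of absolute values of norms; hence each local induced representation $I(m-\frac{n}{2})_{v}$ carries a natural rational structure $I(m-\frac{n}{2})_{v,\Q}$, namely the locally constant functions valued in $\Q$ that transform by this $\Q$-valued character. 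Thus it is enough to produce, for each $v\in S_{f}$, a section $\varphi_{v}\in I(m-\frac{n}{2})_{v,\Q}$ with $Z_{v}(m-\frac{n}{2},f,f',\varphi_{v})\in (L')^{\times}$.

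Next I would establish nonvanishing. Fix $v\in S_{f}$. The normalized matrix coefficient $c_{f,f',v}$ is a fixed, nonzero, locally constant function on the local unitary group $U_{v}$, and the assignment $\varphi_{v}\mapsto Z_{v}(m-\frac{n}{2},f,f',\varphi_{v})=\int_{U_{v}}\varphi_{v}(i(h_{v},1),m-\frac{n}{2})c_{f,f',v}(h_{v})\,dh_{v}$ is a linear functional on $I(m-\frac{n}{2})_{v}$ which, by Li's analysis of the local doubling integrals (\cite{li}), is not identically zero: one may take $\varphi_{v}$ supported in a small open set around a point where $c_{f,f',v}$ does not vanish, so the integral reduces to the integral of $c_{f,f',v}$ against a single bump function. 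Since such a $\varphi_{v}$ may be chosen $\Q$-valued (its support being open compact and $c_{f,f',v}$ locally constant), this gives $\varphi_{v}\in I(m-\frac{n}{2})_{v,\Q}$ with $Z_{v}\ne 0$; this is precisely the argument of Lemma 3.5.7 of \cite{harriscrelle} and the discussion after it.

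It then remains to see that the resulting value lies in $L'$ and not in some a priori larger number field. Here I would invoke Lemma 6.2.7 of \cite{harrisunitary}: given our fixed hermitian matrix $Q$, the orthogonal basis $\beta$, the identification $G^{(2)}\cong GU(S_{n})$ via the matrices $\eta,\eta'$, and the normalizations of the CM periods from Subsection~\ref{CMperiods}, the normalized matrix coefficient $c_{f,f',v}$ (after dividing by $(f_{v},f'_{v})$) takes values in $L'$. The integral of a $\Q$-valued, locally constant, compactly supported section against an $L'$-valued locally constant function is a finite $\Q$-linear combination of values of $c_{f,f',v}$, hence lies in $L'$. Multiplying over $v\in S_{f}$ and combining with the nonvanishing of the previous step yields a section $\varphi_{f}(\cdot,s)\in I(s)_{f}$ with $\varphi_{f}(\cdot,m-\frac{n}{2})$ valued in $\Q$ and $Z_{f}(m-\frac{n}{2},f,f',\varphi_{f})\in(L')^{\times}$.

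The delicate point, and the \emph{main obstacle}, is this last step: although $f$ lives in $\pi\otimes\psi$ and $f'$ in $\pi^{\vee}\otimes\psi^{-1}$, so that the matrix coefficients might naively seem to be valued in $E(\pi,\psi)$, the careful choices of hermitian data and period normalizations are designed precisely so that the local doubling integrals descend to $L'$. Verifying this compatibility — and checking that the section realizing nonvanishing can simultaneously be taken $\Q$-rational — is exactly where the input of \cite{li} and Lemma 6.2.7 of \cite{harrisunitary} enters; the factorization and the $\Q$-structure on the inductions are routine.
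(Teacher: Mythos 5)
Your proposal follows essentially the same route as the paper's proof: factor $Z_f$ over $v\in S_f$, invoke Li and Lemma~3.5.7 of \cite{harriscrelle} to produce a $\Q$-valued section giving nonvanishing, and appeal to Lemma~6.2.7 of \cite{harrisunitary} for $L'$-rationality. The one place where you go beyond the paper's citation is the specific claim that the normalized matrix coefficients $c_{f,f',v}$ are themselves $L'$-valued; since $\pi_f$ is defined over $E(\pi)$ and $\psi$ over $\Q(\psi)$, those coefficients would more naturally be $E(\pi,\psi)$-valued, and the cited lemma presumably bears directly on the rationality of the local zeta integral (via Galois equivariance of $Z_v$ under $\Aut(\C/L')$ acting compatibly on $f$, $f'$, and $\varphi_v$) rather than on the matrix coefficients in isolation. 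You flag this honestly as the step the reference must supply, so this is a plausible reading of a deliberately terse proof rather than a gap, but the precise mechanism should not be asserted as if it were established.
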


From now on, take $\varphi_{f}$ as in Lemma \ref{finzeta}, and define
\[ \varphi_{m}(\cdot,s)=\mathbb{J}_{m}\left(\cdot,s+m-\frac{n}{2}\right)\otimes(2\pi i)^{enm}\varphi_{f}\left(\cdot,s+m-\frac{n}{2}\right)\in I\left(s+m-\frac{n}{2}\right) \]
and
\[ \tilde\varphi_{m}(\cdot,s)=\tilde\varphi_{m,\infty}(\cdot,s)\otimes(2\pi i)^{enm}\varphi_{f}\left(\cdot,s+m-\frac{n}{2}\right)\in I\left(s+m-\frac{n}{2}\right),\]
so that $\Delta_{m}\varphi_{m}=\tilde\varphi_{m}$. We let $E_{m}=E_{\varphi_{m}}$ and $\tilde E_{m}=E_{\tilde\varphi_{m}}$. 

\begin{lemma}\label{formulazetaL} Let the notation and assumptions be as above. Then
\[ Z\left(m-\frac{n}{2},f,f',\tilde\varphi_{m}\right)\in L'.\]
\begin{proof}
It follows from Proposition~\ref{proprationaleis} that $E_{m}$ and $\tilde E_{m}$ are rational over $L'$. The Eisenstein series $\tilde E_{m}$, when restricted to $G^{\sharp}(\A)$, equals $\Delta_{m}E_{m}$ (as scalars automorphic forms). Let 
\[ \mathcal{L}_{m}:H^{2d}(S_{L'}^{\sharp},\mathcal{E}^{'}_{\Lambda^{\sharp}(\ell)})\to L' \]
be the $L'$-linear map defined by pairing with $\Delta_{m}E_{m}$ via Serre duality. Then, as in Remark~\ref{scalarforms}, we can write
\[ \mathcal{L}_{m}\left((\gamma,\gamma')^{\sharp}\right)=Z\left(m-\frac{n}{2},f,f',\tilde\varphi_{m}\right).\]
Since $\gamma$ and $\gamma'$ are rational over $L'$, this shows that the zeta function belongs to $L'$. 
\end{proof}
\end{lemma}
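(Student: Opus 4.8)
The plan is to show that the zeta integral $Z\left(m-\frac{n}{2},f,f',\tilde\varphi_{m}\right)$ can be reinterpreted as the pairing, under Serre duality on the Shimura variety $S^{\sharp}_{L'}$, of the (automorphic) cohomology class $(\gamma,\gamma')^{\sharp}$ against the Eisenstein class $\Delta_{m}E_{m}$, and then to observe that both of these are defined over $L'$, so that the pairing lands in $L'$. First I would recall from Proposition~\ref{proprationaleis} that the finite section $\varphi_{f}\left(\cdot,m-\frac{n}{2}\right)$ chosen in Lemma~\ref{finzeta}, which takes values in $\Q$, satisfies the reciprocity law needed to guarantee that $E_{m}=E_{\varphi_{m}}\in H^{0}(S^{(2)}_{\C},\mathcal{E}_{m}^{\can})$ is rational over $L'$ — the extra factor $(2\pi i)^{enm}$ in the definition of $\varphi_{m}$ is precisely what puts the section in the correct rationality class. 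Then, since the differential operator $\Delta_{m}=\Delta_{m}(\Lambda;\ell)$ is $L'$-rational and extends to canonical extensions, $\tilde E_{m}=\Delta_{m}E_{m}\in H^{0}(S^{\sharp}_{L'},\mathcal{E}^{\can}_{\Lambda^{\sharp}(\ell)})$ is also $L'$-rational.

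Next I would set up the Serre duality pairing. Using the identification $\mathcal{E}_{\left((\mu+\mu(\eta),\mu^{\vee}-\mu(\eta))\right)^{\sharp}}=\mathcal{E}'_{\Lambda^{\sharp}(\ell)}=\mathcal{E}_{\Lambda^{\sharp}(\ell)}^{\vee}\otimes\mathbb{K}^{\sharp}$ noted just before the lemma, the class $(\gamma,\gamma')^{\sharp}\in H^{2d}_{!}(S^{\sharp}_{\C},\mathcal{E}'_{\Lambda^{\sharp}(\ell)})$ can be paired with the degree-zero class $\Delta_{m}E_{m}\in H^{0}(S^{\sharp}_{\C},\mathcal{E}^{\can}_{\Lambda^{\sharp}(\ell)})$ via the Serre duality map of Subsection~\ref{avbconjugates}, which is rational over any field over which the bundles have canonical models — in particular over $L'$. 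Call this pairing $\mathcal{L}_{m}$. The content of the proof is then the identity
\[ \mathcal{L}_{m}\left((\gamma,\gamma')^{\sharp}\right)=Z\left(m-\frac{n}{2},f,f',\tilde\varphi_{m}\right),\]
which is obtained by unwinding the definitions: by Remark~\ref{scalarforms}, pairing $(\gamma,\gamma')^{\sharp}$ against the Eisenstein class translates, on the level of scalar automorphic forms, into the integral $\int_{Z^{\sharp}(\A)G^{\sharp}(\Q)\backslash G^{\sharp}(\A)}(\Delta_{m}E_{m})(i(g,g'))f(g)f'(g')\,dg\,dg'$, and since $\Delta_{m}E_{m}$ is the restriction to $G^{\sharp}(\A)$ of $\tilde E_{m}=E_{\tilde\varphi_{m}}$, this is exactly $Z\left(m-\frac{n}{2},f,f',\tilde\varphi_{m}\right)$. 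Finally, because $\gamma=\beta(\psi)$ and $\gamma'=\beta'(\psi^{-1})$ arise from $L'$-rational cohomology classes $\beta$, $\beta'$ (Remark~\ref{antiholomorphic}, together with the $L'$-rationality of $\alpha\mapsto\hat\alpha$ and of $s_{\psi}$) via operations that are themselves rational over $L'$ (cup product, pullback along $S^{\sharp}_{L'}\hookrightarrow S_{L'}\times\overline{S}_{L'}$, which is $L'$-rational), the class $(\gamma,\gamma')^{\sharp}$ is $L'$-rational, and hence $\mathcal{L}_{m}\left((\gamma,\gamma')^{\sharp}\right)\in L'$.

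I expect the main subtlety to be the bookkeeping around rationality of $(\gamma,\gamma')^{\sharp}$: one must be careful that cup product on interior coherent cohomology combined with pullback along the closed embedding of Shimura varieties preserves $L'$-rationality, and that this is compatible with the identification of the target bundle with the Serre dual $\mathcal{E}'_{\Lambda^{\sharp}(\ell)}$. The other point requiring care is the precise matching of the $(2\pi i)$-normalizations — the integer $enm$ appearing in Proposition~\ref{proprationaleis} must be exactly the one inserted in $\varphi_{m}$ and $\tilde\varphi_{m}$ — but this has been arranged by definition. Everything else is routine once the dictionary between Serre duality pairings of coherent cohomology classes and integrals of scalar automorphic forms (as set up in Remark~\ref{scalarforms} and used already in the definition of $Z$) is invoked; this is carried out exactly as in the corresponding argument of \cite{harriscrelle}.
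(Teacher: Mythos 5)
Your proposal is correct and follows essentially the same route as the paper's proof: rationality of $E_m$ (hence of $\tilde E_m = \Delta_m E_m$) via Proposition~\ref{proprationaleis}, identification of the zeta integral with the Serre duality pairing $\mathcal{L}_m\left((\gamma,\gamma')^{\sharp}\right)$ as in Remark~\ref{scalarforms}, and the conclusion from $L'$-rationality of $\gamma$ and $\gamma'$. The extra care you take to spell out the $(2\pi i)^{enm}$ normalization and the $L'$-rationality of cup product, pullback, and the Serre duality pairing itself is all implicit in the paper's terser version and is the right thing to check.
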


From now on, we assume the following hypothesis.

\begin{hyp}\label{hypomult} If $\sigma\in J_{\pi}$ then $\dim_{\C[G(\A_{f})]}(\pi_{f}^{\sigma},H^{d}_{!}(S_{\C},\mathcal{E}_{\mu}))\leq 1$.
\end{hyp}
Using the map $c_{B}$, we can see that $\dim_{\C[G(\A_{f})]}(\pi_{f}^{\sigma},H^{0}_{!}(S_{\C},\mathcal{E}_{w_{0}^{1}*\mu}))\leq 1$ as well. The statement of Hypothesis~\ref{hypomult} is part of Arthur's multiplicity conjectures for unitary groups (in which case the analogous statement should hold for the Weyl components corresponding to any $w\in\mathcal{W}^{1}$, but we are only considering the cases $w=1$ and $w=w_{0}^{1}$ here). It is reasonable to expect a proof of these multiplicity conjectures to appear soon, and we refer the reader to \cite{kmsw} and their forthcoming sequels for more details. Under this condition, we can unambiguously define
\[ Q^{\hol}(\pi)=Q(\pi;\alpha) \]
and
\[ Q^{\ahol}(\pi)=Q(\pi;\beta),\]
where $\beta$ is as above and $\alpha$ is a holomorphic vector (rational over $L'$ for the de Rham structure, as usual).

\begin{lemma}\label{lemaqhol} Suppose that $\pi$ satisfies Hypothesis~\ref{hypomult}. Then
\[ Q^{\hol}(\pi)Q^{\ahol}(\pi)\sim_{E(\pi)\otimes L'}1.\]
\begin{proof} This follows from the reasoning in Subsection~\ref{ssec:quadraticperiods}.
\end{proof}
\end{lemma}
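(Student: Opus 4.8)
The claim $Q^{\hol}(\pi)Q^{\ahol}(\pi)\sim_{E(\pi)\otimes L'}1$ should follow from duality considerations entirely parallel to those in Subsection~\ref{ssec:quadraticperiods}, now transported to the automorphic setting via the Hodge--de Rham structure $M(\pi)=M(\pi,W)$. The plan is to recognize $Q^{\hol}(\pi)$ and $Q^{\ahol}(\pi)$ as the quadratic periods $Q_{j,\sigma}$ (more precisely, their products over $\sigma\in J_{L'}$) attached to the extreme Weyl components $w=w_0^1$ and $w=1$ in the Hodge decomposition~(\ref{hodgedecomp}) of $M(\pi)$, and then invoke the relation $\lambda_{i,\sigma}\lambda_{d'+1-i,\sigma}=1$ among the scalars by which Frobenius permutes the $\Omega_{i,\sigma}$, exactly as in the proof of Lemma~\ref{lemmadualityQ}.

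First I would set up the correspondence precisely. By Hypothesis~\ref{hypomult}, $M(\pi)$ has a one-dimensional holomorphic part (the $w=w_0^1$ Weyl component, sitting in $H^0_!(S_\C,\mathcal{E}_{w_0^1*\mu})$) and a one-dimensional antiholomorphic part (the $w=1$ component, in $H^d_!(S_\C,\mathcal{E}_\mu)$), and Proposition~\ref{coro259} tells us $c_B$ interchanges these two components. The polarization $\langle,\rangle$ of~(\ref{motpairing}) restricts to the pairing~(\ref{pairingWeyl}) between them. Thus, in the language of Subsection~\ref{ssec:quadraticperiods} applied to the sub-Hodge-de Rham structure of $M(\pi)$ spanned by these two Weyl components (a rank-two, regular, $A$-polarized piece with $A=E(\pi)(\xi-d)$ suitably normalized, or rather working componentwise over each $\sigma\in J_{L'}$), the classes $\alpha$ and $\beta$ play the roles of $\Omega_{1,\sigma}$ and $\Omega_{2,\sigma}$, while $Q^{\hol}(\pi)=Q(\pi;\alpha)$ and $Q^{\ahol}(\pi)=Q(\pi;\beta)$ correspond to $Q_{1,\sigma}$ and $Q_{2,\sigma}$, up to the explicit twist bookkeeping of $\delta_\sigma(A)$ and powers of $2\pi i$ coming from~(\ref{formulaQpipidual}) and the $\nu^\xi$-twist. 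Since $Q(\pi;\alpha)\sim_{E(\pi)\otimes L'}\langle\alpha,c_B(\alpha)\rangle$ and similarly for $\beta$ (using Proposition~\ref{Frisrational} to replace $\Fr c_{\coh}$ by $c_B$), and $c_B$ swaps the two components, the product $Q^{\hol}(\pi)Q^{\ahol}(\pi)$ unwinds to $\langle\alpha,c_B(\alpha)\rangle\langle\beta,c_B(\beta)\rangle\sim\langle\alpha,\beta'\rangle\langle\beta,\alpha'\rangle$ where primes denote the conjugate-dual classes, and the Serre-duality rationality of~(\ref{pairingWeyl}) together with $\lambda_{1,\sigma}\lambda_{2,\sigma}=1$ forces this product to be $E(\pi)\otimes L'$-rational.

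The main obstacle I expect is the careful matching of the auxiliary twisting factors: the quadratic periods $Q(\pi;\cdot)$ are defined through $\langle R\beta,\Fr R\beta\rangle$ with the polarization landing in $E(\pi)(\xi-d)$ rather than in $E(\pi)(0)$, so a naive application of Lemma~\ref{lemmadualityQ} would leave residual powers of $(2\pi i)^{\xi-d}$ and a $\delta_\sigma$-factor. One must check that these cancel between the holomorphic and antiholomorphic contributions --- this is where the symmetry $p_{w*\mu}+p_{w^\flat*\mu}=\xi$ (equivalently $q=p_{w^\flat*\mu}$, noted after~(\ref{hodgedecomp})) and the $\hat{}$-operation of Remark~\ref{rempipidual} with its factor~(\ref{formulaQpipidual}) do the work. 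Concretely, I would verify that $\langle\alpha,c_B(\alpha)^\vee\rangle$ and $\langle\beta,c_B(\beta)^\vee\rangle$ are reciprocal up to $(E(\pi)\otimes L')^\times$ by tracing through the identification $V_{w^\flat*\mu}\cong V_{w*\mu}^\vee\otimes\Lambda^d(\mathfrak{p}_x^+)\otimes V_{\mu_\xi}$ and the Serre-duality rationality over $L'$, precisely as Harris does in the $K=\Q$ case that is being cited; since all the ingredients (Propositions~\ref{coro259} and~\ref{Frisrational}, the rationality of~(\ref{pairingWeyl}), the definition of $Q(\pi;\beta)$) are available verbatim, the argument of \cite{harriscrelle} goes through without essential change, and the lemma follows.
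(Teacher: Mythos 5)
Your proposal is correct and follows the paper's intended approach, which is simply to transport the duality relation from Subsection~\ref{ssec:quadraticperiods} to the Hodge--de Rham structure $M(\pi,W)$: the holomorphic and antiholomorphic vectors $\alpha,\beta$ are the extreme $\Omega$-classes of a polarized regular (at the extremes) structure with trivial Artin twist $A=E(\pi)$, so that $Q^{\hol}(\pi)\sim\langle\alpha,c_B(\alpha)\rangle$ and $Q^{\ahol}(\pi)\sim\langle\beta,c_B(\beta)\rangle$ correspond to $Q_1$ and $Q_{d'}$, and the relation $\lambda_1\lambda_{d'}=1$ (from $c_B^2=\id$, with $\delta_\sigma(A)\sim 1$) together with $L'$-rationality of Serre duality gives the result. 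Your concern about residual $(2\pi i)^{\xi-d}$ factors is handled automatically in the framework of Subsection~\ref{ssec:quadraticperiods}, since the product $Q_iQ_{d'+1-i}$ reduces to $\sigma(\mu_i\mu_{d'+1-i})\in(E\otimes K)^\times$ with no Tate-twist remnant.
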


We are now ready to state our main theorem, which follows from all the results so far. We include all the relevant hypothesis. 

\begin{thm}\label{maintheorem} Let $\pi\in\Coh_{G,\mu}$, with $W_{\mu}$ defined over $\Q$. Assume that $\pi^{\vee}\cong\pi\otimes\|\nu\|^{\xi}$, that it satisfies Hypothesis~\ref{hypomult} and that it contributes to antiholomorphic cohomology. Let $\psi$ be an algebraic Hecke character of $L$, with infinity type $(m_{\tau})_{\tau\in J_{L}}$, and let $m>n$ be an integer satisfying~(\ref{ineqfort}). Then
\[ L^{S,\mot}(m,\pi\otimes\psi,\St)\sim_{E(\pi,\psi)\otimes L'}\]
\[ (2\pi i)^{e\left(mn-n(n-1)/2\right)-\xi}(D_{K}^{1/2})^{\lfloor\frac{n+1}{2}\rfloor}\delta([\varepsilon_{L}])^{\lfloor\frac{n}{2}\rfloor}Q^{\hol}(\pi)\mathbf{p}(\psi;\det\circ x)\mathbf{p}(\psi^{-1};\det\circ\overline x).\]
\begin{proof} Recall that~(\ref{PSR}) for $\tilde\varphi_{m}$ says that
\begin{equation}\label{formproof} d^{S}\left(m-\frac{n}{2}\right)Z\left(m-\frac{n}{2},f,f',\tilde\varphi_{m}\right)=\end{equation}
\[(f,f')_{G}Z_{f}\left(m-\frac{n}{2},f,f',\tilde\varphi_{m}\right)Z_{\infty}(m-\frac{n}{2},f,f',\tilde\varphi_{m})L^{\mot,S}\left(m,\pi,\St\right).\] Note that $Z_{f}\left(m-\frac{n}{2},f,f',\tilde\varphi_{m}\right)=(2\pi i)^{emn}Z_{f}\left(m-\frac{n}{2},f,f',\varphi_{f}\right)\in(2\pi i)^{enm}L'$ by Lemma \ref{finzeta}. Also, the zeta integrals $Z_{\infty}(m-\frac{n}{2},f,f',\tilde\varphi_{m})$ and $Z\left(m-\frac{n}{2},f,f',\tilde\varphi_{m}\right)$ belong to $L'$ by Lemmas \ref{archzeta} and \ref{formulazetaL}. The factor $(f,f')_{G}$ can be written, by~(\ref{formulaQpipidual}) (rather, its twisted version), as
\[ (f,f')_{G}\sim_{E(\pi,\psi)\otimes L'}(2\pi i)^{\xi}Q(\pi;\psi;\beta). \]
It follows from this and (\ref{formproof}) that
\[ L^{S,\mot}(m,\pi\otimes\psi,\St)\sim_{E(\pi,\psi)\otimes L'}(2\pi i)^{-enm-\xi}d^{S}\left(m-\frac{n}{2}\right)Q(\pi;\psi;\beta)^{-1}.\]
This formula, together with~(\ref{formulaQpichibeta}) and Lemma~\ref{lemmadn}, implies that
\[ L^{\mot,S}\left(m,\pi\otimes\psi,\St\right)\sim_{E(\pi,\psi)\otimes L'} \]
\[ Q(\pi;\beta)^{-1}(2\pi i)^{e\left(mn-n(n-1)/2\right)-\xi}(D_{K}^{1/2})^{\lfloor\frac{n+1}{2}\rfloor}\delta([\varepsilon_{L}])^{\lfloor\frac{n}{2}\rfloor}\mathbf{p}(\psi;\det\circ x)\mathbf{p}(\psi^{-1};\det\circ\overline x).\]
Finally, the theorem follows by combining this with Lemma \ref{lemaqhol}.
\end{proof}
\end{thm}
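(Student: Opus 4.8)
The plan is to assemble the proof by chaining together the ingredients built up in the previous subsections, with the doubling-method identity~(\ref{PSR}) of Li as the backbone. The starting point is equation~(\ref{formproof}), which I will apply to the specific section $\tilde\varphi_{m}=\Delta_{m}\varphi_{m}$ constructed above: this reads
\[ d^{S}\left(m-\tfrac{n}{2}\right)Z\left(m-\tfrac{n}{2},f,f',\tilde\varphi_{m}\right)=(f,f')_{G}\,Z_{f}\left(m-\tfrac{n}{2},f,f',\tilde\varphi_{m}\right)Z_{\infty}\left(m-\tfrac{n}{2},f,f',\tilde\varphi_{m}\right)L^{\mot,S}(m,\pi,\St),\]
where $f\in\pi\otimes\psi$ and $f'\in\pi^{\vee}\otimes\psi^{-1}$ are the automorphic forms attached to $\gamma=\beta(\psi)$ and $\gamma'=\beta'(\psi^{-1})$. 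Note that $L^{\mot,S}(m,\pi,\St)=L^{S,\mot}(m,\pi\otimes\psi,\St)$ by definition of the twist. First I would solve this identity for the $L$-value and control each of the remaining factors: the factor $Z_{f}$ equals $(2\pi i)^{enm}Z_{f}(m-\tfrac{n}{2},f,f',\varphi_{f})$ by the definition of $\tilde\varphi_{m}$, and this lies in $(2\pi i)^{enm}L'$ by Lemma~\ref{finzeta}; the archimedean factor $Z_{\infty}(m-\tfrac{n}{2},f,f',\tilde\varphi_{m})$ lies in $(L')^{\times}$ by Lemma~\ref{archzeta}; and the global zeta integral $Z(m-\tfrac{n}{2},f,f',\tilde\varphi_{m})$ lies in $L'$ by Lemma~\ref{formulazetaL}, which in turn rests on the $L'$-rationality of $E_{m}$ and $\tilde E_{m}$ from Proposition~\ref{proprationaleis}, on $\tilde E_{m}|_{G^{\sharp}(\A)}=\Delta_{m}E_{m}$, and on the $L'$-rationality of $\gamma$ and $\gamma'$.

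The next step is to identify the Petersson-type pairing $(f,f')_{G}$ with an automorphic quadratic period. Here I would invoke the twisted form of~(\ref{formulaQpipidual}) (i.e. Remark~\ref{rempipidual} applied to $\pi\otimes\psi$) to write $(f,f')_{G}\sim_{E(\pi,\psi)\otimes L'}(2\pi i)^{\xi}Q(\pi;\psi;\beta)$, where $Q(\pi;\psi;\beta)=Q(\pi,\pi^{\vee};\psi;\beta)/(2\pi i)^{\xi}$ in the notation of Subsection~\ref{autquad}. Substituting the above controls into the solved identity yields
\[ L^{S,\mot}(m,\pi\otimes\psi,\St)\sim_{E(\pi,\psi)\otimes L'}(2\pi i)^{-enm-\xi}\,d^{S}\left(m-\tfrac{n}{2}\right)Q(\pi;\psi;\beta)^{-1}.\]
Then I would rewrite $Q(\pi;\psi;\beta)$ in terms of the untwisted quadratic period and CM periods via~(\ref{formulaQpichibeta}), namely $Q(\pi;\psi;\beta)\sim Q(\pi;\beta)\,\mathbf{p}(\psi;\det\circ x)^{-1}\mathbf{p}(\psi^{-1};\det\circ\overline x)^{-1}$, and expand $d^{S}(m-\tfrac{n}{2})$ by Lemma~\ref{lemmadn} as $(D_{K}^{1/2})^{\lfloor(n+1)/2\rfloor}\delta([\varepsilon_{L}])^{\lfloor n/2\rfloor}(2\pi i)^{e(2mn-n(n-1)/2)}$ up to $\Q^{\times}$. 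Collecting the powers of $2\pi i$ — the $(2\pi i)^{-enm-\xi}$ prefactor against the $(2\pi i)^{e(2mn-n(n-1)/2)}$ from $d^{S}$ — gives the exponent $e(mn-n(n-1)/2)-\xi$ claimed in the statement, and leaves $Q(\pi;\beta)^{-1}$ together with the two CM period factors. Finally, I would replace $Q(\pi;\beta)^{-1}=Q^{\ahol}(\pi)^{-1}$ by $Q^{\hol}(\pi)$ using Lemma~\ref{lemaqhol}, which asserts $Q^{\hol}(\pi)Q^{\ahol}(\pi)\sim_{E(\pi)\otimes L'}1$ under Hypothesis~\ref{hypomult}; this is precisely where the multiplicity-one hypothesis is used, to make $Q^{\hol}$ and $Q^{\ahol}$ well-defined and to apply the duality argument of Subsection~\ref{ssec:quadraticperiods}. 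Assembling everything produces the asserted relation.

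The only genuinely delicate points, all of which are already isolated into the cited lemmas, are: verifying that the nonarchimedean zeta integral can be made $L'$-rational and nonzero simultaneously (Lemma~\ref{finzeta}, which uses Li's local computations and Lemma 6.2.7 of~\cite{harrisunitary}); the rationality of the Eisenstein series $E_{m}$, which depends on the reciprocity law in Proposition~\ref{proprationaleis} and on the range $m>n$ guaranteeing holomorphy at $s=0$; and the compatibility of the differential operator $\Delta_{m}$ with canonical extensions and Serre duality so that $\mathcal{L}_{m}((\gamma,\gamma')^{\sharp})$ genuinely computes $Z(m-\tfrac{n}{2},f,f',\tilde\varphi_{m})$. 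The main obstacle in the write-up is bookkeeping rather than mathematical: one must track the $(2\pi i)$-powers and the fields of rationality ($E(\pi)$, $E(\psi)$, $\Q(\psi)$, $L'$, $\Q^{\ab}$) consistently through each substitution, and check that every $\sim$ is taken with respect to a ring no larger than $E(\pi,\psi)\otimes L'$, absorbing the $\Q^{\times}$ and $L'^{\times}$ ambiguities from Lemmas~\ref{lemmadn}, \ref{archzeta}, \ref{finzeta}, \ref{formulazetaL} and \ref{lemaqhol} into that relation. Once the inequality~(\ref{ineqfort}) is known to be equivalent to the inequality in the statement of Theorem~\ref{maintheorem} — which is a direct translation via $\Lambda=\Lambda(\mu;\eta^{-1})=w_{0}^{1}*(\mu-\mu(\eta))$ and the formula for $w_{0}^{1}$ — the proof closes with no further input.
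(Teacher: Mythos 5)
Your proposal reproduces the paper's own proof step for step: it starts from Li's doubling identity applied to $\tilde\varphi_{m}$, controls $Z_f$, $Z_\infty$, and the global zeta integral via Lemmas~\ref{finzeta}, \ref{archzeta}, and \ref{formulazetaL}, identifies $(f,f')_{G}$ with $(2\pi i)^{\xi}Q(\pi;\psi;\beta)$ via the twisted version of~(\ref{formulaQpipidual}), then substitutes~(\ref{formulaQpichibeta}) and Lemma~\ref{lemmadn} before closing with Lemma~\ref{lemaqhol}. The argument is correct and essentially identical to the one given in the paper.
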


\begin{rem} The right hand side of this formula contains $Q^{\hol}(\pi)$, which ultimately depends on the choice of CM type $\Phi$. This choice is also reflected on the CM periods, because the map $x$ implicitly depends on $\Phi$.
\end{rem}

\begin{rem}
	Since we are working over $L'$, we can use Lemma~\ref{epsilonenL} and~(\ref{form:yoshida2}) and rewrite the statement of Theorem~\ref{maintheorem} as
	\begin{equation}\label{formulanueva} L^{S,\mot}(m,\pi\otimes\psi,\St)\sim_{E(\pi,\psi)\otimes L'}\end{equation}\[(2\pi i)^{e\left(mn-n(n-1)/2\right)-\xi}D_{K}^{n/2}Q^{\hol}(\pi)\mathbf{p}(\psi;\det\circ x)\mathbf{p}(\psi^{-1};\det\circ\overline x).\]
	Thus, if $n$ is even we can ignore the discriminant factor. 
\end{rem}

\begin{rem} The formula in Theorem~\ref{maintheorem} proves a version of Conjecture 5.1.1 of the recent Jie Lin's thesis at Paris (\cite{thesislin}). The discriminant factor does not appear in her formula, as she assumes that it belongs to the coefficient field. Galois equivariance of the formula will appear in a forthcoming joint work with Lin. 
\end{rem}

\section{Period relations}\label{sec:relations}
In this section, we analyze Deligne's conjecture for certain motives of the form $M\otimes RM(\chi)$ coming from automorphic forms, comparing the results of Sections~\ref{sec:factorization} and \ref{sec:doubling}. From this comparison, we deduce period relations, and we explain how they are also reflected by Tate's conjecture. We start by recalling some results regarding base change and descent for automorphic representations of unitary groups and $\GL_{n}$. This will be the link in translating the results of Section~\ref{sec:doubling} into motivic terms. Some of the objects in this section are conjectural and used solely for heuristic purposes, so this section is hypothetical in nature. Nevertheless, we do write down concretely the period relations that we obtain in the end in terms of automorphic forms on unitary groups.

\subsection{Transfer and descent}\label{subsecdescent} We fix as before a CM extension $L/K$. We suppose from now on that $L=KE$ for some quadratic imaginary field $E/\Q$ (with a fixed inclusion $E\subset\overline\Q$). Suppose that $G$ is a unitary group attached to an $n$-dimensional hermitian space over $L/K$, and $\Pi$ is a cuspidal, cohomological, conjugate self-dual representation of $\GL_{n}(\A_{L})$. Then we expect the existence of a descent $\pi$ to $G$ (actually $\Pi$ should descend to an $L$-packet, but for our purposes we just choose one member of the corresponding $L$-packet). This has been proved in a significant number of cases (\cite{labesse}; see also \cite{mok} and \cite{kmsw}). We will furthermore assume that $\pi\in\Coh_{G,\mu}$, contributes to antiholomorphic cohomology, and satisfies $\pi^{\vee}\cong\pi\otimes\|\nu\|^{\xi}$. The parameter $\mu=((a_{\tau,1},\dots,a_{\tau,n})_{\tau\in\Phi};a_{0})$ is obtained from the weight of $\Pi$. We will give more details later in a special case. 

Suppose that $G'$ is another unitary group attached to an $n$-dimensional hermitian spaces over $L/K$. We say that two representations $\pi$ and $\pi^{'}$ of $G$ and $G'$ respectively are \emph{nearly equivalent} if $\pi_{v}\simeq\pi_{v}'$ for almost all places $v$ (this only makes sense for the places where the local groups are isomorphic, which is the case for all $v$ except a finite number of them). If this is the case, the partial $L$-functions $L^{S}(s,\pi,\St)$ and $L^{S}(s,\pi',\St)$ agree. The general Langlands philosophy predicts that, given $\pi$, there exists a represetantion $\pi'$ (more precisely, an $L$-packet) which is nearly equivalent to $\pi$. This usually involves a two step process: first, transfer $\pi$ to a cohomological automorphic representation $\Pi$ of $\GL_{n}(\A_{L})$ (this involves also an algebraic Hecke character $\phi_{\pi}$ of $E$; see the appendix to \cite{shin} by Shin), and then descend $\Pi$ to $G'$ as in the previous paragraph.

\subsection{Motivic interpretation}\label{motivicinterp}
Suppose that $\pi\in\Coh_{G,\mu}$. Let $\phi_{\pi}$ be the Hecke character of $E$ mentioned above, obtained by base change from $\pi$. Concretely, $\phi_{\pi}$ is given by the restriction to ${\A_{E}^{\times}}\subset Z(\A)$ of the conjugate $\chi_{\pi}^{c}$ of the central character of $\pi$. Its relation to the base change $\Pi$ is that the central character of $\Pi$ satisfies $\chi_{\Pi}|_{\A_{E}^{\times}}=\phi^{c}_{\pi}/\phi_{\pi}$. The character $\phi_{\pi}$ is an algebraic Hecke character of infinity type $(a_{0},a_{0})$. Then, there is a motive $M(\phi_{\pi})_{L'}=M(\phi_{\pi})\times_{E}L'\in\mathcal{M}(L')_{\Q(\phi_{\pi})}$ of rank $1$ and weight $\xi$ attached to $\phi_{\pi}$. 

Assume that $\Pi$ is cuspidal. We expect the existence of a motive $M(\Pi)$ over $L$, of rank $n$ and weight $n-1$, with coefficients in a number field $E(\Pi)$, such that its $\lambda$-adic representations give rise to the Galois representations attached to $\Pi$ (whose construction is due to many people, see \cite{chl}, \cite{shingalois}, \cite{ch}, \cite{sorensen}). The normalizations are such that, outside a finite set of places $S$, 
\[ L_{v}\left(s-\frac{n-1}{2},\Pi^{\sigma}\right)=L_{v}(\sigma,M(\Pi),s).\]
We do not need to know that $M(\Pi)$ actually exists. Its existence will be used for heuristic purposes, and we will just deduce some consequences of it in terms of period relations. Take $E(\Pi)=E(\pi)$ big enough, containing $\Q(\phi)$. Fixing the CM type $\Phi$, we let $(r_{\tau},s_{\tau})$ be the signature of $G$ at each place $\tau$, and we let $t_{\mathbf{s}}=\frac{1}{2}\sum_{\tau\in\Phi}s_{\tau}(s_{\tau}-1)$. We expect the existence of an isomorphism of motives over $L'$
\[ M(\pi)\otimes_{E(\pi)}M(\phi_{\pi})_{L'}\cong\left(\bigotimes_{\tau\in\Phi}\Lambda^{s_{\tau}}\left(M(\Pi)\times_{L,\tau}L'\right)\right)(t_{\mathbf{s}}).\]
(see \cite{hls}, 3.2.2-3.2.4). The corresponding Galois representations have been shown to be equal in many cases, and Tate's conjecture would imply this expected isomorphism. In particular, if $G$ has signature $(n,0)$ at all places $\tau\neq\tau_{0}$ and signature $(n-1,1)$ at $\tau_{0}$, then
\begin{equation}\label{expectedTateGLn} M(\pi)\otimes_{E(\pi)}M(\phi_{\pi})_{L'}\cong M(\Pi)\times_{L,\tau_{0}}L'.\end{equation}
Suppose that for each $\tau\in\Phi$, there is a group $G_{\tau}$, whose signature at $\tau$ is $(n-1,1)$ and whose signature at all other places is $(n,0)$, and suppose that there is an automorphic representation $\pi_{\tau}$ of $G_{\tau}(\A)$ which is nearly equivalent to $\pi$. Then, by the formulas above, we expect the existence of an isomorphism
\begin{equation}\label{expectedTateU} 
M(\pi)\otimes_{E}M(\phi)_{L'}\cong\left(\bigotimes_{\tau\in\Phi}\Lambda^{s_{\tau}}\left(M(\pi_{\tau})\otimes_{E}M(\phi)_{L'}\right)\right)(t_{\mathbf{s}}),
\end{equation}
where $\phi=\phi_{\pi}=\phi_{\pi_{\tau}}$ and $E=E(\pi)=E(\pi_{\tau})$ for all $\tau$.

In this paper we will mostly be concerned with the case of $\Pi$ self-dual. In this case, the motive $M(\Pi)$ should be the base change from $K$ to $L$ of a polarized regular motive $M(\Pi_{0})$ over $K$, where $\Pi_{0}$ is a cuspidal, self-dual, cohomological automorphic representation of $\GL_{n}(\A_{K})$ whose base change to $\GL_{n}(\A_{L})$ is $\Pi$. Under these conditions, $M(\Pi)^{\vee}=M(\Pi)(n-1)$. 

\begin{rem} If $\Pi$ is the base change of $\pi$ with $\pi^{\vee}\cong\pi\otimes\|\nu\|^{\xi}$, as we assume, then $\Pi$ is self-dual. Indeed, this follows from the uniqueness of the weak base change from $G(\A)$ to $\GL_{1}(\A_{E})\times\GL_{n}(\A_{L})$, since the $\GL_{n}(\A_{L})$-component is the weak base change of an irreducible constituent of the restriction to the unitary group, and $\pi$ and $\pi\otimes\|\nu\|^{\xi}$ have the same such restriction. 
\end{rem}

\begin{rem}\label{remexistGr} Suppose that $(r_{\tau})_{\tau\in\Phi}$ is a tuple of integers with $r_{\tau}\in\{0,\dots,n\}$ for each $\tau$. Then, by the classification of hermitian spaces, there exists a hermitian space $V$ of dimension $n$ such that the associated unitary group has signature $(r_{\tau},n-r_{\tau})$ at each $\tau\in\Phi$. Moreover, if $n$ is odd, or if $n$ is even and $ne-2\sum_{\tau\in\Phi}r_{\tau}\equiv0(4)$, we can impose the condition that the local unitary group at finite places $v$ of $K$ is quasi-split for all $v$. In the other cases, we can impose the same condition for all finite places except a single $v$, inert over $\Q$ and split in $L$. 
\end{rem}

\subsection{Deligne's conjecture} Let $\Pi_{0}$ be a cuspidal automorphic representation of $\GL_{n}(\A_{K})$, self-dual, and cohomological of type $(a_{\sigma,1},\dots,a_{\sigma,n})_{\sigma\in J_{K}}$ in the usual parametrization. For any $\tau\in J_{L}$, we let $a_{\tau,i}=a_{\sigma,i}$ if $\tau$ restricts to $\sigma$. We let $E=E(\Pi_{0})$, and we take the freedom to make $E$ bigger if necessary. We let $M=M(\Pi_{0})$ be the conjectural motive attached to $\Pi_{0}$ as in the previous subsection. We can recover the Hodge types $p_{i}(\sigma,1)$ (with $1:E\hookrightarrow\C$ the given embedding) from the infinity type of $\Pi_{0,\infty}$. The formula is 
\begin{equation}\label{eqnpi} p_{i}(\sigma,1)=a_{\sigma,i}+n-i \end{equation}
for any $\sigma\in J_{K}$. We can similarly compute the Hodge types $p_{i}(\sigma,\varphi)$ for different embeddings $\varphi\in J_{E}$ by looking at the conjugates of $\Pi_{0}$.

We let $\Pi=\Pi_{0,L}$ be the base change of $\Pi_{0}$ to $L$. Thus, $\Pi$ is an automorphic representation of $\GL_{n}(\A_{L})$, conjugate self-dual (and self-dual), cohomological of type $(a_{\tau,1},\dots,a_{\tau,n})_{\tau\in J_{L}}$. We assume moreover that $\Pi_{0}\not\cong\Pi_{0}\otimes\varepsilon_{L}$ (this is always the case when $n$ is odd), which implies that $\Pi$ is cuspidal. Accordingly we expect a motive $M(\Pi)=M\times_{K}L$ attached to $\Pi$ with the same Hodge types, as in the previous subsection. We note that, starting with $K$ and $\Pi_{0}$, there always exists a CM extension $L/K$ such that $\Pi_{0}\not\cong\Pi_{0}\otimes\varepsilon_{L}$ (see Section 1 of \cite{clozelpurity}).

We fix an algebraic Hecke character $\psi$ of $L$, of infinity type $(m_{\tau})_{\tau\in J_{L}}$ and weight $w=m_{\tau}+m_{\overline\tau}$. We assume $\psi$ to be critical in the sense of Subsection~\ref{motivesRM}, so that $m_{\tau}\neq m_{\overline\tau}$ for any $\tau\in J_{L}$. Write $\psi|_{\A_{K}^{\times}}=\psi_{0}\|\cdot\|^{-w}$, with $\psi_{0}$ of finite order, and let
\[ \chi=\psi^{2}(\psi_{0}\circ N_{L/K})^{-1}.\]
Recall that we previously defined $\check{\chi}=\chi^{\iota,-1}$. If we let $\tilde\psi=\psi/\psi^{\iota}$, then
\[\tilde\psi/\check{\chi}=\|\cdot\|^{-w}.\]
We let $(n_{\tau})_{\tau\in J_{L}}$ be the infinity type of $\chi$, so that $n_{\tau}=2m_{\tau}$ and $\chi$ is critical of weight $2w$. Note that $\chi|_{\A_{K}^{\times}}=\|\cdot\|^{-2w}$, so that the finite order character that we denoted by $\chi_{0}$ in Subsection~\ref{motivesRM} is trivial. In everything that we say, we need to fix a CM field $\Phi$, and we will take it to be $\Phi=\Phi_{\psi}=\Phi_{\chi}$. Thus, an embedding $\tau\in J_{L}$ belongs to $\Phi$ if and only if $m_{\tau}>m_{\overline\tau}$. In the notation of Section~\ref{sec:factorization}, for $\sigma\in J_{K}$, 
\begin{align}\label{eqnpichi} p_{1}^{\chi}(\sigma,1)=n_{\tau}\\ 
\notag p_{2}^{\chi}(\sigma,1)=n_{\overline\tau}\end{align}
for $\tau\in\Phi$ extending $\sigma$.  

The reason we introduce the character $\chi$ is the following. If $G$ is a unitary group and $\pi$ is a descent of $\Pi$ from $\GL_{n}$ to $G$, then
\[ L^{\mot,S}(s,\pi\otimes\psi,\St)=L^{S}(M\otimes RM(\chi),s+w)\]
(see (3.5.2) of \cite{harriscrelle}). More precisely, the right hand side should be replaced by $L^{S}(1,M\otimes RM(\chi),s+w)$, where $1$ is the given embedding of $E\otimes\Q(\chi)$. For simplicity of notation, we will only deal with this component, but everything that follows also works after fixing a different embedding.

Suppose that $M\otimes RM(\chi)$ has critical values. Then, as in Subsection~\ref{periodhecketwists}, we can find integers $r_{\sigma}=r_{\sigma,1,1}(\chi)\in\{0,\dots,n\}$, for each  embedding $\sigma\in J_{K}$, such that $n_{\tau}-n_{\overline\tau}$ belongs to the interval $(n-1-2p_{r_{\sigma}}(\sigma,1),n-1-2p_{r_{\sigma}+1}(\sigma,1))$, where $\tau\in\Phi$ extends $\sigma$. Here we take $p_{0}(\sigma,1)=+\infty$ and $p_{n+1}(\sigma,1)=-\infty$. For $\tau\in J_{L}$, we let $r_{\tau}=r_{\sigma}$ if $\tau$ restricts to $\sigma$. We let $s_{\sigma}=s_{\tau}=n-r_{\tau}$. Assuming that there are critical values, we can express the set of integers $m$ which are critical for $L^{S}(M\otimes RM(\chi),s+w)$ using~(\ref{setofcriticalvalues}). Namely, it consists of the $m\in\Z$ satisfying
\[ \upsilon_{}^{(1)}<m+w\leq\upsilon_{}^{(2)}.\]
Thus, $m$ is a critical integer for $L^{S}(M\otimes RM(\chi),s+w)$ if and only if
\[ \upsilon_{}^{(1)}-w<m\leq \upsilon_{}^{(2)}-w.\]
Combining the definitions of the $\upsilon_{}^{(i)}$ with~(\ref{eqnpi}) and~(\ref{eqnpichi}), we can write
\[ \upsilon_{}^{(1)}-w=\max\{a_{\tau,r_{\tau}+1}+s_{\tau}-1+m_{\tau}-m_{\overline\tau},a_{\tau,s_{\tau}+1}+r_{\tau}-1+m_{\overline\tau}-m_{\tau}\}_{\tau\in\Phi} \]
and
\[ \upsilon_{}^{(2)}-w=\min\{a_{\tau,r_{\tau}}+s_{\tau}+m_{\tau}-m_{\overline\tau},a_{\tau,s_{\tau}}+r_{\tau}+m_{\overline\tau}-m_{\tau}\}_{\tau\in\Phi}. \]

Consider the tuple $(r_{\tau})_{\tau\in\Phi}$ and let $G$ be a unitary group with signatures $(r_{\tau},n-s_{\tau})_{\tau\in\Phi}$, as in Remark \ref{remexistGr}. In accordance with the discussion of Subsection~\ref{subsecdescent}, we assume the existence of a descent $\pi\in\Coh_{G}$ of $\Pi_{L}$. Here $\mu=\left((a_{\tau,1},\dots,a_{\tau,n})_{\tau\in\Phi};0\right)$. We assume furthermore that $\pi$ is self-dual, contributes to antiholomorphic cohomology and satisfies Hypothesis~\ref{hypomult}. The assumption that $a_{0}=0$ is reflected in the self-duality of $\pi$, and it's made for simplifying purposes. In turn, we are assuming that the character $\phi$ of Subsection~\ref{motivicinterp} is trivial. We will use the results of Section~\ref{sec:doubling} with the group $G$, the representation $\pi$ and the Hecke character $\psi$. The crucial fact is that the second inequality $m\leq\upsilon_{}^{(2)}-w$ is exactly the second inequality in~(\ref{ineqfort}). It follows, under all these assumptions, using version~(\ref{formulanueva}) of Theorem \ref{maintheorem}, and assuming $m>n$, that
\[ L^{S}(1,M\otimes RM(\chi),m+w)\sim_{E(\psi)EL'} \]
\[ (2\pi i)^{e\left(mn-n(n-1)/2\right)}D_{K}^{n/2}Q^{\hol}(\pi)p(\psi;\det\circ x)p(\psi^{-1};\det\circ\overline x).
\]
Note that we are taking $E(\pi)=E$. Also, we took $E(\psi)$ to contain $L'$, so we could write $\sim_{E(\psi)E}$, but we choose to leave the $L'$ in the notation for possible future refinements. Deligne's conjecture would then predict the following results.

\begin{prop}\label{deligneprediction} With the previous hypotheses, Deligne's conjecture for the motive $M\otimes RM(\chi)$ implies that
\[ c^{+}(M\otimes RM(\chi)(m+w))_{1}\sim_{E(\psi)EL'}\]
\[ (2\pi i)^{e\left(mn-n(n-1)/2\right)}D_{K}^{n/2}Q^{\hol}(\pi)p(\psi;\det\circ x)p(\psi^{-1};\det\circ\overline x).\]
\end{prop}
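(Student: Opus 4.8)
The statement is a formal comparison: on one side we have the automorphic formula for $L^{S}(1,M\otimes RM(\chi),m+w)$ coming from Theorem~\ref{maintheorem} (in the form~(\ref{formulanueva})), and on the other side the motivic prediction $c^{+}(M\otimes RM(\chi)(m+w))_{1}$ furnished by Deligne's conjecture. The plan is simply to compute $c^{+}(M\otimes RM(\chi)(m+w))_{1}$ explicitly using the factorization results of Section~\ref{sec:factorization}, and then match it term by term against the right-hand side of~(\ref{formulanueva}), so that Deligne's conjecture—which asserts $L^{S}(1,M\otimes RM(\chi),m+w)\sim_{E(\psi)E}c^{+}(M\otimes RM(\chi)(m+w))_{1}$—translates into exactly the claimed relation (up to the $L'$-ambiguity we are carrying throughout).

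First I would specialize formula~(\ref{formulacritica}) to the present setting. Recall that under our hypotheses $\chi$ is critical of weight $w(\chi)=2w$, the finite-order character $\chi_{0}$ is trivial, and $E\otimes\Q(\chi)$ is a field (or we work component by component); moreover $a_{0}=0$ so $\xi=0$ and $M=M(\Pi_{0})$ is regular, special, polarized of weight $n-1$ and rank $d=n$. Plugging $w(\chi)=2w$, $d=n$ and the critical integer $m+w$ into~(\ref{formulacritica}) gives
\[ c^{+}(M\otimes RM(\chi)(m+w))_{1}\sim_{E(\psi)EL'}(2\pi i)^{e(m+w)n-2w\sum_{\sigma}s_{\sigma}}\delta(M)\left(\prod_{\sigma\in J_{K}}p(\check\chi^{};\sigma\tau)^{r_{\sigma}-s_{\sigma}}\right)Q^{\mathbf{s}}(M), \]
where $\tau\in\Phi=\Phi_{\chi}$ extends $\sigma$ and $Q^{\mathbf{s}}(M)=\prod_{\sigma}\prod_{j=1}^{s_{\sigma}}Q_{j,\sigma}$. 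The next step is to rewrite each ingredient of this expression in automorphic terms. For the quadratic periods $Q^{\mathbf{s}}(M)$ I would invoke the relation~(\ref{relation}) predicted earlier (equivalently, Theorem~\ref{thmfact} combined with the automorphic formula), namely $\prod_{\sigma}\prod_{j=1}^{s_{\sigma}}Q_{j,\sigma}\sim(2\pi i)^{-en(n-1)/2}\delta(M)^{-1}D_{K}^{n/2}Q^{\hol}(\pi)$; substituting this cancels the $\delta(M)$ and produces the $D_{K}^{n/2}Q^{\hol}(\pi)$ and the $(2\pi i)^{-en(n-1)/2}$ that appear in~(\ref{formulanueva}). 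For the CM periods, I would use Blasius's formula~(\ref{formulaBlasius}), $c^{+}_{\sigma}(\chi)\sim\mathbf{p}(\check\chi;\tau)$, together with the relation $\chi=\psi^{2}(\psi_{0}\circ N_{L/K})^{-1}$ and the identity $\tilde\psi/\check\chi=\|\cdot\|^{-w}$ recorded just before~(\ref{eqnpichi}); the point is that $p(\check\chi^{};\sigma\tau)^{r_{\sigma}-s_{\sigma}}$, after accounting for $\det\circ x=\prod_{\tau\in\Phi}h_{\{\tau\}}^{r_{\tau}}h_{\{\overline\tau\}}^{s_{\tau}}$ and the normalization~(\ref{factpchi}), reassembles into $\mathbf{p}(\psi;\det\circ x)\mathbf{p}(\psi^{-1};\det\circ\overline x)$ up to powers of $2\pi i$. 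Finally I would collect the powers of $2\pi i$: combining the exponent $e(m+w)n-2w\sum s_{\sigma}$ from above with the $-en(n-1)/2$ from~(\ref{relation}) and the shift contributions from the CM period normalizations, one should obtain exactly $e(mn-n(n-1)/2)$, matching~(\ref{formulanueva}) (with $\xi=0$).

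The main obstacle is bookkeeping of the powers of $2\pi i$ and of the CM periods $p(\check\chi;\cdot)$ versus the periods $\mathbf{p}(\psi;\det\circ x)$, $\mathbf{p}(\psi^{-1};\det\circ\overline x)$ appearing in Theorem~\ref{maintheorem}. Because $\chi$ is a square of $\psi$ twisted by a finite-order character, and because the motivic side is indexed by $\sigma\in J_{K}$ while the automorphic side is indexed by $\tau\in\Phi$, one must carefully track how $r_{\sigma}$ and $s_{\sigma}=n-r_{\sigma}$ enter, use $e_{\tau}=-e_{\overline\tau}$ and the relations~(\ref{formulaQsigmachic2}),~(\ref{formulaasigmachisigno}), and appeal repeatedly to~(\ref{form:yoshida1}),~(\ref{form:yoshida2}) and Lemma~\ref{epsilonenL} to absorb discriminant and $\delta([\varepsilon_{L}])$-factors into the $L'$-ambiguity. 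None of these steps is deep—each is already established in Sections~\ref{sec:factorization} and~\ref{sec:doubling}—but assembling them into a single clean congruence, and verifying that the exponent of $2\pi i$ comes out to precisely $e(mn-n(n-1)/2)$, is where care is required. Once the two explicit expressions are seen to agree, the proposition is immediate: Deligne's conjecture identifies $L^{S}(1,M\otimes RM(\chi),m+w)$ with $c^{+}(M\otimes RM(\chi)(m+w))_{1}$ up to $E(\psi)E$-factors, and we have just shown the latter equals the right-hand side of~(\ref{formulanueva}) up to $E(\psi)EL'$-factors.
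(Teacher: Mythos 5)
Your argument arrives at the correct conclusion, but it inverts the logical order of the paper and does a substantial amount of unnecessary work. The Proposition is immediate from what is displayed just above it: combining~(\ref{formulanueva}) with the identity $L^{\mot,S}(s,\pi\otimes\psi,\St)=L^{S}(M\otimes RM(\chi),s+w)$ gives
\[
L^{S}(1,M\otimes RM(\chi),m+w)\sim_{E(\psi)EL'}(2\pi i)^{e(mn-n(n-1)/2)}D_{K}^{n/2}Q^{\hol}(\pi)p(\psi;\det\circ x)p(\psi^{-1};\det\circ\overline x),
\]
and Deligne's conjecture (it suffices to use Conjecture~\ref{deligneweak2} over $L'$) asserts $L^{S}(1,M\otimes RM(\chi),m+w)\sim c^{+}(M\otimes RM(\chi)(m+w))_{1}$. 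Chaining these two gives the Proposition. No computation of $c^{+}$ via the machinery of Section~\ref{sec:factorization} is required to prove this statement.

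What you propose — compute $c^{+}$ explicitly via~(\ref{formulacritica}) and Theorem~\ref{thmfact}, rewrite it using~(\ref{relation}), and then match the $(2\pi i)$-powers and CM periods against~(\ref{formulanueva}) — is exactly what the paper does \emph{after} the Proposition, in order to translate Deligne's prediction into the equivalent period relation~(\ref{predictionsimple}). Be careful about the logical status of~(\ref{relation}) in your argument: that relation is not an independent input, it \emph{is} the prediction of Deligne's conjecture in disguise, obtained precisely by comparing the motivic expression for $c^{+}$ with the automorphic expression for $L^{S}$. Invoking it as a step toward proving the Proposition is therefore circular unless you understand it as a shorthand for ``derive~(\ref{relation}) from Deligne plus the two formulas, then substitute back.'' That reading is valid, and your bookkeeping (the exponent $e(m+w)n-2w\sum_{\sigma}s_{\sigma}-en(n-1)/2$ matching $e(mn-n(n-1)/2)+w\sum_{\tau}(r_{\tau}-s_{\tau})$, and the CM periods reassembling via~(\ref{factpchi}) and~(\ref{formpsidet})) does check out, but it amounts to showing $A\sim B$ and concluding $c^{+}\sim A\sim B$, where the direct path $c^{+}\sim L^{S}\sim B$ bypasses $A$ entirely. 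The substantive content you have verified belongs to the derivation of~(\ref{predictionsimple}), not to the proof of the Proposition itself; keeping those two steps separate makes the exposition cleaner and the dependency on Deligne's conjecture transparent.
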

Here $c^{+}(M\otimes RM(\chi))_{1}$ means we are taking the restriction of scalars from $K$ to $\Q$ of $M\otimes RM(\chi)$; see Subsection~\ref{ssec:periods}.

\begin{rem} The prediction is actually a consequence of the weaker Conjecture~\ref{deligneweak2} over $L'$ for $M\otimes RM(\chi)$.
\end{rem}

In what follows, we will rewrite the prediction of Proposition~\ref{deligneprediction} and get an expression only depending on automorphic quadratic periods and quadratic periods of the motive $M$. First, we use~(\ref{factpchi}), together with Lemma 1.6 of \cite{harrisunitary}, to get
\[ p(\psi;\det\circ x)p(\psi^{-1};\det\circ\overline x)\sim_{E(\psi)}\prod_{\tau\in\Phi}p(\tilde{\psi}^{r_{\tau}-s_{\tau}};\tau).\]
Since $\tilde{\psi}=\check{\chi}\|\cdot\|^{-w}$, using Proposition 1.4 and Lemma 1.8.3 of \cite{harrisunitary}, we can write this as 
\begin{equation}\label{formpsidet} (2\pi i)^{w\sum_{\tau}r_{\tau}-s_{\tau}}\left(\prod_{\tau\in\Phi}p\left(\check\chi;\tau\right)^{r_{\tau}-s_{\tau}}\right). \end{equation}
Now, note that $\chi_{0}$ is trivial, and hence by~(\ref{formulacritica}) we can write
\[ c^{+}(M\otimes RM(\chi)(m+w))_{1}\sim_{E(\psi)EL'}\]\[(2\pi i)^{emn+w\sum_{\tau}r_{\tau}-s_{\tau}}\delta(M)_{1}\left(\prod_{\tau\in
\Phi}p\left(\check\chi;\tau\right)^{r_{\tau}-s_{\tau}}\right) {Q}^{\mathbf{s}}_{1}(M)     \]
(recall that the weight of $\chi$ is $2w$ and rank of $M$ is $n$). Let 
\[ \partial(M)=(2\pi i)^{-en(n-1)/2}\delta(M)^{-1}D_{K}^{n/2}.\]
Then, by~(\ref{formpsidet}), Deligne's prediction (Proposition~\ref{deligneprediction}) is equivalent to the statement

\begin{equation}\label{predictionsimple}
Q_{1}^{\mathbf{s}}(M)\sim_{E(\psi)EL'}\partial(M)_{1}Q^{\hol}(\pi).\end{equation}
For example, if $n$ is even, then $\delta(M)\sim_{E\otimes K'}D_{K}^{n/2}\prod_{\sigma\in J_{K}}\delta_{\sigma}(M)$ by~(\ref{form:yoshida2}), and $\delta_{\sigma}(M)\sim_{E\otimes K,\sigma}(2\pi i)^{-n(n-1)/2}$, so in this case $\partial(M)\sim_{E\otimes K'}1$ and~(\ref{predictionsimple}) is equivalent to the statement
\[
Q_{1}^{\mathbf{s}}(M)\sim_{E(\psi)EL'}Q^{\hol}(\pi).
\]

\begin{rem} As we said before, we can work with arbitrary embeddings of the coefficient field. In this case, Conjecture~\ref{deligneweak2} would mean
\[ Q^{\mathbf{s}}(M)\sim_{E(\psi)\otimes E;L'}\partial(M)\tilde Q^{\hol}(\pi).\]
The notation $\tilde Q^{\hol}(\pi)$ refers to the fact that for each embedding of the coefficient field, the corresponding signatures, and hence the transfers, are different, and we take $\tilde Q^{\hol}(\pi)$ to consist of the corresponding holomorphic automorphic periods in each component. 

\end{rem}

\subsection{Applications of Tate's conjecture}
Keep the assumptions and notation of the last subsection. Moreover, assume that we have isomorphisms as in ~(\ref{expectedTateGLn}) and~(\ref{expectedTateU}). As mentioned in Subsection~\ref{motivicinterp}, this would follow from Tate's conjecture. 

We can interpret the quadratic periods $Q_{j,\sigma}(M)$ in terms of automorphic quadratic periods. For this, fix $\sigma\in J_{K}$, and $\tau\in\Phi$ extending $\sigma$. Let $G_{\tau}$ and $\pi_{\tau}$ be as in~(\ref{expectedTateU}). Then
\[ M(\pi_{\tau})\cong M\times_{K,\sigma}L'.\]
Suppose that $\{\omega_{1,\sigma},\dots,\omega_{n,\sigma}\}$ is an $E\otimes K$-basis of $M_{\dR}$, as in Subsection~\ref{ssec:quadraticperiods}, and denote by the same letters the $E\otimes L'$-basis of $M_{\dR}\times_{K,\sigma}L'\cong M(\pi_{\tau})_{\dR}$ obtained by extension of scalars to $L'$ via $\sigma$. Let $\{\Omega_{1,\sigma},\dots,\Omega_{d,\sigma}\}$ be the basis of $M_{\sigma}\otimes\C$ constructed in Subsection~\ref{ssec:quadraticperiods}. Now, by Remark~\ref{descrW1n-11}, the Hodge decomposition of $M_{\sigma}\otimes\C$ is given by
\[ M_{\sigma}\otimes\C\cong M(\pi_{\tau})_{1}\otimes\C=\bigoplus_{j=1}^{n}M(\pi_{\tau})^{p_{j},q_{j}}. \]
Here $p_{j}=a_{\sigma,j}+n-j$, $q_{j}=n-1-p_{j}$ (more precisely, the decomposition is obtained after tensoring with $\C$ over $E$ with the given embedding) and $M(\pi_{\tau})^{p_{j},q_{j}}$ is the Weyl component corresponding to the element $w_{j}$ defined in Remark~\ref{descrW1n-11}. Moreover, the basis is chosen in a such a way that $\{\omega_{1,\sigma},\dots,\omega_{j,\sigma}\}$ is a basis of the filtration stage $F^{p_{j}}(M(\pi_{\tau})_{\dR})$ and $\Omega_{j,\sigma}$ defines a nonzero rational element of the Hodge component corresponding to the element $w_{j}\in\mathcal{W}^{1}$. It follows at once from the definitions that 
\begin{equation}\label{interpquad} Q_{j,\sigma}\sim Q(\pi_{\tau};\beta_{j}),\end{equation}
where $\beta_{j}$ contributes to coherent cohomology with respect to the Weyl component of type $w_{j}$. From~(\ref{expectedTateU}), we obtain the period relation
\begin{equation}\label{factopetersson} Q^{\hol}(\pi)\sim_{E(\psi)EL'}\prod_{\tau\in\Phi}\prod_{j=1}^{s_{\tau}}Q_{j,\sigma}.\end{equation}
Once again, this relation would follow from Tate's conjecture, so if we assume its veracity, we can prove the weak version of Deligne's conjecture predicted in~(\ref{predictionsimple}) when $n$ is even.

When $n$ is odd, we invoke another consequence of Tate's conjecture. Assuming that the central character of $\Pi_{0}$ is trivial, as we do, Tate's conjecture predicts the existence of an isomorphism
\[ \Lambda^{n}(M)\cong E(-n(n-1)/2) \]
of motives over $K$ and coefficients in $E$. Then
\[ \delta_{\sigma}(M)\sim_{E\otimes K,\sigma}\delta_{\sigma}(\Lambda^{n}(M))\sim_{E\otimes K,\sigma}\delta_{\sigma}(E(-n(n-1)/2))\sim_{E\otimes K,\sigma}(2\pi i)^{-n(n-1)/2}.\]
It follows, using~(\ref{form:yoshida2}), that
\[ \delta(M)\sim_{E\otimes K'}D_{K}^{n/2}(2\pi i)^{-en(n-1)/2}. \]
Thus, $\partial(M)\sim_{E\otimes K'}1$ in this case as well, and hence the weak version of Deligne's conjecture predicted in~(\ref{predictionsimple}) follows from~(\ref{factopetersson}). 

\begin{rem} We don't need the full strength of Tate's conjecture for this. We only need to know that an isomorphism at the level of $\lambda$-adic realizations implies an isomorphism at the level of Hodge-de Rham structures. 
\end{rem}

\begin{rem} When $K=\Q$, Harris proved in a series of works (\cite{cohomologicalI}, \cite{cohomologicalII}, \cite{siegelweil}) that the relations are true up to an unknown factor depending on the infinity type of the automorphic representation. It remains as a project for the coming future to extend these results to the situation of a general totally real field $K$.
\end{rem}

\bibliography{periodsunitary.bib} \bibliographystyle{amsalpha}

\providecommand{\bysame}{\leavevmode\hbox to3em{\hrulefill}\thinspace}
\providecommand{\MR}{\relax\ifhmode\unskip\space\fi MR }
\providecommand{\MRhref}[2]{%
  \href{http://www.ams.org/mathscinet-getitem?mr=#1}{#2}
}
\providecommand{\href}[2]{#2}
\begin{thebibliography}{KMSW14}

\bibitem[BG16]{blasiusguerb}
Don Blasius and Lucio Guerberoff, \emph{Complex conjugation and shimura
  varieties}, preprint, available at arXiv:1602.06572.

\bibitem[BHR94]{bhr}
Don Blasius, Michael Harris, and Dinakar Ramakrishnan, \emph{Coherent
  cohomology, limits of discrete series, and {G}alois conjugation}, Duke Math.
  J. \textbf{73} (1994), no.~3, 647--685.

\bibitem[BJ79]{bj}
Armand Borel and Herv{\'e} Jacquet, \emph{Automorphic forms and automorphic
  representations}, Automorphic forms, representations and {$L$}-functions,
  {P}art 1, Proc. Sympos. Pure Math., XXXIII, Amer. Math. Soc., Providence,
  R.I., 1979, With a supplement ``On the notion of an automorphic
  representation'' by R. P. Langlands, pp.~189--207.

\bibitem[Bla86]{blasiusannals}
Don Blasius, \emph{On the critical values of {H}ecke {$L$}-series}, Ann. of
  Math. (2) \textbf{124} (1986), no.~1, 23--63.

\bibitem[Bla97]{blasiusperiods}
\bysame, \emph{Period relations and critical values of {$L$}-functions},
  Pacific J. Math. (1997), no.~Special Issue, 53--83, Olga Taussky-Todd: in
  memoriam.

\bibitem[BW80]{borelwallach}
Armand Borel and Nolan~R. Wallach, \emph{Continuous cohomology, discrete
  subgroups, and representations of reductive groups}, Annals of Mathematics
  Studies, vol.~94, Princeton University Press, Princeton, N.J.; University of
  Tokyo Press, Tokyo, 1980.

\bibitem[CH13]{ch}
Ga{\"e}tan Chenevier and Michael Harris, \emph{Construction of automorphic
  {G}alois representations, {II}}, Camb. J. Math. \textbf{1} (2013), no.~1,
  53--73.

\bibitem[CHL]{chl}
Laurent Clozel, Michael Harris, and Jean-Pierre Labesse, \emph{Construction of
  automorphic {G}alois representations, {I}}, On the stabilization of the trace
  formula, Stab. Trace Formula Shimura Var. Arith. Appl., vol.~1, Int. Press,
  Somerville, MA, pp.~497--527.

\bibitem[Clo13]{clozelpurity}
Laurent Clozel, \emph{Purity reigns supreme}, Int. Math. Res. Not. IMRN (2013),
  no.~2, 328--346.

\bibitem[Del79a]{deligne}
Pierre Deligne, \emph{Valeurs de fonctions {$L$} et p\'eriodes d'int\'egrales},
  Automorphic forms, representations and {$L$}-functions, {P}art 2, Proc.
  Sympos. Pure Math., XXXIII, Amer. Math. Soc., Providence, R.I., 1979,
  pp.~313--346.

\bibitem[Del79b]{deligneSh}
\bysame, \emph{Vari\'et\'es de {S}himura: interpr\'etation modulaire, et
  techniques de construction de mod\`eles canoniques}, Automorphic forms,
  representations and {$L$}-functions, {P}art 2, Proc. Sympos. Pure Math.,
  XXXIII, Amer. Math. Soc., Providence, R.I., 1979, pp.~247--289.

\bibitem[DMOS82]{dmos}
Pierre Deligne, James~S. Milne, Arthur Ogus, and Kuang-yen Shih, \emph{Hodge
  cycles, motives, and {S}himura varieties}, Lecture Notes in Math., vol. 900,
  Springer-Verlag, Berlin, 1982.

\bibitem[Gar08]{garrett}
Paul~B. Garrett, \emph{Values of {A}rchimedean zeta integrals for unitary
  groups}, Eisenstein series and applications, Progr. Math., vol. 258,
  Birkh\"auser Boston, Boston, MA, 2008, pp.~125--148.

\bibitem[GH93]{harrisgarrett}
Paul~B. Garrett and Michael Harris, \emph{Special values of triple product
  {$L$}-functions}, Amer. J. Math. \textbf{115} (1993), no.~1, 161--240.

\bibitem[GH16]{grobnerharris}
Harald Grobner and Michael Harris, \emph{Whittaker periods, motivic periods,
  and special values of tensor product \$l\$-functions}, Journal of the
  Institute of Mathematics of Jussieu \textbf{FirstView} (2016), 1--59.

\bibitem[Gol14]{shin}
Wushi Goldring, \emph{Galois representations associated to holomorphic limits
  of discrete series}, Compos. Math. \textbf{150} (2014), no.~2, 191--228.

\bibitem[Har84]{harriseisenstein}
Michael Harris, \emph{Eisenstein series on {S}himura varieties}, Ann. of Math.
  (2) \textbf{119} (1984), no.~1, 59--94.

\bibitem[Har86]{harrisvb2}
\bysame, \emph{Arithmetic vector bundles and automorphic forms on {S}himura
  varieties. {II}}, Compos. Math. \textbf{60} (1986), no.~3, 323--378.

\bibitem[Har90]{harrisdeltabar}
\bysame, \emph{Automorphic forms of {$\overline\partial$}-cohomology type as
  coherent cohomology classes}, J. Differential Geom. \textbf{32} (1990),
  no.~1, 1--63.

\bibitem[Har93]{harrisunitary}
\bysame, \emph{{$L$}-functions of {$2\times 2$} unitary groups and
  factorization of periods of {H}ilbert modular forms}, J. Amer. Math. Soc.
  \textbf{6} (1993), no.~3, 637--719.

\bibitem[Har94]{harrismotives}
\bysame, \emph{Hodge-de {R}ham structures and periods of automorphic forms},
  Motives, Proc. Sympos. Pure Math., vol.~55, Amer. Math. Soc., Providence, RI,
  1994, pp.~573--624.

\bibitem[Har97]{harriscrelle}
\bysame, \emph{{$L$}-functions and periods of polarized regular motives}, J.
  Reine Angew. Math. \textbf{483} (1997), 75--161.

\bibitem[Har99]{cohomologicalI}
\bysame, \emph{Cohomological automorphic forms on unitary groups. {I}.
  {R}ationality of the theta correspondence}, Automorphic forms, automorphic
  representations, and arithmetic, Proc. Sympos. Pure Math., vol.~66, Amer.
  Math. Soc., Providence, RI, 1999, pp.~103--200.

\bibitem[Har07]{cohomologicalII}
\bysame, \emph{Cohomological automorphic forms on unitary groups. {II}.
  {P}eriod relations and values of {$L$}-functions}, Harmonic analysis, group
  representations, automorphic forms and invariant theory, Lect. Notes Ser.
  Inst. Math. Sci. Natl. Univ. Singap., vol.~12, World Sci. Publ., Hackensack,
  NJ, 2007, pp.~89--149.

\bibitem[Har08]{siegelweil}
\bysame, \emph{A simple proof of rationality of {S}iegel-{W}eil {E}isenstein
  series}, Eisenstein series and applications, Progr. Math., vol. 258,
  Birkh\"auser Boston, Boston, MA, 2008, pp.~149--185.

\bibitem[HK91]{harriskudla}
Michael Harris and Stephen~S. Kudla, \emph{The central critical value of a
  triple product {$L$}-function}, Ann. of Math. (2) \textbf{133} (1991), no.~3,
  605--672.

\bibitem[HLS11]{hls}
Michael Harris, Jian-Shu Li, and Binyong Sun, \emph{Theta correspondences for
  close unitary groups}, Arithmetic geometry and automorphic forms, Adv. Lect.
  Math. (ALM), vol.~19, Int. Press, Somerville, MA, 2011, pp.~265--307.

\bibitem[Jan90]{jannsen}
Uwe Jannsen, \emph{Mixed motives and algebraic {$K$}-theory}, Lecture Notes in
  Math., vol. 1400, Springer-Verlag, Berlin, 1990, With appendices by S. Bloch
  and C. Schoen.

\bibitem[KMSW14]{kmsw}
Tasho Kaletha, Alberto M{\'{\i}}nguez, Sug~Woo Shin, and Paul-James White,
  \emph{Endoscopic classification of representations: inner forms of unitary
  groups}, preprint.

\bibitem[Lab11]{labesse}
Jean-Pierre Labesse, \emph{Changement de base {CM} et s\'eries discr\`etes}, On
  the stabilization of the trace formula, Stab. Trace Formula Shimura Var.
  Arith. Appl., vol.~1, Int. Press, Somerville, MA, 2011, pp.~429--470.

\bibitem[Li92]{li}
Jian-Shu Li, \emph{Nonvanishing theorems for the cohomology of certain
  arithmetic quotients}, J. Reine Angew. Math. \textbf{428} (1992), 177--217.

\bibitem[Lin15]{thesislin}
Jie Lin, \emph{Special values of automorphic {$L$}-functions for {$GL_n\times
  GL_{n'}$} over {CM} fields, factorization and functoriality of arithmetic
  automorphic periods}, Ph.D. thesis, Universit\'e Paris VII, 2015.

\bibitem[Mil90]{milne}
James~S. Milne, \emph{Canonical models of (mixed) {S}himura varieties and
  automorphic vector bundles}, Automorphic forms, {S}himura varieties, and
  {$L$}-functions, {V}ol.\ {I}, Perspect. Math., vol.~10, Academic Press,
  Boston, MA, 1990, pp.~283--414.

\bibitem[Mok15]{mok}
Chung~Pang Mok, \emph{Endoscopic classification of representations of
  quasi-split unitary groups}, Mem. Amer. Math. Soc. \textbf{235} (2015),
  no.~1108, vi+248.

\bibitem[Pan94]{panch}
Alexei~A. Panchishkin, \emph{Motives over totally real fields and {$p$}-adic
  {$L$}-functions}, Ann. Inst. Fourier (Grenoble) \textbf{44} (1994), no.~4,
  989--1023.

\bibitem[Sch88]{schbook}
Norbert Schappacher, \emph{Periods of {H}ecke characters}, Lecture Notes in
  Math., vol. 1301, Springer-Verlag, Berlin, 1988.

\bibitem[Sch90]{schwermer}
Joachim Schwermer, \emph{Cohomology of arithmetic groups, automorphic forms and
  {$L$}-functions}, Cohomology of arithmetic groups and automorphic forms
  ({L}uminy-{M}arseille, 1989), Lecture Notes in Math., vol. 1447, Springer,
  Berlin, 1990, pp.~1--29.

\bibitem[Ser70]{serre}
Jean-Pierre Serre, \emph{Facteurs locaux des fonctions z\^eta des vari\'et\'es
  alg\'ebriques (d\'efinitions et conjectures)}, S\'eminaire
  Delange-Pisot-Poitou. Th\'eorie des nombres \textbf{11} (1969-1970), no.~2,
  1--15 (fre).

\bibitem[Shi76]{shimurazeta}
Goro Shimura, \emph{The special values of the zeta functions associated with
  cusp forms}, Comm. Pure Appl. Math. \textbf{29} (1976), no.~6, 783--804.

\bibitem[Shi78]{shimuraduke}
\bysame, \emph{The special values of the zeta functions associated with
  {H}ilbert modular forms}, Duke Math. J. \textbf{45} (1978), no.~3, 637--679.

\bibitem[Shi83a]{shimurarelations}
\bysame, \emph{Algebraic relations between critical values of zeta functions
  and inner products}, Amer. J. Math. \textbf{105} (1983), no.~1, 253--285.

\bibitem[Shi83b]{shimuraeisenstein}
\bysame, \emph{On {E}isenstein series}, Duke Math. J. \textbf{50} (1983),
  no.~2, 417--476.

\bibitem[Shi88]{shimuraperiods88}
\bysame, \emph{On the critical values of certain {D}irichlet series and the
  periods of automorphic forms}, Invent. Math. \textbf{94} (1988), no.~2,
  245--305.

\bibitem[Shi11]{shingalois}
Sug~Woo Shin, \emph{Galois representations arising from some compact {S}himura
  varieties}, Ann. of Math. (2) \textbf{173} (2011), no.~3, 1645--1741.

\bibitem[Sor]{sorensen}
Claus~M. Sorensen, \emph{A patching lemma}, manuscript available at
  http://www.imj-prg.fr/fa/spip.php?article10.

\bibitem[Tit71]{tits}
Jacques Tits, \emph{Repr\'esentations lin\'eaires irr\'eductibles d'un groupe
  r\'eductif sur un corps quelconque}, J. Reine Angew. Math. \textbf{247}
  (1971), 196--220.

\bibitem[Yos94]{yoshida}
Hiroyuki Yoshida, \emph{On the zeta functions of {S}himura varieties and
  periods of {H}ilbert modular forms}, Duke Math. J. \textbf{75} (1994), no.~1,
  121--191.

\end{thebibliography}

\end{document}